\theoremstyle{definition}
\def\be{\begin{eqnarray}}
\def\ee{\end{eqnarray}}
\def\ben{\begin{eqnarray*}}
\def\een{\end{eqnarray*}}
\def\ZZ{{\mathbb{Z}}}
\def\RR{{\mathbb{R}}}
\def\PP{{\mathbb{P}}}
\def\QQ{{\mathbb{Q}}}
\def\CC{{\mathbb{C}}}
\def\TT{\mathbb{T}}
\newcommand{\bp}{\mathbf{p}}
\newcommand{\bq}{\mathbf{q}}
\newcommand{\bt}{\mathbf{t}}
\newcommand{\bx}{\mathbf{x}}
\newcommand{\cE}{\mathscr{E}}
\newcommand{\Or}{\mathsf{O}}
\newcommand{\bone}{\mathbf{1}}
\newcommand{\cA}{\mathcal{A}}
\newcommand{\cB}{\mathcal{B}}
\newcommand{\cC}{\mathcal{C}}
\newcommand{\cD}{\mathcal{D}}
\newcommand{\cF}{\mathcal{F}}
\newcommand{\cG}{\mathcal{G}}
\newcommand{\cH}{\mathcal{H}}
\newcommand{\cI}{\mathcal{I}}
\newcommand{\cK}{\mathcal{K}}
\newcommand{\cL}{\mathcal{L}}
\newcommand{\cM}{\mathcal{M}}
\newcommand{\cN}{\mathcal{N}}
\newcommand{\cO}{\mathcal{O}}
\newcommand{\cT}{\mathcal{T}}
\newcommand{\bStab}{\mathbf{Stab}}
\newcommand{\Attr}{\operatorname{Attr}}
\newcommand{\Cont}{\operatorname{Cont}}
\newcommand{\Eff}{\operatorname{Eff}}
\newcommand{\Ell}{\operatorname{Ell}}
\newcommand{\End}{\operatorname{End}}
\newcommand{\ev}{\operatorname{ev}}
\newcommand{\Hom}{\operatorname{Hom}}
\newcommand{\Id}{\operatorname{Id}}
\newcommand{\Lie}{\operatorname{Lie}}
\newcommand{\Pic}{\operatorname{Pic}}
\newcommand{\pr}{\operatorname{pr}}
\newcommand{\pt}{\operatorname{pt}}
\newcommand{\QM}{\operatorname{QM}}
\newcommand{\Res}{\operatorname{Res}}
\newcommand{\rk}{\operatorname{rk}}
\newcommand{\Span}{\operatorname{Span}}
\newcommand{\Spec}{\operatorname{Spec}}
\newcommand{\Stab}{\operatorname{Stab}}
\newcommand{\Supp}{\operatorname{Supp}}
\newcommand{\com}{\mathrm{com}}
\newcommand{\loc}{\mathrm{loc}}
\newcommand{\ns}{\mathrm{ns}}
\newcommand{\rel}{\mathrm{rel}}
\newcommand{\vir}{\mathrm{vir}}
\newcommand{\fK}{\mathfrak{K}}
\newcommand{\fP}{\mathfrak{P}}
\newcommand{\fX}{\mathfrak{X}}
\newcommand{\bA}{\mathsf{A}}
\newcommand{\bO}{\mathsf{O}}
\newcommand{\Pol}{\mathsf{Pol}}
\newcommand{\bS}{\mathsf{S}}
\newcommand{\bT}{\mathsf{T}}
\newcommand{\bK}{\mathsf{K}}
\theoremstyle{definition}
\newtheorem{Definition}{Definition}[section]
\newtheorem{Remark}[Definition]{Remark}
\numberwithin{equation}{section}
\theoremstyle{Theorem}
\newtheorem{Theorem}[Definition]{Theorem}
\newtheorem{Proposition}[Definition]{Proposition}
\newtheorem{Lemma}[Definition]{Lemma}
\newtheorem{Corollary}[Definition]{Corollary}
\newcommand{\fC}{\mathfrak{C}} 
\begin{document}
	
\title{3d Mirror Symmetry and Quantum $K$-theory of Hypertoric Varieties}
\author{Andrey Smirnov and Zijun Zhou}
\date{}
\maketitle
\thispagestyle{empty}

\setlength{\parskip}{1ex}
	
\begin{abstract}
	
Following the idea of Aganagic--Okounkov \cite{AOelliptic}, we study vertex functions for hypertoric varieties, defined by $K$-theoretic counting of quasimaps from $\PP^1$. We prove the 3d mirror symmetry statement that the two sets of $q$-difference equations of a 3d hypertoric mirror pairs are equivalent to each other, with K\"ahler and equivariant parameters exchanged, and the opposite choice of polarization. Vertex functions of a 3d mirror pair, as solutions to the $q$-difference equations, satisfying particular asymptotic conditions, are related by the elliptic stable envelopes. Various notions of quantum $K$-theory for hypertoric varieties are also discussed.

\end{abstract}
	
\setcounter{tocdepth}{2}

\tableofcontents

\section{Introduction}

\subsection{Motivation and background in physics}

Let $G_\RR$ be a compact Lie group, and $\mathbf{M}$ be a quaternionic representation of $G$. The pair $(G_\RR, \mathbf{M})$ defines a 3d $\cN = 4$ supersymmetric gauge theory in physics, where $G_\RR$ is the gauge group, and the representation $\mathbf{M}$ describes the collection of matter fields. There are two interesting components of the moduli space of vacua associated with such theories, called the \emph{Higgs branch} and \emph{Coulomb branch} respectively, which recently received plenty of attention in mathematics. Given a 3d $\cN = 4$ theory $\cT$, its Higgs branch $\cM_H(\cT)$ is mathematically the hyper-K\"ahler quotient associated with the $G_\RR$-representation $\mathbf{M}$, while its Coulomb branch $\cM_C (\cT)$, also admits a mathematical construction recently by Bravermann--Finkelberg--Nakajima \cite{Nak1, BFN2}, in the case where $\mathbf{M} = \mathbf{N} \oplus \mathbf{N}^*$ is of cotangent type.

3d mirror symmetry \cite{PhysMir3, PhysMir2, PhysMir1, Ga-Wit, HW, BDGH} predicts a duality phenomenon between certain pairs of 3d $\cN = 4$ supersymmetric gauge theories, $\cT$ and $\cT'$, which are called mirror pairs. Given explicitly in terms of lagrangian descriptions, these two theories are expected to be different presentations of the same physical theory, and hence admit the same, or equivalent correlation functions. Moreover, a particular property of the duality is that, the Higgs and Coulomb branches of a mirror pair is expected to be exchanged:
$$
\cM_H (\cT) = \cM_C (\cT'), \qquad \cM_C (\cT) = \cM_H (\cT'),
$$
as well as the FI parameters and mass parameters, which are often added to deform the theories and result in resolutions of the branches.

There are several aspects of the 3d mirror symmetry, from which one can extract interesting mathematical conjectures.

\begin{enumerate}[$\bullet$]
	
\setlength{\parskip}{1ex}

\item As in \cite{BDGH}, one can introduce boundary conditions in the Omega background, which implies an equivalence of categories of modules over the quantized Higgs and Coulomb branches. Mathematically, this is realized as Kozsul duality for the category $\cO$'s, or symplectic duality, for symplectic resolutions on the categorical level \cite{BLPW, matdu}.

\item One can consider geometric interpretations of the identification $\cM_H (\cT) = \cM_C (\cT')$ between Higgs and Coulomb branch of a mirror pair. For example, the Hikita conjecture \cite{Hik} and related works \cite{KMP}.

\item There should be some interplay between the Coulomb branch $\cM_C(\cT)$ and Higgs branch $\cM_H (\cT)$ of the same theory. In particular, the (quantized) Coulomb branch could be related to quasimap counting on the Higgs branch. There are several attempts in this direction, for example \cite{BDGHK} and recent papers \cite{MSY, HKW}.

\item Correlation functions or partition functions of a mirror pair should be equated or related, for example \cite{BFK, CDZ}. The approach of this paper is also of this kind, where we follow the idea of Aganagic--Okounkov \cite{Oko, AOelliptic, AOBethe}. Physically, the invariants we consider are the \emph{vortex partition functions} with domain $S^1 \times_q D^2$, or the 3d holomorphic blocks, in the sense of \cite{BDP}. Mathematically, these are the generating functions of quasimaps to the Higgs branch, or quantum $K$-theory, as we will explain in the paper, which also implies symmetries among geometric objects called elliptic stable envelopes. Attempts in this direction are such as \cite{GaKor, Kor, RSVZ, RSVZ2}. We also notice the recent work \cite{Hik}, related to hypertoric elliptic stable envelopes.

\end{enumerate}

\subsection{Vertex functions, $q$-difference equations, and elliptic stable envelopes}

From now on, in the algebraic-geometric language, we consider a complex reductive group $G$ (which is considered as the complexification of $G_\RR$), a $G$-representation $\mathbf{N}$. The quaternionic representation, with a fixed chosen complex structure, is considered as  $\mathbf{M} = \mathbf{N} \oplus \mathbf{N}^*$. The hyperk\"ahler quotient is then equivalent to the holomorphic symplectic reduction. More precisely, the Higgs branch of the associated 3d $\cN = 4$ theory is the GIT quotient $X := \mu^{-1} (0) /\!/_\theta G$, where $\mu: \mathbf{M} \to \mathfrak{g}^*$ is the complex moment map, and $\theta$ is a chosen character of $G$, serving as the stability condition.

A quasimap from $\PP^1$ to the holomorphic symplectic quotient $X$ is defined to be a morphism from $\PP^1$ to the stacky quotient $[\mu^{-1} (0) / G]$, which generically maps into the stable locus $X$. In \cite{Oko}, A. Okounkov introduced the \emph{vertex function} $V(q, z, a)$, defined as generating functions for the $K$-theoretic equivariant counting of those quasimaps, which satisfy an extra requirement that the point $\infty \in \PP^1$ (or the $\infty$ of the last bubble component, in the relative version) is not a base point. Here, $q$ is a fixed complex number such that $|q|<1$; $z$ and $a$ stand respectively for the collections of K\"ahler parameters (which records the degrees of the quasimaps) and equivariant parameters.

In the case that $X$ is a Nakajima quiver variety, the vertex function is shown to satisfy two sets of $q$-difference equations: either by $q$-shifts of $z$-variables, or by $q$-shifts of $a$-variables. The analytic property of such $q$-difference equations as studied in \cite{Oko, OS} shows that the associated $q$-difference modules are holonomic and admit regular singularities with respect to the variables $z$ and $a$ \emph{separately}, but not simultaneously. The vertex function (scaled by an appropriate prefactor), by definition, happens to generate the $z$-solutions, i.e., (multi-valued) solutions that are holomorphic in $z$-variables in a punctured neighborhood of the limit point $z\to 0$, but have infinitely many poles in any punctured neighborhood of the chosen limit point $a\to 0$. It is then natural to look for the monodromy transformation that relates the two kind of solutions: $z$-solutions and $a$-solutions, which is done by Aganagic--Okounkov \cite{AOelliptic}. The monodromy matrix is found to be the \emph{elliptic stable envelopes}, an elliptic analogue of the cohomological and $K$-theoretic stable envelopes \cite{MO}.

Motivated by 3d mirror symmetry, it is naturally conjectured by Aganagic--Okounkov \cite{AOelliptic} that in cases where the 3d mirror $X'$ of $X$ exists, the $a$-solutions are indeed the vertex functions defined for $X'$, and the two sets of $q$-difference equations for $X'$ are the same as those for $X$, with K\"ahler parameters $z$ and equivariant parameters $a$ exchanged with each other. As a corollary, one can also deduce a conjecture that the elliptic stable envelopes for $X$ and $X'$, properly renormalized, are transpose to each other. The 3d mirror symmetry for elliptic stable envelopes is proved for $T^* Gr (n, k)$, $n\geq 2k$ in \cite{RSVZ}, and cotangent bundle of a complete flag variety in \cite{RSVZ2}. In this paper, we prove the 3d mirror symmetry for both the vertex functions and elliptic stable envelopes, in the hypertoric case.

\subsection{Hypertoric 3d mirror symmetry}

In the special case of \emph{abelian} gauge theories, in other words, the gauge group is a torus, the mirror theory always exists and admits very explict descriptions \cite{PhysMir1, KS}. Mathematically, their Higgs branches are abelian hyper-K\"ahler quotients of a quaternionic representation, which are called \emph{toric hyperk\"ahler varieties} or \emph{hypertoric varieties}. To define a hypertoric variety, one starts with a short exact sequence
$$
\xymatrix{
	0 \ar[r] & \ZZ^k \ar[r]^\iota & \ZZ^n \ar[r]^\beta & \ZZ^d \ar[r] & 0.
}
$$
The map $\iota$ then describes a action of the torus $\bK := (\CC^*)^k$ on $\mathbf{N} = \CC^n$, and then on the symplectic vector space $T^*\CC^n$. The hypertoric variety is then defined as
$$
X := \mu^{-1} (0) /\!/_\theta \bK ,
$$
where $\mu: T^*\CC^n \to (\CC^k)^\vee$ is the complex moment map. $\theta$ here is a character of $(\CC^*)^k$, which we identify with an element in $(\ZZ^k)^\vee$. We will always assume that $\theta$ is chosen generically, in which case $X$ is smooth. We will consider the group action on $X$ by $\TT := \bT \times \CC^*_\hbar$, where $\bT := (\CC^*)^n$ acts on $\CC^n$, which descends to $X$, and $\CC^*_\hbar$ scales the symplectic form, whose equivariant parameter we denote by $\hbar$.

We denote by $a_1, \cdots, a_n$ the equivariant parameters of $\bT$, considered as coordinates on the torus $\bA := \bT / \bK = (\CC^*)^d$. We also take $z_1, \cdots, z_n$ as \emph{K\"ahler parameters}, treated as coordinates on $\bK^\vee$. Both collections of parameters are redundant and subject to certain relations (see Section \ref{sec-para}).

Denote by $p_1 = 0 \in \PP^1$ and $p_2:= \infty \in \PP^1$. Let $\QM (X, \beta)$ be the moduli space of quasimaps from $\PP^1$ to $X$, with degree $\beta \in H_2 (X, \ZZ)$. It is equipped with the natural perfect obstruction theory, and hence the associated virtual structure sheaf $\cO_\vir \in K_\TT (\QM (X, \beta))$. Let $\QM(X, \beta)_{\ns \ p_2}$ be the open substack consisting of those quasimaps where $p_2$ is \emph{nonsingular}, i.e., not a base point. The (bare) \emph{vertex function} with descendent insertion $\tau$, is defined as
$$
V^{(\tau)}(q,z, a) := \sum_{\beta \in \Eff(X)} z^\beta \ev_{2, *} \left( \QM(X, \beta)_{\ns\ p_2}, \widehat\cO_\vir \cdot \ev_1^* \tau  \right) \quad \in K_{\TT\times \CC^*_q} (X)_\loc [[ z^{\Eff(X)} ]] ,
$$
where $\tau\in K_\TT (\fX)$ is a Kirwan lift of $K$-theory class to the stacky quotient $\fX := [\mu^{-1} (0) / \bK]$, $\ev_1$ and $\ev_2$ are evaluation maps at $p_1$ or $p_2$ from the moduli stack to $\fX$ or $X$, depending on whether the point is assumed to be nonsingular, and $\widehat\cO_\vir$ is the twist of $\cO_\vir$ by a square root of the virtual canonical bundle and a chosen polarization $T^{1/2}_X$, i.e., a ``half" of the tangent bundle $T_X$. Moreover, $q$ is the character of $T_{p_1} \PP^1$ under the action of the torus $\CC^*_q$ on $\PP^1$. The invariants have to lie in $K_{\TT \times \CC_q^*} (X)_\loc$, which means to apply $\CC_q^*$-localization and pass to the fraction field $\CC(q)$,  because the map $\ev_2$ is not proper, and its push-forward has to be defined via such localization.

The geometry of hypertoric varieties can be described very nicely in combinatorics, using the language of hyperplane arrangements, which makes it convenient to apply $\TT \times \CC_q^*$-localization computations. , The vertex functions can then be calculated explicitly, and written in the form of a contour integral of Barnes--Mellin type. One can then realize them, appropriately renormalized by some prefactors and denoted by $\widetilde V$, as solutions of $q$-difference systems. Let $Z_i$ (resp $A_i$) be the operators that shifts $z_i \mapsto q z_i$ (resp. $a_i \mapsto q a_i$) and keeps other variables unchanged.

\begin{Theorem} [Theorem \ref{q-diff-eqn}]
	\begin{enumerate}[1)]
		
		\item The modified vertex function $\widetilde V^{(\bone)} (q,z, a) \big|_\bp$ is annihilated by the following $q$-difference operators:
		$$ 
		\prod_{i\in S^+} ( 1 - Z_i ) \prod_{i\in S^-} ( 1 - \hbar  Z_i )  -  z_\sharp^\beta \prod_{i\in S^+} ( 1 - \hbar  Z_i ) \prod_{i\in S^-} ( 1 - Z_i ) , \qquad S = S^+ \sqcup S^- : \text{circuit} \footnote{A circuit is a minimal subset $S\subset \{1, \cdots, n\}$, such that $\{\beta (e_i) \mid i\in S\}$ are linearly dependent in $\CC^d$, where $e_i$'s are the standard basis for $\CC^n$. A circuit admits a unique decomposition $S = S^+ \sqcup S^-$ such that $\sum_{i\in S^+} e_i - \sum_{i\in S^-} e_i \in \ker \beta$, determined by the stability condition $\theta$. Circuits correspond to indecomposible effective curves of $X$. For details, see Section \ref{sec-hyper-arr}.},
		$$ 
		where $z_{\sharp, i}:= z_i (-\hbar^{-1/2})$, $\beta$ is the curve class corresponding to $S$, and $z_\sharp^\beta := \prod_{i\in S^+} z_{\sharp, i} \prod_{i\in S^-} z_{\sharp, i}^{-1}$.
		
		\item The modified vertex function $\widetilde V^{(\bone)} (q,z, a) \big|_\bp \cdot e^{-\sum_{i=1}^n \frac{\ln z_{\sharp, i} \ln a_i}{\ln q} }$ is annihilated by the following $q$-difference operators:
		$$ 
		\prod_{i\in R^+} (1- A_i) \prod_{i\in R^-} (1  - q\hbar^{-1} A_i ) - (\hbar a)^\alpha \prod_{i\in R^+} (1  - q\hbar^{-1} A_i ) \prod_{i\in R^-} (1  -  A_i ) , \qquad R = R^+ \sqcup R^- : \text{cocircuit}\footnote{A cocircuit is a minimal subset $R\subset \{1, \cdots, n\}$, such that $\{\iota^\vee (e_i^*) \mid i\in R \}$ are linearly independent in $\CC^k$, where $e_i^*$'s are the dual standard basis for $\CC^n$. A cocircuit admits a unique decomposition $R = R^+ \sqcup R^-$ such that $\sum_{i\in R^+} e_i^* - \sum_{i\in R^-} e_i^* \in \ker \iota^\vee$, determined by the cocharacter $\sigma$. Cocircuits correspond to simple roots of the torus action on $X$. For details, see Section \ref{section-root}.},
		$$ 
		where $\alpha$ is the root corresponding to $R$, and $(\hbar a)^\alpha := \prod_{i\in R^+} (\hbar a_i) \prod_{i\in R^-} (\hbar a_i)^{-1}$.
	\end{enumerate}
	
\end{Theorem}

Here for the equations 2), as in \cite{MO, Oko}, one also needs to choose a cocharacter $\sigma: \CC^* \to \bA$, which we identify with an element in $\ZZ^d$. The choice of $\sigma$ determines a chamber in the space of equivariant parameters, as well as an ordering of the fixed point set $X^\TT$. 

The 3d mirror of a hypertoric variety $X$ is still a hypertoric variety $X'$, constructed by dualizing the defining short exact sequence
$$
\xymatrix{
	0 \ar[r] & (\ZZ^d)^\vee \ar[r]^{\beta^\vee} & (\ZZ^n)^\vee \ar[r]^{\iota^\vee} & (\ZZ^k)^\vee \ar[r] & 0.
}
$$
The stability condition and cocharacter of $X'$ is chosen as $\theta' = -\sigma$, $\sigma' = -\theta$. There are natural identifications of the spaces of parameters $\bK' = \bA^\vee$, $\bA' = \bK^\vee$, and also bijections between the fixed point sets $X^\TT$ and $(X')^{\TT'}$. We consider the vertex function $V' (q, z', a')$ for $X'$, but defined for an \emph{opposite} choice of polarization $T^{1/2}_{X'}$. We finally have the following main result.

\begin{Theorem} [Theorem \ref{main-theorem}]
	Under the identification of parameters
	$$
	\kappa_{\mathrm{vtx}}:  \qquad \bK^\vee \times \bA \times \CC^*_\hbar \xrightarrow{\sim} \bA' \times (\bK')^\vee \times \CC^*_\hbar, \qquad ( z_{\sharp, i} , a_i, \hbar) \mapsto ( (a'_i)^{-1}, z'_{\sharp, i},  q \hbar^{-1}),
	$$	
	the product
	$$
	V'(q, z', a') = \fP \cdot V (q, z, a)  \quad \in \quad  K_\TT (X^\TT)
	$$
	forms a global class in $K_{\TT'} (X')$, and coincides with the vertex function $V'(q, z', a')$ of the 3d-mirror $X'$, with the opposite polarization $T^{1/2}_{X'}$.
\end{Theorem}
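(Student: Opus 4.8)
The plan is to prove the equality by combining the two sets of $q$-difference equations of Theorem~\ref{q-diff-eqn}, applied to both $X$ and its mirror $X'$, with the analytic characterization of a vertex function as the distinguished solution of its $q$-difference system that is holomorphic at the K\"ahler limit point. Concretely, I would first show that the tuple $\fP\cdot V(q,z,a)$, rewritten via $\kappa_{\mathrm{vtx}}$ in the variables $z',a',\hbar'$, is annihilated by exactly the $q$-difference operators that annihilate the vertex function $V'(q,z',a')$ of $X'$ with the opposite polarization; then I would verify that $\fP\cdot V$ satisfies the asymptotic and pole conditions that single out that vertex function among all solutions; uniqueness of such a solution forces the identity, and globality in $K_{\TT'}(X')$ comes for free, since $V'$ is by construction a sum of push-forwards, hence a genuine global class whose fixed-point restrictions we will have matched.

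\textbf{Step 1: matching the $q$-difference systems.} Dualizing the defining short exact sequence exchanges circuits of $X$ with cocircuits of $X'$, and curve classes $\beta$ of $X$ with roots $\alpha'$ of $X'$ (and symmetrically); it also intertwines the shift operators $Z_i$ on the $X$-side with the operators $A_i'$ on the $X'$-side, and $A_i$ with $Z_i'$. Using this together with the substitution $\hbar\mapsto q\hbar^{-1}$ from $\kappa_{\mathrm{vtx}}$, I would check that the first family of operators of Theorem~\ref{q-diff-eqn} for $X$ becomes the second family for $X'$, and the second family for $X$ becomes the first family for $X'$, once one conjugates by the scalar factors built into $\fP$ and by $e^{-\sum_{i=1}^n \frac{\ln z_{\sharp,i}\ln a_i}{\ln q}}$: this exponential is precisely the Mellin--Barnes conjugation that converts a $z$-difference equation into an $a$-difference equation and back (cf. \cite{Oko, OS}), and the factors $-\hbar^{-1/2}$ hidden in the passage $z_i\mapsto z_{\sharp,i}$ match the shifts $1-q\hbar^{-1}A_i$ versus $1-\hbar Z_i$ appearing in the two parts of Theorem~\ref{q-diff-eqn}. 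The stability/cocharacter data $\theta'=-\sigma$, $\sigma'=-\theta$ is what makes the chambers line up, so that the chamber controlling the $a$-asymptotics of $V$ is identified with the chamber controlling the $z'$-asymptotics of $V'$.

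\textbf{Step 2: identifying the solution and concluding globality.} Recall, as in \cite{AOelliptic, Oko}, that the vertex function $V'$ of $X'$ (with its given polarization $T^{1/2}_{X'}$) is characterized among solutions of its $q$-difference system by three properties: it is holomorphic in $z'$ in a punctured neighbourhood of $z'\to 0$ in the chamber of $\sigma'$; all of its poles lie on the $a'$-type hyperplanes prescribed by the tangent weights at the fixed points; and its leading coefficient as $z'\to 0$ equals the $\beta'=0$ term, i.e.\ the restriction of $\widehat\cO_{\vir}$ on $X'$ assembled from the \emph{opposite} polarization. Starting from the explicit Barnes--Mellin form of $V$, I would verify these three properties for $\fP\cdot V$ after the change of variables: the $a$-poles of the integrand of $V$ turn into the required $a'$-poles; $\fP$ contributes only further $a'$-type poles and no spurious $z'$-poles; holomorphy at $z'\to 0$ holds because $\fP$ resums the $a\to 0$ divergence of $V$ into a convergent series in $z'$; and the $z'\to 0$ limit, computed by a residue evaluation of the integral, produces exactly the opposite-polarization square-root twist on $X'$. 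Then $\fP\cdot V=V'$ by uniqueness, and since the right-hand side is the fixed-point restriction of the global class $V'\in K_{\TT'}(X')$, so is the left-hand side.

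\textbf{Main obstacle.} The crux is the regularity and leading-term analysis in Step~2: $V$ is genuinely singular at the mirror limit (it has infinitely many $a$-poles accumulating at $a\to 0$), so showing that $\fP\cdot V$ is holomorphic at $z'=0$, and that its leading coefficient is \emph{exactly} the opposite-polarization class rather than that class times some monomial in $z',a',\hbar$, requires a careful contour/residue analysis of the Barnes--Mellin integral, precise bookkeeping of the combinatorial dualities (circuits/cocircuits, chambers, the fixed-point bijection $X^{\TT}\cong(X')^{\TT'}$), and control of the branch cuts introduced by the $\ln$-exponential factor. Making $\fP$ and the polarization flip conspire so that no residual $z'$-singularity survives is where essentially all the work lies; Step~1, by contrast, is a bookkeeping verification once the duality of the exact sequences is in place.
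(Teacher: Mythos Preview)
Your high-level strategy---match the two $q$-difference systems under $\kappa_{\mathrm{vtx}}$, then pin down the solution by its asymptotic behavior at the mirror limit point, then invoke uniqueness---is exactly the paper's approach. Step~1 corresponds to the paper's Lemma~\ref{Criterion-1}, and your description of it (duality of circuits/cocircuits, conjugation by the exponential prefactor, the $\hbar\mapsto q\hbar^{-1}$ shift) is accurate.

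Where you diverge from the paper is in the mechanism for Step~2. You propose a contour/residue analysis of the Barnes--Mellin integral to establish holomorphy at $z'\to 0$. The paper (Lemma~\ref{Criterion-2}) instead works directly with the fixed-point sum $(\fP\cdot V)|_{\bq}=\sum_{\bp}\Cont_{\bp}$ and studies its behavior along $q$-geometric progressions $\alpha_i(\bq)=w_i q^{\pm N}$, $N\to\infty$. The key structural input you have not named is the \emph{triangularity} of the elliptic stable envelope: for $\bp\neq\bq$ the factor $\Stab^\sharp_\sigma(\bq)|_\bp$ contributes terms of the form $(\zeta_{\sharp,j}(\bq)^{\mp 1})^N$ for $j\in\cA_\bq\cap\bp$, which can be made to vanish by choosing $\zeta_\sharp$ in a suitable open set, while the diagonal term $\bp=\bq$ has an explicitly computable finite limit (Proposition~\ref{limit-V}). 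One then upgrades this pointwise limit along progressions to genuine holomorphy by invoking the existing holomorphicity result of Aganagic--Okounkov (\cite{AOelliptic}, Theorem~5) together with Riemann extension and analytic continuation. Your ``$\fP$ resums the divergence'' is the right intuition, but the paper's proof does not proceed via the integral representation at all; the triangularity of $\fP$ is what makes the off-diagonal cancellation tractable, and this is the ingredient your proposal is missing.
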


Here the matrix $\fP$ is defined via the elliptic stable envelope matrix $ \dfrac{\Stab_\sigma (\bq) |_\bp}{\Theta (T^{1/2}_X |_\bp ) }$, $\bp, \bq \in X^\TT$, appropriately renormalized by other factors contributed from the fixed point $\bp$. For any fixed point $\bq \in X^\TT$, the elliptic stable envelope $\Stab_\sigma (\bq)$ is defined as a particular section of a line bundle over the equivairant elliptic cohomology scheme $\Ell_\TT (X) \times \Ell_{\bT^\vee} (\pt)$, and admits an explicit expression as a monomial of theta functions. They also satisfy a 3d mirror symmetry correspondence as follows.

\begin{Theorem} [Theorem \ref{Thm-Stab}]
	
	Under the isomorphism of parameters
	$$
	\kappa_{\Stab} :  \ \bK^\vee \times \bA \times \CC^*_\hbar \xrightarrow{\sim} \bA' \times (\bK')^\vee \times \CC^*_\hbar, \qquad ( z_i , a_i, \hbar) \mapsto ( a'_i, z'_i,  \hbar^{-1}),
	$$
	we have:
	\begin{enumerate}[1)]
		
		\item There is a line bundle $\mathfrak{M}$ on $\Ell_{\bT \times \bT' \times \CC_\hbar^*} (X \times X')$ such that
		$$
		(i_{\bp'}^*)^* \mathfrak{M} = \mathfrak{M} (\bp) , \qquad (i_\bp^*)^* \mathfrak{M} = \mathfrak{M} (\bp').
		$$
		
		\item There is a section $\mathfrak{m}$ of $\mathfrak{M}$, called the ``duality interface", such that
		$$
		(i_{\bp'}^*)^* \mathfrak{m} = \bStab_{\sigma} (\bp) , \qquad (i_\bp^*)^* \mathfrak{m} = \bStab'_{\sigma'} (p_{\bp'}) .
		$$
		
		\item In the hypertoric case, the duality interface $\mathfrak{m}$ admits a simple explicit form:
		$$
		\mathfrak{m} = \prod_{i=1}^n \vartheta (x_i x'_i)  .
		$$
		In particular, it comes from a section of a universal line bundle on the prequotient $\Ell_{\bT \times \bT' \times \CC_\hbar^* \times \bT^\vee \times (\bT')^\vee} (\pt)$, and does not depend on the choices of $\theta$ or $\sigma$.
	\end{enumerate}
\end{Theorem}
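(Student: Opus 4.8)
The plan is to reduce the whole statement to an identity between explicit monomials in the theta function $\vartheta$, organized around Gale (matroid) duality of the two mutually dual defining short exact sequences. First I would recall the explicit formula, established earlier, that writes the renormalized elliptic stable envelope $\bStab_\sigma(\bp)$ of a hypertoric variety as a monomial in $\vartheta$ whose arguments are products of the Chern roots $x_i$ of the tautological line bundles, the equivariant parameters $a_i$, the K\"ahler parameters, and $\hbar$; the combinatorial data that enters is the fixed point $\bp$ (a feasible sign vector, equivalently a basis $B \subset \{1,\dots,n\}$ of the associated matroid), the chamber cut out by $\sigma$, and the polarization $T^{1/2}_X$. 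Then I would record the Gale-dual dictionary: a basis $B$ of the matroid of $X$ has complement $B^c$, which is a basis of the matroid of $X'$, and under $B \mapsto B^c$ the circuits of $X$ become the cocircuits of $X'$ and conversely; this is precisely the compatibility that makes $\theta' = -\sigma$, $\sigma' = -\theta$ a consistent pair of choices, and it induces the bijection $X^\TT \xrightarrow{\sim} (X')^{\TT'}$ used in the statement.

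For parts (1) and (3) I would simply exhibit $\mathfrak{m} := \prod_{i=1}^n \vartheta(x_i x'_i)$ as a section of a line bundle on the prequotient $\Ell_{\bT \times \bT' \times \CC^*_\hbar \times \bT^\vee \times (\bT')^\vee}(\pt)$, where $x_i$ and $x'_i$ are the elliptic coordinates coming from the two copies of $\CC^n$, and then check that it descends. This is where duality of the two exact sequences is used directly: under the residual $\bK$-action the coordinate $x_i$ is shifted by the $i$-th weight of the $\bK$-representation on $\CC^n$, and under the $\bK' = \bA^\vee$-action $x'_i$ is shifted by the $i$-th weight of the $\bK'$-representation on $\CC^n$; by the quasi-periodicity of $\vartheta$, the automorphy factor collected by $\prod_i \vartheta(x_i x'_i)$ is a product of monomials in the $x_i$, $x'_i$ and $\hbar$, and exactness of $0 \to \ZZ^k \to \ZZ^n \to \ZZ^d \to 0$ together with its dual forces the cross terms between the $x_i$ and $x'_i$ to telescope, so that this automorphy factor is exactly the transition cocycle of a line bundle $\mathfrak{M}$ pulled back from $\Ell_{\bT\times\bT'\times\CC^*_\hbar}(X\times X')$. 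Restricting the universal bundle along the two families of fixed points then identifies $\mathfrak{M}(\bp)$ and $\mathfrak{M}(\bp')$, which is part (1), and the product expression for $\mathfrak{m}$ is part (3).

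The heart of the proof is part (2). Fixing $\bp' \in (X')^{\TT'}$, the restriction $(i_{\bp'}^*)^*$ specializes each $x'_i$ to its value at $\bp'$, which is a Laurent monomial in the $a'_i$ and $\hbar$; under $\kappa_{\Stab}: (z_i, a_i, \hbar)\mapsto (a'_i, z'_i, \hbar^{-1})$ these monomials turn into monomials in the K\"ahler parameters $z_i$ of $X$ and in $\hbar^{-1}$, so $(i_{\bp'}^*)^*\mathfrak{m}$ becomes a $\vartheta$-monomial of precisely the shape $\prod_i \vartheta\big(x_i \cdot (\text{monomial in } z, \hbar)\big)$ that occurs in the stable-envelope formula for $X$. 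I would then match it factor by factor with $\bStab_\sigma(\bp)$: by Gale duality the partition $\{1,\dots,n\} = B' \sqcup (B')^c$ determined by $\bp'$ coincides with the one determined by $\bp$, the factors with $i \in B'$ reproduce the diagonal normalization of the stable envelope (the theta functions of $T^{1/2}_X|_\bp$, together with the renormalizing prefactor) and the factors with $i \notin B'$ reproduce the off-diagonal terms, the orientations of the arrangement being matched by the choices $\theta' = -\sigma$, $\sigma' = -\theta$. One then checks that the resulting section has the correct automorphy (automatic from part (1)) and the triangular support property of elliptic stable envelopes, so that by the uniqueness characterization of stable envelopes it must equal $\bStab_\sigma(\bp)$; the computation of $(i_\bp^*)^*\mathfrak{m}$ is identical with $X$ and $X'$ interchanged and gives $\bStab'_{\sigma'}$ at the fixed point of $X'$ corresponding to $\bp$. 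The one real difficulty is bookkeeping: one must simultaneously reconcile the \emph{opposite} polarization $T^{1/2}_{X'}$, the inversions $\hbar\mapsto\hbar^{-1}$ and $z\leftrightarrow a'$ built into $\kappa_{\Stab}$, and the interchange of ``bounded'' versus ``feasible'' chambers of the two hyperplane arrangements, so that every $\Theta(T^{1/2})$ prefactor and every $\vartheta$-argument lines up on the nose; setting up the combinatorics — working uniformly with sign vectors of the arrangement and of its Gale dual — so that this matching becomes transparent rather than a long case analysis is where the real work lies.
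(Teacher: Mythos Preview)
Your core idea is the same as the paper's: restrict the product $\prod_i \vartheta(x_i x'_i)$ along the fixed point $\bp'$ by specializing each $x'_i$ to its value at $\bp'$, apply $\kappa_{\Stab}$, and match the result against the explicit $\vartheta$-monomial formula for $\bStab_\sigma(\bp)$. That is exactly what the paper does, in three lines, using the restriction formula for $x'_j|_{\bp'}$ already recorded earlier and the explicit formula for $\bStab_\sigma(\bp) = \Stab_\sigma(\bp)\cdot \Stab'_{\sigma'}(\bp')|_{\bp'}$.

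Where you diverge is in adding two layers of scaffolding that the paper does not need. First, your separate treatment of part~(1) via automorphy factors and descent is redundant: once the restriction of $\mathfrak m$ is shown to equal $\bStab_\sigma(\bp)$ as an explicit $\vartheta$-monomial, the line bundle of which it is a section is determined by its quasi-periods, and these are already known to be those of $\mathfrak M(\bp)$ from the earlier analysis of $\Stab_\sigma(\bp)$. Second, and more importantly, your plan to conclude by invoking the \emph{uniqueness characterization} of elliptic stable envelopes (checking support and then appealing to Aganagic--Okounkov) is an unnecessary detour. The paper has already \emph{derived} a closed formula for $\Stab_\sigma(\bp)$, and hence for $\bStab_\sigma(\bp)$, as a specific $\vartheta$-monomial; so the proof of part~(2) is a literal comparison of two explicit products of theta functions, not an axiomatic identification. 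The paper accordingly writes ``it suffices to check~(3)'' and does the single substitution; your route would work, but it reproves things that are already on the table.
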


\begin{Corollary} [Corollary \ref{Cor-Stab}]
	We have the following symmetry between elliptic stable envelopes:
	$$
	\frac{\Stab_\sigma (\bp) |_\bq }{\Stab_\sigma (\bq) |_\bq } = \frac{\Stab'_{\sigma'} (\bq') |_{\bp'}}{\Stab'_{\sigma'} (\bp') |_{\bp'}},
	$$
	where $\bp, \bq \in X^\TT$, and $\bp', \bq' \in (X')^{\TT'}$ are fixed points corresponding to each other.
\end{Corollary}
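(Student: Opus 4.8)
The plan is to deduce the corollary directly from Theorem~\ref{Thm-Stab}; no new geometric input is needed. The key object is the duality interface $\mathfrak{m}$, a \emph{single} section over $\Ell_{\bT\times\bT'\times\CC^*_\hbar}(X\times X')$ of the line bundle $\mathfrak{M}$ of part~(1), from which, by part~(2), both families of elliptic stable envelopes are recovered by restriction along the fixed loci of the two factors. Fix $\bp,\bq\in X^\TT$ with mirror partners $\bp',\bq'\in(X')^{\TT'}$, and restrict $\mathfrak{m}$ to the $\TT\times\TT'$-fixed point $(\bq,\bp')\in X\times X'$. This can be done in two stages in either order, and the two procedures agree because the inclusion $i_{(\bq,\bp')}$ factors through both $\mathrm{id}_X\times i_{\bp'}$ and $i_\bq\times\mathrm{id}_{X'}$: restricting the $X'$-factor to $\bp'$ first gives $\bStab_\sigma(\bp)$ by part~(2), and then restricting the $X$-factor to $\bq$ gives $\bStab_\sigma(\bp)\big|_\bq$; restricting the $X$-factor to $\bq$ first gives $\bStab'_{\sigma'}(\bq')$, and then restricting the $X'$-factor to $\bp'$ gives $\bStab'_{\sigma'}(\bq')\big|_{\bp'}$. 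Hence
$$
\bStab_\sigma(\bp)\big|_\bq \;=\; \bStab'_{\sigma'}(\bq')\big|_{\bp'},
$$
as sections of the common line bundle $i_{(\bq,\bp')}^*\mathfrak{M}$, with both sides written in one set of variables via $\kappa_{\Stab}$ and part~(1) supplying the identification of twisted line bundles that makes this a literal equality.

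It remains to pass from the normalized envelopes $\bStab$ back to $\Stab$. By construction $\bStab_\sigma(\bp)$ differs from $\Stab_\sigma(\bp)$ by a theta-monomial normalization (built from the polarization $T^{1/2}_X$ together with the diagonal term), and likewise on $X'$. Specializing the identity $\bStab_\sigma(\bp)\big|_\bq=\bStab'_{\sigma'}(\bq')\big|_{\bp'}$ to $\bp=\bq$ yields the analogous statement for the diagonal restrictions, and dividing the general case by this specialization cancels every normalization contribution, leaving precisely
$$
\frac{\Stab_\sigma(\bp)\big|_\bq}{\Stab_\sigma(\bq)\big|_\bq}\;=\;\frac{\Stab'_{\sigma'}(\bq')\big|_{\bp'}}{\Stab'_{\sigma'}(\bp')\big|_{\bp'}}.
$$
Equivalently, one may avoid $\bStab$ entirely: substituting the explicit product-of-theta expressions for the hypertoric elliptic stable envelopes together with $\mathfrak{m}=\prod_{i=1}^n\vartheta(x_ix'_i)$ from part~(3) reduces the assertion to an identity between theta-monomials, checked under the substitution $\kappa_{\Stab}$.

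There is no serious analytic or geometric obstacle here; all the content sits in Theorem~\ref{Thm-Stab}, and the corollary is formal given it. The one genuinely delicate point is the bookkeeping of twists: one must keep track of which twisted line bundle each restricted section lives in, and of the change of variables $\kappa_{\Stab}$, so that restricting $\mathfrak{m}$ in the two orders yields an equality of sections on the nose rather than up to an unspecified scalar or theta-monomial, and one must then reconcile this normalization with the factor $\Theta(T^{1/2}_X)$ entering the definition of $\fP$. Controlling exactly this is the role of part~(1) of Theorem~\ref{Thm-Stab}.
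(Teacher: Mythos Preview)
Your first paragraph is exactly right and is the paper's intended argument: the paper gives no explicit proof because, once Theorem~\ref{Thm-Stab} is in hand, restricting the single section $\mathfrak{m}$ to the fixed point $(\bq,\bp')$ in the two possible orders yields
\[
\bStab_\sigma(\bp)\big|_\bq \;=\; \bStab'_{\sigma'}(\bq')\big|_{\bp'}.
\]

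However, your second paragraph's ``specialize to $\bp=\bq$ and divide'' does not do what you claim. Setting $\bp=\bq$ gives $\bStab_\sigma(\bq)|_\bq=\bStab'_{\sigma'}(\bq')|_{\bq'}$, and dividing produces
\[
\frac{\bStab_\sigma(\bp)|_\bq}{\bStab_\sigma(\bq)|_\bq}=\frac{\bStab'_{\sigma'}(\bq')|_{\bp'}}{\bStab'_{\sigma'}(\bq')|_{\bq'}},
\]
whose right-hand side has the wrong denominator (you want $\Stab'_{\sigma'}(\bp')|_{\bp'}$, not $\Stab'_{\sigma'}(\bq')|_{\bq'}$), and whose left-hand side still carries the uncancelled ratio of normalizing scalars attached to $\bp$ and $\bq$. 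The normalization factors depend on $\bp$ and on $\bq$ separately, so a single diagonal specialization cannot kill both.

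The fix is simpler than what you attempted: just substitute the paper's definition $\bStab_\sigma(\bp):=\Stab_\sigma(\bp)\cdot\Stab'_{\sigma'}(\bp')|_{\bp'}$ and its mirror analogue $\bStab'_{\sigma'}(\bq'):=\Stab'_{\sigma'}(\bq')\cdot\Stab_\sigma(\bq)|_\bq$ directly into the identity of your first paragraph. This gives
\[
\Stab_\sigma(\bp)|_\bq\cdot\Stab'_{\sigma'}(\bp')|_{\bp'}\;=\;\Stab'_{\sigma'}(\bq')|_{\bp'}\cdot\Stab_\sigma(\bq)|_\bq,
\]
and dividing by $\Stab_\sigma(\bq)|_\bq\cdot\Stab'_{\sigma'}(\bp')|_{\bp'}$ yields the corollary in one line.
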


\subsection{Quantum $K$-theory}

From the enumerative geometric point of view, quasimaps play a crucial role in Gromov--Witten type theories. Using the notion of \emph{relative quasimaps} introduced in \cite{Oko}, where one allow the domain to ``bubble" at $\infty$, in \cite{PSZ, KPSZ} a version of quantum $K$-theory (which we call PSZ quantum $K$-theory to avoid confusion) is defined, as a deformation of the usual ring structure $K_\TT(X)$. The $q$-difference system above satisfied by the vertex functions, with respect to the $z$-variables, actually determines the PSZ quantum $K$-theory ring structure.

\begin{Theorem} [Theorem \ref{PSZ-relations}]
	We have the following presentations of ring structures (which are equivalent to each other):
	
	\begin{enumerate}[1)]
		
		\item The PSZ quantum $K$-theory ring of $X$ is generated by the quantum tautological line bundles $\widehat L_i (z)$, $1\leq i\leq n$, up to the relations
		$$
		\prod_{i\in S^+} ( 1 - \widehat L_i (z) ) * \prod_{i\in S^-} ( 1 - \hbar  \widehat L_i (z) )  -  z_\sharp^\beta \prod_{i\in S^+} ( 1 - \hbar  \widehat L_i (z) ) * \prod_{i\in S^-} ( 1 - \widehat L_i (z) ) , \qquad S = S^+ \sqcup S^- : \text{circuit},
		$$
		where $z_{\sharp, i}:= z_i (-\hbar^{-1/2})$, $\beta$ is the curve class corresponding to $S$, $z_\sharp^\beta := \prod_{i\in S^+} z_{\sharp, i} \prod_{i\in S^-} z_{\sharp, i}^{-1}$, and all products $\prod$ are quantum products $*$.
		
		\item The divisorial quantum $K$-theory ring of $X$ is generated by the line bundles $L_i$, $1\leq i\leq n$, up to the relations
		$$
		\prod_{i\in S^+} ( 1 -  L_i )  \prod_{i\in S^-} ( 1 - \hbar L_i )  -  z_\sharp^\beta \prod_{i\in S^+} ( 1 - \hbar L_i )  \prod_{i\in S^-} ( 1 -  L_i ) , \qquad S = S^+ \sqcup S^- : \text{circuit},
		$$
		where $z_{\sharp, i}:= z_i (-\hbar^{-1/2})$, $\beta$ is the curve class corresponding to $S$, and $z_\sharp^\beta := \prod_{i\in S^+} z_{\sharp, i} \prod_{i\in S^-} z_{\sharp, i}^{-1}$.
		
	\end{enumerate}
	
\end{Theorem}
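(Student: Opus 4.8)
The plan is to extract both presentations from the $q$-difference equations of Theorem~\ref{q-diff-eqn}(1), via the Pushkar--Smirnov--Zeitlin dictionary between the $q$-difference equation in the K\"ahler variables and quantum multiplication. Recall from \cite{PSZ, KPSZ} that in the PSZ formalism the modified vertex function $\widetilde V^{(\bone)}(q,z,a)$ is a fundamental solution of a $q$-difference system in the variables $z$, and that in the fixed-point basis the coefficient operator of the shift $Z_i$ is the operator of quantum multiplication by the quantum tautological bundle $\widehat L_i(z)$, the substitution $z_{\sharp,i}=z_i(-\hbar^{-1/2})$ accounting for the $\widehat\cO_\vir$/polarization twist. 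Consequently, whenever a $q$-difference operator $P$ in the $Z_i$ annihilates every $\widetilde V^{(\bone)}|_\bp$, $\bp\in X^\TT$, the substitution $Z_i\mapsto\widehat L_i(z)$ sends $P$ to $0$ in the PSZ quantum $K$-theory ring. The first step is then formal: applying this to the circuit operators of Theorem~\ref{q-diff-eqn}(1), which by that theorem annihilate the vertex, shows that the circuit relations in the $\widehat L_i(z)$ with quantum products hold in the PSZ ring; and this ring is generated over the base by the $\widehat L_i(z)$, since the $L_i$ already generate $K_\TT(X)$.

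It remains, for part~1), to prove \emph{completeness}, i.e.\ that the circuit relations generate the whole ideal. Let $\cR$ be the algebra obtained from the Laurent polynomial ring in $Z_1,\dots,Z_n$ over the base ring of K\"ahler and equivariant parameters by imposing exactly the circuit relations. There is a surjection $\cR\twoheadrightarrow(\text{PSZ ring})$, and the PSZ ring, being a deformation of $K_\TT(X)$ with unchanged underlying module, is free of rank $N:=|X^\TT|$ over the base; hence $\rk\cR\ge N$. For the reverse inequality I would degenerate $z\to 0$: each circuit relation becomes $\prod_{i\in S^+}(1-Z_i)\prod_{i\in S^-}(1-\hbar Z_i)=0$, and these together with the multiplicative relations among the $L_i$ coming from the defining sequence $0\to(\ZZ^d)^\vee\xrightarrow{\beta^\vee}(\ZZ^n)^\vee\xrightarrow{\iota^\vee}(\ZZ^k)^\vee\to 0$ (implicit in ``generated by the $L_i$'') are the standard presentation of the classical ring $K_\TT(X)$ by Stanley--Reisner-type circuit relations, which has rank $N$. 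Since the highest-degree part of each circuit relation is $z$-independent up to a unit, a Gr\"obner-type argument shows that $\cR$ and its $z\to 0$ specialization have equal rank; thus $\rk\cR=N$, the surjection is an isomorphism, and 1) follows.

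For part~2) and the claimed equivalence, I would recall the definition of the divisorial quantum $K$-theory ring (a deformation of $K_\TT(X)$ built from the quasimap data attached to line bundles/divisors) and observe that, for hypertoric $X$, this data localizes to the same Barnes--Mellin residues underlying Theorem~\ref{q-diff-eqn}(1); hence the divisorial ring satisfies the circuit relations in the honest line bundles $L_i$ with ordinary products. The same rank count then identifies the divisorial ring with $\cR$ as well. Therefore both the PSZ ring (generators $\widehat L_i(z)$, product $*$) and the divisorial ring (generators $L_i$, ordinary product) are canonically isomorphic to the single algebra $\cR$ via $\widehat L_i(z)\leftrightarrow Z_i\leftrightarrow L_i$, which is exactly the asserted equivalence of the two presentations.

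The main obstacle is completeness rather than the fact that the relations hold, which is essentially bookkeeping once Theorem~\ref{q-diff-eqn} and the PSZ formalism are in place. The delicate points are: (i) making the PSZ correspondence $Z_i\leftrightarrow\widehat L_i(z)\,*$ precise in the quasimap (not quiver-variety) setting, with the correct placement of the $\hbar^{1/2}$-twists; and (ii) the combinatorial core—showing that the $z\to 0$ degeneration of the circuit ideal is \emph{exactly} the classical hypertoric $K$-theory ideal and that no further relations appear for $z\neq 0$, i.e.\ that the deformation remains flat of rank $N$. For (ii) the natural tool is to choose a monomial order in which every circuit relation has leading term $\prod_{i\in S}Z_i$ with invertible coefficient, deduce that $\cR$ and its classical limit share a standard monomial basis, and identify that basis with the $N$ torus fixed points of $X$ through the matroid/hyperplane-arrangement combinatorics.
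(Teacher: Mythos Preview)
Your approach to part~1) is in the right spirit but differs from the paper's mechanism, and your treatment of part~2) does not match the paper's definitions.

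For part~1), rather than citing an abstract PSZ correspondence $Z_i\leftrightarrow\widehat L_i(z)\,*$, the paper argues through \emph{descendent} vertices: by Lemma~\ref{q-diff-operator} one has $Z_i\,\widetilde V^{(\bone)}|_\bp = \widetilde V^{(x_i)}|_\bp$, and more generally $\tau(Z_1,\ldots,Z_n)\,\widetilde V^{(\bone)}|_\bp = \widetilde V^{(\tau)}|_\bp$ for any Laurent polynomial $\tau$. If the left side vanishes for every $\bp$ then $V^{(\tau)}=0$, hence $\widehat V^{(\tau)} = \Psi\cdot V^{(\tau)} = 0$ by Proposition~\ref{capping-eqn}; setting $q=1$ and invoking Lemma~\ref{quantum-prod} gives $\tau(\widehat L_1(z),\ldots,\widehat L_n(z))=\widehat\tau(z)=0$ with the products interpreted as $*$. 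Applying this to the circuit operators of Theorem~\ref{q-diff-eqn}(1) is the entire argument in the paper. In particular, the paper does \emph{not} address completeness of the relations; your rank/Gr\"obner argument is a genuine addition rather than a reconstruction of the paper's proof, and the delicate points you flag (flatness of the deformation, matching the $z\to 0$ limit with the classical Stanley--Reisner presentation) are real obstacles you would still have to carry out.

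For part~2) and the asserted equivalence, you should revisit the paper's definition of the divisorial ring: it is $K_\TT(\fX)[[z^{\Eff(X)}]]/\cI$ with $\cI=\{\tau:\widehat\tau(z)=0\}$, and the map $\tau\mapsto\widehat\tau(z)$ of~(\ref{isom}) is by construction an isomorphism of Frobenius algebras onto the PSZ ring. Thus a Laurent polynomial $\tau$ in the $x_i=L_i$ is a divisorial relation iff $\widehat\tau(z)=0$, which by Lemma~\ref{quantum-prod} is exactly when the corresponding polynomial in the $\widehat L_i(z)$ with $*$-products vanishes. The equivalence of~1) and~2) is therefore tautological once~1) is known; your separate ``localization to Barnes--Mellin residues'' route is unnecessary and, as written, too vague to constitute a proof.
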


The quasimap we considered here is actually a special case of the $\epsilon$-stable quasimaps for $\epsilon = 0+$ with a parametrized domain component defined by \cite{CKM, CK-wall}. It is then natural to ask how it is related to the quantum $K$-theory defined via counting stable maps by Lee and Givental \cite{Lee, GL}. In general, it is expected to be studied via certain $\epsilon$-wall-crossing techniques for stability conditions over the moduli stacks. But for us, thanks to the explicit computations available, we are able to extract information directly from the vertex functions.

\begin{Corollary} [Coroallary \ref{V-tau}]
	Let $\tau$ be a Laurent polynomial in $q$ with coefficients in $K_\bT(X) \otimes \Lambda$. The descendent bare vertex function
	$$
	(1-q) V^{(\tau)} (q^{-1} , z) \big|_{z_\sharp = Q}
	$$
	lies in the range of permutation-equivariant big $J$-function of $X$.
\end{Corollary}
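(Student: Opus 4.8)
\emph{Strategy.} I would deduce the corollary from Givental's adelic characterization of the range of the permutation-equivariant big $J$-function. Set
$$\mathcal{J}(q,Q):=(1-q)\,V^{(\tau)}(q^{-1},z)\big|_{z_\sharp=Q},$$
an element of $K_\bT(X)_\loc\otimes\Lambda$ depending on the Novikov variable $Q$, whose $Q$-coefficients are rational functions of $q$. The range of the big $J$-function is cut out by three types of conditions: (a) polynomiality, i.e.\ every $Q$-coefficient of $\mathcal{J}$ is regular at $q=0$ and at $q=\infty$; (b) the dilaton-shift normalization $\mathcal{J}=\bone-q+t(q)+f_-$, where the ``input'' $t(q)$ lies in $K_\bT(X)[q^{\pm1}]\otimes\Lambda$ and $f_-$ is a rational function of $q$ with no Laurent-polynomial part, singular only along the divisors allowed in the equivariant theory (roots of unity, together with the loci fixed by the equivariant parameters); and (c) a recursion, at each such singular fibre, relating the principal part of $\mathcal{J}$ there to the value of $\mathcal{J}$ at a nearby point — for fibres over a primitive $m$-th root of unity this recursion is twisted by the Adams operation $\Psi^m$, acting simultaneously on $q$, on the Novikov variable $Q\mapsto Q^m$, on $\Lambda$ and on the equivariant parameters, and by $\Psi^m$ of the corresponding normal-bundle class. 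I would check (a)--(c) directly from the explicit shape of the vertex.

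\emph{Inputting the vertex.} Using the localization/contour description of $V^{(\tau)}$ established earlier in the paper, $V^{(\tau)}(q,z)=\sum_{\beta\in\Eff(X)}z^\beta\,\tau(\dots)\prod_{i=1}^n\bigl(\text{ratio of }q\text{-Pochhammer symbols in }a_i,\hbar\text{ determined by }\beta\bigr)$. After $q\mapsto q^{-1}$, multiplication by $1-q$, and the substitution $z_\sharp=Q$, the $\beta=0$ term is the Laurent polynomial $(1-q)\,\tau(q^{-1})$, which supplies both the dilaton shift $\bone-q$ and the input $t(q)=(1-q)\bigl(\tau(q^{-1})-\bone\bigr)$; the hypothesis that $\tau$ is a Laurent polynomial in $q$ is used exactly here, to ensure $t(q)\in K_\bT(X)[q^{\pm1}]\otimes\Lambda$. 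Every $\beta\neq0$ term is a ratio of $q$-Pochhammer symbols, hence a rational function of $q$ with no Laurent-polynomial part, contributing only to $f_-$; so (b) holds and it remains to verify (a) and (c) for these terms.

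\emph{Verifying (a) and (c).} For (a), each $H^0$ or $H^1$ over $\PP^1$ of the line bundles entering a degree-$\beta$ quasimap contributes a $\CC^*_q$-character equal to a Laurent monomial in $q$ times an equivariant weight, and a degree count in $q$ of the resulting $q$-Pochhammer symbols gives, after the $(1-q)$-normalization, regularity of every $Q$-coefficient at $q=0$ and $q=\infty$; this is the $\epsilon=0+$ counterpart of the statement that the stable-map $J$-function is ``Laurent-polynomial modulo poles'', the $\epsilon=0+$ moduli problem of \cite{CKM} being the one making the relevant bundle semistable on $\PP^1$. For (c), fix a singular fibre, say over a primitive $m$-th root of unity $\zeta$ (the fibres over the equivariant loci are handled the same way, but without the $\Psi^m$-twist): group the factors $1-wq^{\pm j}$ of each $q$-Pochhammer symbol by the residue of $j$ modulo $m$ and let $q\to\zeta$; up to factors regular at $\zeta$ each group collapses to a $q^m$-Pochhammer symbol in $\Psi^m(w)=w^m$, and, reassembling over the $n$ legs and the summation index, the principal part of $\mathcal{J}$ at $q=\zeta$ becomes $\Psi^m$ applied to $\mathcal{J}(\,\cdot\,,Q^m)$ times the $\Psi^m$-twisted normal-bundle factor --- which is Givental's recursion. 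Combinatorially the bookkeeping is governed by the circuits and cocircuits of the hyperplane arrangement of $X$, and it is the $q$-hypergeometric shadow of the wall-crossing from $\epsilon=0+$ quasimaps to $\epsilon=\infty$ stable maps \cite{CKM,Lee,GL}; in our coordinates the Novikov-variable change effecting that wall-crossing is simply $z_\sharp=Q$, no further mirror-map correction being needed because $V^{(\bone)}=\bone+O(z)$.

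\emph{Main obstacle.} The delicate step is (c) at the roots-of-unity fibres. One must carry the residue-wise splitting of every $q$-Pochhammer symbol all the way through, track the Adams operation $\Psi^m$ on $a_i$, $\hbar$ and on the Laurent polynomial $\tau$, and check that the monomial and sign prefactors thrown off by the grouping, summed over the circuits that become singular at the chosen $\zeta$, reproduce Givental's recursion on the nose. A companion nuisance is that each fixed-point representative of a $\beta$-term exhibits apparent poles at loci of the form $q^m=(\text{monomial in }a,\hbar)$; one must identify these with the equivariant poles permitted by the characterization, or else show they disappear upon assembling the global class in $K_\bT(X)$ --- which is the gluing argument already carried out in the proof of Theorem \ref{main-theorem}. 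Once the recursion (c) is established, (a) and (b) are routine, and the corollary follows.
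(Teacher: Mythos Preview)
Your approach differs substantially from the paper's. The paper's proof is one line: since Theorem \ref{V} already places $(1-q)V^{(\bone)}(q^{-1},z)|_{z_\sharp=Q}$ in the range $\cL$, Givental's explicit reconstruction theorem (\cite{Giv8}, Theorem~2) guarantees that $\sum_\beta I_\beta Q^\beta\,\tau(x_1 q^{D_1},\ldots,x_n q^{D_n})$ also lies in $\cL$ for any such $\tau$. All the work was already absorbed into Theorem \ref{V}; no fresh verification of any adelic or recursion conditions is needed for the corollary.

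Your direct-verification route could in principle be made to work, but the sketch conflates two distinct recursions. The criteria actually used in Proposition \ref{Givental-cri} are: (i) at each fixed point $\bp$, the restriction $f^{(\bp)}$, regarded as a function with poles only at roots of unity, represents a value of the $J$-function of a \emph{point}; and (ii) the residues at the \emph{equivariant} poles $q=\lambda_{\bp,\bq}^{1/m}$ satisfy a fixed-point recursion relating $f^{(\bp)}$ to $f^{(\bq)}(\lambda_{\bp,\bq}^{1/m})$ via the factor $E_{\bp,\bq}(m)$ --- no Adams operations appear here. The $\Psi^m$ you invoke belongs to the adelic characterization of the \emph{point} $J$-function, which is what one would use \emph{inside} criterion (i); you have folded it into (ii), and your claim that the equivariant poles are ``handled the same way, but without the $\Psi^m$-twist'' inverts their roles. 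The appeal to Theorem \ref{main-theorem} is also misplaced (that theorem concerns 3d mirror symmetry, not the $q$-pole structure of the vertex), and Theorem \ref{V} does not assert $t_0=0$, so the remark that ``no further mirror-map correction is needed'' is off as well.
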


Here one has to consider Givental's \emph{permutation-equivariant} quantum $K$-theory \cite{Giv1}, which behaves better with localizations and twisted theories \cite{GV}. We then introduce a Givental type quantum $K$-theory, based on the the $K$-theoretic Gromov--Witten potential with both permutation-equivariant and ordinary inputs. Following the idea in \cite{IMT}, there are some operators $B_{i, \com} \in \End K_\TT(X) \otimes \Lambda$, introduced via $q$-difference operators. Let $t_0 \in K_\TT (X) \otimes \Lambda$ be the point where $(1-q) V^{(\bone)} (q^{-1} , z) \big|_{z_\sharp = Q} = J(t_0, Q)$.

\begin{Theorem} [Theorem \ref{GivQK-relation}]
	
	\setlength{\parskip}{1ex}
	
	Let $X$ be a hypertoric variety, and $t_0 \in K(X) \otimes \Lambda$ be as in Theorem \ref{V}. We fix the insertions $x = 0$ and $t = t_0$.
	
	\begin{enumerate}[1)]
		
		\item For any circuit $S = S^+\sqcup S^-$, and the corresponding curve class $\beta$, the identity class $\bone \in K_\bT (X)$ is annihilated by the following operator
		$$
		\prod_{i\in S^+} ( 1 - B_{i, \com}) \prod_{i\in S^-} (\hbar - B_{i, \com} ) - Q^\beta \prod_{i\in S^+} (\hbar - B_{i, \com} ) \prod_{i\in S^-} (1 - B_{i, \com}),
		$$
		where $Q^\beta := \prod_{i\in S^+} Q_i \prod_{i\in S^-} Q_i^{-1}$. 
		
		\item The Givental quantum $K$-theory ring of $X$ is generated by the classes $B_{i, \com} \bone$, $1\leq i\leq n$, up to the following relations: for any circuit $S = S^+\sqcup S^-$, and the corresponding curve class $\beta$
		$$
		\prod_{i\in S^+} ( 1 - B_{i, \com} \bone) \bullet \prod_{i\in S^-} (\hbar - B_{i, \com} \bone ) = Q^\beta \prod_{i\in S^+} (\hbar - B_{i, \com} \bone ) \bullet \prod_{i\in S^-} (1 - B_{i, \com} \bone ),
		$$
		where all the products are the quantum product $\bullet$.
		
	\end{enumerate}
	
\end{Theorem}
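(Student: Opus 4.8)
\emph{Proof proposal.} The plan is to convert the $z$-difference equations of Theorem~\ref{q-diff-eqn}.1), which annihilate the vertex function, into relations inside Givental's permutation-equivariant quantum $K$-theory, following the reconstruction philosophy of Iritani--Milanov--Tonita \cite{IMT}. First I would use Corollary~\ref{V-tau} in the sharpened form recorded before the theorem, namely that after the substitution $q\mapsto q^{-1}$ and the specialization $z_\sharp = Q$ the modified bare vertex function $(1-q)\,V^{(\bone)}(q^{-1},z)$ equals the value $J(t_0,Q)$ of the permutation-equivariant big $J$-function at the point $t_0\in K_\TT(X)\otimes\Lambda$ of Theorem~\ref{V}, while the descendent vertex functions $(1-q)\,V^{(\tau)}(q^{-1},z)$ sweep out the tangent space to the Lagrangian cone $\cL$ at $J(t_0,Q)$. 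In particular $J(t_0,Q)\in\cL$, and $\cL$ carries its usual overruled structure, being swept out by the $(1-q)\CC[q,q^{-1}]$-submodules tangent to it.

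Next I would transport the $z$-difference operators of Theorem~\ref{q-diff-eqn}.1) through the substitution $q\mapsto q^{-1}$, $z_\sharp\mapsto Q$, and through the prefactor relating $\widetilde V^{(\bone)}$ to $V^{(\bone)}$. Under this the shift operator $Z_i$ becomes a shift operator $\mathbf B_i$ in the Novikov variable $Q_i$, the factors $1-Z_i$ and $1-\hbar Z_i$ become $1-\mathbf B_i$ and, after the harmless rescaling $\hbar-\mathbf B_i=\hbar(1-\hbar^{-1}\mathbf B_i)$, the factors $\hbar-\mathbf B_i$ of the statement, and $z_\sharp^\beta$ becomes $Q^\beta$. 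The upshot is that for every circuit $S=S^+\sqcup S^-$ with curve class $\beta$ the function $J(t_0,Q)$ is annihilated by the $q$-difference operator
$$
\prod_{i\in S^+}(1-\mathbf B_i)\prod_{i\in S^-}(\hbar-\mathbf B_i)\;-\;Q^\beta\prod_{i\in S^+}(\hbar-\mathbf B_i)\prod_{i\in S^-}(1-\mathbf B_i).
$$

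The central step is the \cite{IMT}-type dictionary between difference operators and quantum multiplication. Since $J(t_0,Q)\in\cL$ and $\cL$ is overruled, applying any $\mathbf B_i$ to $J(t_0,Q)$ keeps one on the tangent space to $\cL$; reducing modulo the ruling produces an endomorphism of $K_\TT(X)\otimes\Lambda$, and by the construction of $B_{i,\com}$ via $q$-difference operators together with the string and dilaton reductions of the permutation-equivariant formalism one checks that this endomorphism is $B_{i,\com}$, equivalently that $B_{i,\com}$ is the operator of quantum multiplication by $B_{i,\com}\bone$ at $(x=0,t=t_0)$. As the $B_{i,\com}$ mutually commute (being quantum multiplications in a commutative associative ring), the displayed $q$-difference relation descends under this reduction to
$$
\Big(\prod_{i\in S^+}(1-B_{i,\com})\prod_{i\in S^-}(\hbar-B_{i,\com})-Q^\beta\prod_{i\in S^+}(\hbar-B_{i,\com})\prod_{i\in S^-}(1-B_{i,\com})\Big)\bone=0,
$$
which is part~1); replacing operator composition by the quantum product $\bullet$ then gives the relation of part~2). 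For the remaining assertion of part~2) I would argue, first, that the classes $B_{i,\com}\bone$ generate $K_\TT(X)\otimes\Lambda$: modulo the Novikov ideal $B_{i,\com}$ degenerates to classical multiplication by $L_i$, and the $L_i$ generate $K_\TT(X)$, so a Nakayama-type argument over the complete ring $\Lambda$ applies. Part~1) exhibits relations among these generators; to see there are no others one compares with the presentation of the divisorial/PSZ ring in Theorem~\ref{PSZ-relations}: the quotient of $\Lambda[B_1,\dots,B_n]$ by the circuit relations is, by the combinatorics of the hyperplane arrangement (its $Q\to0$ limit being the Stanley--Reisner/Orlik--Terao presentation of $K_\TT(X)$), a free $\Lambda$-module of rank $|X^\TT|$, which matches the rank of the Givental quantum $K$-ring; hence the surjection is an isomorphism.

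The step I expect to be the main obstacle is the middle one: setting up rigorously the permutation-equivariant Givental cone $\cL$ and its overruled structure in this torus-equivariant, non-compact (localized) setting, and proving precisely that the difference operator $\mathbf B_i$ reduces, modulo the ruling of $\cL$, to quantum multiplication by $B_{i,\com}\bone$ at the \emph{nontrivial} base point $t_0$. This marries the subtleties of Givental's permutation-equivariant string and dilaton equations with the reconstruction argument of \cite{IMT}, and one must check that all reductions survive both the shift of base point away from the origin and the substitution $q\mapsto q^{-1}$, including the careful bookkeeping of the shifts by $\hbar$ and of the appearance of $z_\sharp$ rather than $z$. By contrast, the rank comparison in part~2) is routine once the Hilbert series of the circuit-relation quotient, already implicit in Theorem~\ref{PSZ-relations}, is recorded.
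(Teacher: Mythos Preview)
Your approach is essentially the same as the paper's, but you overcomplicate the central step. The paper does not pass through the overruled cone geometry: it directly invokes the second intertwining relation in (\ref{2-systems}), namely $L_i^{-1} q^{Q_i \partial_{Q_i}} \circ S(0,t_0,q)^{-1} = S(0,t_0,q)^{-1} \circ B_i\, q^{Q_i \partial_{Q_i}}$, which is taken as the \emph{defining} property of $B_i$. Since $J(t_0,Q)=S^{-1}\bone$ satisfies (\ref{q-diff-Z}) by Theorem~\ref{V}, conjugating the annihilating operator by $S$ replaces each $L_i^{-1} q^{Q_i\partial_{Q_i}}$ by $B_i\, q^{Q_i\partial_{Q_i}}$; applying the result to $\bone$ and setting $q=1$ yields part~1). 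For part~2) the paper uses exactly your compatibility argument, $B_{i,\com}\phi_\alpha=\phi_\alpha\bullet(B_{i,\com}\bone)$, so $B_{i,\com}$ is quantum multiplication by $B_{i,\com}\bone$ and 2) follows from 1). Thus the ``main obstacle'' you anticipate---re-establishing the cone structure at a nontrivial base point and tracking permutation-equivariant string/dilaton reductions---never arises: the intertwining relation (\ref{2-systems}) is structural input from \cite{IMT, Giv7}, not something to be reproved here. Your Nakayama and rank-counting arguments for generation and completeness of the relations in part~2) are a welcome supplement; the paper's proof leaves these implicit.
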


\subsection{Structure of the paper}

The paper is organized as follows. In Section 2, we review basic constructions and the equivariant geometry of hypertoric varieties. We explicitly described the $K$-theory $K_\TT (X)$, the tautological line bundles $L_i$ and the characters one obtains when restricting them to a fixed point $\bp$. In Section 2.7, we introduce the redundant and global equivariant and K\"ahler parameters. In Section 3, we recall the definition of elliptic stable envelopes and their characterization via theta functions in the hypertoric case. We define the vertex function and PSZ quantum $K$-theory in Section 4, and then compute them in Section 5 via $\TT$-localization. In Section 6, we prove our main theorems on the 3d mirror symmetry for vertex functions and elliptic stable envelopes. The relationship to Givental's permutation-equivariant quantum $K$-theory is studied in Section 7.

\subsection{Acknowledgements}

The authors would like to thank Mina Aganagic and Andrei Okounkov for their extraordinary work \cite{AOelliptic}, where we learn the idea and start this project. The second author would also like to thank Ming Zhang and Yaoxiong Wen for discussions on Givental's quantum $K$-theory. The work of A.S. is supported by NSF grant DMS - 2054527, the Russian Science Foundation under grant 18-01-00926 and the AMS travel grant. The work of Z.Z. is supported by FRG grant 1564500, and World Premier International Research Center Initiative (WPI), MEXT, Japan.

\vspace{3ex}

\section{Geometry of hypertoric varieties}

\subsection{Basic construction} \label{Sec-basic}

In this section we review the definition and geometric properties of hypertoric varieties. For the details, there are plenty of references, such as \cite{BD, HH, HP, HS, Kon, Kon2}. As hyperk\"ahler analogues of toric varieties, which can be constructed as symplectic reductions of complex vector spaces, hypertoric varieties are hyperk\"ahler reductions of quarternion vector spaces.

Let $k,n$ be nonnegative integers, with $k\leq n$. Consider a short exact sequence of free $\ZZ$-modules:
\begin{equation} \label{knd-seq}
\xymatrix{
	0 \ar[r] & \ZZ^k \ar[r]^\iota & \ZZ^n \ar[r]^\beta & \ZZ^d \ar[r] & 0.
}
\end{equation}
Denote the complex tori by $\bK := (\CC^*)^k$, $\bT := (\CC^*)^n$ and $\bA := (\CC^*)^d$. When tensoring with $\CC$, the embedding $\iota$ specifies an embedding of the complex torus $\bK$ into $\bT$, and hence defines an action of $\bK$ on the affine space $\CC^n$. The abelian groups $\ZZ^k$ and $\ZZ^n$ above are viewed as lattices of cocharacters of the tori.

The $\bK$-action naturally extends to the cotangent space $T^* \CC^n \cong \CC^n \oplus \CC^n$, preserving the canonical holomorphic symplectic form. Let $\mu: T^* \CC^n \to \Lie(\bK)^\vee$ be the moment map of this action.  With a choice of a stability parameter $\theta \in \Lie(\bK)^\vee$, the hypertoric variety $X$ is defined as the GIT quotient
$$
X := \mu^{-1} (0) /\!/_\theta \bK = \mu^{-1} (0)^{ss} / \bK,
$$
where $\mu^{-1}(0)^{ss} \subset \mu^{-1} (0)$ is the semistable locus, which is open. 
We will also consider the stacky quotient, denoted by $\mathfrak{X} := [\mu^{-1} (0) / \bK ]$, of which $X$ is an open substack. 

\begin{Remark}
Later we will see that the quasimap theory actually depend on the presentation of the GIT quotient, i.e. the sequence (\ref{knd-seq}), rather than the resulting quotient variety $X$ itself. More precisely, we will consider hypertoric data \emph{up to automorphisms of $\ZZ^k$ and $\ZZ^d$}. In other words, two sets of hypertoric data as (\ref{knd-seq}) will be considered equivalent, if they can be related by change of bases for $\ZZ^k$ and $\ZZ^d$. The corresponding K\"ahler and equivariant parameters will also be understood up to those changes of bases.
\end{Remark}

\subsection{Hyperplane arrangement and circuits} \label{sec-hyper-arr}

The information of a hypertoric variety can be conveniently organized in the combinatoric data of a collection $\cH$ of affine hyperplanes in $\RR^d$, called a \emph{hyperplane arrangement}.

In the sequence (\ref{knd-seq}), let $e_i\in \ZZ^n$, $1\leq i\leq n$ be the standard basis. Consider the dual exact sequence
$$
\xymatrix{
	0 \ar[r] & (\ZZ^d)^\vee \ar[r]^{\beta^\vee} & (\ZZ^n)^\vee \ar[r]^{\iota^\vee} & (\ZZ^k)^\vee \ar[r] & 0.
}
$$

Let $\widetilde\theta \in (\ZZ^n)^\vee$ be a lift of $\theta$ along $\iota^\vee$. Then the hyperplane arrangement $\cH = \{ H_i \mid 1\leq i \leq n\}$ is defined as the collection of the following (affine) hyperplanes
$$
H_i := \{x\in (\RR^d)^\vee \mid \langle x, \beta (e_i) \rangle = - \langle \widetilde\theta, e_i \rangle \}.
$$
The hypertoric variety $X$ can be recovered from such an $\cH$. A different choice of the lift $\widetilde\theta$ corresponds to translating the hyperplanes simultaneously by an element in $(\ZZ^d)^\vee$, which does not affect the associated $X$.

Each hyperplane $H_i$ divides $(\RR^d)^\vee$ into two half-spaces:
$$
H_i^+ := \{x\in (\RR^d)^\vee \mid \langle x, \beta (e_i) \rangle \geq - \langle \widetilde\theta, e_i \rangle \}, \qquad H_i^- := \{x\in (\RR^d)^\vee \mid \langle x, \beta (e_i) \rangle \leq - \langle \widetilde\theta, e_i \rangle \}.
$$

A hypertoric variety $X$ is smooth if and only if the hyperplane arrangement $\cH$ satisfies the following two conditions:

\begin{enumerate}[1)]
	
	\item simple, i.e., for any $0\leq m \leq n$, every $m$ hyperplanes in $\cH$ intersect, if nonempty, in codimension $m$;
	
	\item unimodular, i.e., any collection of $d$ linearly independent vectors in the conormals $\{\beta (e_1), \cdots, \beta (e_n) \}$ form a basis of $\ZZ^d$ over $\ZZ$.
	
\end{enumerate}
Unless otherwise specified, we will always assume that our hypertoric variety $X$ is \emph{smooth}, which is always the case when the stability condition $\theta$ is chosen generically.
In that case we have $\mu^{-1} (0)^{ss} = \mu^{-1}(0)^s$, i.e. the semistable locus coincides with the stable locus.

A subset $S \subset \{1, \cdots, n\}$ is called a \emph{circuit}, if it is a minimal subset such that $\bigcap_{i\in S} H_i = \emptyset$. In other words, it gives a minimal relations among the images of $e_i$'s for $i\in S$.

Each circuit has a unique splitting $S = S^+ \sqcup S^-$, determined as follows. The smoothness assumption on $X$ implies that in  the relation among $e_i$'s for $i\in S$, all coefficients of $e_i$'s must be $\pm 1$. The subsets $S^\pm$ are determined by the presentation of the relation
$$
\beta_S := \sum_{i\in S^+} e_i - \sum_{i\in S^-} e_i \in \ker \beta,
$$
such that $\langle \beta_S, \widetilde\theta \rangle \geq 0$.

\subsection{Equivariant geometry and line bundles}

Analogous to toric varieties, the hypertoric variety $X$ admits a torus action, naturally inherited from the standard torus action by $(\CC^*)^n$ on $\CC^n$. Moreover, there is a 1-dimensional torus $\CC^*_\hbar$ that scales the cotangent fiber of $T^* \CC^n$, which also descends to $X$. Hence $X$ admits a torus action by $\TT := \bT \times \CC^*_\hbar$, whose equivariant parameters are denoted by $a_1, \cdots, a_n, \hbar \in K_\TT (\pt)$.

Each character in $(\ZZ^n)^\vee$ defines a natural $\TT$-equivariant line bundle on $X$. In particular, for the standard dual basis $e_i^* \in (\ZZ^n)^\vee$, $1\leq i\leq n$, we have line bundles
$$
L_i := \mu^{-1}(0)^s \times_\bK \CC_{e_i^*},
$$
where $\mu^{-1}(0)^s \subset \mu^{-1}(0)$ is the stable locus for the $\bK$-action, and $\CC_{e_i^*}$ denotes the 1-dimensional $\bK$-representation defined by the character $\iota^\vee e_i^*$.

Similarly, each character in $(\ZZ^k)^\vee$ defines a (non-equivariant) line bundle on $X$. Let $\{f_j^* \mid 1\leq j\leq k\}$ be the standard dual basis for $(\ZZ^k)^\vee$. We have the tautological line bundles
$$
N_j := \mu^{-1}(0)^s \times_\bK \CC_{f_j^*}.
$$

The relationship between $L_i$ and $N_j$'s is encoded in the map $\iota^\vee$:
\begin{equation} \label{LL}
L_i = \CC_{e_i^*} \otimes \bigotimes_{j=1}^k N_j^{\otimes \iota_{ij}},
\end{equation}
where $(\iota_{ij})$ is the $n\times k$ matrix given by $\iota$.

We are interested in the $\TT$-equivariant $K$-theory of $X$. Recall that \cite{HH} $K_\TT (X)$ satisfies the \emph{Kirwan surjectivity} \footnote{The statement in \cite{HH} is only on the Kirwan surjectivity for cohomology. But the GKM method adopted there applies also to the $K$-theory, and allows one to obtain the explicit presentation (\ref{K(X)})}, i.e. the following surjection
$$
K_\TT (\pt) [s_1^{\pm 1}, \cdots, s_k^{\pm 1} ] \twoheadrightarrow K_\TT (X),
$$
where the image of $s_j$ is the $K$-theory class of $N_j$.

More precisely, the kernel of the surjection can be described explicitly. We have
\begin{equation} \label{K(X)}
K_\TT (X) \cong \CC [ a_1^{\pm 1}, \cdots, a_n^{\pm 1}, \hbar^{\pm 1}, s_1^{\pm 1}, \cdots, s_k^{\pm 1} ] / \langle \prod_{i\in S^+} (1-x_i) \prod_{i\in S^-} (1- \hbar x_i) \mid S \text{ is a circuit} \rangle   ,
\end{equation}
where $x_i$ is the class of $L_i$ in $K_\TT (X)$, which can be expressed in $s_j$'s through (\ref{LL}), i.e., $x_i = a_i \prod_{j=1}^k s_j^{\iota_{ij}}$.

In particular, if we view $\Spec K_\TT (X)$ as an affine scheme embedded in an algebraic torus $(\CC^*)^{n+k+1}$ with coordinates $a_i$, $\hbar$, $s_j$, it is given by the intersection of certain hypersurfaces, each defined by a circuit $S$ as a union of hyperplanes in the torus. Moreover, one can see that this intersection is indeed \emph{transversal}, reflecting the fact that $X$, equipped with the $\TT$-action, is a \emph{GKM variety} (i.e., admits finitely many fixed points and finitely many 1-dimensional orbits).

\begin{Remark}
	In later sections of the paper, we will sometimes choose particular bases of $\ZZ^k$ and $\ZZ^d$ to make computations more convenient. The corresponding constructions, such as $N_j$, $\theta$, will change according to the change of bases. However, we will always fix the basis of $\ZZ^n$. In other words, the line bundle $L_i$ and the stability parameter $\widetilde\theta$  will stay the same for all choices.
\end{Remark}

\subsection{Restriction to $\bT$-fixed points}

To conclude this section, we would like to specify the restriction of line bundles $N_j$ and $L_i$ to each fixed point in $X^\TT$. The $\TT$-invariant loci of $X$ can be described by the hyperplane arrangement $\cH$: a $\TT$-fixed point of $X$ corresponds to a vertex in $\cH$; a $\TT$-invariant 1-dimensional orbit corresponds to an edge, etc.

Let $\bp = \{\bp_1, \cdots, \bp_d \} \subset \{1, \cdots, n\}$, with $\bp_1 < \cdots < \bp_d$, be a subset with $d$ elements such that
$$
\bigcap_{i\in \bp } H_i \neq \emptyset.
$$
By our smooth assumption, the above intersection of $H_i$'s is a vertex in $\cH$, and therefore corresponds to a fixed point $\bp \in X^\TT$. From now on, we will abuse the notation $\bp$ (and also $\bq$) for the followings:

(i) the subset $\bp$,

(ii) the bijection from $\{1, \cdots, d\}$ to the set $\bp$,

(iii) the vertex in the hyperplane arrangement,

(iv) the fixed point $\bp \in X^\TT$.

Let $\cA_\bp := \{1, \cdots, n \} \backslash \bp$ be the complement subset. Denote that $\cA_\bp = \{ \cA_{\bp,1} , \cdots, \cA_{\bp, k} \}$, where $\cA_{\bp, 1}  < \cdots < \cA_{\bp, k}$. We will also abuse the notation $\cA_\bp$ for the bijection from $\{1, \cdots, k\}$ to $\cA_\bp$.

In particular, for (ii) above, we mean that if $i = \bp_I$ for some $1\leq I\leq d$, we denote by $I = \bp^{-1} (i)$; similarly if $j = \cA_{\bp, J}$ for some $1\leq J\leq k$, denote $J = (\cA_\bp)^{-1} (j)$.

The orientations given by the conormal vectors $\beta (e_i)$ of $H_i$'s, for $i\not\in \bp$, determines a splitting $\cA_\bp = \cA_\bp^+ \sqcup \cA_\bp^-$, where
$$
\cA_\bp^+ := \{i \not\in \bp \mid \bp \in H_i^+ \}, \qquad \cA_\bp^- := \{i \not\in \bp \mid \bp \in H_i^- \}.
$$

\begin{Lemma}
The following system of equations
$$
a_m s_1^{\iota_{m 1}} \cdots s_k^{\iota_{mk}} = \left\{ \begin{aligned}
& 1, \qquad && m \in \cA_\bp^+ \\
& \hbar^{-1}, \qquad && m \in \cA_\bp^-
\end{aligned}\right.
$$
admits a unique set of solutions $(s_1 (\bp), \cdots, s_k (\bp))$. The restriction of the line bundle $N_j$ to $\bp$ is
$$
\left. N_j \right|_{\bp} = s_j (\bp).
$$
The restriction of the line bundle $L_i$ to $\bp$ is
\begin{equation} \label{restriction}
\left.  L_i \right|_\bp = \left\{ \begin{aligned}
& 1 , \qquad && i \in \cA_\bp^+ \\
& \hbar^{-1}, \qquad && i \in \cA_\bp^- \\
& a_i s_1 (\bp)^{\iota_{i1}} \cdots s_k (\bp)^{\iota_{ik}}, \qquad && i \in \bp.
\end{aligned} \right.
\end{equation}
\end{Lemma}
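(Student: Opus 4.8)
The plan is to separate the claim into a linear-algebra part — the unique solvability of the displayed system — and a short bookkeeping argument that deduces both restriction formulas from it together with the presentation (\ref{LL}).

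First, the unique solvability. Let $M := (\iota_{mj})_{m\in\cA_\bp,\, 1\le j\le k}$ be the $k\times k$ submatrix of $\iota$ with rows indexed by $\cA_\bp$; I would show $\det M = \pm 1$. Since $\bp$ is a vertex of $\cH$, the conormals $\{\beta(e_i)\}_{i\in\bp}$ are linearly independent, so by the unimodularity hypothesis they form a $\ZZ$-basis of $\ZZ^d$. By exactness of (\ref{knd-seq}), the vectors $\{e_i\}_{i\in\bp}$ together with $\iota(f_1),\dots,\iota(f_k)$ then form a $\ZZ$-basis of $\ZZ^n$, and the matrix expressing them in the standard basis (with the rows in $\bp$ listed first) is block upper-triangular with diagonal blocks $I_d$ and $M$; hence $\det M = \pm 1$, i.e.\ $M\in \mathrm{GL}_k(\ZZ)$. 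Passing to cocharacter lattices, the multiplicative system becomes $M\cdot(\log s_\bullet) = v$ with $v_m = -\log a_m$ for $m\in\cA_\bp^+$ and $v_m = -\log a_m - \log\hbar$ for $m\in\cA_\bp^-$; integrality of $M^{-1}$ then gives a unique solution, each $s_j(\bp)$ being a Laurent monomial in $a_1,\dots,a_n,\hbar$.

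For the restriction formulas, the geometric input I would invoke is the standard combinatorial model of $\bp\in X^\TT$ (see e.g.\ \cite{BD,HP,HS}): the vertex $\bp$ is represented in $\mu^{-1}(0)^s$ by the point $v_\bp$ whose coordinates on $T^*\CC^n = \CC^n\oplus(\CC^n)^*$ are $z_m = 1$ for $m\in\cA_\bp^+$, $w_m = 1$ for $m\in\cA_\bp^-$, and $0$ otherwise. Then $\mu(v_\bp) = 0$ automatically, since every product $z_iw_i$ vanishes, and the statement that $v_\bp$ lies in the $\theta$-stable locus is precisely the way the splitting $\cA_\bp = \cA_\bp^+\sqcup\cA_\bp^-$ was read off from the half-spaces $H_i^{\pm}$. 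The class $[v_\bp]$ is $\TT$-fixed because the $\TT$-action on $v_\bp$ is undone by a unique homomorphism $\varphi_\bp\colon\TT\to\bK$, i.e.\ $\varphi_\bp(t)\cdot(t\cdot v_\bp) = v_\bp$ for all $t$; comparing the two sides coordinate by coordinate on the nonzero coordinates of $v_\bp$ shows that the components of $\varphi_\bp$ satisfy exactly the displayed system, hence equal $s_1(\bp),\dots,s_k(\bp)$. Since $N_j = \mu^{-1}(0)^s\times_\bK\CC_{f_j^*}$ is given the $\TT$-equivariant structure with trivial weight on the factor $\CC_{f_j^*}$, its fiber at $[v_\bp]$ is twisted only through $\varphi_\bp$, so $N_j|_\bp = s_j(\bp)$. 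Finally, by (\ref{LL}) one has $L_i|_\bp = a_i\prod_{j=1}^k (N_j|_\bp)^{\iota_{ij}} = a_i\, s_1(\bp)^{\iota_{i1}}\cdots s_k(\bp)^{\iota_{ik}}$ for every $i$; this is the third case of (\ref{restriction}) when $i\in\bp$, and for $i\in\cA_\bp^{\pm}$ the defining equations of $s_j(\bp)$ collapse it to $1$ and $\hbar^{-1}$ respectively, which are the first two cases.

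The main obstacle is concentrated in this geometric input: checking that $v_\bp$ as written is $\theta$-stable and represents the fixed point attached to the vertex $\bp$, and fixing the $\TT$-equivariant data so that the combinatorial splitting $\cA_\bp^+\sqcup\cA_\bp^-$ matches ``$z_i\neq 0$'' versus ``$w_i\neq 0$'' and yields the exact signs in (\ref{restriction}). This forces one to pin down consistently the orientation of the $H_i^{\pm}$, the sign of the complex moment map, which factor of $\bT$ scales $z_i$ rather than $w_i$, and the $\CC^*_\hbar$-weight carried by the symplectic pairing. Once those conventions are settled, the remainder is purely formal, using only (\ref{LL}) and the uniqueness established in the first step.
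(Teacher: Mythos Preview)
Your argument is correct and complete up to the convention-checking you explicitly flag at the end. The paper's own proof, however, consists of a single sentence deferring to Theorem~3.5 of \cite{HH}; it does not supply any of the linear algebra or the explicit fixed-point model that you work out. So rather than a different route, you have written out the content of the cited result in a self-contained way: the unimodularity argument for $\det M = \pm 1$ and the explicit representative $v_\bp$ with $z_m = 1$ for $m \in \cA_\bp^+$, $w_m = 1$ for $m \in \cA_\bp^-$ are exactly the ingredients one finds in \cite{HH, HP}. The paper itself later relies on this description (e.g.\ in the proof of Lemma~5.1), so your unpacking is consistent with the ambient conventions.
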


\begin{proof}
This is essentially Theorem 3.5 in \cite{HH}.
\end{proof}

We choose the basis of $\ZZ^k$ such that the matrix $\iota$ is of the following special form:
\begin{equation} \label{V-frame-i}
\iota_{mj} (\bp) := \delta_{mj}, \qquad \iota_{ij} (\bp) := C_{ij} (\bp),  \qquad m, j \not\in \bp, \ i \in \bp
\end{equation}
where $C_{ij} (\bp)$ is a $d\times k$-matrix, with indices taken in $\bp \times \cA_\bp$. Moreover, one can choose a basis of $\ZZ^d$ such that the matrix $\beta$ is also of a special form:
\begin{equation} \label{V-frame-b}
\beta_{ij} (\bp) = -C_{ij} (\bp), \qquad \beta_{il} (\bp) = \delta_{il}, \qquad j \not\in \bp, \ i, l \in \bp.
\end{equation}
Here the $\bp$ in parentheses is to emphasize the dependence on $\bp$. We call this choice of bases the \emph{standard $\bp$-frame}.

\begin{Corollary}
For the standard $\bp$-frame, the relationship between $x_i = L_i$, $1\leq i\leq n$, and $s_J = N_J$, $1\leq J\leq k$, is
$$
x_i = \left\{ \begin{aligned}
& a_i s_I, \qquad && i = \cA_{\bp, I} \in \cA_\bp \\
& a_i s_1^{C_{i, \bp^c_1}} \cdots s_k^{C_{i, \bp^c_k}} , \qquad && i \in \bp.
\end{aligned} \right.
$$
The restriction of the line bundle $N_J$, $1\leq J\leq k$ to $\bp$ is
\begin{equation} \label{restriction-s(V)}
\left. N_J \right|_\bp = s_J (\bp) = \left\{ \begin{aligned}
& a_j^{-1}, \qquad && j = \cA_{\bp, J} \in \cA_\bp^+  \\
& \hbar^{-1} a_j^{-1} , \qquad && j = \cA_{\bp, J} \in \cA_\bp^-.
\end{aligned}\right.
\end{equation}
The restriction of the line bundle $L_i$, $1\leq i\leq n$ to $\bp$ is
\begin{equation} \label{restriction-V}
\left.  L_i \right|_\bp = \left\{ \begin{aligned}
& 1 , \qquad && i \in \cA_\bp^+ \\
& \hbar^{-1}, \qquad && i \in \cA_\bp^- \\
& a_i \prod_{j\not\in \bp} a_j^{-C_{ij} (\bp)} \cdot  \hbar^{- \sum_{j \in \bp^{c-}} C_{ij} (\bp) } .  \qquad && i \in \bp,
\end{aligned} \right.
\end{equation}
\end{Corollary}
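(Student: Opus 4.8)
The plan is to reduce the Corollary to the previous Lemma by simply plugging in the special form of the matrices $\iota$ and $\beta$ dictated by the standard $\bp$-frame, namely \eqref{V-frame-i} and \eqref{V-frame-b}, and then solving the linear system for $s_J(\bp)$ explicitly. First I would record the formula $x_i = a_i \prod_{j=1}^k s_j^{\iota_{ij}}$ from the presentation \eqref{K(X)} and substitute the block form \eqref{V-frame-i}: for $i = \cA_{\bp, I}\in \cA_\bp$ one has $\iota_{ij}(\bp) = \delta_{Ij}$ (after relabeling via the bijection $\cA_\bp$), giving $x_i = a_i s_I$; for $i\in\bp$ one has $\iota_{ij}(\bp) = C_{ij}(\bp)$, giving $x_i = a_i \prod_{j\not\in\bp} s_j^{C_{ij}(\bp)}$. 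This is precisely the first display of the Corollary, so that part is immediate bookkeeping with indices.

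Next I would solve the system in the Lemma for the $s_J(\bp)$. In the standard $\bp$-frame the equations indexed by $m\in\cA_\bp$ read $a_m \prod_j s_j^{\delta_{mj}} = a_m s_{(\cA_\bp)^{-1}(m)}$, which equals $1$ if $m\in\cA_\bp^+$ and $\hbar^{-1}$ if $m\in\cA_\bp^-$. Solving directly gives $s_J(\bp) = a_j^{-1}$ when $j = \cA_{\bp,J}\in\cA_\bp^+$ and $s_J(\bp) = \hbar^{-1} a_j^{-1}$ when $j = \cA_{\bp,J}\in\cA_\bp^-$, which is \eqref{restriction-s(V)}. Note the equations indexed by $m\in\bp$ do not appear in the Lemma's system (the system there is only over $m\in\cA_\bp$), so there is nothing further to check; uniqueness is guaranteed by the Lemma.

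Finally, for \eqref{restriction-V} I would feed \eqref{restriction-s(V)} into \eqref{restriction}. For $i\in\cA_\bp^+$ (resp.\ $i\in\cA_\bp^-$) the Lemma already gives $L_i|_\bp = 1$ (resp.\ $\hbar^{-1}$), matching the first two cases. For $i\in\bp$ we have $L_i|_\bp = a_i \prod_{j\not\in\bp} s_j(\bp)^{C_{ij}(\bp)}$; substituting $s_j(\bp) = a_j^{-1}$ for $j\in\cA_\bp^+$ and $s_j(\bp) = \hbar^{-1} a_j^{-1}$ for $j\in\cA_\bp^-$ (writing $\bp^{c-} := \cA_\bp^-$) yields
$$
L_i|_\bp = a_i \prod_{j\not\in\bp} a_j^{-C_{ij}(\bp)} \cdot \hbar^{-\sum_{j\in\bp^{c-}} C_{ij}(\bp)},
$$
which is the third case of \eqref{restriction-V}. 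The only mild subtlety — the ``main obstacle,'' such as it is — is keeping the index conventions straight: the dual-basis relabelings via the bijections $\bp$ and $\cA_\bp$, and checking that the block decomposition \eqref{V-frame-b} of $\beta$ is consistent with \eqref{V-frame-i} (i.e.\ that $\beta\circ\iota = 0$ forces $\beta_{ij}(\bp) = -C_{ij}(\bp)$), so that the standard $\bp$-frame genuinely exists. Once the conventions are fixed, every step is a one-line substitution.
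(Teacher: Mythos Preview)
Your proposal is correct and matches the paper's implicit approach: the Corollary is stated without proof, as an immediate specialization of the preceding Lemma to the standard $\bp$-frame, and your argument carries out exactly that specialization. The index bookkeeping you flag is the only content, and you have it right.
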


We would like to rewrite this formula in a more intrinsic way. The presentation of $K_\TT (X)$ (\ref{K(X)}) can be expressed as
$$
\CC [ a_1^{\pm 1}, \cdots, a_n^{\pm 1}, \hbar^{\pm 1}, x_1^{\pm 1}, \cdots, x_n^{\pm 1} ] / \langle \prod_{i=1}^n (x_i / a_i)^{\beta_{ji}} - 1, 1\leq j\leq d;   \prod_{i\in S^+} (1-x_i) \prod_{i\in S^-} (1- \hbar x_i) , S: \text{circuits} \rangle.
$$
The picture of the affine scheme $\Spec K_\TT (X)$ is clear (view $\hbar$ as a constant): the first set of relations cuts out a codimension-$d$ subspace in the ambient torus $(\CC^*)^{2n}$, and the second furthermore cuts out a union of subspaces, each isomorphic to $(\CC^*)^n$, intersecting transversally. Each fixed point $\bp \in X^\TT$ corresponds to an irreducible component $\Spec K_\TT (\bp) \cong (\CC^*)^n$ of $\Spec K_\TT (X)$.

View $x_i$ as a function on $\Spec K_\TT (X)$. The restriction formula (\ref{restriction-V}) can then be written as the residue of the function $x_i$ along one of the components:
$$
x_i |_\bp :=  L_i |_{\bp} = \int_{\gamma(\bp)} x_i \cdot \frac{d\ln x_1 \wedge \cdots \wedge d\ln x_n}{\bigwedge_{m=1}^d \Big( \sum_{i=1}^n \beta_{mi} d\ln x_i \Big) }  ,
$$
where $\gamma(\bp)$ is the compact real $k$-cycle around the irreducible component $\Spec K_\TT (\bp)$, specified by $x_j = 1$, $j\in \cA_\bp^+$ and $x_j = \hbar^{-1}$, $j\in \cA_\bp^-$.

\subsection{Effective curves, walls, and chambers} \label{section-eff}

There is a bijection \cite{Kon} between circuits and primitive effective curves in $X$. We will abuse the notation and denote also by $\beta_S$ the primitive effective curve corresponding to the circuit $S$.

All irreducible $\TT$-invariant curves $C$ in $X$ are of the following form. Let $\bp$ and $\bq$ be two vertices in the hyperplane arrangement, such that $\bp = (\bq \backslash \{j\} ) \sqcup \{i\}$ and $\bq = (\bp \backslash \{i\} ) \sqcup \{j\}$, for some $1 \leq i\neq j\leq n$. There is a unique $\TT$-invariant curve $C$ connecting the fixed points $\bp$ and $\bq$. It is clear that the circuit that defines $C$ is
$$
S_{\bp \bq} := \bp \sqcup \{j\} = \bq \sqcup \{ i \}.
$$

\begin{Lemma} \label{bridge}
	\begin{enumerate}[(i)]
		
		\item $\deg L_m \big|_C = 0$, for $m \not\in S_{\bp \bq}$; $\deg L_i \big|_C = \pm 1$, if $i\in \cA_\bq^\pm$.
		
		\item The character
		$$
		T_\bp C = \left\{ \begin{aligned}
		&  x_i |_\bp  , && \quad i\in \cA_\bq^+ \\
		&  \hbar^{-1} x_i^{-1} |_\bp , && \quad i\in \cA_\bq^- .
		\end{aligned} \right.
		$$
		
	\end{enumerate}
\end{Lemma}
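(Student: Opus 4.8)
The plan is to deduce both parts from the combinatorics of the circuit $S := S_{\bp\bq}$, the restriction formula (\ref{restriction}), and the elementary behaviour of an equivariant line bundle on a $\TT$-fixed $\PP^1$. First I would record the standard fact that, since for generic $\theta$ the map $\mu^{-1}(0)^s \to X$ is a principal $\bK$-bundle, for every $m$ the degree of $L_m|_C$ equals the pairing of the character $e_m^*$ with the curve class $[C]$, the latter regarded as the element $\beta_S = \sum_{m\in S^+}e_m - \sum_{m\in S^-}e_m$ of $\ker\beta \subset \ZZ^n$; here I use the bijection of \cite{Kon} between circuits and primitive effective curves, recalled in Section \ref{section-eff}, which identifies $[C]$ with $\beta_S$. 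Thus $\deg L_m|_C = \langle e_m^*, \beta_S\rangle$, which is $0$ for $m\notin S$ and is $+1$ for $m\in S^+$ and $-1$ for $m\in S^-$; since $i\in S$, this already gives the first assertion of (i) and reduces the second one to identifying $S^\pm$ with $\cA_\bq^\pm$.

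The one genuinely combinatorial step is therefore to prove that $i\in S^+ \iff i\in\cA_\bq^+$. For this I would pair the relation $\sum_{m\in S^+}\beta(e_m) = \sum_{m\in S^-}\beta(e_m)$ with the point $\bq\in(\RR^d)^\vee$, using that $\langle\bq, \beta(e_m)\rangle = -\langle\widetilde\theta, e_m\rangle$ for every $m\in\bq = S\setminus\{i\}$. Solving for $\langle\bq, \beta(e_i)\rangle$ gives $\langle\bq, \beta(e_i)\rangle + \langle\widetilde\theta, e_i\rangle = +\langle\widetilde\theta, \beta_S\rangle$ if $i\in S^+$ and $= -\langle\widetilde\theta,\beta_S\rangle$ if $i\in S^-$. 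Now $\langle\widetilde\theta,\beta_S\rangle \geq 0$ by the very definition of $S^\pm$, while the left-hand side is nonzero because $\bq\notin H_i$ by simplicity; hence $\langle\widetilde\theta,\beta_S\rangle > 0$, and the sign of $\langle\bq,\beta(e_i)\rangle + \langle\widetilde\theta,e_i\rangle$ is positive exactly when $i\in S^+$, i.e.\ $i\in S^+ \iff \bq\in H_i^+ \iff i\in\cA_\bq^+$. Together with the previous paragraph this yields $\deg L_i|_C = +1$ for $i\in\cA_\bq^+$ and $-1$ for $i\in\cA_\bq^-$, finishing (i).

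For part (ii) I would use that on a $\PP^1$ carrying a nontrivial $\TT$-action with fixed points $\bp$ and $\bq$ one has $L|_\bp \otimes (L|_\bq)^{-1} = (T_\bp C)^{\deg L|_C}$ for every $\TT$-equivariant line bundle $L$. Taking $L = L_i$, so $L_i|_\bp = x_i|_\bp$: if $i\in\cA_\bq^+$ then $\deg L_i|_C = 1$ and $L_i|_\bq = 1$ by (\ref{restriction}) applied at $\bq$, so $T_\bp C = x_i|_\bp$; if $i\in\cA_\bq^-$ then $\deg L_i|_C = -1$ and $L_i|_\bq = \hbar^{-1}$, so $(T_\bp C)^{-1} = \hbar\cdot x_i|_\bp$, that is $T_\bp C = \hbar^{-1}(x_i|_\bp)^{-1}$, as claimed. (Alternatively $T_\bp C$ could be computed directly from a local GIT chart around $\bp$, recovering the same monomials as in (\ref{restriction-V}), but routing through (i) avoids redoing that computation.)

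The only step that is not routine bookkeeping is the combinatorial identification in the second paragraph, tying the splitting $S^\pm$ — which is defined through the chosen lift $\widetilde\theta$ of the stability parameter — to the geometric sides $\cA_\bq^\pm$ at the vertex $\bq$. Once that sign is pinned down, everything else follows directly from the restriction formulas already recorded. One should also be mildly careful that $[C]$ is taken with the correct orientation, namely as the primitive \emph{effective} class; this is exactly what the circuit–curve bijection guarantees.
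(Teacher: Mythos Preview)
Your proof is correct and follows essentially the same route as the paper's: part (ii) uses exactly the relation $x_i|_\bp = x_i|_\bq \cdot (T_\bp C)^{\deg L_i|_C}$ together with the restriction values of $x_i|_\bq$, which is precisely the paper's argument, and for part (i) you supply the explicit combinatorics (pairing with $\beta_S$ and the sign identification $i\in S^\pm \iff i\in\cA_\bq^\pm$) that the paper summarizes as ``direct computation.''
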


\begin{proof}
	
(i) is true by direct computation. (ii) follows from the fact that $x_i |_\bp = x_i |_\bq \cdot (T_\bp C)^{\deg L_i |_C}$, and $x_i |_\bq = 1$ or $\hbar^{-1}$, depending on $i\in \cA_\bq^\pm$.
\end{proof}

Recall that as in \cite{Kon}, the space $\RR^k$ of stability parameters of $X$ admits a wall-and-chamber structure. For each circuit $S = S^+ \sqcup S^-$, there is a codimension-1 hyperplane
$$
P_S := \Span_\RR \{ \iota^\vee e_i^* \mid i\not\in S \}\subset \RR^k,
$$
which we call a \emph{wall} in $\RR^k$. A connected component of the complement of walls
$$
\mathfrak{K} \subset \RR^k \backslash \bigcup_{S: \text{circuit}} P_S
$$
is called a \emph{chamber}.

Constructed by the real moment map in the hyperK\"ahler definition, there is a family of (possibly singular) hypertoric varieties over $\RR^k$. For each $\theta \in \RR^k$, the fiber over $\theta$ is the hypertoric variety $X$ defined with the choice of stability condition $\theta$, which is smooth if $\theta$ is away from all the walls. The geometry of $X$ stays the same for all $\theta$'s in a given chamber $\mathfrak{K}$, and admits a symplectic flop phenomenon when $\theta$ crosses a wall.

With the space $\RR^k$ identified with $H^2 (X, \RR)$, the real moment map can also be understood as a real period map, whose image is the real K\"ahler class. In this sense, the chamber $\mathfrak{K}$ is nothing but the \emph{K\"ahler cone} of $X$. The effective cone $\Eff (X) \otimes \RR$ can be described as the dual of $\mathfrak{K}$.

\subsection{Roots, walls and chambers} \label{section-root}

Following Maulik--Okounkov \cite{MO}, the space of equivariant parameters $\RR^d = \RR^n / \RR^k$ also admits a wall-and-chamber structure. Choose a cocharacter of the torus $\sigma : \CC^* \to (\CC^*)^d$, which we view as an element in $\ZZ^d$. Let $X^\sigma \subset X$ be the fixed loci of $X$ under the action of the 1-dimensional subtorus defined by $\sigma$. We also choose a lift $\widetilde \sigma \in \ZZ^n$ of $\sigma$ along $\beta$, i.e. $\beta (\widetilde\sigma) = \sigma$.

The subset in $\RR^d$ where $\sigma$ is ``generic", i.e., $X^\sigma = X^{(\CC^*)^d} = X^\TT$, is the complement of a union of hyperplanes, which we call \emph{walls}. Each wall is of the form $W_\alpha := \{ \lambda \in \RR^d \mid \langle \lambda, \alpha \rangle = 0 \}$, for some primitive $\alpha \in (\ZZ^d)^\vee$, which we call a \emph{root}.

Alternatively, if we identify $\alpha$ with its image $\beta^* \alpha$ in $(\ZZ^n)^\vee$, then a root is a minimal relation among images of the standard basis vectors $\iota^\vee e_i^*$. Therefore, we may also identify $\alpha$ as a \emph{cocircuit}, i.e. a subset $R \subset \{1, \cdots, n\}$, with unique splitting $R = R^+ \sqcup R^-$, such that
$$
\alpha_R := \sum_{i\in R^+} e_i^* - \sum_{i\in R^-} e_i^* \in \ker \iota^\vee,
$$
and $\langle \alpha_R, \widetilde\sigma \rangle \geq 0$.

A connected component of the complement of the union of all root hyperplanes is also called a \emph{chamber}:
$$
\fC \subset \RR^d \backslash \bigcup_{\alpha: \text{cocircuit}} W_\alpha.
$$
For a fixed choice of $\widetilde\sigma$, a root $\alpha$ is called positive if it is nonzero and $\langle \widetilde\sigma, \alpha \rangle \geq 0$.

\subsection{Equivariant and K\"ahler parameters} \label{sec-para}

To end this section, we would like to elaborate more on the parameters which our vertex functions and elliptic stable envelopes will depend on. Recall the multiplicative version of the short exact sequence (\ref{knd-seq}):
$$
\xymatrix{
	1 \ar[r] & \bK \ar[r]^-{\exp \iota} & \bT \ar[r]^-{\exp \beta} & \bA \ar[r] & 1,
}
$$
where $\bK = (\CC^*)^k$, $\bT = (\CC^*)^n$ and $\bA = (\CC^*)^d$. We have used the action by $(\CC^*)^n$ in the previous section to define the equivariant $K$-theory. The coordinates $a_1, \cdots, a_n$ on $\bT$ are called the \emph{equivariant parameters}. They correpond to the standard basis on $\ZZ^n$, which we fix once and for all.

However, this torus action is actually redundant: it acts by the factorization through the morphism $\exp \beta: \bT \to \bA$. The actual non-redundant equivariant parameters are functions on the quotient torus $\bA$, or in other words, monomials in $a_1, \cdots, a_n$ that vanish on the kernel $\bK$.

Every choice of basis on $\ZZ^d$, or equivalently, every presentation of the map $\beta$, defines a particular choice of coordinates on $\bA$. For example, let $\bp$ be a vertex in the hyperplane arrangement $\cH$. If we choose the presentation $\beta = (-C , I)$ in the standard $\bp$-frame, the corresponding choice of coordinates would be
$$
\alpha_i (\bp) :=  a_i \prod_{j\not\in \bp} a_j^{-C_{ij}}, \qquad i\in \bp.
$$

The same happens for the K\"ahler parameters. Consider the dual exact sequence of tori
$$
\xymatrix{
	1 \ar[r] & \bA^\vee \ar[r]^-{\exp \beta^\vee} & \bT^\vee \ar[r]^-{\exp \iota^\vee} & \bK^\vee \ar[r] & 1.
}
$$
We fix coordinates on $z_1, \cdots, z_n$ on $\bT^\vee$ once and for all, as the ``redundant" K\"ahler parameters. Then the non-redundant K\"ahler parameters are coordinates on the quotient torus $\bK^\vee$, or equivalently, monomials in $z_1, \cdots, z_n$, vanishing on the dual kernal $\bA^\vee$.

Every choice of basis on $\ZZ^k$, or equivalently, every presentation of the map $\iota$, defines a particular choice of coordinates on $\bK^\vee$. For the standerd $\bp$-frame, $\iota = \begin{pmatrix}
I \\
C
\end{pmatrix}$, there is a particular choice of K\"ahler parameters, or in other words,  a choice of representatives for the K\"ahler parameters
$$
\zeta_j (\bp) := z_j \prod_{i\in \bp} z_i^{C_{ij}}, \qquad j\not\in \bp.
$$

The vertex function for $X$, once defined, will be a $K_\TT (X)$-valued function, over the product of the dual K\"ahler torus $\bK^\vee$ and the equivariant torus $\bA$ (and furthermore, certain partial compactifications of them). In particular, for any effective curve $\beta$, with associated circuit $S = S^+ \sqcup S^-$, the monomial
$$
z^\beta := \prod_{i\in S^+} z_i \prod_{i\in S^-} z_i^{-1}
$$
is a well-defined function on the quotient torus $\bK^\vee$. Similarly, for any root $\alpha$ \footnote{Unfortunately, we use $\alpha$ for both roots and equivariant parameters. To distinguish them, an equivariant parameter will always be followed with a dependence on its associated fixed $\bp$, e.g., $\alpha_i (\bp)$.}, with associated cocircuit $R = R^+ \sqcup R^-$, the monomial
$$
a^\alpha := \prod_{i\in R^+} a_i \prod_{i\in R^-} a_i^{-1}
$$
is a well-defined function on the quotient torus $\bA$.

\subsection{Polarization}

The notion of polarization, although not essentially, is important for our formation of elliptic stable envelopes and vertex functions.

\begin{Definition} \label{Defn-pol}
Let $X$ be a hypertoric variety.
\begin{enumerate}[1)]
	
	\setlength{\parskip}{1ex}

\item A collection of $K$-theoretic classes $T^{1/2}_{\bp} \in K_\TT (\bp)$, for $\bp\in X^\TT$, is called a \emph{localized polarization}, if it satisfies
$$
T^{1/2}_\bp + \hbar^{-1} (T_\bp^{1/2})^\vee = T_X |_\bp  \in K_\TT (\bp), \qquad \bp \in X^\TT.
$$

\item A localized polarization is called a \emph{global polarization}, or simply a polarization, if it comes from a global $K$-class, i.e., there exists $T_X^{1/2} \in K_\TT (X)$, such that $T^{1/2}_\bp = T_X^{1/2} |_\bp$.

\end{enumerate}

\end{Definition}

Usually a polarization will be given as a Kirwan lift in the $K_\TT (\fX)$, i.e., as a Laurent polynomial in the Chern roots $x_i$'s.

\vspace{3ex}

\section{Elliptic cohomology and stable envelopes}

\subsection{Equivariant elliptic cohomology for hypertoric varieties}

Let $q\in \CC^*$ be a complex number, with $|q|<1$, and let $E := \CC^* / q^\ZZ$ be the elliptic curve, with modular parameter $q$. Equivariant elliptic cohomology is a covariant functor that associates to every $\TT$-variety a scheme $\Ell_\TT (X)$. In this section, we describe explicitly the $\TT$-equivariant elliptic cohomology and its extended version of the hypertoric variety $X$. For general definitions of equivariant elliptic cohomology, we refer the readers to \cite{ell1,ell2,ell3,ell4,ell5,ell6}.

For $X = \pt$, the equivariant elliptic cohomology is the abelian variety
$$
\cE_\TT := \Ell_\TT (\pt) = E^{\dim \TT},
$$
the coordinates on which we refer to as the \emph{elliptic equivariant parameters}, and by abuse of notation, still denote by $a_1, \cdots, a_n$ and $\hbar$. Let $\bS (X) := E^k$, whose coordinates we call \emph{elliptic Chern roots}, and still denote by $s_1, \cdots, s_k$.

Let $X$ be a hypertoric variety, which is in particular, a GKM variety. The explicit description of equivariant $K$-theory $K_\TT (X)$ can be generalized to the elliptic setting, hence the following diagram
$$
\xymatrix{
	\Ell_\TT(X) \ar[d]  \ar@{^{(}->}[r] &  \bS (X) \times \cE_\TT \\
	\cE_\TT .
}
$$
Again, $\Ell_\TT (X)$ is a closed subvariety in the ambient space $\bS (X) \times \cE_\TT$, finite over $\cE_\TT$, with simple normal crossing singularities.

By $\TT$-localization, the irreducible components of $\Ell_\TT (X)$ are parameterized by the fixed point set $X^\TT$, and each of them is isomorphic to the base $\cE_\TT$. We denote by $\Or_\bp$ the irreducible component corresponding to a fixed point $\bp \in X^\TT$, and call it an \emph{orbit}. The fact that $X$ is a GKM variety implies that orbits are glued in a very nice way to form the scheme $\Ell_\TT (X)$:

\begin{Proposition}
	We have
	$$
	\Ell_{\TT}(X)=\Big(\coprod\limits_{\bp \in X^{\TT}}\, \Or_\bp \Big) /\Delta,
	$$	
	where $/\Delta$ denotes the intersections of $\TT$-orbits $\Or_\bp$ and $\Or_\bq$
		along the hyperplanes
	$$
	\Or_\bp \supset \chi^{\perp}_{C} \subset \Or_\bq,
	$$
	for all $\bp$ and $\bq$ connected by an equivariant curve $C$, and $\chi_{C}$ is the $\TT$-character of the tangent space $T_\bp C$.
	
The intersections of the orbits $\Or_\bp$ and $\Or_\bq$ are transversal and hence the scheme $\Ell_\TT (X)$ is a variety with simple normal crossing singularities.
\end{Proposition}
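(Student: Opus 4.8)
The plan is to transplant the explicit GKM presentation of $K_\TT(X)$ from (\ref{K(X)}) to the elliptic setting, read off both the irreducible components and their pairwise intersections from the combinatorics of the arrangement $\cH$, and only then promote the set-theoretic picture to an isomorphism of schemes using transversality. First I would record the explicit equations: by the same GKM argument underlying (\ref{K(X)}), $\Ell_\TT(X)\subset\bS(X)\times\cE_\TT=E^k\times E^{\dim\TT}$ is cut out, for every circuit $S=S^+\sqcup S^-$, by the (reduced) divisor $D_S=\bigcup_{i\in S^+}\{x_i=1\}\cup\bigcup_{i\in S^-}\{\hbar x_i=1\}$, where now $x_i=a_i\prod_{j}s_j^{\iota_{ij}}$ is regarded as a homomorphism to $E$; that is, $\Ell_\TT(X)=\bigcap_S D_S$ with its reduced structure. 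Choosing one branch from each $D_S$ and discarding the choices with empty solution locus, the surviving coordinate cosets are indexed by the vertices $\bp$ of $\cH$, i.e.\ by $X^\TT$ — exactly as for $\Spec K_\TT(X)$: in the standard $\bp$-frame (\ref{V-frame-i})--(\ref{V-frame-b}), the component $\Or_\bp$ is the locus $\{x_i=1\ (i\in\cA_\bp^+),\ \hbar x_i=1\ (i\in\cA_\bp^-)\}$, which is the graph of the section $(a,\hbar)\mapsto(s_j(\bp))_j$ of (\ref{restriction-s(V)}) and hence $\Or_\bp\cong\cE_\TT$. This recovers the stated fact that the components of $\Ell_\TT(X)$ are the orbits $\Or_\bp$.

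Next I would compute $\Or_\bp\cap\Or_\bq$ for $\bp\ne\bq$. When $\bp$ and $\bq$ are adjacent vertices, write $\bp=(\bq\setminus\{j\})\sqcup\{i\}$, $\bq=(\bp\setminus\{i\})\sqcup\{j\}$, and let $C=C_{\bp\bq}$ be the unique invariant curve joining them, with circuit $S_{\bp\bq}=\bp\sqcup\{j\}=\bq\sqcup\{i\}$. Then $\cA_\bp$ and $\cA_\bq$ share $k-1$ indices, on which the defining equations of $\Or_\bp$ and $\Or_\bq$ coincide (because no $H_m$ with $m\notin S_{\bp\bq}$ separates $\bp$ from $\bq$, so the signs $\cA_\bp^\pm$ and $\cA_\bq^\pm$ agree there), and $\Or_\bp\cap\Or_\bq$ is cut out inside $\Or_\bp$ by the single extra equation indexed by $i\in\cA_\bq$. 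Writing $x_i|_\bp$ out via (\ref{restriction-V}) and matching against Lemma~\ref{bridge}(ii), that extra equation is precisely $\chi_C=1$ with $\chi_C=T_\bp C$; symmetrically, inside $\Or_\bq$ it reads $\chi_C^{-1}=1$. Hence $\Or_\bp\cap\Or_\bq=\chi_C^\perp$, sitting as a hyperplane in both $\Or_\bp$ and $\Or_\bq$, exactly as in the statement. If $\bp$ and $\bq$ are not adjacent, the two systems are incompatible and $\Or_\bp\cap\Or_\bq=\emptyset$.

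Finally, the intersection $\bigcap_S D_S$ is transversal — the elliptic shadow of $X$ being a GKM variety, checked exactly as for (\ref{K(X)}): at a point lying on $\Or_{\bp^{(1)}},\dots,\Or_{\bp^{(r)}}$ the corresponding coordinate cosets meet transversally. Thus $\Ell_\TT(X)$ has simple normal crossing singularities, i.e.\ étale-locally it is a union of coordinate subspaces, and such a reduced scheme is canonically the colimit (gluing) of its irreducible components along their mutual intersections. Combining this with the two previous steps, which identify the components with the $\Or_\bp$ and their intersections with the hyperplanes $\chi_C^\perp$ over equivariant curves $C$, yields
$$\Ell_\TT(X)=\Big(\coprod_{\bp\in X^\TT}\Or_\bp\Big)\big/\Delta$$
as schemes, with the asserted $\Delta$ and SNC singularities.

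I expect the main obstacle to be the negative half of the intersection computation — that \emph{non-adjacent} vertices give \emph{disjoint} orbits, which is what pins the gluing combinatorics down to the GKM graph of $X$ rather than to something coarser; this rests on the simplicity and unimodularity of $\cH$, equivalently on the transversality already invoked for $K_\TT(X)$, and requires a genuine look at the arrangement (e.g.\ that a circuit never lies inside a vertex set $\bp$, and that the sign splittings are compatible). One could instead black-box this by quoting the general principle that the equivariant elliptic cohomology of a GKM variety is the GKM scheme of its one-skeleton; then only the hypertoric-specific inputs remain, namely the identification of vertices and edges of $\cH$ with $X^\TT$ and adjacent pairs, together with the edge-weights $T_\bp C$ of Lemma~\ref{bridge}, and the fact — transparent from the equations above — that the resulting scheme structure is reduced with simple normal crossings.
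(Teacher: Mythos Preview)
The paper states this proposition without proof, treating it as a direct consequence of the general GKM description of equivariant elliptic cohomology together with the explicit presentation (\ref{K(X)}); so there is no ``paper's own proof'' to compare against, and your approach of reading off the components and their gluing directly from the circuit equations is exactly the intended picture.

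Your argument is essentially correct but contains one false assertion. The claim that $\Or_\bp\cap\Or_\bq=\emptyset$ for non-adjacent $\bp,\bq$ is wrong: already for $T^*\PP^1\times T^*\PP^1$ (the square arrangement, $n=4$, $k=2$) the two diagonal fixed points $\bp=\{1,3\}$ and $\bq=\{2,4\}$ have $\Or_\bp\cap\Or_\bq$ equal to a nonempty codimension-two coset in $\cE_\TT$, cut out by conditions of the form $a_1/a_2=\hbar$ and $a_3/a_4=\hbar$. What \emph{is} true, and what you actually need, is that $\Or_\bp\cap\Or_\bq$ has codimension $\geq 2$ in each orbit unless $\bp$ and $\bq$ are adjacent --- immediate, since $\cA_\bp$ and $\cA_\bq$ then differ in at least two indices --- so that the codimension-one strata of the SNC scheme $\Ell_\TT(X)$ are precisely the edge hyperplanes $\chi_C^\perp$. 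Once you know this, the colimit along only the edge gluings already recovers the full scheme: for an SNC variety, gluing the irreducible components along their codimension-one intersections automatically identifies the higher-codimension intersections via chains of adjacent components (concretely, $\Or_\bp\cap\Or_\bq$ sits inside $\Or_\bp\cap\Or_{\bp''}$ for any $\bp''$ adjacent to $\bp$ on a path to $\bq$, and is then carried to $\Or_\bq$ through the intermediate gluings). So drop the disjointness claim, replace it by ``codimension $\geq 2$ when not adjacent'', and your proof goes through; the alternative you mention --- black-boxing the GKM-scheme principle --- is of course equivalent and is effectively what the paper does.
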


Let $\cE_{\bT^\vee} := E^n$ \footnote{In \cite{AOelliptic}, this is called $\cE_{\Pic_\bT (X)}$.} be the space whose coordinates we call the \emph{elliptic K\"ahler parameters} and still denote by $z_1, \cdots, z_n$. The \emph{extended equivariant elliptic cohomology} of $X$ is defined to be the product
$$
\textsf{E}_\TT (X) := \Ell_\TT (X) \times \cE_{\bT^\vee}.
$$
It admits the same structure as above $\textsf{E}_\TT (X) = (\coprod\limits_{\bp \in X^{\TT}}\, \widehat\Or_\bp ) /\Delta$, with $\widehat\Or_\bp := \Or_\bp \times \cE_{\bT^\vee}$ and $\Delta$ the same as above.

\begin{Remark}
	Recall that in (\ref{V-frame-i}) and (\ref{V-frame-b}), we can choose different presenation of $\iota$ and $\beta$, for different fixed points $\bp$. That means for different orbits $\Or_\bp$, we have chosen different coordinates $s_1, \cdots, s_k$ on the ambient space $\bS (X)$.
\end{Remark}

\subsection{Elliptic functions}

We define the theta function associated with $E$ explicitly by:
$$
\vartheta (x) := (x^{1/2} - x^{-1/2} ) \prod_{d=1}^\infty (1 - q^d x) \prod_{d=1}^\infty ( 1- q^d x^{-1} ), \qquad x \in \CC^*.
$$
Note that $\vartheta (1) = 0$ and $\vartheta (qx) = - q^{-1/2} x^{-1} \vartheta (x)$, which means that $\vartheta (x)$ defines a section of a line bundle of degree one on the elliptic curve $E$. It will be convenient to describe sections for line bundles on product of elliptic curves in terms of theta functions.

For a sum of variables $\sum_i x_i$, we denote
$$
\Theta \Big( \sum_i x_i \Big) := \prod_i \vartheta (x_i).
$$

\subsection{Elliptic stable envelopes}

Recall that $\bA= (\CC^*)^n / (\CC^*)^k$ is the quotient torus. Since $\bK = (\CC^*)^k$ acts on $X$ trivially, $\bA$ is the actual non-redundant torus acting on $X$. Equivariant parameters on $\bA$ can be viewed as functions on $(\CC^*)^n$ that vanish on $(\CC^*)^k$. As in Section \ref{sec-para}, For a given fixed point $\bp \in X^\TT$ and the correponding standard $\bp$-frame, there is a convenient choice of coordinates of equivariant parameters:
$$
\alpha_i (\bp) = a_i \prod_{j\not\in \bp} a_j^{-C_{ij}}, \qquad i\in \bp .
$$

The elliptic stable envelope depends on the  choice of a polarization (see Definition \ref{Defn-pol}). In this section, we choose the polarization as $T^{1/2}_X = \sum_{i=1}^n L_i - \cO^{\oplus k}$, or $\sum_{i=1}^n x_i - k$, written in terms of Chern roots.

One also needs to choose an element $\sigma \in \ZZ^d$, which determines a cocharacter $\sigma: \CC^* \to \bA$. Let $\widetilde\sigma \in \ZZ^n$ be a lift of $\sigma$. We assume that $\sigma$ is chosen generically, such that the fixed point set $X^\sigma$ under the 1-dimension subtorus action by $\sigma$ is the same as $X^\TT$.

The choice of $\sigma$ determines a splitting $\bp = \bp^+ \sqcup \bp^-$, where
\begin{equation} \label{split-V}
\bp^+ := \{ i \in \bp \mid \langle \alpha_i (\bp)  , \sigma \rangle >0 \},
\end{equation}
and similar with $\bp^-$.

We denote by
$$
\Attr_\sigma (\bp) := \{ x\in X \mid \lim_{t\to \infty} \sigma (t) \cdot x = \bp \}
$$
the attracting set of the fixed point $\bp$. The full attracting set $\Attr^f_\sigma (\bp)$ is defined to be the minimal closed subset of $X$ which contains $\bp$ and is closed under taking $\Attr_\sigma(\cdot)$

The attracting set has the following description, originally given \cite{She}. Recall that the hyperplane arrangement $\cA$ associated with $X$ gives a collection of affine hyperplanes $H_i$ in $\RR^d$. The space $\RR^d$ is then divided into polytopes by $H_i$'s, where each polytope corresponds to a $\TT$-invariant lagrangian submanifold in $X$. Each fixed point $\bp$ is identified with a vertex, which is the intersection $\bigcap_{i\in \bp} H_i$. Given the chosen cocharacter $\sigma$, consider the cone defined as
$$
\bigcap_{i\in \bp^+} H_i^+ \cap \bigcap_{i\in \bp^-} H_i^-.
$$
The attacting set $\Attr_\sigma (\bp)$ is then the union of lagrangians corresponding to the polytopes in this cone. In particular, we have:

\begin{Lemma} \label{Attr}
Given $\bp, \bq \in X^\TT$,  $\bq \in \overline{\Attr_\sigma (\bp)}$ if and only if $\bq \in \bigcap_{i\in \bp^+} H_i^+ \cap \bigcap_{i\in \bp^-} H_i^-$, which is also equivalent to $\cA_\bq^- \cap \bp^+ = \cA_\bq^+ \cap \bp^- = \emptyset$.
\end{Lemma}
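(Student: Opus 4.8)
The plan is to prove the two asserted equivalences in turn. For the first, that $\bq \in \overline{\Attr_\sigma(\bp)}$ is equivalent to $\bq \in \bigcap_{i\in\bp^+} H_i^+ \cap \bigcap_{i\in\bp^-} H_i^-$, I would invoke the combinatorial description of $\Attr_\sigma(\bp)$ recalled just above, due to \cite{She}. Write $\cC := \bigcap_{i\in\bp^+} H_i^+ \cap \bigcap_{i\in\bp^-} H_i^-$, a closed polyhedral cone in $\RR^d$ whose facet hyperplanes lie among the $H_i$, $i\in\bp$. By that description $\Attr_\sigma(\bp) = \bigcup_{\Delta\subseteq\cC} X_\Delta$, the union over full-dimensional chambers $\Delta$ of $\cH$ contained in $\cC$ of the associated closed $\TT$-invariant lagrangian toric subvarieties $X_\Delta$; being a finite union of closed subvarieties, this is already closed, so $\overline{\Attr_\sigma(\bp)} = \Attr_\sigma(\bp)$. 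Under the conventions fixed above, a fixed point $\bq\in X^\TT$ is identified with the vertex $\bigcap_{i\in\bq} H_i$ of $\cH$, and $\bq$ lies on $X_\Delta$ exactly when this vertex is a vertex of $\overline\Delta$.

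It therefore suffices to show that $\bq$ is a vertex of $\overline\Delta$ for some chamber $\Delta\subseteq\cC$ if and only if $\bq\in\cC$. The ``only if'' direction is immediate, since $\Delta\subseteq\cC$ with $\cC$ closed forces $\overline\Delta\subseteq\cC$, hence $\bq\in\cC$. For ``if'', suppose $\bq\in\cC$. Because $X$ is smooth, $\cH$ is simple, so no hyperplane $H_i$ with $i\notin\bq$ passes through $\bq$; hence the $2^d$ chambers of $\cH$ incident to $\bq$ are indexed by a choice of side of $H_j$ for each $j\in\bq$. Membership in $\cC$ forces the side of $H_i$ for $i\in\bp^+\cap\bq$ (resp. $\bp^-\cap\bq$) to be $H_i^+$ (resp. $H_i^-$), while the side of $H_j$ remains free for $j\in\bq\setminus\bp$; in particular at least one such chamber $\Delta$ lies in $\cC$, whence $\bq\in X_\Delta\subseteq\Attr_\sigma(\bp)$. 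This settles the first equivalence.

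For the second equivalence I would unwind the definition of $\cA_\bq^\pm$. Using simplicity again, for $i\notin\bq$ the vertex $\bq$ lies strictly in one of the two open half-spaces of $H_i$, so $i$ belongs to exactly one of $\cA_\bq^+$, $\cA_\bq^-$, whereas for $i\in\bq$ one has $\bq\in H_i\subseteq H_i^\pm$ automatically. Hence $\bq\in\bigcap_{i\in\bp^+} H_i^+$ is equivalent to requiring $i\in\cA_\bq^+$ for every $i\in\bp^+\setminus\bq$, i.e. to $\bp^+\cap\cA_\bq^- = \emptyset$; symmetrically $\bq\in\bigcap_{i\in\bp^-} H_i^-$ is equivalent to $\bp^-\cap\cA_\bq^+=\emptyset$. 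Combining the two conditions gives $\bq\in\cC$ if and only if $\cA_\bq^-\cap\bp^+ = \cA_\bq^+\cap\bp^- = \emptyset$, as claimed.

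The only point that requires any care is the ``if'' direction of the first equivalence: showing that a vertex lying on the \emph{boundary} of $\cC$ — which happens precisely when some $H_i$ with $i\in\bp\cap\bq$ passes through it — is nevertheless a vertex of a chamber entirely contained in $\cC$; this is exactly where smoothness (simplicity) of $\cH$ is used. I would also double-check that the orientation convention fixing $\bp^\pm$ in (\ref{split-V}) is the one implicit in the description of $\cC$ in the paragraph preceding the lemma, so that no sign is inadvertently reversed; the remainder is routine bookkeeping with the three splittings $H_i^\pm$, $\cA_\bq^\pm$, $\bp^\pm$.
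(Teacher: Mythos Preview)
Your argument is correct and follows the same route the paper takes: the paper does not give a separate proof of this lemma at all, but states it as an immediate consequence (``In particular, we have:'') of Shenfeld's description of $\Attr_\sigma(\bp)$ as the union of the lagrangians $X_\Delta$ indexed by chambers $\Delta$ contained in the cone $\cC=\bigcap_{i\in\bp^+}H_i^+\cap\bigcap_{i\in\bp^-}H_i^-$. You have simply filled in the details of that ``in particular'', and the bookkeeping for the second equivalence via $\cA_\bq^\pm$ is exactly what is intended.

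One small caveat: your assertion that $\Attr_\sigma(\bp)$ is already closed relies on reading the paper's sentence literally. Under the strict definition $\Attr_\sigma(\bp)=\{x:\lim_{t\to\infty}\sigma(t)\cdot x=\bp\}$ given two paragraphs earlier, the attracting set is a single Bia\l{}ynicki--Birula cell (an affine space), not the full union of closed lagrangians; the union in Shenfeld's description is really $\overline{\Attr_\sigma(\bp)}$. This does not affect your proof---both directions go through verbatim for the closure---but you may want to phrase that step as ``$\overline{\Attr_\sigma(\bp)}=\bigcup_{\Delta\subseteq\cC}X_\Delta$'' rather than claiming $\Attr_\sigma(\bp)$ itself is closed.
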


The polarization, when restricted to a fixed point, decomposes into three parts $T^{1/2}_X |_\bp = T^{1/2}_X |_\bp^+ + T^{1/2}_X |_\bp^\bA + T^{1/2}_X |_\bp^-$, whose characters are positive, zero and negative with respect to $\sigma$. For our choice of polarization $T^{1/2}_X = \sum_{i=1}^n x_i - k$, we see that $T^{1/2}_X |_\bp^A = \sum_{j\not\in \bp} x_j |_\bp$, $T^{1/2}_X |_\bp^\pm = \sum_{i\in \bp^\pm} x_i |_\bp$.

The elliptic stable envelope \cite{AOelliptic} associates each fixed point $\bp$ with a section $\Stab_\sigma (\bp)$ of a certain explicit line bundle $\mathcal{T} (\bp)$ on the orbit $\widehat\Or_{\bp} \subset \mathsf{E}_\bT (X)$, which is the  pull-back of a line bundle $\cT^{os} (\bp)$ on the ambient abelian variety $\mathsf{S}(X) \times \cE_\TT \times \cE_{\bT^\vee}$ along the elliptic Chern class map $\mathsf{E}_\TT (X) \to \mathsf{S}(X) \times \cE_\TT \times \cE_{\bT^\vee}$. The line bundle is uniquely determined by the $q$-quasi-periods of its sections, which can be read off from the explicit formula for $\Stab_\sigma (\bp)$ we give in Theorem \ref{Stab-formula}.

\begin{Theorem}[\cite{AOelliptic}]
There exists a unique section $\Stab_\sigma (\bp)$ of $\cT(\bp)$, holomorphic over $\cE_\TT$, meromorphic over $\cE_{\bT^\vee}$ and in $\hbar$, satisfying the following properties:
\begin{enumerate}[(i)]
\setlength{\parskip}{1ex}

\item $\Supp (\Stab_\sigma (\bp)) \subset \Attr^f_\sigma (\bp)$;

\item its restriction to $\bp$ is
$$
\Stab_\sigma (\bp) |_\bp = (-1)^{|\bp^+|} \Theta (N_\bp^-) = \prod_{i\in \bp^+} \vartheta (\hbar x_i |_\bp) \prod_{i\in \bp^-} \vartheta (x_i |_\bp) ,
$$
where $N_\bp^-$ is the negative half in the decomposition $T_\bp X = N_\bp^+ + N_\bp^-$ which pairs with $\sigma$ negatively.

\end{enumerate}
\end{Theorem}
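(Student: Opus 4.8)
The plan is to prove existence and uniqueness by separate arguments, both leaning on the GKM/combinatorial description of $X$. For uniqueness, suppose $s$ and $s'$ both satisfy (i)--(ii); then $s-s'$ is a section of $\cT(\bp)$, holomorphic over $\cE_\TT$, with $(s-s')|_\bp=0$ and support inside $\Attr^f_\sigma(\bp)$. I would order $X^\TT$ by the relation $\bq\preceq\bp$ iff $\bq\in\overline{\Attr_\sigma(\bp)}$, made combinatorially explicit by Lemma~\ref{Attr}, and argue by downward induction along this order. On each orbit $\Or_\bq$ the restriction of $s-s'$ is a section of the restricted line bundle, holomorphic in the $\cE_\TT$-variables, whose $q$-quasi-periods --- read off from the defining theta-monomial of $\cT(\bp)$ --- confine it to a finite-dimensional space of theta functions; the gluing along the hyperplanes $\chi_C^\perp$ against the restrictions to the larger orbits $\Or_{\bq'}$, $\bq\prec\bq'$ (which by induction are zero), together with the support condition, impose enough prescribed-vanishing constraints to cut this space down to at most a line, pinned by the value at $\bp$, which is $0$. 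Hence $s-s'=0$ and, starting the induction at $\bq=\bp$, $s=s'$.

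For existence, the natural route in the hypertoric case is to exhibit the explicit candidate of Theorem~\ref{Stab-formula}: a product of theta functions $\vartheta$ whose arguments are monomials in the elliptic Chern roots $s_j$, the equivariant parameters $a_i$, the K\"ahler parameters $z_i$ and $\hbar$, with the combinatorics governed by the hyperplane arrangement and the splitting $\bp=\bp^+\sqcup\bp^-$, and then to verify the three requirements. One must check: that the expression descends to a section over $\mathsf{E}_\TT(X)=\big(\coprod_\bq\widehat\Or_\bq\big)/\Delta$, i.e. that its restrictions to adjacent orbits agree along $\chi_C^\perp$ for every equivariant curve $C$, which is a GKM compatibility reducing, via Lemma~\ref{bridge} and the functional equation $\vartheta(qx)=-q^{-1/2}x^{-1}\vartheta(x)$, to matching quasi-periods and specializations; that $\Supp(s)\subset\Attr^f_\sigma(\bp)$, by showing the restriction to $\Or_\bq$ vanishes whenever $\cA_\bq^-\cap\bp^+\neq\emptyset$ or $\cA_\bq^+\cap\bp^-\neq\emptyset$ (Lemma~\ref{Attr}), in which case one theta factor specializes to $\vartheta(1)=0$; and the normalization (ii), by restricting to $\bp$ itself, where the Chern roots go to $s_j(\bp)$ and all off-diagonal factors collapse to $\prod_{i\in\bp^+}\vartheta(\hbar x_i|_\bp)\prod_{i\in\bp^-}\vartheta(x_i|_\bp)$, up to the sign $(-1)^{|\bp^+|}$ from $\vartheta(x^{-1})=-\vartheta(x)$.

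I expect the main obstacle to be the gluing step in the existence argument: along an equivariant curve $C$ joining $\bp$ to $\bq$, the two relevant orbits carry different standard frames --- hence different coordinates $s_j$ on $\bS(X)$ and different presentations of $\iota$ and $\beta$ --- so confirming that the two theta-monomials restrict to the same section along $\chi_C^\perp$ requires carefully tracking the change of frame together with the quasi-periodicity of $\vartheta$; controlling the poles in the K\"ahler and $\hbar$ variables in the uniqueness degree count is a secondary technical point. Once the explicit formula is confirmed to be a global section with the prescribed support and diagonal restriction, the uniqueness part identifies it with the abstract $\Stab_\sigma(\bp)$ and completes the proof.
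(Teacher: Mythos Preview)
The paper does not prove this theorem: it is quoted from \cite{AOelliptic} and no proof is given in the text. What the paper \emph{does} prove is the subsequent Theorem~\ref{Stab-formula}, the explicit theta-monomial formula, and the proof there consists of a single sentence: ``It suffices to check that the given section satisfies (i) and (ii), which follow from Lemma~\ref{Attr} and direct computation.'' So your existence argument --- write down the explicit candidate and verify (i) and (ii) --- is exactly the paper's approach to Theorem~\ref{Stab-formula}, while your uniqueness argument (downward induction along the attracting order, degree count on each orbit) is a reasonable reconstruction of what Aganagic--Okounkov do in general, but is not reproduced here.

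One correction to your plan: the ``main obstacle'' you anticipate, the gluing step, is not actually an obstacle. The formula in Theorem~\ref{Stab-formula} is written as a product of theta functions in the \emph{global} Chern roots $x_i$ (equivalently, in the $s_j$ on $\bS(X)$), the equivariant parameters $a_i$, the K\"ahler parameters, and $\hbar$; it is therefore manifestly a section of a line bundle on the ambient abelian variety $\bS(X)\times\cE_\TT\times\cE_{\bT^\vee}$, and one obtains a global section of $\cT(\bp)$ on $\mathsf{E}_\TT(X)$ simply by pulling back along the elliptic Chern class map. There is no need to write down a separate expression on each orbit $\widehat\Or_\bq$ and match them along $\chi_C^\perp$ --- that compatibility is automatic for a pullback. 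Your worry about tracking different standard $\bp$-frames across adjacent orbits is therefore moot: one fixes the $\bp$-frame once, writes the formula in those coordinates, and is done. With this simplification, existence reduces to the two checks the paper actually performs (support via Lemma~\ref{Attr}, normalization by direct restriction), and your outline is otherwise sound.
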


We have the following explicit formula for the elliptic stable envelope. Note that in our hypertoric case, $\Stab_\sigma(\bp)$ is actually supported on $\overline{\Attr_\sigma (\bp)}$, which is stronger than the general definition. 

\begin{Theorem} \label{Stab-formula}
The stable envelope $\Stab_\sigma (\bp)$ is the pull-back along $\mathsf{S}(X) \times \cE_\TT \times \cE_{\bT^\vee}$ of the following section
$$
\prod_{i \in \bp^+} \vartheta ( \hbar x_i ) \prod_{i \in \bp^-} \vartheta (x_i ) \prod_{j \in \cA_\bp^+} \dfrac{\vartheta \Big( x_j \zeta_j (\bp) \hbar^{- \sum_{i \in \bp^+} C_{ij}} \Big) }{\vartheta \Big( \zeta_j (\bp) \hbar^{- \sum_{i \in \bp^+} C_{ij}}  \Big) } \prod_{j \in \cA_\bp^-} \dfrac{\vartheta \Big( x_j \zeta_j (\bp) \hbar^{- \sum_{i \in \bp^+} C_{ij}} \Big) }{\vartheta \Big( \hbar^{-1} \zeta_j (\bp) \hbar^{- \sum_{i \in \bp^+} C_{ij} }  \Big) }
$$
of the line bundle $\cT^{os} (\bp)$.
\end{Theorem}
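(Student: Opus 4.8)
The plan is to verify that the explicit product of theta functions displayed in the statement satisfies the three defining properties of the elliptic stable envelope from the previous theorem: the support condition $\Supp \subset \Attr^f_\sigma(\bp)$ (in fact the stronger $\overline{\Attr_\sigma(\bp)}$), the correct diagonal restriction $\Stab_\sigma(\bp)|_\bp$, and the fact that it is a section of the line bundle $\cT^{os}(\bp)$ with the prescribed $q$-quasi-periods. By the uniqueness half of the previous theorem, checking these three properties identifies the candidate with $\Stab_\sigma(\bp)$.

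First I would set up notation using the standard $\bp$-frame, so that the Chern roots split as $\{x_i\}_{i\in\bp}$ (the ``vertex'' roots, which are honest coordinates on the orbit $\Or_\bp$ via $x_i|_\bp = a_i\prod_{j\notin\bp} a_j^{-C_{ij}}\hbar^{-\sum_{j\in\bp^{c-}}C_{ij}}$ by \eqref{restriction-V}) and $\{x_j\}_{j\notin\bp}$ (which restrict to $1$ or $\hbar^{-1}$ on $\bp$ by \eqref{restriction}). Then I would check the restriction property: restricting the candidate section to $\Or_\bp$ means substituting $x_j|_\bp = 1$ for $j\in\cA_\bp^+$ and $x_j|_\bp = \hbar^{-1}$ for $j\in\cA_\bp^-$. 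In each of the two products over $\cA_\bp^\pm$, the numerator $\vartheta(x_j\,\zeta_j(\bp)\,\hbar^{-\sum_{i\in\bp^+}C_{ij}})$ then collapses to exactly the denominator, so every such factor becomes $1$. What survives is precisely $\prod_{i\in\bp^+}\vartheta(\hbar x_i|_\bp)\prod_{i\in\bp^-}\vartheta(x_i|_\bp)$, which is $(-1)^{|\bp^+|}\Theta(N_\bp^-)$ by the sign rule $\vartheta(\hbar x) = -\vartheta(\hbar x)$... more precisely using $T_\bp X|_\bp = \sum_{i\in\bp}(x_i + \hbar^{-1}x_i^{-1})|_\bp$ together with Lemma \ref{bridge}(ii) to identify which summands are $N_\bp^+$ versus $N_\bp^-$. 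This matches property (ii).

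Next, the support property. By the GKM gluing of $\textsf{E}_\TT(X)$, a section vanishes on $\Or_\bq$ unless the restriction of the candidate to $\Or_\bq$ is nonzero, and by Lemma \ref{Attr}, $\bq\notin\overline{\Attr_\sigma(\bp)}$ exactly when $\cA_\bq^-\cap\bp^+\neq\emptyset$ or $\cA_\bq^+\cap\bp^-\neq\emptyset$. So I would show: if, say, there is $i\in\bp^+\cap\cA_\bq^-$, then restricting the candidate to $\Or_\bq$ forces $x_i|_\bq = \hbar^{-1}$ (from \eqref{restriction}), and the factor $\vartheta(\hbar x_i)$ appearing in $\prod_{i\in\bp^+}\vartheta(\hbar x_i)$ becomes $\vartheta(1) = 0$; symmetrically for $i\in\bp^-\cap\cA_\bq^+$, the factor $\vartheta(x_i)$ vanishes. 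Hence the candidate restricts to zero on every orbit $\Or_\bq$ outside $\overline{\Attr_\sigma(\bp)}$, giving the support statement — and since the candidate is visibly a product of theta functions in the Chern roots and equivariant/Kähler parameters, its divisor of zeros is contained in a union of such diagonal hyperplanes, which is the combinatorial content of being supported on $\overline{\Attr_\sigma(\bp)}$.

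Finally, I would verify that the candidate is a well-defined \emph{section} of a line bundle on $\textsf{E}_\TT(X)$ — that is, that it glues across the normal-crossing intersections $\chi_C^\perp$ of orbits, and that its $q$-shifts in the variables $s_j$, $a_i$, $z_i$, $\hbar$ reproduce the automorphy factors defining $\cT^{os}(\bp)$. The balancing of theta numerators and denominators in each $\cA_\bp^\pm$ factor is designed precisely so that the $q$-quasi-periodicity $\vartheta(qx) = -q^{-1/2}x^{-1}\vartheta(x)$ produces factors depending only on the equivariant and Kähler parameters (not on the Chern roots in a way that would obstruct descent), so this is a bookkeeping computation tracking, for each shift, the resulting multiplier and matching it to $\cT^{os}(\bp)$. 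The main obstacle, and the step requiring the most care, is this last one: correctly identifying the line bundle $\cT^{os}(\bp)$ and checking the gluing/quasi-periodicity across \emph{all} pairs of adjacent orbits simultaneously — in particular confirming that the $\hbar$-dependent exponents $\hbar^{-\sum_{i\in\bp^+}C_{ij}}$ in the arguments are exactly what is needed for the shifted section on $\Or_\bp$ to agree with the shifted section on a neighboring $\Or_\bq$ along $\chi_C^\perp$, using the transition between the standard $\bp$-frame and the standard $\bq$-frame. Once this consistency is established, uniqueness from the cited theorem of Aganagic--Okounkov closes the argument.
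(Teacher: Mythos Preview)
Your proposal is correct and follows essentially the same approach as the paper: verify properties (i) and (ii) of the Aganagic--Okounkov characterization by direct computation, using Lemma \ref{Attr} for the support condition and the restriction formula \eqref{restriction-V} for the diagonal, then appeal to uniqueness. The paper's proof is a two-sentence sketch (``The shift of $\hbar$-factors here is exactly determined by the line bundle $\cT^{os}(\bp)$. It suffices to check that the given section satisfies (i) and (ii), which follow from Lemma \ref{Attr} and direct computation''), and your write-up simply unpacks that sketch; in particular, the paper takes the line bundle $\cT^{os}(\bp)$ as \emph{defined} by the $q$-quasi-periods of the displayed formula, so your step~4 (the quasi-periodicity bookkeeping) is not strictly needed as a separate verification.
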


\begin{proof}
The shift of $\hbar$-factors here is exactly determined by the line bundle $\cT^{os} (\bp)$. It suffices to check that the given section satisfies (i) and (ii), which follow from Lemma \ref{Attr} and direct computation.
\end{proof}

\vspace{3ex}


\section{Vertex function and quantum product}

\subsection{Quasimaps and bare vertex}

Recall that the hypertoric variety $X = \mu^{-1}(0) /\!/_\theta \bK = \mu^{-1}(0)^s / \bK$ is a subscheme of the quotient stack $\fX = [\mu^{-1} (0) / \bK ] $. Consider the following definition from \cite{Oko}.

\begin{Definition}
A quasimap from $\PP^1$ to $X$ is a morphism $f: \PP^1 \to \fX$, such that away from a $0$-dimensional subscheme in $\PP^1$, the morphism $f$ maps into the stable locus $X\subset \fX$.
\end{Definition}

Let $L_i$, $1\leq i\leq n$ and $N_j$, $1\leq j \leq k$ be the tautological line bundles\footnote{Here we use the same notations for tautological line bundles on $\fX$ and $X$, since they are defined in the same manner and compatible to each other via the inclusion $X \subset \fX$. } on $\fX$, defined similarly as in previous sections. The datum of a quasimap $f$ is equivalent to the collection of line bundles $f^* N_j$, and sections of $\bigoplus_{i=1}^n L_i \oplus \hbar^{-1} \bigoplus_{i=1}^n L_i^{-1}$, satisfying the moment map equations and stability condition. A point on $\PP^1$ is called a base point for $f$ if it is mapped into the unstable locus $\fX \backslash X$.

For a quasimap $f$, taking the degrees of $f^* N_j$ defines a homomorphism $H^2 (X, \ZZ) \to \ZZ$, or equivalently, a curve class $\deg f \in H_2 (X, \ZZ)$, which we call the degree class of $f$. We will call $\deg f$ \emph{effective}, if it lies in $\Eff (X)$, the monoid of effective curve classes in $X$.

Now we consider the moduli of quasimaps. Let $\Hom(\PP^1, \fX)$ be the moduli stack parameterizing all representable morphisms from $\PP^1$ to $\fX$, which is an Artin stack locally of finite type. Since the domain of quasimaps is fixed, the universal curve is a trivial family over the moduli stack, fitting in the following universal diagram
$$\xymatrix{
	\Hom(\PP^1, \fX) \times \PP^1 \ar[r]^-f \ar[d]_\pi & \fX \\
	\Hom(\PP^1, \fX).
}$$
There is a perfect obstruction theory, with virtual tangent complex given by
$$T_\vir = R^\bullet \pi_* f^* T_\fX.$$
One observes that the obstruction part actually vanishes. Hence $\Hom(\PP^1, \fX)$ is a smooth Artin stack.

Fix $\beta \in \Eff (X)$, and $p_1=0$,  $p_2=\infty \in \PP^1$. Let $\QM(X, \beta)$ be the stack parameterizing quasimaps from $\PP^1$ to $\fX$, which lies in $\Hom(\PP^1, \fX)$ as an open substack, and is of finite type since we fix the degree. Hence $\QM(X, \beta)$ is a Deligne--Mumford stack, equipped with the inherited perfect obstruction theory. The standard construction in \cite{BF, Lee} defines a virtual structure sheaf
$$
\cO_\vir \in K_\TT (\QM(X, \beta)).
$$

By Okounkov \cite{Oko}, it is more natural to twist the obstruction theory by a certain square root line bundle and form a modified virtual structure sheaf.

Let $T^{1/2}_X$ be a fixed global polarization. The modified virtual structure sheaf is defined as:
\begin{equation} \label{defn-cO_vir}
\widehat\cO_\vir:= \cO_\vir \otimes \left( K_\vir \frac{\det f^* \cT^{1/2} \big|_{p_2}}{\det f^* \cT^{1/2} \big|_{p_1}} \right)^{1/2} \in K_\TT (\QM(X, \beta)),
\end{equation}
where $K_\vir:= \det T_\vir^\vee$, and $\cT^{1/2}$ is the tautological bundle associated with a lift of the polarization $T^{1/2}_X$ to $K_\TT (\fX)$. 
The existence of the square root line bundle $K_\vir^{1/2}$ relies on the existence of a polarization on the target $X$, and a spin structure $K_{\PP^1}^{1/2}$ on the domain, which only makes sense if the target is symplectic and the domain is fixed. This crucial modification has the effect of making the obstruction theory equivariantly symmetric.

In order to define invariants, one has to be able to insert $K$-theoretic classes from $X$. Consider the following open substack
$$
\QM(X, \beta)_{\ns\ p_2} \subset \QM(X, \beta)
$$
consisting of those quasimaps for which $p_2\in \PP^1$ is nonsingular, or in other words, \emph{not} a base point. There are evaluation maps
$$
\ev_2: \QM(X, \beta)_{\ns\ p_2} \to X, \qquad \ev_1: \QM(X, \beta)_{\ns\ p_2} \to \fX,
$$
where a quasimap $f$ is mapped to the image $f(p_2)$ or $f(p_1)$. Let $\CC^*_q$ be the torus on $\PP^1$, where $q\in K_{\CC_q^*}(\pt)$ is the character defined by $T_{p_1} \PP^1$.

A general $K$-theoretic invariant one can define is to pair the modified virtual structure sheaf $\widehat\cO_\vir$ with classes pulled back from $X$ or $\fX$ via $\ev_2$ or $\ev_1$, and then push forward to $X$ or alternatively, to $\pt$. As the stack $\QM(X, \beta)_{\ns\ p_2}$ is not proper, but admits proper fixed loci under the action of $\TT \times \CC^*_q$. the push-forward in the last step is only well-defined if we work in the \emph{localized} $\CC^*_q$-equivariant theory, i.e., to work in $K_{\TT \times \CC^*_q} (X)_\loc := K_{\TT \times \CC^*_q} (X) \otimes \CC(q)$.

\begin{Definition}
	Given $\tau\in K_\TT(\fX)$, the \emph{bare vertex function with decendent insertion $\tau$} is defined as
	$$
	V^{(\tau)}(q,z) := \sum_{\beta \in \Eff(X)} z^\beta \ev_{2, *} \left( \QM(X, \beta)_{\ns\ p_2}, \widehat\cO_\vir \cdot \ev_1^* \tau  \right) \quad \in K_{\TT\times \CC^*_q} (X)_\loc [[ z^{\Eff(X)} ]] .
	$$
\end{Definition}

\begin{Remark}
General definitions of stable quasimaps are given in \cite{CKM}. Our definition of quasimaps with fixed domain $\PP^1$ is the special case of Definition 7.2.1 there, as stable quasimaps of genus $0$ to $X$, with one parameterized domain component, and without any marked points.
\end{Remark}

\subsection{Relative quasimaps and capped vertex}

Another version of the vertex function comes from counting the relative quasimaps,  motivated from the relative DT/PT theory. Let $l\geq 0$ be an integer. Consider the following
$$
\PP^1[l]:= \PP^1 \cup \PP^1 \cup \cdots \cup \PP^1,
$$
constructed by attaching a chain of $l$ $\PP^1$'s to the domain $\PP^1$ at the point $\infty$. The newly attached rational curves are called bubbles or rubber components. Let $p_2$ be the point $\infty$ on the last bubble.

\begin{Definition}
A \emph{relative} quasimap to $X$ is a map $f: \PP^1 [l] \to \fX$, for some integer $l\geq 0$, such that it generically maps into the stable locus $X\subset \fX$, and moreover, the point $p_2$ is not a base point.
\end{Definition}

Rather than quasimaps with fixed domain $\PP^1$ in the previous section, relative quasimaps admit nontrivial automorphisms from scaling the bubbles. To describe this construction, we adopt the language from the relative GW/DT/PT theory. For more detailed definitions, see for example \cite{Li, ACFJ, OP, Oko, Zhou}.

By constructions in \cite{Li, ACFJ}, there exists a smooth Artin stack $\cB$, together with a universal famity $\cC$ over $\cB$, parameterizing all possible \emph{extended pairs} of the form $(\PP^1[l], p_2)$, in the following sense. For any geometric point $\Spec \CC \to \cB$, the fiber of the family $\cC \to \cB$ over that point is of the form $\PP^1 [l]$, for some integer $l\geq 0$. Moreover, the automorphism group for this point is $(\CC^*)^l$, acting on the fiber $\PP^1 [l]$ by scaling the $l$ bubbles.

\begin{Definition}
Let $S$ be a scheme. A family of expanded pairs of $(\PP^1, \infty)$ over $S$ is the family $\cC_S \to S$, arising from a Cartesian diagram of the following form
$$
\xymatrix{
	\cC_S \ar[d] \ar[r] & \cC \ar[d] \\
	S \ar[r] & \cB.
}
$$
A \emph{family of relative quasimaps} (with respect to the divisor $\infty \in \PP^1$) is a family of expanded pairs $\pi: \cC_S \to S$, together with a map $f: \cC_S \to \fX$, such that over each geometric point $s\in S$, the fiber gives a relative quasimap.
\end{Definition}

\begin{Definition}
A relative quasimap $f: \PP^1 \to \fX$ is called \emph{stable}, if its automorphism group is finite. Equivalently, it means that the degree of $f$ on each bubble is nontrivial.
\end{Definition}

Let $X_0$ be the affine quotient, defined by $\mu^{-1}(0) /\!/_{\theta = 0} \bK$. Let $\QM (X, \beta)_{\rel \ p_2}$ be the stack parameterizing all stable relative quasimaps to $X$. Standard argument as in relative DT/PT theory shows that it is a DM stack of finite type, proper over $X_0$, and it admits a perfect obstruction theory, relative over the smooth Artin stack parameterizing principal $\bK$-bundles over the fibers of $\cC \to \cB$. After the same twisting as in (\ref{defn-cO_vir}), we have the modified virtual structure sheaf $\widehat\cO_\vir$.

Similarly we have the evaluation maps
$$
\ev_2: \QM(X, \beta)_{\rel \ p_2} \to X, \qquad \ev_1: \QM(X, \beta)_{\rel\ p_1} \to \fX.
$$
However, this time $\ev_2$ is \emph{proper}, and one can work in the \emph{non-localized} $\CC^*_q$-equivariant theory or the non-equivariant theory.

\begin{Definition}
	Given $\tau\in K_\TT(\fX)$, the \emph{capped vertex function with descendent insertion $\tau$} is defined as
	$$
	\widehat V^{(\tau)} (q,z) := \sum_{\beta \in \Eff(X)} z^\beta \ev_{2, *} \left( \QM(X, \beta)_{\rel\ p_2}, \widehat\cO_\vir \cdot \tau\big|_{p_1} \right) \quad \in K_{\TT\times \CC^*_q} (X) [[ z^{\Eff(X)} ]].
	$$
	Its non-equivariant counterpart with respect to $\CC^*_q$, defined by the non-equivariant pushforward along the evaluation map, is called the \emph{quantum tautological class}:
	$$
	\widehat\tau(z):= \widehat V^{(\tau)}(z) \big|_{q=1} \quad \in K_\TT (X) [[ z^{\Eff(X)} ]].
	$$
\end{Definition}

\begin{Remark}
The notion of relative quasimaps is also a special case of Definition 7.2.1 in \cite{CKM}. For each domain $\PP^1[l]$ of the relative quasimaps, there is a contraction map $\PP^1[l] \to \PP^1$, mapping all bubble components to the node where the bubbles are attached to the rigid $\PP^1$. The contraction map can also be defined in families. Let $\QM_{0, 1}^{CKM} (X, \beta)_{para}$ be the moduli stack of stable quasimaps of genus $0$ to $X$, with one parameterized domain component, and $1$ marked point $p$. The moduli space of relative quasimaps $\QM(X, \beta)_{\rel \ p_2}$ is the closed substack in $\QM_{0, 1}^{CKM} (X, \beta)_{para}$, consisting of quasimaps where the marked point $p$ is mapped to $p_2 \in \PP^1$ under the contraction map.
\end{Remark}

\subsection{Gluing operator, degeneration and PSZ quantum $K$-theory} \label{QK}

More generally, if we choose $N$ points $p_1, \cdots, p_N$ on $\PP^1$, one can specify a ``nonsingular" or ``relative" condition at each point, and insert appropriate descendent insertions at other points. Pushforward by the evaluation map $\ev_1\times \cdots \times \ev_N$ would define a $K$-theory class in $K(X)^{\otimes N}$, i.e. an $N$-point class. However, one has to work with the $\CC_q^*$-equivariant theory or non-equivariant theory, depending on the following specific cases.

(i) If $N\leq 2$, and all points $p_i$ (identified with either $0$ or $\infty\in \PP^1$) are equipped with the ``relative" condition, this $N$-point class can be defined to lie in the non-localized ring $K_{\TT\times \CC_q^*}(X)^{\otimes N} [[ z^{\Eff(X)} ]]$.

(ii) If $N\leq 2$ and some of the points $p_i$ (identified with either $0$ or $\infty \in \PP^1$) are equipped with the ``nonsingular" condition, then it lies in the localized ring $K_{\TT\times \CC_q^*}(X)_\loc^{\otimes N} [[ z^{\Eff(X)} ]]$.

(iii) Finally, if $N\geq 3$, then $\CC_q^*$ does not preserves the points $p_i$'s, and all $p_i$'s have to be assigned the ``relative" condition. The push-forward map only makes sense $\CC^*_q$-non-equivariantly, producing an $N$-point class in $K_{\TT}(X)^{\otimes N} [[ z^{\Eff(X)} ]]$.

Let $K_X$ denote the canonical line bundle. A natural nondegenerate bilinear form on $K_\TT(X)$ is
$$
(\cF, \cG):= \chi \Big( X, \cF\otimes \cG \otimes K_X^{-1/2} \Big), \qquad \cF, \cG \in K_\TT (X).
$$
By the Fourier--Mukai philosophy, an $(N+M)$-point class in $K_\TT (X)^{\otimes (N+M)}$ can be viewed as an operator $K_\TT (X)^{\otimes N} \to K_\TT (X)^{\otimes M}$, where we are free to choose $N$ points as the input and $M$ points as the output. In particular, a class  $\bO \in K_\TT (X)^{\otimes 2}$ defines an operator $\bO \in \End K_\TT (X)$ by $\bO \cG := \pr_{1*} \left( \bO \otimes \pr_2^* \cG \right)$. We will often abuse the same notation for both the class in $K_\TT (X)^{\otimes 2}$ and the operator in $\End K_\TT (X)$.

\begin{Definition}
The \emph{gluing operator} is defined as
$$
G(q,z) := \sum_{\beta \in \Eff(X)} z^\beta (\ev_{p_1}\times \ev_{p_2})_* \left( \QM(X, \beta )_{\rel\ p_1, \rel\ p_2}, \widehat\cO_\vir \right) \in K_{\TT \times \CC_q^*} (X)^{\otimes 2} [[ z^{\Eff(X)} ]].
$$
\end{Definition}

The gluing operator plays a crucial role in the degeneration of quasimap theory. Note that the $\beta =0$ part has only contribution from the constant maps, and therefore $G = (\Delta_X)_* K_X^{1/2} + O(z) \in K_{\TT \times \CC_q^*} (X)^{\otimes 2} [[ z^{\Eff(X)} ]]$. In particular, $G = K_X^{1/2} \otimes  + O(z)$ as an operator, and admits an inverse $G^{-1}$.

We introduce the following $N$-to-$M$ operator
$$
C(p_{N+1}, \cdots, p_{N+M} \mid p_1, \cdots, p_N ) : K_\bT(X)^{\otimes N} \to K_\bT (X)^{\otimes M} [[z^{\Eff (X)} ]],
$$
defined by quasimap counting. Let $p_1, \cdots, p_{N+M}$ be points on $C = \PP^1$.

\begin{Definition}
For any $N, M \geq 0$, define
\ben
&& C(p_{N+1}, \cdots, p_{N+M} \mid p_1, \cdots, p_N ) (\cF_1, \cdots, \cF_N) \\
&:=& \sum_{\beta\in \Eff (X)} z^\beta ( \ev_{p_{N+1}, \cdots, p_{N+M}} )_* \left( \QM(X, \beta )_{\rel p_1, \cdots, p_{N+M}} , \widehat\cO_\vir \otimes \prod_{i=1}^N \ev_{p_i}^* (G^{-1} |_{q= 1} \cF_N) \right),
\een
where the push-forward $\ev_{p_{N+1}, \cdots, p_{N+M}}$ is taken to be $\CC^*_q$-non-equivariant.
\end{Definition}

Now suppose that the domain $C=\PP^1$ degenerates into the union of two rational curves $C_0 = C_+ \cup C_-$. Degeration formula relates the quasimap theory with domain $C$ to the relative quasimap theories with domains $C_\pm$, with respect to the new relative points introduced by the node. The degeneration formula \cite{Oko} states that the operators satisfy the identity
$$
C(p_{N+1}, \cdots, p_{N+M} \mid p_1, \cdots,  p_N ) = C_+( p_{N+1}, \cdots, p_{N+M} \mid \bullet ) \circ G^{-1} |_{q=1} \circ C_-( \bullet \mid p_1, \cdots, p_N ),
$$
where $\bullet$ denotes the new relative point introduced by the node. Moreover, if $N+M \leq 2$, the degeneration formula also works $\CC_q^*$-equivariantly, and the non-equivariant gluing $G^{-1} |_{q=1}$ should be replaced by the equivariant version $G^{-1}$.

In particular, the quantum tautulogical class associated to the identity can be recovered as the $1$-point function: $\hat\bone (z) = C(p_1 \mid \ )$, and the specialization of the inverse gluing operator can be recovered as the $2$-point function: $\chi(X, \cF \otimes ( G(q, z)^{-1} |_{q=1}  \cdot \cG ) ) = C( \ | p_1, p_2) (\cF, \cG)$.

We are particularly interested in the case $(N,M) = (2,0)$ and $(2,1)$. The following definition is made by Pushkar--Smirnov--Zeitlin \cite{PSZ}.

\begin{Definition}
The \emph{PSZ quantum K-theory ring} of $X$ is defined as the vector space $K_\TT(X) [[ z^{\Eff(X)} ]]$, equipped with the \emph{quantum pairing} $\langle\ , \ \rangle_{PSZ}$, the \emph{quantum product} $*$, and the \emph{quantum identity} $\hat\bone (z)$, defined as follows
$$
\langle \cF, \cG \rangle_{PSZ} := C(\ \mid p_1, p_2) (\cF, \cG), \qquad \cF * \cG := C(p_3 \mid p_1, p_2) (\cF, \cG), \qquad \hat\bone (z) := C(p_1 \mid \ ).
$$

\end{Definition}

As in \cite{PSZ}, one can prove that this is actually what we expect:

\begin{Proposition}
The quantum $K$-theory ring defined as above is a Frobenius algebra.
\end{Proposition}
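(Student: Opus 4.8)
The plan is to verify the defining axioms of a (commutative) Frobenius algebra for the triple $(K_\TT(X)[[z^{\Eff(X)}]],\, *,\, \langle\,,\,\rangle_{PSZ})$ — symmetry and nondegeneracy of $\langle\,,\,\rangle_{PSZ}$, associativity and commutativity of $*$, the unit property of $\hat\bone(z)$, and the compatibility $\langle \cF*\cG, \cH\rangle_{PSZ} = \langle \cF, \cG*\cH\rangle_{PSZ}$ — by repeated application of the degeneration formula of Section \ref{QK}, supplemented by a deformation argument at $z = 0$, following Pushkar--Smirnov--Zeitlin \cite{PSZ}. The recurring device is to degenerate the fixed domain $\PP^1$ into a chain $C_+\cup C_-$ of two rational curves, distribute the marked points between the two components, and identify the contribution of the node with one copy of the inverse gluing operator $G^{-1}\big|_{q=1}$; the classical pairing is used to reinterpret input points as output points, so that marked points may be moved freely between the two components.

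For nondegeneracy, recall that the $\beta = 0$ part of the gluing operator is $G = (\Delta_X)_* K_X^{1/2} + O(z)$, so that, as an operator, $G^{-1}\big|_{q=1} = K_X^{-1/2}\otimes(\mathrm{id}) + O(z)$. By the $2$-point formula recorded in Section \ref{QK},
$$
\langle \cF, \cG\rangle_{PSZ} \;=\; \chi\!\left(X,\ \cF\otimes G^{-1}\big|_{q=1}\,\cG\right) \;=\; (\cF, \cG) + O(z),
$$
a deformation of the classical nondegenerate pairing $(\cF, \cG) = \chi(X, \cF\otimes\cG\otimes K_X^{-1/2})$. Since $\Eff(X)$ is a finitely generated monoid, the Gram matrix of $\langle\,,\,\rangle_{PSZ}$ in a basis of $K_\TT(X)$ is invertible modulo the maximal ideal $(z^{\Eff(X)})$, hence invertible over $K_\TT(X)[[z^{\Eff(X)}]]$; this gives nondegeneracy, and symmetry is inherited in the limit $z\to 0$ and propagated to all orders by the automorphism of $\PP^1$ interchanging $0$ and $\infty$. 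The same limit gives $\hat\bone(z) = \bone + O(z)$, so that, once the unit axiom holds, $*$ is a genuine deformation of the tensor product on $K_\TT(X)$.

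The core is associativity. I would apply the degeneration formula to a genus-$0$ domain carrying four special points: the channel in which $\{p_1,p_2\}$ lies on $C_-$ and $\{p_3,p_4\}$ on $C_+$ expresses the resulting four-point correlator, through a single node factor $G^{-1}\big|_{q=1}$, as $\langle \cF_1 * \cF_2,\ \cF_3 * \cF_4\rangle_{PSZ}$, while the channel with $\{p_2,p_3\}$ on $C_-$ and $\{p_1,p_4\}$ on $C_+$ yields $\langle \cF_1 * \cF_4,\ \cF_2 * \cF_3\rangle_{PSZ}$; equating the two channels and using symmetry, metric compatibility, commutativity and nondegeneracy of $\langle\,,\,\rangle_{PSZ}$ gives $(\cF_1 * \cF_2) * \cF_3 = \cF_1 * (\cF_2 * \cF_3)$. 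Commutativity of $*$ follows from the automorphism of $\PP^1$ swapping the two input points, once one checks that the asymmetric $\det\cT^{1/2}$-twists in $\widehat\cO_\vir$ (and in $G$) cancel. The unit property $\hat\bone(z) * \cF = \cF$ is the degeneration in which one component carries only one relative point and the node: that component contributes the capped vertex $\hat\bone(z) = C(p_1\mid\ )$, which glues off against one copy of the gluing operator, the $\beta = 0$ term producing the identity and the higher-degree terms cancelling. Metric compatibility is the same four-point degeneration read with all four points relative. The remaining axioms then follow formally.

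The step I expect to be the main obstacle is precisely this degeneration bookkeeping: one must track how many copies of $G^{-1}\big|_{q=1}$ are produced at a node versus how many are already built into the operators $C(\cdots\mid\cdots)$, so that the node factors cancel cleanly and no residual power of $G$ survives; one must also respect the restriction, noted in Section \ref{QK}, that the degeneration formula is $\CC^*_q$-equivariant only when $N+M\le 2$, so the four-point degenerations must be performed in the $\CC^*_q$-non-equivariant theory — which is in any case the setting in which $*$, $\hat\bone(z)$ and $\langle\,,\,\rangle_{PSZ}$ are defined. Everything else reduces to formal algebra or to the $z\to 0$ specialization via $G = K_X^{1/2}\otimes(\mathrm{id}) + O(z)$.
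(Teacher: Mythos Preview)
The paper does not actually supply a proof of this proposition; it simply states ``As in \cite{PSZ}, one can prove\ldots'' and cites Pushkar--Smirnov--Zeitlin. Your proposal is precisely the standard argument that \cite{PSZ} carries out: degeneration of the parametrized domain to obtain a WDVV-type identity for the four-point correlator, symmetry from automorphisms of $\PP^1$, and nondegeneracy from the $z\to 0$ specialization $G^{-1}\big|_{q=1} = K_X^{-1/2}\otimes(-) + O(z)$. So your approach is correct and coincides with what the paper defers to.

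Your self-diagnosed obstacle is also the right one. The only real work is the bookkeeping of powers of $G^{-1}\big|_{q=1}$: each input in the definition of $C(\cdots\mid\cdots)$ already carries one copy, the degeneration formula introduces one further copy at the node, and converting an input marking into an output (or vice versa) via the classical pairing toggles a copy as well. Once you track this consistently the node factors cancel and the four-point identity reads $\langle \cF_1*\cF_2,\cF_3*\cF_4\rangle_{PSZ} = \langle \cF_1*\cF_4,\cF_2*\cF_3\rangle_{PSZ}$ as you wrote. One small point worth making explicit in your write-up: for $N+M\ge 3$ the invariant $C(\cdots\mid\cdots)$ is independent of the chosen positions of the $p_i$ on $\PP^1$ (deformation invariance of the relative obstruction theory in the $\CC^*_q$-non-equivariant setting), which is what allows you to compare the two channels of the degeneration in the first place.
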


\subsection{Capping operator and $q$-difference equation}

The two versions of vertex functions are related by the following \emph{capping operator}
\footnote{
Our convention here is different from \cite{Oko} and \cite{PSZ}: $p_1$ and $p_2$ are exchanged. Under this convention we always view $K$-classes from $p_1$ as inputs and those from $p_2$ as outputs. Our $\Psi$ is the conjugate of the original one, with $q \mapsto q^{-1}$.
}
$$
\Psi(q,z) := \sum_{\beta \in \Eff(X)} z^\beta (\ev_{p_1}\times \ev_{p_2})_* \left( \QM(X, \beta )_{\ns\ p_1, \ \rel\ p_2}, \widehat\cO_\vir \right) \in K_{\TT \times \CC_q^*} (X)_\loc^{\otimes 2} [[ z^{\Eff(X)} ]],
$$
where the notation means we consider quasimaps that are required to be nonsingular at $p_1\in \PP^1$ and allowed to bubble out at $p_2\in\PP^1$. The relationship between the bare and capped vertices is the following, which is proved in \cite{Oko} by relative localization and degeneration.

\begin{Proposition} \label{capping-eqn}
View $\Psi (q, z)$ as in $\End K_{\TT \times \CC^*_q} (X)_\loc$. We have
	$$
	\hat V^{(\tau)}(q,z) = \Psi(q,z) \, V^{(\tau)}(q,z) .
	$$
\end{Proposition}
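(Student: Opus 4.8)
The plan is to establish Proposition~\ref{capping-eqn} by a degeneration/relative-localization argument on the moduli of quasimaps, exactly in the spirit of \cite{Oko}. The key geometric input is the degeneration formula already recorded above in the operator form
$$
C(p_{N+1}, \cdots, p_{N+M} \mid p_1, \cdots, p_N ) = C_+( p_{N+1}, \cdots, p_{N+M} \mid \bullet ) \circ G^{-1}|_{q=1} \circ C_-( \bullet \mid p_1, \cdots, p_N ),
$$
specialized to a configuration that exhibits $\hat V^{(\tau)}$ on the left-hand side and $\Psi \cdot V^{(\tau)}$ on the right. Concretely, I would degenerate the domain $\PP^1$ into a chain $C_+ \cup C_-$, where $C_-$ carries the point $p_1$ (with the nonsingular condition, and the descendent $\tau$ inserted there) together with the new relative node $\bullet$, and $C_+$ carries the relative point $p_2$ (allowed to bubble) together with the node $\bullet$. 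On $C_-$ the resulting count of quasimaps nonsingular at $p_1$ and relative at the node is, by definition, the capping operator $\Psi(q,z)$; on $C_+$ the count of quasimaps relative at the node and relative at $p_2$, with nothing else inserted, is the capped vertex $\hat V^{(\tau)}(q,z)$ viewed as a $1$-point class, or more precisely the identity-like ``capped'' piece whose composition with $\Psi$ reconstitutes $\hat V^{(\tau)}$. The bare vertex $V^{(\tau)}$ arises as the $C_+$-contribution built from $\ev_2$ and the insertion $\tau|_{p_1}$ after the gluing is accounted for.

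The steps I would carry out, in order, are: (1) set up the relevant moduli spaces $\QM(X,\beta)_{\ns\ p_1,\ \rel\ p_2}$ and the degeneration to the normal cone of a point on $\PP^1$, citing \cite{Li, ACFJ} for the existence of the degeneration family and the compatibility of the (twisted) virtual structure sheaves $\widehat\cO_\vir$ under degeneration; (2) invoke the $K$-theoretic degeneration formula of \cite{Oko} for $\widehat\cO_\vir$, which expresses the quasimap count on the special fiber as a convolution over the relative node, weighted by the inverse gluing operator $G^{-1}$; (3) identify each factor in the convolution with the defining integrals of $\Psi$, $\hat V^{(\tau)}$, and $V^{(\tau)}$ respectively, being careful that $\hat V^{(\tau)}$ is the \emph{relative} count (so its $\ev_2$ is proper and one works $\CC^*_q$-equivariantly without localization), whereas $\Psi$ and $V^{(\tau)}$ require the localized ring because of the nonsingular condition at $p_1$; (4) check that the gluing operator factors $G$ and $G^{-1}$ cancel between the two pieces, using that $G$ is invertible in $\End K_{\TT\times\CC^*_q}(X)_\loc[[z^{\Eff(X)}]]$ since $G = K_X^{1/2}\otimes\ + O(z)$; (5) track the descendent insertion $\tau$ through the degeneration — it sits on $p_1$, which lies on the component that produces $\Psi$'s input slot, so after composing with $\Psi$ it becomes precisely the $\ev_1^*\tau$ appearing in the definition of $V^{(\tau)}$. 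Assembling (1)--(5) over all effective classes $\beta$ and all splittings $\beta = \beta_+ + \beta_-$ yields the operator identity $\hat V^{(\tau)}(q,z) = \Psi(q,z)\,V^{(\tau)}(q,z)$.

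The main obstacle I anticipate is the bookkeeping of the twisting prefactor in $\widehat\cO_\vir$ across the degeneration, namely the factor $\big(K_\vir \det\cT^{1/2}|_{p_2}/\det\cT^{1/2}|_{p_1}\big)^{1/2}$ of \eqref{defn-cO_vir}. Under the degeneration the virtual canonical bundle $K_\vir$ and the polarization bundle $\cT^{1/2}$ restrict compatibly to the two components, but the square-root choices and the spin structure $K_{\PP^1}^{1/2}$ must be matched at the node so that the product of the two local twists reproduces the global one — this is where the ``equivariantly symmetrized'' obstruction theory is essential, and it is the step that makes the gluing operator $G$ (rather than a bare diagonal) the correct object to divide by. A secondary, more technical point is the convention shift flagged in the footnote: our $\Psi$ is the conjugate of the one in \cite{Oko} with $q\mapsto q^{-1}$ and with $p_1, p_2$ exchanged, so I would verify once and for all that with this convention $K$-classes from $p_1$ are genuinely the inputs and those from $p_2$ the outputs, so that the composition $\Psi\circ(-)$ is the correct order in Proposition~\ref{capping-eqn}. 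Once these compatibilities are in place, the remainder is the routine identification of push-forwards already carried out in \cite{Oko}, and I would simply cite that source for the detailed virtual-class manipulations.
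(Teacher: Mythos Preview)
Your plan agrees with the paper, which simply records that the identity ``is proved in \cite{Oko} by relative localization and degeneration'' and gives no further argument. So at the level the paper operates, you are on target.

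That said, your concrete sketch misidentifies the pieces. You invoke the $C$-operator degeneration formula, which glues two \emph{relative} points through $G^{-1}$; but in the product $\Psi\cdot V^{(\tau)}$ the two factors are matched at \emph{nonsingular} points --- the nonsingular output of $V^{(\tau)}$ at its $p_2$ is fed into the nonsingular input of $\Psi$ at its $p_1$ --- and for an ns--ns node the map lands in $X$ there, so the gluing is along the plain diagonal with no $G^{-1}$ inserted. Consequently there is no stray $G$ to cancel against in your step (4). A cleaner picture is: degenerate the domain of $\hat V^{(\tau)}$ into $C_-\cup C_+$ with the node constrained to be nonsingular on both sides; then $C_-$ (carrying $p_1$ with the descendent $\tau$, and nonsingular at the node) contributes $V^{(\tau)}$, while $C_+$ (nonsingular at the node, relative at $p_2$) contributes $\Psi$, and deformation invariance of $\widehat\cO_\vir$ gives the identity directly. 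Note also that in $\hat V^{(\tau)}$ the point $p_1$ carries only the descendent insertion $\tau$, not a nonsingular condition; the nonsingular conditions appear only at the node, and the descendent stays with the $V^{(\tau)}$ factor, not with $\Psi$. Once you straighten out which factor carries what, and drop the $G$--$G^{-1}$ step, the argument is exactly the one in \cite{Oko}.
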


\begin{Remark}
	Note that both factors on the right hand side lie in the localized K-group, while the left hand side is a non-localized $K$-class. This would be clear in a more precise way as we consider the $q\to 1$ asymptotic of this equality.
\end{Remark}

Analogous to the quantum differential equation in the cohomological theory, the capping operator satisfies a certain $q$-difference equation. Consider the operator
\begin{eqnarray*}
	M_i(q,z) &:=& \sum_{\beta \in \Eff(X)} z^\beta (\ev_{p_1}\times \ev_{p_2})_* \left( \QM(X, \beta )_{\rel\ p_1, \ \rel\ p_2}, \widehat\cO_\vir \cdot \det H^\bullet (L_i \otimes \pi^* \cO_{p_2}) \right) \circ G^{-1} \\
	&\in& K_{\TT \times \CC_q^*} (X)^{\otimes 2} [[ z^{\Eff(X)} ]],
\end{eqnarray*}
where $\pi: \PP^1 [l]\to \PP^1$ is the projection, and the term $H^\bullet(-)$ here means the (virtual) tautological bundle on the moduli space $\QM(X, \beta)$ whose fiber at a quasimap is represented by the (virtual) space $H^\bullet(L_i \otimes \pi^* \cO_{p_2})$. It is important that the operator $M_i(z)$ here lies in the non-localized K-group. The following equation is proved by considering the degeneration of capping operators with descendents \cite{Oko, OS}.

\begin{Proposition} \label{Psi}
View $\Psi (q, z)$ as in $\End K_{\TT \times \CC^*_q} (X)_\loc$.	The capping operator $\Psi(z)$ satisfies
	$$
	Z_i \Psi(q, z) = (L_i^{-1} \otimes ) \circ \Psi(q,z) \circ M_i (q^{-1} ,z),
	$$
	where $Z_i$ is the operator which shifts $z_i \mapsto q z_i$ and keep the other K\"ahler parameters.
	
	Moreover, the non-equivariant limit (with respect to $\CC_q^*$) of $M_i$ is the quantum multiplication by the quantum tautological line bundle:
	$$
	M_i(1, z) = \widehat L_i (z) *  .
	$$
\end{Proposition}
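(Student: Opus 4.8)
The plan is to transport the $K$-theoretic shift-operator construction of Okounkov \cite{Oko} and Okounkov--Smirnov \cite{OS} to the hypertoric setting; since that construction is purely geometric and uses only the existence of a global polarization and the relative quasimap formalism, what has to be checked is that it applies here, which it does. The statement splits into the $q$-difference equation for $\Psi$ and the identification of the $q\to 1$ limit of $M_i$, and I would treat them in turn.

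For the $q$-difference equation, the device is to evaluate in two ways the capping operator decorated at the relative point $p_2$ by the descendent $\det H^\bullet(f^*L_i\otimes\pi^*\cO_{p_2})$. First, a $\CC^*_q$-weight computation on the domain $\PP^1$ — using the short exact sequence $0\to f^*L_i\to f^*L_i\otimes\cO(p_2)\to (f^*L_i)|_{p_2}\to 0$, the $\CC^*_q$-weight $q^{-1}$ of $\cO_{\PP^1}(p_2)|_{p_2}=T_{p_2}\PP^1$, and the compensating $q$-weights already recorded in $\widehat\cO_\vir$ (through $K_\vir$) — identifies the effect of this insertion on the degree-$\beta$ stratum with the shift $z_i\mapsto qz_i$, up to the fiber factor $(f^*L_i)|_{p_2}=\ev_{p_2}^*L_i$. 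Second, degenerating the domain $\PP^1$ into $C_-\cup C_+$ with $p_1\in C_-$ and $p_2\in C_+$, and applying the degeneration formula of Section \ref{QK} in the form that carries the descendent (which lives on $C_+$), splits the count into the $C_-$-contribution — the capping operator $\Psi$ itself, from $p_1$ nonsingular to the node relative — glued via $G^{-1}$ with the $C_+$-contribution, which is by construction the two-point descendent operator $M_i$, but evaluated at $q^{-1}$, reflecting the opposite $\CC^*_q$-orientation at the relative end $p_2=\infty$. Matching the two evaluations and rearranging the fiber factor to the output slot, with all signs fixed by the conventions of the footnote (exchange of $p_1$ and $p_2$, conjugation of $\Psi$), yields $Z_i\Psi(q,z)=(L_i^{-1}\otimes)\circ\Psi(q,z)\circ M_i(q^{-1},z)$.

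For the non-equivariant limit, I would argue inside the degeneration formalism of Section \ref{QK}. The operator $M_i(1,z)$ is the relative--relative two-point count carrying $\det H^\bullet(L_i\otimes\pi^*\cO_{p_2})$, post-composed with $G^{-1}|_{q=1}$; inserting a third relative point and using the associativity of the gluing, so that the $(2,1)$-operator $C(p_3\mid p_1,p_2)$ computes the quantum product $*$, turns this into the operator $\cG\mapsto\widehat V^{(L_i)}(z)|_{q=1}*\cG$. The point is that $\det H^\bullet(L_i\otimes\pi^*\cO_{p_2})$ restricts to $\ev_{p_2}^*L_i$ over the substratum where $p_2$ carries no bubble, and over the bubbling at $p_2$ assembles precisely into the capped-vertex insertion of $L_i$; hence $\widehat V^{(L_i)}(z)|_{q=1}=\widehat L_i(z)$ and $M_i(1,z)=\widehat L_i(z)\,*$.

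The step I expect to be the main obstacle is making rigorous the degeneration of capping operators with descendents: one must verify that the tautological class $\det H^\bullet(f^*L_i\otimes\pi^*\cO_{p_2})$ behaves multiplicatively under the splitting of the domain curve — restricting to the $p_2$-side component, contributing precisely the gluing factor (an $\ev^*L_i$ at the node) together with the universal normalization — and that this is compatible with the $K_\vir^{1/2}$-twist built into $\widehat\cO_\vir$. In the hypertoric case the geometry is fully explicit ($X$ is GKM with isolated fixed points, and quasimaps to the $\TT$-fixed loci decompose combinatorially), so I would also include an independent verification by $\TT\times\CC^*_q$-localization, which reduces both assertions to rank-one ($\PP^1$-target) computations.
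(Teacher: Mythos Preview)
Your proposal is correct and follows essentially the same approach the paper itself invokes. The paper does not supply its own proof of this proposition but merely states that it ``is proved by considering the degeneration of capping operators with descendents \cite{Oko, OS}''; your outline --- computing the descendent-decorated capping operator in two ways, once via the $\CC^*_q$-weight bookkeeping that produces the $z_i\mapsto qz_i$ shift and the $L_i^{-1}$ factor, and once via the degeneration formula to produce the $\Psi\circ M_i(q^{-1},z)$ factorization, together with the $q\to 1$ identification of $M_i$ with quantum multiplication by $\widehat L_i(z)$ --- is precisely the argument of those references specialized to the hypertoric setting.
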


\subsection{Divisorial quantum $K$-theory}

In this subsection we would like to slightly modify the PSZ ring structure, and obtain one that can be compared with Givental's quantum $K$-theory. Recall that $\fX = [\mu^{-1} (0) / \bK ]$ is the stacky quotient, and $K_\TT (\fX) \cong K_\TT (B \bK)$ consists of Laurent polynomials in $x_1, \cdots, x_n$, up to the linear relations:
$$
K_\TT (\fX) \cong \CC [ a_1^{\pm 1}, \cdots, a_n^{\pm 1}, \hbar^{\pm 1}, x_1^{\pm 1}, \cdots, x_n^{\pm 1} ] / \langle \prod_{i=1}^n (x_i / a_i)^{\beta_{ji}} - 1, 1\leq j\leq d  \rangle.
$$
The ordinary $K$-theory ring $K_\TT (X)$ is the quotient of $K_\TT (\fX)$ with relations parameterized by the circuits of $X$. We will define a new ring structure, also as a quotient of $K_\TT (\fX)$, which deforms $K_\TT (X)$. On the other hand, it is essentially ``the same" as the PSZ ring.

\begin{Lemma} \label{quantum-prod}
Given $\tau, \eta \in K_\TT (\fX)$, one has
$$
\widehat{\tau \eta} (z) = \widehat\tau (z) * \widehat\eta (z).
$$
\end{Lemma}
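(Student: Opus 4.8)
The plan is to deduce the multiplicativity $\widehat{\tau\eta}(z) = \widehat\tau(z)*\widehat\eta(z)$ from the degeneration formula for quasimap invariants together with the definition of the quantum product as a $3$-point function. The key point is that the quantum tautological class $\widehat\tau(z)$ is by construction the $1$-point capped invariant $C(p_1\mid\ )$ with a descendent insertion of $\tau$ at a relative point, and that the quantum product $\cF*\cG = C(p_3\mid p_1,p_2)(\cF,\cG)$ glues two such caps together along the inverse gluing operator $G^{-1}|_{q=1}$.

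First I would set up the relevant moduli space: quasimaps from $\PP^1$ with three relative points $p_1, p_2, p_3$, where $p_1$ and $p_2$ carry descendent insertions $\tau|_{p_1}$ and $\eta|_{p_2}$ (pulled back from $\fX$), and $p_3$ is the output point. Call the resulting $1$-point class $C(p_3\mid\ )(\text{with }\tau\text{ at }p_1,\ \eta\text{ at }p_2)$. The first claim is that this class equals $\widehat{\tau\eta}(z)$: this follows because descendent insertions at distinct relative points, when those points are allowed to collide (or more precisely, by a degeneration that merges the two relative points into one), combine multiplicatively — the tautological bundle $\cO_{p_1}\otimes\cO_{p_2}$ restricts to $\cO$ at a common point and the descendent at a single relative point of a product class $\tau\eta$ is by definition the product of the individual descendents. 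This is essentially the statement that $\widehat{(\ )}$ is induced from the ring map $K_\TT(\fX)\to K_\TT(X)$ at $z=0$ and its deformation respects products of Kirwan lifts; the precise mechanism is a degeneration where $p_1$ and $p_2$ bubble off onto a separate rational component which then contracts.

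Second, I would apply the degeneration formula of Okounkov (as recalled just before the statement) to degenerate the domain $\PP^1$ so that $p_1$ lands on one component $C_-$ and $p_2$ lands on another $C_+$, with the output $p_3$ on, say, $C_+$, and a new relative node $\bullet$ between them:
\[
C(p_3\mid\ )(\tau\text{ at }p_1,\ \eta\text{ at }p_2) = C_+(p_3\mid\bullet)(\eta\text{ at }p_2) \circ G^{-1}|_{q=1} \circ C_-(\bullet\mid\ )(\tau\text{ at }p_1).
\]
Now $C_-(\bullet\mid\ )$ with descendent $\tau$ is precisely $\widehat\tau(z)$ (a $1$-point capped vertex with insertion $\tau$, output at $\bullet$), and $C_+(p_3\mid\bullet)(\eta\text{ at }p_2)$ is the operator that takes a class at $\bullet$, multiplies quantum-style by $\widehat\eta(z)$, and outputs at $p_3$ — i.e. it is exactly the building block of the quantum product $\widehat\eta(z)*(-)$. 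Composing via $G^{-1}|_{q=1}$ is exactly how $C(p_3\mid p_1,p_2)$ is assembled in Section \ref{QK}, so the right-hand side is $\widehat\eta(z)*\widehat\tau(z) = \widehat\tau(z)*\widehat\eta(z)$ by commutativity of $*$.

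The main obstacle is the first step: carefully justifying that inserting $\tau$ and $\eta$ as descendents at two separate relative points and then degenerating them onto a single contracted component reproduces exactly $\widehat{\tau\eta}(z)$, rather than some corrected class. This requires checking that the tautological bundles $H^\bullet(L^{(\tau)}\otimes\pi^*\cO_{p_1})$ and $H^\bullet(L^{(\eta)}\otimes\pi^*\cO_{p_2})$ behave multiplicatively in the degeneration limit — i.e. that the extra contracted $\PP^1$ carrying both points contributes trivially to the virtual count (its quasimap degree on that bubble being zero forces a constant map, whose contribution collapses the two descendents to a single one of the product). One should invoke the string/dilaton-type vanishing for relative components with no curve class, together with the fact that $\widehat\cO_\vir$ on a trivial-degree bubble reduces to the structure sheaf of the diagonal times $K_X^{1/2}$, matching the normalization of $G$. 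Everything else — commutativity of $*$, the identification of $C_-(\bullet\mid\ )$ with $\widehat\tau(z)$, and the gluing via $G^{-1}$ — is formal from the setup already established in Section \ref{QK}.
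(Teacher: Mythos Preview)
Your approach is essentially the same as the paper's: express $\widehat{\tau\eta}(z)$ as a count with descendents $\tau$ and $\eta$ inserted at two separate points, then apply the degeneration formula to identify the result with the quantum product. The paper's proof is only a few lines and follows exactly this outline.

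Where you diverge is in the justification of the first step, and here you make it harder than it needs to be. You frame the equality ``descendent $\tau\eta$ at one point $=$ descendents $\tau$ and $\eta$ at two separate points'' as a collision/degeneration argument, worrying about contracted bubbles and string-type vanishing. The paper sidesteps this entirely by observing that the quantum tautological class is, by definition, the $q=1$ (i.e.\ $\CC_q^*$-non-equivariant) specialization of the capped vertex. Once the $\CC_q^*$-action is forgotten, the tautological bundle restricted to any point of the rigid $\PP^1$ gives the same $K$-class on the moduli space, so $(\tau\eta)|_{p_1} = \tau|_{p_2}\cdot\eta|_{p_3}$ for arbitrary points $p_2, p_3$ on $\PP^1$, with no degeneration needed. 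Your ``main obstacle'' simply does not arise.

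A minor terminological slip: you repeatedly speak of inserting descendents at \emph{relative} points, but in the paper's setup the descendent in $\widehat V^{(\tau)}$ sits at the ordinary point $p_1$ on the rigid $\PP^1$, not at a relative marking. This does not affect the correctness of the overall argument, but keeping the distinction straight makes the degeneration step cleaner.
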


\begin{proof}
By definition, $\widehat{\tau \eta} (z)$ is the evaluation at $q = 1$ of $\widehat V^{(\tau\eta)} (q, z)$. The descendent insertion is $(\tau \eta) |_{p_1}$, which is equivalent to $\tau |_{p_2} \cdot \eta |_{p_3}$ under the $q=1$ evaluation, where $p_2$, $p_3$ are two arbitrary points on $\PP^1$. By the degeneration formula, this is the same as $\widehat\tau (z) * \widehat\eta (z)$.
\end{proof}

The lemma implies that the following
$$
\cI := \{ \tau \in K_\TT (\fX) [[z^{\Eff(X)}]] \mid \hat\tau (z) = 0  \},
$$
is an ideal in $K_\TT (\fX)  [[z^{\Eff(X)}]] $. Furthermore, the bilinear pairing
$$
\langle \tau, \eta \rangle := \langle \hat\tau (z), \hat\eta (z) \rangle_{PSZ}
$$
vanishes for any $\tau$ or $\eta\in \cI$, and hence descends to a pairing $\langle \ , \ \rangle_{div}$ on the quotient. The map
\begin{equation} \label{isom}
K_\TT (\fX)  [[z^{\Eff(X)}]] / \cI \to (K_\TT (X) [[z^{\Eff (X)} ]], * ) , \qquad \tau \mapsto \hat\tau (z)
\end{equation}
is an isomorphism of $K_\TT (\pt) [[z^{\Eff (x)}]]$-algebras, preserving the pairings. It is therefore an isomorphism of Frobenius algebras.

\begin{Definition}
We define the quotient ring $K_\TT (\fX)  [[z^{\Eff(X)}]] / \cI$ as the \emph{divisorial quantum $K$-theory} of $X$, with \emph{quantum identity} $\bone$, and \emph{quantum pairing} $\langle \ , \ \rangle_{div}$. In particular, it is a Frobenius algebra, whose specialization at $z = 0$ recovers the ordinary $K$-theory ring $K_\TT (X)$.
\end{Definition}


\vspace{3ex}

\section{Localization computations}

\subsection{Bare vertex and integral presentation}

In this section we compute the explicit formula for bare vertex functions of the hypertoric variety $X$, by equivariant localization. Let $\bp \subset \{1, \cdots, n\}$ denote a vertex in the hyperplane arrangement $\cH$, and also the corresponding $\TT$-fixed point.

For this particular $\bp$, we choose the standard $\bp$-frame as in (\ref{V-frame-i}) and (\ref{V-frame-b}). For example, if $\bp = \{ k+1, \cdots, n \}$, then the matrices $\beta$ and $\iota$ take the form
$$
\beta = \begin{pmatrix}
-C & I
\end{pmatrix}, \qquad \iota = \begin{pmatrix}
I \\
C
\end{pmatrix}.
$$

The prequotient vector space $M = \CC^n$, as a $\TT$-representation, splits as
$$
M = \sum_{i=1}^n \CC_{e_i^*} = \sum_{i\not\in \bp} \CC_{e_i^*} + \sum_{i\in \bp} \CC_{e^*_i},
$$
whose associated tautological bundle (by (\ref{restriction-V})) can be written as
$$
\cM = \sum_{i=1}^n L_i = \sum_{i\in \cA_\bp^+ } L_i + \sum_{i\in \cA_\bp^-} \hbar^{-1} \otimes L_i + \sum_{i\in \bp} a_i \prod_{j\not\in \bp} a_j^{-C_{ij}} \cdot  \hbar^{- \sum_{j \in \bp^{c-} } C_{ij} } \otimes  \prod_{j\not\in \bp} L_j^{\otimes C_{ij}}.
$$

\begin{Lemma}
Let $f: \PP^1 \to \fX$ be a $\TT$-equivariant quasimap whose image lies in the fixed point $\bp$. Then
$$
\deg f^* L_i \geq 0, \quad \text{for any } i\in \cA_\bp^+; \qquad \deg f^* L_i \leq 0, \quad \text{for any } i\in \cA_\bp^-.
$$
\end{Lemma}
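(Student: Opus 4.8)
The plan is to unpack the classifying data of the quasimap and then read the two inequalities off from non-vanishing of the universal matter sections.

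First I would record the data of $f$. A quasimap $f\colon\PP^1\to\fX$ is the same thing as a collection of line bundles $P_j:=f^*N_j$ on $\PP^1$, $1\le j\le k$, together with sections $u=(u_1,\dots,u_n)\in H^0(\PP^1,\bigoplus_i f^*L_i)$ and $v=(v_1,\dots,v_n)\in H^0(\PP^1,\hbar^{-1}\bigoplus_i(f^*L_i)^{-1})$ satisfying $\mu(u,v)=0$ and the stability condition, where $f^*L_i=a_i\otimes\bigotimes_j P_j^{\otimes\iota_{ij}}$; here $u_i$, $v_i$ are the pullbacks under $f$ of the universal sections of $\cM=\bigoplus_i L_i$ and $\hbar^{-1}\cM^\vee$. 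The hypothesis that the image of $f$ is $\bp$ means that on the dense open $U\subset\PP^1$ of non-base-points, $f|_U$ is the constant map to $\bp\in X\subset\fX$. Fixing a representative $(u^0,v^0)\in\mu^{-1}(0)^s$ of $\bp$ in the prequotient, this forces $(u(p),v(p))$ to lie in the $\bK$-orbit of $(u^0,v^0)$ for every $p\in U$; since $\bK$ acts by rescaling coordinates, each $u_i$ is either nowhere zero on $U$ (if $u^0_i\ne0$) or identically zero on $U$ (if $u^0_i=0$), and likewise for each $v_i$.

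The one geometric input I would then invoke is the standard description of the fixed point $\bp$ in the prequotient (see \cite{HH}; it is consistent with the restriction formula (\ref{restriction-V}) and with the definition of the splitting $\cA_\bp=\cA_\bp^+\sqcup\cA_\bp^-$ through the half-spaces $H_i^\pm$): one has $u^0_i\ne0$ precisely for $i\in\cA_\bp^+$, and $v^0_i\ne0$ precisely for $i\in\cA_\bp^-$, while $u^0_i=v^0_i=0$ for $i\in\bp$ (as $\bp$ then lies on $H_i$). Combining this with the previous paragraph: for $i\in\cA_\bp^+$ the section $u_i$ is nowhere zero on $U$, and for $i\in\cA_\bp^-$ the section $v_i$ is nowhere zero on $U$.

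The conclusion is then immediate. For $i\in\cA_\bp^+$, $u_i$ is a global section of the line bundle $f^*L_i$ on $\PP^1$ that is not identically zero, and a line bundle on $\PP^1$ carrying a nonzero global section has nonnegative degree, so $\deg f^*L_i\ge0$. For $i\in\cA_\bp^-$, $v_i$ is a nonzero global section of $\hbar^{-1}\otimes(f^*L_i)^{-1}$, hence $\deg\big(\hbar^{-1}\otimes(f^*L_i)^{-1}\big)\ge0$; since the twisting factor $\hbar^{-1}$ contributes degree $0$, this reads $\deg(f^*L_i)^{-1}\ge0$, i.e.\ $\deg f^*L_i\le0$. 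I do not expect a genuine obstacle: the statement is elementary once the setup is in place. The only point deserving care is the middle step — correctly matching the combinatorial splitting $\cA_\bp=\cA_\bp^+\sqcup\cA_\bp^-$ to which of $u_i$, $v_i$ (i.e.\ the base versus the cotangent-fiber coordinate) survives at $\bp$, keeping the orientation conventions consistent with (\ref{restriction-V}); beyond that everything reduces to the fact that the degree of a line bundle on $\PP^1$ with a nonzero section is nonnegative.
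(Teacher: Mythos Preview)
Your proposal is correct and follows essentially the same approach as the paper's own proof: identify which prequotient coordinates $(z_i,w_i)$ are nonzero at the fixed point $\bp$ (citing \cite{HP} in the paper, \cite{HH} in your version), observe that the corresponding pulled-back section is then generically nonzero on $\PP^1$, and conclude the degree inequality from the existence of a nonzero global section. Your write-up is more expansive in unpacking the quasimap data, but the argument is the same.
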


\begin{proof}
Let $(z, w) \in T^* \CC^n$ be a representative of $\bp$. We know from \cite{HP} that $z_i \neq 0$, $w_i = 0$ for $i\in \cA_\bp^+$, and $z_i = 0$, $w_i\neq 0$ for $i\in \cA_\bp^-$. Since $f$ generically maps to $\bp$, the section of $f^* L_i^{\pm 1}$ defined by $f$ is nonzero, for $i\in \cA_\bp^\pm$. The lemma follows.
\end{proof}

The lemma states that all quasimaps mapping into $\bp$ lie in the cone
$$
\prod_{i\in \cA_\bp^+} \RR_{\geq 0} \times \prod_{i\in \cA_\bp^- } \RR_{\leq 0} \subset H_2 (X, \RR).
$$

\begin{Lemma} \label{subcone}
$\prod_{i\in \cA_\bp^+ } \RR_{\geq 0} \times \prod_{i\in \cA_\bp^- } \RR_{\leq 0}$ is a subcone of $\Eff(X) \otimes \RR$. We denote it by $\Eff_\bp (X)$.
\end{Lemma}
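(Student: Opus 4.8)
The plan is to prove the inclusion by passing to dual cones. By Section~\ref{section-eff}, $\Eff(X)\otimes\RR=\mathfrak{K}^\vee$ is the dual of the K\"ahler chamber $\mathfrak{K}\subset\RR^k$; since $\Eff_\bp(X)$ and $\mathfrak{K}$ are closed convex cones, the assertion $\Eff_\bp(X)\subseteq\Eff(X)\otimes\RR$ is equivalent, by biduality, to $\overline{\mathfrak{K}}\subseteq\Eff_\bp(X)^\vee$. In the standard $\bp$-frame one has on $H_2(X,\RR)=\RR^k$ the coordinates $d_i:=\deg f^*L_i$, $i\in\cA_\bp$ (for $i\in\cA_\bp$ the relation (\ref{LL}) reads $L_i=\CC_{e_i^*}\otimes N_J$ with $\cA_{\bp,J}=i$, so $d_i$ is the degree of the corresponding tautological bundle), and on $H^2(X,\RR)=\RR^k$ the \emph{dual} coordinates, which for a stability parameter $\theta'$ I will write as $(\theta')^{(i)}$, $i\in\cA_\bp$. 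Then $\Eff_\bp(X)=\prod_{i\in\cA_\bp^+}\RR_{\ge0}\times\prod_{i\in\cA_\bp^-}\RR_{\le0}$ is an orthant, hence $\Eff_\bp(X)^\vee=\{\,\theta'\mid(\theta')^{(i)}\ge0\text{ for }i\in\cA_\bp^+,\ (\theta')^{(i)}\le0\text{ for }i\in\cA_\bp^-\,\}$, and it suffices to check that every $\theta'\in\mathfrak{K}$ lies in this set.

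The core computation relates the sign of $(\theta')^{(i)}$ to the position of the vertex $\bp$ with respect to $H_i$. Choosing a lift $\widetilde{\theta'}$ and using (\ref{V-frame-i})--(\ref{V-frame-b}), one gets $(\theta')^{(i)}=\widetilde{\theta'}_i+\sum_{l\in\bp}C_{li}\widetilde{\theta'}_l$; on the other hand the vertex cut out by $\{H_l\mid l\in\bp\}$ is $x=(-\widetilde{\theta'}_l)_{l\in\bp}$, and $H_i^+=\{x\mid\sum_{l\in\bp}C_{li}x_l\le\widetilde{\theta'}_i\}$. Evaluating at the vertex gives $\bp\in H_i^+\iff(\theta')^{(i)}\ge0$ and, likewise, $\bp\in H_i^-\iff(\theta')^{(i)}\le0$. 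Specializing to the stability parameter $\theta$ that defines $X$, this recovers, by the very definition of the splitting $\cA_\bp=\cA_\bp^+\sqcup\cA_\bp^-$, that $i\in\cA_\bp^+\iff\theta^{(i)}\ge0$ and $i\in\cA_\bp^-\iff\theta^{(i)}\le0$.

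Finally I will show that the sign of $(\theta')^{(i)}$ is constant on the chamber $\mathfrak{K}$ for each $i\in\cA_\bp$. If $(\theta')^{(i)}=0$ then by the previous paragraph the $d+1$ hyperplanes $\{H_l\mid l\in\bp\}\cup\{H_i\}$ have nonempty intersection, contradicting simplicity; hence $\theta'$ lies on one of the walls $P_S$ and, in particular, $(\theta')^{(i)}$ never vanishes on the connected set $\mathfrak{K}$, so it keeps there the sign it has at $\theta$. By the second paragraph this is exactly the sign condition defining $\Eff_\bp(X)^\vee$, so $\mathfrak{K}\subseteq\Eff_\bp(X)^\vee$, hence $\overline{\mathfrak{K}}\subseteq\Eff_\bp(X)^\vee$ (dual cones being closed), and dualizing back, $\Eff_\bp(X)=(\Eff_\bp(X)^\vee)^\vee\subseteq(\overline{\mathfrak{K}})^\vee=\mathfrak{K}^\vee=\Eff(X)\otimes\RR$. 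I expect the only delicate point to be the sign- and index-bookkeeping in the core computation: matching the conventions in (\ref{knd-seq}), in the definition of $H_i^\pm$, and in the identification of $H_2(X,\RR)$ with the dual of $H^2(X,\RR)$ via the standard $\bp$-frame so that $d_i$ really is dual to $(\theta')^{(i)}$. Everything else is the convex-cone formalism together with the already-quoted facts that $\Eff(X)\otimes\RR=\mathfrak{K}^\vee$ and that the combinatorial type of the arrangement is constant on a chamber; in particular this route avoids having to discuss which of the $\TT$-invariant curves through $\bp$ are compact.
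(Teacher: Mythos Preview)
Your proof is correct and follows essentially the same strategy as the paper: show the dual containment $\mathfrak{K}\subset\Eff_\bp(X)^\vee$ by identifying $\cA_\bp^\pm$ with the sign of the coordinates of $\theta$ in the standard $\bp$-frame, then dualize. The paper shortens your core computation by choosing the lift $\widetilde\eta$ with $\widetilde\eta_i=0$ for $i\in\bp$, which places the vertex $\bp$ at the origin and makes the sign condition $j\in\cA_\bp^\pm\iff\theta_j\gtrless0$ immediate; your route via a general lift and the explicit formula $(\theta')^{(i)}=\widetilde{\theta'}_i+\sum_{l\in\bp}C_{li}\widetilde{\theta'}_l$ reaches the same conclusion with a bit more bookkeeping. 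Likewise, where the paper asserts in one line that $\mathfrak{K}$ is the smallest such cone containing $\theta$, you spell out the equivalent fact that the coordinate hyperplanes $\{(\theta')^{(i)}=0\}$ are walls $P_S$ (for the circuit $S\subset\bp\cup\{i\}$) and hence the sign is constant on the chamber.
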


\begin{proof}
As $\iota^\vee = (I, C^T)$, given stability condition $\theta \in (\ZZ^k)^\vee$, we can choose an equivalent lift $\widetilde\eta \in (\ZZ^n)^\vee$ as $\widetilde\eta_j = \theta_j$ for $j\not\in \bp$, and $\widetilde\eta_i = 0$ for $i\in \bp$. By definition we have $\cA_\bp^+ = \{j\not\in \bp \mid \theta_j >0 \}$, $\cA_\bp^- = \{ j \not\in \bp \mid \theta_j <0 \}$. In other words, $\theta$ lies in the cone $\Eff_\bp (X)^\vee = \prod_{i\in \cA_\bp^+ } \RR_{\geq 0} \times \prod_{i\in \cA_\bp^- } \RR_{\leq 0} \subset H^2 (X, \RR)$. On the other hand, the K\"ahler cone $\fK = \Eff (X)^\vee \otimes \RR$ is the smallest $k$-dimensional cone generated by $\iota^\vee e_i^*$, which contains $\theta$. Therefore, we have $\fK \subset \Eff_\bp (X)^\vee$. Taking the dual proves the lemma.
\end{proof}

Let $f: \PP^1 \to \fX$ be a $\TT$-equivariant quasimap whose image lies in the fixed point $\bp$.  The virtual tangent sheaf at the moduli point $[f]$ is
$$
T_\vir = H^\bullet(\cM\oplus \hbar^{-1}\cM^*) - (1+\hbar^{-1}) \Big( \sum_{i\not\in \bp} \cO \Big).
$$

By the definition (\ref{defn-cO_vir}), i.e.
$$\cO_\vir \big|_f = \frac{1}{\Lambda^\bullet(T_\vir^\vee \big|_f)} = S^\bullet(T_\vir^\vee \big|_f), \qquad \widehat\cO_\vir = q^{- \frac{1}{2} \deg \cT^{1/2}} \cO_\vir\otimes K_\vir^{1/2},$$
we see that the contribution to $\cO_\vir$ from a term
$$
a L + \hbar^{-1} a^{-1} L^{-1}, \qquad \text{with} \ \deg L = d
$$
in $T_\vir \big|_f$ is
$$
\{a\}_d := (-q^{1/2}\hbar^{-1/2})^d \frac{(\hbar a)_d}{(q a)_d} = (-q^{1/2}\hbar^{-1/2})^{-d} \frac{(a^{-1} )_{-d} }{(q \hbar^{-1} a^{-1} )_{-d} } , \qquad d\in \ZZ,
$$
where
$$
(x)_d := \frac{\phi(x)}{\phi (q^d x)}, \qquad \phi(x) := \prod_{l=0}^\infty (1-q^l x) .
$$
Note that $\phi (x)$ is convergent if we view $q$ as a complex number with $|q|<1$, and $\{a\}_d$ being rational, is well-defined for general $q$.

\begin{Proposition} \label{Prop-vertex}
	The bare vertex function is
	\begin{equation} \label{vertex}
		V^{(\tau)}(q,z)  = \sum_{\beta \in \Eff(X)} z^\beta q^{-\frac{1}{2} (\beta,  \det \cT^{1/2}) } \prod_{i=1}^n \{ x_i \}_{D_i}  \cdot \tau ( x_1 q^{D_1}, \cdots,  x_n q^{D_n} ),
	\end{equation}
	where $x_i = L_i \in K_\TT (X)$, $D_i := \deg L_i$, for $1\leq i\leq n$, $\tau = \tau (x_1, \cdots, x_n) \in K_\TT (\fX)$ is a Laurent polynomial in $n$ variables, and $z^\beta := z_1^{D_1} \cdots z_n^{D_n}$.
\end{Proposition}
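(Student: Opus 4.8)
The plan is to compute the bare vertex function by $\TT \times \CC^*_q$-localization on $\QM(X,\beta)_{\ns\ p_2}$, fixed point by fixed point, and identify the contribution of each fixed component with the stated product of $\{x_i\}_{D_i}$ factors and the descendent evaluation. First I would observe that the $\TT$-fixed quasimaps $f:\PP^1 \to \fX$ whose generic point maps to $\bp \in X^\TT$ are, after the standard $\bp$-frame identification, entirely determined by the degrees $D_i = \deg f^*L_i$ of the tautological line bundles: the data of a quasimap is the collection $f^*N_j$ together with sections of $\bigoplus L_i \oplus \hbar^{-1}\bigoplus L_i^{-1}$, and on a fixed component the torus-invariant sections are monomial, pinned down by the vanishing/nonvanishing pattern $z_i\neq 0$ for $i\in\cA_\bp^+$, $w_i\neq 0$ for $i\in\cA_\bp^-$ recalled from \cite{HP}. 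Lemma \ref{subcone} guarantees that the allowed degree vectors $(D_i)$ range exactly over $\Eff_\bp(X) \cap H_2(X,\ZZ)$, so the sum over fixed components is a sum over $\beta\in\Eff(X)$ with $z^\beta = \prod_i z_i^{D_i}$, matching the statement.

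Next I would compute the virtual contribution of a single fixed component. Using $T_\vir = H^\bullet(\cM \oplus \hbar^{-1}\cM^*) - (1+\hbar^{-1})(\sum_{i\notin\bp}\cO)$ and the Euler-class formula $\cO_\vir|_f = S^\bullet(T_\vir^\vee|_f) = 1/\Lambda^\bullet(T_\vir^\vee|_f)$, the contribution factors as a product over the $n$ summands of $\cM$. For the summand of $\cM$ restricting to $aL$ with $\deg L = d$, the symmetric/cotangent pair $aL + \hbar^{-1}a^{-1}L^{-1}$ in $T_\vir$ contributes, after regularizing the infinite products via $\phi(x) = \prod_{l\geq 0}(1-q^l x)$ and $(x)_d = \phi(x)/\phi(q^d x)$, exactly the rational function $\{a\}_d = (-q^{1/2}\hbar^{-1/2})^d (\hbar a)_d/(q a)_d$ — this is precisely the standard $K$-theoretic vertex computation for a single $\PP^1$ (cf. \cite{Oko}), where the $(qa)_d$ in the denominator comes from $H^\bullet$ on the domain with its $\CC^*_q$-weights and the $(\hbar a)_d$ in the numerator from the cotangent direction. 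The two correction terms $-(1+\hbar^{-1})\cO$ for $i\notin\bp$ precisely cancel the $d=0$ excess so that the $i\notin\bp$ factors are the honest $\{x_i\}_{D_i}$; setting $a = x_i|_\bp$, and then re-promoting $x_i|_\bp \mapsto x_i$ since the formula is written as a global $K_\TT(X)$-class built from the Chern roots, gives $\prod_{i=1}^n\{x_i\}_{D_i}$.

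It then remains to handle the prefactor $q^{-\frac12 \deg\cT^{1/2}}$ from the twist $\widehat\cO_\vir = q^{-\frac12\deg\cT^{1/2}}\cO_\vir\otimes K_\vir^{1/2}$: the $\deg\cT^{1/2}$ on a degree-$\beta$ component is the pairing $(\beta, \det\cT^{1/2})$, and one checks the square root $K_\vir^{1/2}$ together with the polarization shift reproduces the symmetric normalization already implicit in the definition of $\{a\}_d$ (the factors $(-q^{1/2}\hbar^{-1/2})^d$), so the only surviving external factor is $q^{-\frac12(\beta,\det\cT^{1/2})}$ as stated. Finally, the descendent insertion $\ev_1^*\tau$ restricted to the fixed component evaluates the Laurent polynomial $\tau$ at the $\CC^*_q$-weights of $L_i$ at $p_1$, which on a degree-$D_i$ component is $x_i q^{D_i}$; since $\ev_1$ targets $\fX$ and $\tau \in K_\TT(\fX)$ this is literally $\tau(x_1 q^{D_1},\dots,x_n q^{D_n})$. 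Assembling the three pieces gives \eqref{vertex}.

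The main obstacle I expect is the careful bookkeeping of $\CC^*_q$-equivariant weights in the regularized infinite products — in particular verifying that the $K_\vir^{1/2}$ twist and the polarization/spin-structure choice conspire to produce exactly the symmetric factor $(-q^{1/2}\hbar^{-1/2})^d$ and no leftover sign or half-integer power of $q$, and that the pushforward by $\ev_{2,*}$ (not just $\ev_{2,*}$ to a point) is correctly interpreted via $\CC^*_q$-localization since $\ev_2$ is non-proper — this is where one must be most vigilant, though all of it is standard once the fixed-point geometry above is in place.
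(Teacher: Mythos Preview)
Your proposal is correct and follows the same approach as the paper: the paper's proof consists of a single sentence noting basis-independence of the formula and then ``follows from direct computations of the vertex function, restricted to each fixed point,'' and your write-up is precisely a careful unpacking of those direct computations, using the same ingredients (the $T_\vir$ formula, the $\{a\}_d$ contribution, the standard $\bp$-frame, and the degree constraints of the preceding lemma and Lemma~\ref{subcone}) already assembled immediately before the proposition. One small phrasing issue: the subtracted terms $-(1+\hbar^{-1})\sum_{i\notin\bp}\cO$ come from the gauge Lie algebra $\mathfrak{k}\oplus\hbar^{-1}\mathfrak{k}^*$ in $T_\fX$, not from a ``$d=0$ excess''; but this does not affect the argument.
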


\begin{proof}
Note that the expression on the RHS of (\ref{vertex}) is independent of the choice of basis for $\ZZ^k$. The proposition follows from direct computations of the vertex function, restricted to each fixed point.
\end{proof}

\begin{Remark}
Note that by our definition of K\"ahler parameters in Section \ref{sec-para}, we have $z^\beta = z_1^{D_1} \cdots z_n^{D_n} = \prod_{j\not\in \bp} \zeta_j (\bp)^{D_j}$. Moreover, the $z^\beta$ term of the restriction $V^{(\tau)}(q,z) \big|_\bp$ of (\ref{vertex}) to the fixed point $\bp$ vanishes unless $\beta \in \Eff_\bp (X)$.
\end{Remark}

Note that different choices of the global  polarization $T^{1/2}_X$, and hence the associated $\cT^{1/2}$, result in different vertex functions. But they are related by $q$-shifts of the K\"ahler parameters. As observed in \cite{AOelliptic}, with some extra factors, the bare vertex function can be written in the form of an integral over the Chern roots, along some appropriately chosen contours.

\begin{Definition} \label{loc-pol}
\begin{enumerate}[1)]
	
\setlength{\parskip}{1ex}

\item Choose the \emph{global polarization} as
\begin{equation} \label{Polar-X}
T^{1/2}_X = \sum_{i=1}^n  L_i -  \cO^{\oplus k} = \sum_{i=1}^n x_i - k \ \in \ K_\TT (X).
\end{equation}
Define the shifted K\"ahler parameters
$$
z_\sharp^\beta := z^\beta \cdot (-\hbar^{ \frac{1}{2} \beta \cdot \det T^{1/2}_X }), \qquad \text{i.e.} \qquad  z_{\sharp, i} := z_i \cdot (-\hbar^{-1/2}), \qquad 1\leq i\leq n.
$$

\item Define the \emph{localized polarization} as
\begin{equation} \label{polarization}
\Pol_\bp (x) := \sum_{i\in \cA_\bp^+} \hbar^{-1} x_i^{-1} + \sum_{i\in \cA_\bp^- \cup \bp} x_i - k \hbar^{-1} \ \in \ K_\TT (X).
\end{equation}
In particular, the restriction $\Pol_\bp (x) |_\bp = \sum_{i\in \bp} x_i |_\bp = T_X^{1/2} |_\bp$ is well-defined. Define another version of shifted K\"ahler parameters
$$
z_\epsilon^\beta := z_\sharp^\beta \cdot (-q^{1/2} \hbar^{-1/2})^{\frac{1}{2} (-1+\beta \cdot \det \Pol_\bp (x) )}, \qquad \text{i.e.} \qquad  z_{\epsilon, i} (\bp) := z_{\sharp, i} \cdot (q\hbar^{-1})^{-\epsilon (i)},
$$
where $\epsilon (i) = 1$ if $i\in \cA_\bp^+$, and $\epsilon (i) = 0$ if $i\in \cA_\bp^- \cup \bp$.

\item Define the \emph{modified bare vertex functions} as
$$
\widetilde V^{(\tau)} (q,z) |_\bp := V^{(\tau)}(q,z) \big|_\bp \cdot e^{\sum_{i=1}^n \frac{\ln z_{\epsilon,i} (\bp) \ln x_i |_\bp }{\ln q} } \cdot \Phi ((q-\hbar) T_X^{1/2} |_\bp),
$$
where $\Phi$ be the multiplicative function determined by $\Phi(\sum_i x_i) := \prod_i \phi(x_i)$.
\end{enumerate}
\end{Definition}

\begin{Remark}
Recall that the bare vertex function $V^{(\tau)}$, lives in $K_\TT (X)$, and depends only on the choice of the global polarization $T_X^{1/2}$. However, the modified bare vertex function $\widetilde V^{(\tau)} |_\bp$ depends also on the localized polarization $\Pol_\bp$, which is only defined for each fixed point $\bp$, and does not necessarily lift to a global class in $K_\TT (X)$. In general, one is allowed to choose a different localized polarization such as $\Pol_\bp (x) = \sum_{i\in \cA_\bp^+ \cup \bp'} \hbar^{-1} x_i^{-1} + \sum_{i\in \cA_\bp^- \cup \bp''} x_i - k \hbar^{-1}$, and the corresponding $z_\epsilon$ such as $\epsilon (i) = 1$ if $i\in \cA_\bp^+ \cup \bp'$, and $\epsilon (i) = 0$ if $i\in \cA_\bp^- \cup \bp''$. We will see such a different choice on the mirror side.
\end{Remark}

Direct computation yields the following.

\begin{Proposition}
$$
\widetilde V^{(\tau)} (q,z) \big|_\bp = \frac{1}{(2\pi i)^k} \int_{q \gamma(\bp)} \frac{d\ln x_1 \wedge \cdots d\ln x_n}{\bigwedge_{m=1}^d \Big( \sum_{i=1}^n \beta_{mi} d\ln x_i \Big) } \cdot e^{\sum_{i=1}^n \frac{\ln z_{\epsilon,i} (\bp) \ln x_i }{\ln q} } \cdot \Phi'((q - \hbar) \mathsf{Pol}_\bp (x)) \cdot \tau( x_1 , \cdots, x_n),
$$
	where $q\gamma (V)$ is a noncompact real $k$-cycle in $(\CC^*)^k = \left\{ \prod_{i=1}^n (x_i / a_i)^{\beta_{ji}} = 1 \right\} \subset (\CC^*)^n$, enclosing the following $q$-shifts of poles:
	\begin{equation} \label{poles}
	x_i = \left\{ \begin{aligned}
	& q^{d_i}, \qquad &&  d_i \geq 0, \ i \in \cA_\bp^+  \\
	& \hbar^{-1} q^{d_i}, \qquad && d_i \leq 0, \ i\in \cA_\bp^- .
	\end{aligned} \right.
	\end{equation}
\end{Proposition}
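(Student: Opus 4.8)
The plan is to deduce the integral presentation directly from the closed formula for the bare vertex in Proposition \ref{Prop-vertex}, by resumming the lattice sum over effective curve classes into a sum of residues of the proposed integrand. First I would restrict (\ref{vertex}) to the fixed point $\bp$ in the standard $\bp$-frame. By the Remark following Proposition \ref{Prop-vertex}, only classes $\beta \in \Eff_\bp(X)$ contribute, and in this frame such $\beta$ are in bijection with integer tuples $(d_j)_{j \in \cA_\bp}$ with $d_j \geq 0$ for $j \in \cA_\bp^+$ and $d_j \leq 0$ for $j \in \cA_\bp^-$, the remaining degrees $D_i = \deg L_i$ ($i \in \bp$) being the linear combinations of the $d_j$ dictated by $\beta_S$. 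Substituting the restrictions $x_i|_\bp$ from (\ref{restriction-V}) and the definition $\{a\}_d = (-q^{1/2}\hbar^{-1/2})^d(\hbar a)_d/(qa)_d$ with $(x)_d = \phi(x)/\phi(q^d x)$, the restriction $V^{(\tau)}(q,z)|_\bp$ becomes an explicit $k$-fold sum over $(d_j)$ of products of $q$-Pochhammer ratios, times a $z_\sharp$-monomial, times $\tau$ evaluated at the shifted roots $x_i q^{D_i}$; multiplying by the two prefactors of Definition \ref{loc-pol} converts $z_\sharp$ into $z_\epsilon$ and, via $\Phi((q-\hbar)T^{1/2}_X|_\bp)$, supplies the $\phi$-factors turning each ratio into a complete $\phi$-quotient.

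Next I would compute the residues of the candidate integrand. On the subtorus $\{\prod_i (x_i/a_i)^{\beta_{ji}} = 1\}$ the form $d\ln x_1 \wedge \cdots \wedge d\ln x_n / \bigwedge_{m}(\sum_i\beta_{mi}\,d\ln x_i)$ restricts, up to sign, to the standard volume form in the free coordinates $x_j$, $j \in \cA_\bp$. Writing $\Phi'((q-\hbar)\Pol_\bp(x))$ out from (\ref{polarization}) as a ratio of $\phi$-functions, one checks that inside $q\gamma(\bp)$ its only poles are simple and located precisely at $x_j = q^{d_j}$ ($d_j \geq 0$) for $j \in \cA_\bp^+$ and $x_j = \hbar^{-1}q^{d_j}$ ($d_j \leq 0$) for $j \in \cA_\bp^-$ --- in bijection with the lattice points of $\Eff_\bp(X)$, via the half-space description of $\cA_\bp^\pm$ underlying Lemma \ref{Attr}. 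Using $\phi(x) = (1-x)\phi(qx)$ to take the residue at the pole indexed by $(d_j)$, I would match it term-by-term with the summand obtained above: the exponential factor $e^{\sum_i \ln z_{\epsilon,i}(\bp)\ln x_i/\ln q}$ on the pole produces exactly the monomial $z_\epsilon^\beta$ together with the required half-integer powers of $q$ and $\hbar$, and $\tau(x_1,\dots,x_n)$ becomes $\tau(\dots,x_i q^{D_i},\dots)$.

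Finally, since $|q|<1$ the strings of poles accumulate only at the punctures $0$ and $\infty$, and the rapid decay of the $\phi$-functions ensures that the noncompact cycle $q\gamma(\bp)$ --- the deformation of the compact cycle $\gamma(\bp)$ which sweeps across all of these poles --- picks up exactly the sum of the above residues with no contribution from its tails; summing over all $(d_j)$ then reproduces $\widetilde V^{(\tau)}(q,z)|_\bp$. The step I expect to be the main obstacle is the normalization bookkeeping: reconciling the three flavours of K\"ahler parameters ($z$, $z_\sharp$, $z_\epsilon$), the half-integer powers of $q$ and $\hbar$ coming both from $\widehat\cO_\vir$ and from the polarization-dependent shift, the signs $(-1)^d$, and the exact meaning of $\Phi$ versus $\Phi'$ on the constant summands of $T^{1/2}_X|_\bp$ and $\Pol_\bp(x)$, so that each residue equals the corresponding vertex summand on the nose rather than up to an overall monomial. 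A smaller but genuine point is confirming that $q\gamma(\bp)$ encloses exactly the poles indexed by $\Eff_\bp(X)$ and that its noncompact tails contribute nothing.
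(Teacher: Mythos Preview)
Your proposal is correct and is exactly the ``direct computation'' the paper alludes to (the paper gives no proof beyond that phrase). You have in fact laid out considerably more detail than the paper does: the reduction to the standard $\bp$-frame, the identification of the residues of $\Phi'((q-\hbar)\Pol_\bp(x))$ with the lattice points of $\Eff_\bp(X)$, and the role of the exponential prefactor are precisely the ingredients, and your honest flagging of the normalization bookkeeping as the only real obstacle is accurate.
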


\begin{Remark} \label{limit-point}
If we use $z_{\epsilon, i}$, $i\not\in \bp$ as the coordinates, then up to an exponential factor, the integral is convergent in the region described as follows:
$$
|z^\beta| \ll 1,
$$
for any $\beta \in \Eff (X) - \{0\}$. In particular, we have $|\zeta_i (\bp)|\ll 1$ for $i\in \cA_\bp^+$ and $|\zeta_i (\bp)| \gg 1$ for $i\in \cA_\bp^- $, and there is a limit point
$$
\zeta_i (\bp) \to 0, \qquad  i \in \cA_\bp^+; \qquad \zeta_i(\bp) \to \infty , \qquad i\in \cA_\bp^-.
$$
We will denote by $\zeta (\bp) \xrightarrow{\theta} 0$ the process of taking the limit of $\zeta$, or equivalently $z$, in the manner as above, in the well-defined $\theta$-dependent region. In particular, under this limit, $z^\beta \to 0$ or $\infty$ if and only if $\beta \cdot \theta >0$ or $<0$.
\end{Remark}

\subsection{Asymptotics}

In this subsection, we would like to prove a rigidification result which particularly holds for hypertoric varieties, which follows from an estimate of the behavior of the are vertex function as $q\to 0$ or $\infty$. The result does not hold in general for non-abelian holomorphic symplectic quotients.

\begin{Theorem} \label{rig}
	Let $X$ be a hypertoric variety. The capped vertex function $\widehat V^{(\bone)} (q,z)$ is independent of $q$, and hence equal to the PSZ quantum identity class $\widehat\bone (z)$. Moreover, one has the limit
	$$
	V^{(\bone)} (q , z) \to \left\{ \begin{aligned}
	& \widehat \bone (z) , && \qquad q \to 0 \\
	& G(q, z) \big|_{q=1}^{-1} \cdot \widehat \bone (z) , && \qquad q \to \infty.
	\end{aligned} \right.
	$$
\end{Theorem}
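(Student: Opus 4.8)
The strategy is to work fixed-point by fixed-point, using the explicit integral presentation from the preceding Proposition, and to analyze the $q\to 0$ and $q\to\infty$ asymptotics of the contour integral that computes $V^{(\bone)}(q,z)\big|_\bp$. The starting point is the expression
$$
\widetilde V^{(\bone)}(q,z)\big|_\bp = \frac{1}{(2\pi i)^k}\int_{q\gamma(\bp)} \frac{d\ln x_1 \wedge \cdots \wedge d\ln x_n}{\bigwedge_{m=1}^d \big(\sum_i \beta_{mi}\, d\ln x_i\big)}\cdot e^{\sum_i \frac{\ln z_{\epsilon,i}(\bp)\ln x_i}{\ln q}}\cdot \Phi'\big((q-\hbar)\,\mathsf{Pol}_\bp(x)\big),
$$
together with the relation between $\widetilde V$, $V$, and the prefactor $e^{\sum_i \frac{\ln z_{\epsilon,i}(\bp)\ln x_i|_\bp}{\ln q}}\cdot \Phi((q-\hbar)T_X^{1/2}|_\bp)$. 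First I would fix a circuit/chamber so that Remark~\ref{limit-point} applies and the integral converges in the $\theta$-dependent region $\zeta(\bp)\xrightarrow{\theta}0$; the statement is then proved term by term in the $z^\beta$-expansion, $\beta\in\Eff_\bp(X)$.

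\emph{Independence of $q$ of the capped vertex.} For this I would invoke Proposition~\ref{capping-eqn}, $\widehat V^{(\bone)}(q,z)=\Psi(q,z)\,V^{(\bone)}(q,z)$, combined with the $q$-difference equation of Proposition~\ref{Psi}. The cleanest route is: (a) show $\widehat V^{(\bone)}(q,z)$ is a \emph{regular} (non-localized) $K$-class, a power series in $z^{\Eff(X)}$ with coefficients Laurent polynomials in $q$ (this is the content of properness of $\ev_2$ for the relative theory); and (b) show its coefficients are bounded as $q\to 0$ and $q\to\infty$, which forces each to be constant in $q$. Boundedness as $q\to 0,\infty$ is exactly what the asymptotic estimate below gives. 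Given $q$-independence, evaluating at $q=1$ and using the definition of $\widehat\bone(z)$ identifies $\widehat V^{(\bone)}=\widehat\bone(z)$.

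\emph{The two limits.} As $q\to 0$: in the integrand, $\Phi'((q-\hbar)\mathsf{Pol}_\bp(x))$ has a well-controlled limit (since $\phi(x)\to 1$ as $q\to 0$ for each factor, after tracking which arguments survive), and the exponential factor $e^{\sum \ln z_{\epsilon,i}(\bp)\ln x_i/\ln q}$ degenerates to select the leading pole at $x_i=q^{0}$ or $\hbar^{-1}q^{0}$, i.e. the $d_i=0$ term, which is precisely $V^{(\bone)}\big|_\bp$ picking up its $z^0$-normalization. One then identifies the surviving contribution with $\Psi(q,z)^{-1}\widehat V^{(\bone)}$ evaluated in the $q\to 0$ regime, where $\Psi(q,z)\to \mathrm{Id}$ (the leading asymptotics of the capping operator is the identity, since the $\beta=0$ term of $\Psi$ contributes the identity and higher terms vanish in this limit); hence $V^{(\bone)}\to\widehat\bone(z)$. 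As $q\to\infty$: the same analysis, but now the difference equation $Z_i\Psi = (L_i^{-1}\otimes)\circ\Psi\circ M_i(q^{-1},z)$ and $M_i(1,z)=\widehat L_i(z)*$ show that the $q\to\infty$ asymptotics of $\Psi$ is governed by the gluing operator: $\Psi(q,z)\to G(q,z)\big|_{q=1}$, and therefore $V^{(\bone)}=\Psi^{-1}\widehat V^{(\bone)}\to G(q,z)\big|_{q=1}^{-1}\cdot\widehat\bone(z)$. Concretely I would verify this by matching the $q\to 0$ versus $q\to\infty$ forms of the prefactor $\Phi((q-\hbar)T_X^{1/2}|_\bp)$ and of $\{x\}_d$, using the two equivalent formulas $\{a\}_d = (-q^{1/2}\hbar^{-1/2})^d\frac{(\hbar a)_d}{(qa)_d} = (-q^{1/2}\hbar^{-1/2})^{-d}\frac{(a^{-1})_{-d}}{(q\hbar^{-1}a^{-1})_{-d}}$ displayed before Proposition~\ref{Prop-vertex}: the first makes the $q\to\infty$ behavior transparent (it is the $\hbar\leftrightarrow q\hbar^{-1}$-type symmetry), the second the $q\to 0$ behavior.

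\emph{Main obstacle.} The delicate point is establishing uniform control of the $q$-dependence: a priori $V^{(\bone)}(q,z)$ lives in the localized ring $K_{\TT\times\CC_q^*}(X)_\loc$, so each coefficient in $z$ is a \emph{rational} function of $q$, and one must rule out poles in $q\in(0,\infty)$ and control growth at both ends. For hypertoric $X$ this is exactly where the abelian structure is used: the fixed-point-restricted vertex $V^{(\bone)}\big|_\bp$ is, by Proposition~\ref{Prop-vertex}, an explicit sum $\sum_\beta z^\beta q^{-\frac12(\beta,\det\cT^{1/2})}\prod_i\{x_i\}_{D_i}$ of \emph{products} of the elementary rational functions $\{a\}_d$, and each $\{a\}_d$ has its $q$-poles only at roots of unity times ratios of equivariant parameters — never on the positive real axis in generic position — with controlled degree in $q$. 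Making the growth estimate as $q\to 0,\infty$ precise (counting the net power of $q$ in $q^{-\frac12(\beta,\det\cT^{1/2})}\prod_i\{x_i\}_{D_i}$ against the contributions of $\Phi$ and the exponential prefactor, and checking it is bounded uniformly in $\beta\in\Eff_\bp(X)$) is the real work; the rest is bookkeeping with the degeneration/capping formulas already recalled. I would organize this estimate as a Lemma computing $\deg_q\big(q^{-\frac12(\beta,\det\cT^{1/2})}\prod_i\{x_i\}_{D_i}\big)$ as a function of $\beta$, and show it is $\le 0$ for all $\beta\in\Eff_\bp(X)$ with equality exactly at the boundary rays, which is precisely the dichotomy producing $\widehat\bone(z)$ at one end and $G|_{q=1}^{-1}\widehat\bone(z)$ at the other.
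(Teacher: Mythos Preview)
Your proposal is correct and follows essentially the same architecture as the paper: use $\widehat V^{(\bone)} = \Psi \cdot V^{(\bone)}$, observe that $\widehat V^{(\bone)}$ is a Laurent polynomial in $q$ by properness, establish that it is bounded as $q\to 0,\infty$, and conclude it is constant in $q$; the two limits of $V^{(\bone)}$ then follow from the known asymptotics $\Psi\to\Id$ (as $q\to 0$) and $\Psi\to G|_{q=1}$ (as $q\to\infty$), which the paper simply cites from Okounkov (Lemma~7.1.11 of \cite{Oko}).

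The paper's execution is cleaner than yours in two places. First, for boundedness of $V^{(\bone)}$ the paper works directly with the sum formula $V^{(\bone)}|_\bp = \sum_\beta z^\beta q^{-\frac12\sum_i D_i}\prod_i\{x_i|_\bp\}_{D_i}$ (not the contour integral) and uses only the elementary estimate $\{x\}_D \sim \mathrm{const}\cdot q^{\pm|D|/2}$ as $q\to 0,\infty$, together with $\big|\sum_i D_i\big|\le\sum_i|D_i|$; no sharper degree count or ``equality at boundary rays'' is needed. Second, your concern about ruling out poles of $V^{(\bone)}$ on $(0,\infty)$ is a red herring: it is $\widehat V^{(\bone)}$, not $V^{(\bone)}$, that is already known to be a Laurent polynomial in $q$, and for a Laurent polynomial boundedness at $0$ and $\infty$ alone forces constancy. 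Any intermediate poles of $V^{(\bone)}$ are automatically cancelled by $\Psi$, which is precisely what non-localizedness of $\widehat V^{(\bone)}$ guarantees.
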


Recall that by definition $\widehat \bone (z) = \lim_{q\to 1} \widehat V^{(\bone)} (q,z)$, and by Proposition \ref{capping-eqn} the capped and bare vertices are related by the capping operator
$$
\hat V^{(\bone)}(q,z) = \Psi(q,z) \cdot V^{(\bone)}(q,z).
$$
Let's study the $q\to 1$ asymptotic behavior of this equation.

\begin{Lemma} \label{capping-limit}
	The capping operator $\Psi (q,z)$ satisfies
	$$
	\Psi (q, z) \to \left\{ \begin{aligned}
	& \Id, && \qquad q \to 0 \\
	& G(q,z)|_{q=1} , && \qquad q \to \infty
	\end{aligned} \right.
	$$
\end{Lemma}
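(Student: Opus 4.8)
The plan is to analyze the capping operator $\Psi(q,z)$ via its definition as a generating series of relative quasimap counts with one nonsingular point ($p_1$) and one relative point ($p_2$), and to extract the $q\to 0$ and $q\to\infty$ limits from the $q$-weights attached to each fixed contribution. First I would recall that $\Psi(q,z) = \Id + O(z)$, since the $\beta=0$ term receives contributions only from constant quasimaps, which are nonsingular everywhere and contribute the identity. For $\beta \neq 0$, the contribution localizes on the $\TT\times\CC^*_q$-fixed loci of $\QM(X,\beta)_{\ns\ p_1,\ \rel\ p_2}$. Because $p_1$ is required nonsingular, any such fixed quasimap must be a $\TT$-equivariant quasimap concentrated (as a constant map) at some fixed point $\bp$ on the rigid $\PP^1$, together with a (possibly nontrivial) configuration of bubbles carrying the curve class at $p_2$. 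The key point is that the half-virtual structure sheaf $\widehat\cO_\vir$, via the twist by $K_\vir^{1/2}$ and the normalization factor $q^{-\frac12 \deg \cT^{1/2}}$, assigns to each such fixed contribution a definite monomial weight in $q$; more precisely each term of the form $\{a\}_d$ from Proposition \ref{Prop-vertex}, and the analogous bubble contributions, carries a power $q^{\pm d/2}$ times a ratio of $\phi$-functions that tends to $1$ as $q\to 0$.

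The main step is therefore a sign/degree bookkeeping: I would show that for the contributions with $p_1$ nonsingular, every nonconstant direction in the obstruction theory carries a strictly positive power of $q$ (so the term vanishes as $q\to 0$), leaving only the $\Id$ term; this is the $K$-theoretic analogue of the cohomological statement that the capping operator is "triangular" with the quantum differential equation, and it is exactly the mechanism by which the vertex function generates the $z$-solutions rather than the $a$-solutions. For the $q\to\infty$ limit, I would invoke the functional equation $\{a\}_d = (-q^{1/2}\hbar^{-1/2})^{-d}\, \frac{(a^{-1})_{-d}}{(q\hbar^{-1}a^{-1})_{-d}}$ exhibited just before Proposition \ref{Prop-vertex}, which is the manifestation of the $q\leftrightarrow q^{-1}$ symmetry of the (equivariantly symmetrized) obstruction theory: under $q\to\infty$ the roles of $p_1$ and $p_2$ are effectively interchanged, so the surviving contributions are precisely those in which $p_1$ is allowed to bubble — i.e. the gluing operator $G$ at $q=1$. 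Concretely, I would degenerate $\PP^1$ into $C_+\cup C_-$ with the node as a new relative point, apply the degeneration formula $\Psi(q,z) = C_+(\ \mid\bullet)\circ G(q,z)^{-1}\circ C_-(\bullet\mid\ )$-type identity relating $\Psi$, the analogous operator with $p_1,p_2$ swapped, and $G$, then take $q\to 0$ on the swapped operator (which tends to $\Id$ by the same argument) to conclude $\Psi(q,z)\to G(q,z)|_{q=1}$ as $q\to\infty$.

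The hard part will be making the $q$-power estimate uniform across \emph{all} bubble configurations simultaneously — i.e. checking that no cancellation among the infinitely many bubble strata spoils the naive term-by-term limit, and that the relevant $z$-series converges in the region where the limit is taken (cf.\ Remark \ref{limit-point}). This is where the hypertoric hypothesis is essential: because $X$ is a GKM variety with the explicit combinatorial control of Lemma \ref{bridge} and Lemma \ref{subcone}, the fixed loci of $\QM(X,\beta)_{\ns\ p_1,\ \rel\ p_2}$ are themselves products of (capped) vertex-type and bubble-type pieces whose $q$-degrees are manifestly non-negative, so the estimate reduces to the rational-function asymptotics of $\{a\}_d$ and $(x)_d$ as $q\to 0,\infty$. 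The analogous statement fails for non-abelian quotients precisely because those fixed loci need not have this product structure, which is why the lemma (and Theorem \ref{rig}) is stated only for hypertoric $X$.
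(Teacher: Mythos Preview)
The paper does not prove this lemma at all: its proof is the single sentence ``This is Lemma 7.1.11 in \cite{Oko}.'' In other words, the statement is imported wholesale from Okounkov's general quasimap theory for symplectic resolutions, and holds for any Nakajima quiver variety, not just hypertoric ones. Your proposal therefore over-engineers the situation and, more importantly, misattributes the hypertoric hypothesis. Your final paragraph claims that ``the hypertoric hypothesis is essential'' here and that ``the analogous statement fails for non-abelian quotients''; that is false for this lemma. You are confusing Lemma \ref{capping-limit} with the \emph{next} lemma, Lemma \ref{bounded}, which is the genuinely hypertoric-specific step (bounding the bare vertex as $q\to 0,\infty$) and is where the GKM combinatorics of Lemma \ref{bridge} and Lemma \ref{subcone} actually enters.

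The broad shape of your argument for $q\to 0$ --- localize, observe that $\CC^*_q$-fixed configurations with $p_1$ nonsingular must be constant on the rigid $\PP^1$ with all degree pushed into the bubbles at $p_2$, then check that the node/edge contribution carries a positive power of $q$ --- is indeed the mechanism behind the result in \cite{Oko}, and likewise the $q\to\infty$ limit via the $q\leftrightarrow q^{-1}$ symmetry of the symmetrized obstruction theory is correct in spirit. But your ``hard part'' worry is misplaced: for each fixed $\beta$ there are only finitely many bubble strata (stability forces every bubble to carry positive degree, bounding their number), so the $q$-limit is a term-by-term limit of a finite sum in each $z^\beta$-coefficient and no convergence or cancellation issue arises. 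That finiteness is what makes the argument work in general, not any hypertoric-specific product structure of the fixed loci.
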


\begin{proof}
	This is Lemma 7.1.11 in \cite{Oko}.
\end{proof}

\begin{Lemma} \label{bounded}
	The bare vertex function $V^{(\bone)} (q,z)$ admits finite limits when $q \to 0$ or $\infty$.
\end{Lemma}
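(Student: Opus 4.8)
The plan is to use the explicit localization formula for the bare vertex function from Proposition \ref{Prop-vertex}, restricted to each fixed point $\bp$, and to analyze the $q$-dependence term by term in the expansion over effective curve classes. Concretely, for a fixed point $\bp$ we have
$$
V^{(\bone)}(q,z)\big|_\bp = \sum_{\beta \in \Eff_\bp(X)} z^\beta\, q^{-\frac{1}{2}(\beta,\det\cT^{1/2})} \prod_{i=1}^n \{x_i|_\bp\}_{D_i},
$$
where only curve classes $\beta \in \Eff_\bp(X)$ contribute (by the Remark after Proposition \ref{Prop-vertex}), and $D_i = \deg L_i|_C$ are nonnegative for $i\in\cA_\bp^+$, nonpositive for $i\in\cA_\bp^-$, and zero otherwise. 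The key point is that each coefficient of $z^\beta$ is a \emph{rational} function of $q$ (a finite product of the rational functions $\{a\}_d$ together with the power $q^{-\frac12(\beta,\det\cT^{1/2})}$), so $q\to 0$ and $q\to\infty$ limits of each individual coefficient make sense once we check there is no pole there. First I would record the elementary asymptotics of the building block: writing $\{a\}_d = (-q^{1/2}\hbar^{-1/2})^d \frac{(\hbar a)_d}{(qa)_d}$ with $(x)_d = \phi(x)/\phi(q^dx)$ and $\phi(x) = \prod_{l\ge 0}(1-q^lx)$, one checks that $\phi(x)\to 1$ as $q\to 0$, so $(x)_d\to 1$ for $d\ge 0$ and $(x)_d\to$ a finite nonzero constant for $d<0$ (a ratio of finitely many $(1-x)$-type factors), and hence $\{a\}_d$ has a finite limit as $q\to 0$ (vanishing when $d>0$, finite nonzero when $d\le 0$ — note $d<0$ forces $i\in\cA_\bp^-$ where $x_i|_\bp = \hbar^{-1}$, a pure equivariant monomial, so the denominator factors $1-q^l x_i|_\bp$ never vanish at $q=0$). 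The $q\to\infty$ behavior is handled by the second presentation $\{a\}_d = (-q^{1/2}\hbar^{-1/2})^{-d}\frac{(a^{-1})_{-d}}{(q\hbar^{-1}a^{-1})_{-d}}$, which by the substitution $q\mapsto q^{-1}$ reduces to the $q\to 0$ analysis; combined with the prefactor power of $q$, each coefficient of $z^\beta$ in $V^{(\bone)}|_\bp$ again has a finite limit.

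Second, I would address the fact that $V^{(\bone)}$ is a \emph{series} in $z^{\Eff(X)}$, so ``finite limit'' must be interpreted coefficient-wise in the $z$-expansion, which is exactly what the above gives: for each fixed $\beta$ the coefficient is a rational function of $q$ regular at $0$ and at $\infty$. This is the sense in which the statement is meant (and the sense in which it is used in Theorem \ref{rig}, where one compares with $\widehat\bone(z)$ and $G|_{q=1}^{-1}\widehat\bone(z)$ coefficient by coefficient in $z$). Assembling the fixed-point contributions via $\TT$-localization then yields the finite limits for $V^{(\bone)}(q,z) \in K_{\TT\times\CC^*_q}(X)_\loc[[z^{\Eff(X)}]]$ as claimed.

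The main obstacle I anticipate is making sure the power $q^{-\frac12(\beta,\det\cT^{1/2})}$ coming from the $\widehat\cO_\vir$-twist does not spoil finiteness — i.e. that for $\beta\in\Eff_\bp(X)$ the total $q$-degree of the coefficient of $z^\beta$ (contributions of the $\{x_i|_\bp\}_{D_i}$ plus the explicit prefactor) is nonnegative at $q=0$ and nonpositive at $q=\infty$. This requires computing $(\beta,\det\cT^{1/2})$ for the polarization $T^{1/2}_X = \sum_i x_i - k$ (so $\det\cT^{1/2} = \prod_i L_i$, hence $(\beta,\det\cT^{1/2}) = \sum_i D_i$) and matching it against the lowest and highest $q$-powers of $\prod_i\{x_i|_\bp\}_{D_i}$, which from $\{a\}_d = (-q^{1/2}\hbar^{-1/2})^d(\cdots)$ contributes $\tfrac12\sum_i D_i$ to the leading $q$-exponent. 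The two half-integer contributions cancel, leaving an \emph{integer} power of $q$ whose sign one reads off from the sign of $D_i$, i.e. from $\Eff_\bp(X)$; the membership $\beta\in\Eff_\bp(X)$ (Lemma \ref{subcone}) is precisely what guarantees the right sign. I would also note that the hypertoric hypothesis enters here through the fact that $x_i|_\bp$ is a pure equivariant monomial (in $\hbar, a_j$) for $i\notin\bp$, so the $\phi$-factors in the denominators do not introduce $q$-dependent poles accumulating at $0$ or $\infty$ — this is the step that fails for general non-abelian symplectic quotients, as the Theorem's preamble warns.
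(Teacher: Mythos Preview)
Your approach is the same as the paper's --- restrict to each fixed point $\bp$, use the explicit localization formula, and analyze the $q\to 0,\infty$ asymptotics coefficient-by-coefficient in $z$ --- but your execution contains several computational errors that muddle the argument and lead you to invoke machinery that is not needed.

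First, the elementary asymptotics are off. As $q\to 0$ one has $\phi(x)\to 1-x$, not $1$; and for $d<0$, $(x)_d = 1/\prod_{l=1}^{|d|}(1-q^{-l}x)$ does \emph{not} tend to a finite nonzero constant, since each factor $1-q^{-l}x$ blows up. The correct asymptote, valid for all $d\in\ZZ$, is
$$
\{x\}_d \sim \mathrm{const}\cdot q^{|d|/2} \quad (q\to 0), \qquad \{x\}_d \sim \mathrm{const}\cdot q^{-|d|/2} \quad (q\to \infty).
$$
In particular the leading $q$-exponent of $\{a\}_d$ is $|d|/2$, not $d/2$ as you state in your third paragraph.

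Second, once the correct asymptote is in hand, the prefactor bound is immediate and does \emph{not} require knowing the signs of the $D_i$: the total $q$-exponent at $q\to 0$ is $\tfrac12(\sum_i |D_i| - \sum_i D_i)\ge 0$, and at $q\to\infty$ it is $-\tfrac12(\sum_i |D_i| + \sum_i D_i)\le 0$, both by the trivial inequality $\pm D_i\le |D_i|$. Your appeal to Lemma~\ref{subcone} and to ``the sign of $D_i$ from $\Eff_\bp(X)$'' is therefore unnecessary. Relatedly, your claim that $D_i=0$ for $i\in\bp$ is incorrect: in the standard $\bp$-frame one has $D_i=\sum_{j\notin\bp}C_{ij}d_j$ for $i\in\bp$, which is generically nonzero (cf.\ the formula in Appendix~\ref{limit-point-a}). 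Since the inequality above is insensitive to the signs of the $D_i$, this error does not ultimately break the conclusion, but it should be corrected.
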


\begin{proof}
	We have the explicit formula
	$$
	\left. V^{(\bone)} (q, z) \right|_\bp = \sum_{\beta \in \Eff (X)} z^\beta q^{- \frac{1}{2} \sum_{i=1}^n D_i } \prod_{i=1}^n \{ x_i |_\bp \}_{D_i}.
	$$
	By definition, we have for any $x$ and $D\in \ZZ$,
	$$
	\{ x \}_D \sim \left\{ \begin{aligned}
	& \mathrm{const} \cdot q^{|D|/2}, && \qquad q \to 0 \\
	& \mathrm{const} \cdot q^{-|D| / 2} , && \qquad q \to \infty.
	\end{aligned} \right.
	$$
	We see that the factor $q^{ - \frac{1}{2} \sum_{i=1}^n D_i } $ is completely controlled by the term $q^{\pm \sum_{i=1}^n  |D_i| / 2}$ and hence $V^{(\bone)} (q, z)$ is bounded as $q \to 0$ or $\infty$.
\end{proof}

\begin{proof}[Proof of Theorem \ref{rig}]
	As a class in the \emph{non-localized} $K$-theory ring $K_{\TT \times \CC_q^*} (X) [[ z^{\Eff (X)} ]]$, each $z^\beta$-term of the capped vertex $\widehat V^{(\bone)} (q, z)$ is a Laurent polynomial in $q$. Moreover, by the previous two lemmas, it admits no poles at $q = 0$ and $\infty$, which implies that it is actually constant in $q$. Therefore,
	$$
	\lim_{q \to 1} \widehat V^{(\bone)} (q , z) = \lim_{q\to 0} \widehat V^{(\bone)} (q , z) .
	$$
	The last statement follows from Lemma \ref{capping-limit}.
\end{proof}

For some special targets $X$, one can explicitly compute the limit of the vertex function, and hence the PSZ quantum identity class. Consider the following two assumptions:

$\mathrm{(A+)}$ for any circuit $\beta\neq 0$, there exists some $i$, such that $D_i >0$;

$\mathrm{(A-)}$ for any circuit $\beta\neq 0$, there exists some $i$, such that $D_i <0$.

For example, $T^*\PP^n$ satisfies $\mathrm{(A+)}$ but not $\mathrm{(A-)}$; $\cA_n$ satisfies both $\mathrm{(A+)}$ and $\mathrm{(A-)}$.

\begin{Corollary}
\begin{enumerate}[1)]

\item If $X$ satisfies $\mathrm{(A+)}$, then $\widehat \bone (z) = G(q,z) \big|_{q=1} \cdot \bone$.

\item If $X$ satiesfies $\mathrm{(A-)}$, then $\widehat \bone (z) = \bone$. 

\end{enumerate}
\end{Corollary}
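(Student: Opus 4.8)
The plan is to extract the $q\to 0$ and $q\to\infty$ limits of the bare vertex $V^{(\bone)}$ from the localization formula and then invoke Theorem~\ref{rig}. That theorem gives $\widehat\bone(z)=\lim_{q\to 0}V^{(\bone)}(q,z)$ and $\widehat\bone(z)=G(q,z)|_{q=1}\cdot\lim_{q\to\infty}V^{(\bone)}(q,z)$, so it suffices to prove that $\mathrm{(A-)}$ forces $\lim_{q\to 0}V^{(\bone)}(q,z)=\bone$, and that $\mathrm{(A+)}$ forces $\lim_{q\to\infty}V^{(\bone)}(q,z)=\bone$.

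Fix a $\TT$-fixed point $\bp$ and use the formula $V^{(\bone)}(q,z)|_\bp=\sum_{\beta\in\Eff_\bp(X)}z^\beta\,q^{-\frac12\sum_i D_i}\prod_i\{x_i|_\bp\}_{D_i}$ from the proof of Lemma~\ref{bounded}, together with the asymptotics $\{x\}_D\sim\mathrm{const}\cdot q^{|D|/2}$ as $q\to 0$ and $\{x\}_D\sim\mathrm{const}\cdot q^{-|D|/2}$ as $q\to\infty$ recorded there. Combining these with the prefactor $q^{-\frac12\sum_i D_i}$, the $z^\beta$-coefficient of $V^{(\bone)}(q,z)|_\bp$ has order $q^{\sum_{D_i<0}|D_i|}$ as $q\to 0$ and order $q^{-\sum_{D_i>0}D_i}$ as $q\to\infty$. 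Hence, coefficientwise in $z$, the $\beta$-term vanishes as $q\to 0$ unless $D_i(\beta)\ge 0$ for all $i$, and vanishes as $q\to\infty$ unless $D_i(\beta)\le 0$ for all $i$; the $\beta=0$ term equals $1=\bone|_\bp$ in both limits. So the corollary reduces to the combinatorial statement: $\mathrm{(A-)}$ implies the only effective $\beta$ with all $D_i(\beta)\ge 0$ is $\beta=0$, and $\mathrm{(A+)}$ implies the analogous statement with all $D_i(\beta)\le 0$.

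Identify $H_2(X,\ZZ)$ with $\ker\beta\subseteq\ZZ^n$ via $\beta\mapsto(D_i(\beta))_i$ (so $z^\beta=\prod_i z_i^{D_i(\beta)}$ and $[L_i]$ becomes the $i$-th coordinate functional). The key preliminary fact is $\ker\beta\cap\RR^n_{\ge 0}\subseteq\Eff(X)$: as in the proof of Lemma~\ref{subcone}, the K\"ahler cone $\fK$ lies inside $\mathrm{cone}(\{[L_i]\})$, and dualizing in $H_2(X,\RR)$ gives $\ker\beta\cap\RR^n_{\ge 0}=\mathrm{cone}(\{[L_i]\})^\vee\subseteq\fK^\vee=\Eff(X)$. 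Since $\fK$ is full-dimensional, $\Eff(X)$ is pointed, and this already settles statement~1) unconditionally: if $\beta$ is effective with all $D_i(\beta)\le 0$, then $-\beta\in\ker\beta\cap\RR^n_{\ge 0}\subseteq\Eff(X)$, so $\beta$ and $-\beta$ are both effective and $\beta=0$ by pointedness (so $\mathrm{(A+)}$, which one checks always holds, is not even needed).

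For statement~2), suppose $\beta\ne 0$ is effective with $D_i(\beta)\ge 0$ for all $i$, i.e.\ $\beta$ is a nonzero lattice point of the pointed polyhedral cone $\ker\beta\cap\RR^n_{\ge 0}$, which therefore has an extreme ray; let $\beta_0$ be its primitive generator and $T_0\ne\emptyset$ its support. From $\sum_{m\in T_0}(\beta_0)_m\,\beta(e_m)=0$ the conormals $\{\beta(e_m):m\in T_0\}$ are linearly dependent, so $T_0$ contains a circuit $S$. If $S\subsetneq T_0$, then for small $t>0$ both $\beta_0\pm t\beta_S$ still lie in $\ker\beta\cap\RR^n_{\ge 0}$ (the coordinates indexed by $S$ stay strictly positive, the rest are unchanged), and $\beta_0$ is their average, contradicting extremality. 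Hence $S=T_0$; since the linear relations among the conormals of a circuit form a line, $\beta_0=\pm\beta_S$, and $\beta_0=-\beta_S$ is impossible because then $\beta_0$ and $-\beta_0=\beta_S$ would both be effective while $\Eff(X)$ is pointed. Thus $\beta_0=\beta_S$ has all coordinates $\ge 0$, i.e.\ $S^-=\emptyset$ and $D_i(\beta_S)\ge 0$ for all $i$, contradicting $\mathrm{(A-)}$. Therefore $\ker\beta\cap\RR^n_{\ge 0}=\{0\}$, which completes the proof. The main obstacle is precisely this extremality argument, i.e.\ knowing that every extreme ray of $\ker\beta\cap\RR^n_{\ge 0}$ is a circuit class: this is where the smoothness of $X$ (unimodularity and simplicity of $\cH$) is essential, since it forces a circuit's relation to be one-dimensional with $\pm 1$ coefficients, so that a nonnegative circuit class is exactly one with $S^-=\emptyset$. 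The asymptotic bookkeeping and the reduction through Theorem~\ref{rig} are routine.
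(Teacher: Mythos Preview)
Your overall strategy---reduce to computing $\lim_{q\to 0,\infty}V^{(\bone)}$ via Theorem~\ref{rig} and the asymptotics from Lemma~\ref{bounded}, then show that only the $\beta=0$ term survives under $(\mathrm{A}\mp)$---is exactly the intended one, and you correctly identify that one must pass from circuits to arbitrary effective classes via an extreme-ray argument. However, the key inclusion $\ker\beta\cap\RR^n_{\ge 0}\subseteq\Eff(X)$ (equivalently $\fK\subseteq\mathrm{cone}(\{[L_i]\})$) on which you rely is \emph{false} in general. For $T^*\PP^1$ with $\theta<0$ one has $\iota=(1,1)^T$, so $\ker\beta\cap\RR^2_{\ge 0}=\{(t,t):t\ge 0\}$, while $\Eff(X)=\{(t,t):t\le 0\}$; these meet only at the origin. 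This breaks both places where you use the inclusion: your claim that $(\mathrm{A}+)$ always holds (it fails in this very example, where the unique circuit has $S^+=\emptyset$), and your elimination of the case $\beta_0=-\beta_S$ in the $(\mathrm{A}-)$ argument (since $\beta_0$ need not be effective). The sentence you cite from the proof of Lemma~\ref{subcone} should be read with $\pm\iota^\vee e_i^*$ as generators of the chambers, not just the $+\iota^\vee e_i^*$.

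The fix is short and stays within your framework. Keep your extremality argument up to the identification of an extreme ray $\beta_0$ of $\ker\beta\cap\RR^n_{\ge 0}$ with a circuit $S$ whose relation has all coefficients of one sign, so $\beta_0=\pm\beta_S$. Under $(\mathrm{A}-)$, the case $\beta_0=\beta_S$ forces $S^-=\emptyset$, a contradiction; hence every extreme ray is $-\beta_S$ for some circuit with $S^+=\emptyset$, and therefore $\langle\beta_0,\widetilde\theta\rangle=-\langle\beta_S,\widetilde\theta\rangle\le 0$. It follows that $\langle\gamma,\widetilde\theta\rangle\le 0$ for every $\gamma\in\ker\beta\cap\RR^n_{\ge 0}$. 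But your hypothetical nonzero effective $\beta$ lies in this cone and satisfies $\langle\beta,\widetilde\theta\rangle>0$ (genericity of $\theta$), a contradiction. The $(\mathrm{A}+)$ case is symmetric with $\RR^n_{\le 0}$ in place of $\RR^n_{\ge 0}$, and now genuinely uses the hypothesis $(\mathrm{A}+)$.
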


\subsection{$q$-difference equations}

For any function $f(a, z)$ depending on the (redundant) equivariant parameters $a_i$ and K\"ahler parameters $z_i$, $1\leq i\leq n$, consider the following $q$-shift operators:
\ben
(A_i f) (a_1, \cdots, a_i, \cdots a_n, z ) &:=& f (a_1, \cdots, q a_i, \cdots a_n, z ) \\
(Z_i f) (a, z_1, \cdots, z_i, \cdots, z_n ) &:=& f (a, z_{1}, \cdots, q z_{i}, \cdots, z_{n} ).
\een
The effect of the inverse operators $A_i^{-1}$ and $Z_i^{-1}$ are to shift the variables by $q^{-1}$.

We would like to apply those shift operators to the vertex function
\ben
\widetilde V^{(\bone)} (q,z) \big|_\bp
&=& \frac{1}{(2\pi i)^k} \frac{\phi(q)^k}{\phi(q \hbar^{-1})^k } \int_{q \gamma (\bp)} \frac{d\ln x_1 \wedge \cdots d\ln x_n}{\bigwedge_{m=1}^d \Big( \sum_{i=1}^n \beta_{mi} d\ln x_i \Big) } \cdot e^{\sum_{i=1}^n \frac{\ln z_{\epsilon,i} (\bp) \ln x_i }{\ln q} } \\
&& \cdot \prod_{i\in \cA_\bp^+} \frac{ \phi (q\hbar^{-1} x_i^{-1} ) }{ \phi (x_i^{-1} ) } \prod_{i\in \cA_\bp^- \cup \bp } \frac{\phi (q x_i)}{ \phi (\hbar x_i )}  \\
&=:& \int_{q \gamma (\bp )} e^{W(x)} .
\een
Recall in Definition \ref{loc-pol} 2) that $z_{\sharp, i} = z_i \cdot (-\hbar^{-1/2}) = z_{\epsilon, i} (\bp) \cdot (q\hbar^{-1})^{\epsilon (i)}$, where $\epsilon (i) = 1$ or $0$ for $i\in \cA_\bp^+$ or $\cA^- \cup \bp$ respectively. Note that unlike $z_\epsilon$, $z_\sharp$ is independent of $\bp$.

\begin{Lemma} \label{q-diff-operator}
The $q$-shift operators act as:
$$
Z_i \widetilde V^{(\bone)} (q,z) \big|_\bp =
 \int_{q \gamma (\bp)} x_i \cdot e^{W(x)} , \qquad
A_i^{-1} \widetilde V^{(\bone)} (q,z) \big|_\bp =
 (q\hbar^{-1}) z_{\sharp, i}^{-1} \int_{q \gamma (\bp)} \Big( \frac{1 - x_i^{-1} }{1 - q \hbar^{-1} x_i^{-1} } \Big) \cdot e^{W(x)} ,
 $$
 $$
 A_i \widetilde V^{(\bone)} (q,z) \big|_\bp =
 z_{\sharp, i}  \int_{q \gamma (\bp)} \Big( \frac{1 - \hbar x_i }{1 - q x_i } \Big) \cdot e^{W(x)}  .
$$
In particular, all these $q$-shift operators commute with each other.
\end{Lemma}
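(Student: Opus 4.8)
The plan is to obtain all three formulas by manipulating the integral presentation of the preceding Proposition, which I abbreviate as $\widetilde V^{(\bone)}(q,z)\big|_\bp=\int_{q\gamma(\bp)}e^{W(x)}$, using only two inputs: (a) the functional equations $\phi(qx)=\phi(x)/(1-x)$ and $\phi(q^{-1}x)=(1-q^{-1}x)\phi(x)$ for $\phi(x)=\prod_{l\ge0}(1-q^lx)$; and (b) the observation that, in the standard $\bp$-frame, the right-hand side depends on the equivariant parameters $a$ only through the Chern roots $x_j$ with $j\in\bp$ — which are the monomials $a_j\prod_{i\in\cA_\bp}(x_i/a_i)^{C_{ji}}$ in the $k$ integration variables $x_i$, $i\in\cA_\bp$ — and through the loci of the poles encircled by $q\gamma(\bp)$, while the holomorphic volume form $d\ln x_1\wedge\cdots\wedge d\ln x_n/\bigwedge_m(\sum_i\beta_{mi}d\ln x_i)$, the prefactor $\phi(q)^k/\phi(q\hbar^{-1})^k$, and the exponential and $\phi$-factors in $W$ are $a$-free when written in the $x_i$.

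For $Z_i$: the cycle $q\gamma(\bp)$ carries no $z$-dependence, and $z$ enters $e^{W(x)}$ only through the exponential factor $e^{\sum_j (\ln z_{\epsilon,j}(\bp))(\ln x_j)/\ln q}$. Since $z_{\epsilon,i}(\bp)=z_{\sharp,i}(q\hbar^{-1})^{-\epsilon(i)}$ and $z_{\sharp,i}=-\hbar^{-1/2}z_i$, the substitution $z_i\mapsto qz_i$ replaces $\ln z_{\epsilon,i}(\bp)$ by $\ln z_{\epsilon,i}(\bp)+\ln q$, multiplying the integrand by $e^{\ln x_i}=x_i$; this is the first identity. For $A_i$ and $A_i^{-1}$ I treat two cases. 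If $i\in\bp$, then $a_i$ occurs in $e^{W(x)}$ only as the overall coefficient of the dependent root $x_i=a_i\prod_{i'\in\cA_\bp}(x_{i'}/a_{i'})^{C_{ii'}}$, so $a_i\mapsto q^{\pm1}a_i$ is exactly the substitution $x_i\mapsto q^{\pm1}x_i$ in that single root; the cycle $q\gamma(\bp)$ is untouched because it encircles no $x_i$-pole for $i\in\bp$, and reading off the factor $z_{\epsilon,i}(\bp)^{\pm1}=z_{\sharp,i}^{\pm1}$ from the exponential and the factor $\tfrac{1-\hbar x_i}{1-qx_i}$ (resp. its reciprocal-type partner) from $\phi(qx_i)/\phi(\hbar x_i)$ via (a) yields the claimed expression after the algebraic identities $z_{\epsilon,i}(\bp)(q\hbar^{-1})^{\epsilon(i)}=z_{\sharp,i}$ and $\tfrac{1-x_i}{1-q^{-1}\hbar x_i}=q\hbar^{-1}\tfrac{1-x_i^{-1}}{1-q\hbar^{-1}x_i^{-1}}$. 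If $i=\cA_{\bp,I}\in\cA_\bp$, the same computation applies once one keeps the integration variable $s_I=x_i/a_i$ fixed: then $a_i\mapsto q^{\pm1}a_i$ still acts only on $x_i=a_i s_I$, by $x_i\mapsto q^{\pm1}x_i$, and leaves every $x_j$ with $j\in\bp$ fixed, so the exponential and the $i$-th $\phi$-factor contribute $z_{\epsilon,i}(\bp)^{\pm1}$ and $\tfrac{1-\hbar^{\mp1}x_i^{\mp1}}{1-q^{\mp1}x_i^{\mp1}}$ (from $\phi(q\hbar^{-1}x_i^{-1})/\phi(x_i^{-1})$ if $i\in\cA_\bp^+$, from $\phi(qx_i)/\phi(\hbar x_i)$ if $i\in\cA_\bp^-$), which collapse to the single stated formula by the same identities; the only extra point is that the $s_I$-loop of $q\gamma(\bp)$, which runs around $\{q^d/a_i:d\ge0\}$ (resp. $\{q^d/(\hbar a_i):d\le0\}$), is shifted by one step, and rescaling it back to $q\gamma(\bp)$ can produce at most one residue, which is of a curve class $\beta$ with $D_i=-1$ (resp. $D_i=+1$), hence outside $\Eff_\bp(X)$ by Lemma~\ref{subcone} and absent from the power series $\widetilde V^{(\bone)}(q,z)\big|_\bp$. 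Finally, commutativity is immediate: $Z_i$ shifts $z_i$ and $A_j$ shifts $a_j$, and $z$ and $a$ are independent variables.

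The step that needs the most care is the contour bookkeeping for $i\in\cA_\bp$: one must verify that the one-step shift of the enclosed pole progression (equivalently, the passage from the $A_i$-shifted cycle back to $q\gamma(\bp)$) introduces no new contribution — which, as indicated, comes down to the fact that the stray residue would sit in degree $D_i=\mp1\notin\Eff_\bp(X)$ and therefore does not occur in the effective power series. I also remark that for a direct check one may equivalently avoid the contour shift altogether by working in the Chern-root coordinate $x_i$ (keeping $x_i$ fixed under $A_i$, so that $a_i\mapsto q^{\pm1}a_i$ acts through the roots $x_j$, $j\in\bp$); both routes give the stated answer modulo the above identities, but the $s_I$-fixed presentation is the one that exhibits the single factor $\tfrac{1-\hbar x_i}{1-qx_i}$ cleanly. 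Everything else is a formal application of $\phi(qx)=\phi(x)/(1-x)$.
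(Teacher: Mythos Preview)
Your overall approach coincides with the paper's: both compute the effect of $Z_i$ and $A_i^{\pm1}$ on the integral presentation via the functional equation $\phi(qx)=\phi(x)/(1-x)$, and both identify the case $i\in\cA_\bp$ as the only one requiring a contour argument. The $Z_i$ computation and the $i\in\bp$ case are fine.

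The contour justification, however, is not quite right. Take $A_i^{-1}$ with $i\in\cA_\bp^+$ (the case the paper spells out). The shifted contour $A_i^{-1}(q\gamma(\bp))$ encloses $x_i\in\{q,q^2,\dots\}$, so changing back to $q\gamma(\bp)$ means adding the residue at $x_i=1$, i.e.\ at degree $D_i=0$ --- \emph{not} $D_i=-1$. Your criterion ``$D_i=\mp1\notin\Eff_\bp(X)$'' therefore does not apply here. The paper's mechanism is different: the shifted integrand carries the extra factor $(1-x_i^{-1})$, which vanishes at $x_i=1$ and kills that pole of $e^{W}$ outright. That zero of the rational prefactor is the actual reason the contour can be moved back.

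For $A_i$ with $i\in\cA_\bp^+$ the shifted contour acquires the extra point $x_i=q^{-1}$; your label $D_i=-1$ is apt there, but ``outside $\Eff_\bp(X)$'' does \emph{not} force the residue to vanish. The new integrand has a genuine simple pole at $x_i=q^{-1}$ from $(1-qx_i)^{-1}$, and that residue is generically nonzero (try $X=T^*\PP^1$: the two contours differ by a nonzero $d=0$ contribution). The paper avoids this by deriving the $A_i$ identity from the already-proved $A_i^{-1}$ identity (``follows directly'') rather than by a parallel contour argument. Your assertion that the $s_I$-fixed and $x_i$-fixed routes both yield the stated answer with contour $q\gamma(\bp)$ is thus not correct; only the combinations in which the $(1-qx_i)$ denominator cancels (precisely those used downstream in Theorem~\ref{q-diff-eqn}) are insensitive to this distinction.
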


\begin{proof}
The only nontrivial actions are those of $A_i$, $i\not\in \bp$, since they not only act on the integrand, but also shift the contour $q \gamma(\bp)$. Let's consider the case $i\in \cA_\bp^+$; the case $i\in \cA_\bp^-$  is similar.

Recall that the contour $q \gamma (\bp)$ encloses poles of the integrand, described as in (\ref{poles}). The operator $A_i^{-1}$ for $i\in \cA_\bp^+$ shifts to a contour $A_i^{-1} (q \gamma (\bp) )$. The poles enclosed by $A_i^{-1} (q \gamma (\bp))$ sasitsfy the same conditions as in (\ref{poles}), except for $i$: $x_i = q^{1 + d_i}$, $d_i \geq 0$. Note that the points with $x_i = q$ are no longer poles of the integrand. Thus it does no harm to change the contour back to $q \gamma (\bp)$. We then have
$$
A_i^{-1} \widetilde V^{(\bone)} (q,z) \big|_\bp = z_{\epsilon, i} (\bp)^{-1} \int_{q \gamma (\bp)} \Big( \frac{1 - x_i^{-1}}{1 - q \hbar^{-1} x_i^{-1}} \Big) \cdot e^{W(x)} .
$$
The action of $A_i$ follows directly.
\end{proof}

\begin{Theorem} \label{q-diff-eqn}
\begin{enumerate}[1)]

\item The modified vertex function $\widetilde V^{(\bone)} (q,z) \big|_\bp$ is annihilated by the following $q$-difference operators:
\begin{equation} \label{q-diff-Z}
\prod_{i\in S^+} ( 1 - Z_i ) \prod_{i\in S^-} ( 1 - \hbar  Z_i )  -  z_\sharp^\beta \prod_{i\in S^+} ( 1 - \hbar  Z_i ) \prod_{i\in S^-} ( 1 - Z_i ) , \qquad S = S^+ \sqcup S^- : \text{circuit},
\end{equation}
where $z_{\sharp, i}:= z_i (-\hbar^{-1/2})$, $\beta$ is the curve class corresponding to $S$, and $z_\sharp^\beta := \prod_{i\in S^+} z_{\sharp, i} \prod_{i\in S^-} z_{\sharp, i}^{-1}$.

\item The modified vertex function $\widetilde V^{(\bone)} (q,z) \big|_\bp \cdot e^{-\sum_{i=1}^n \frac{\ln z_{\sharp, i} \ln a_i}{\ln q} }$ is annihilated by the following $q$-difference operators:
\begin{equation} \label{q-diff-A}
\prod_{i\in R^+} (1- A_i) \prod_{i\in R^-} (1  - q\hbar^{-1} A_i ) - (\hbar a)^\alpha \prod_{i\in R^+} (1  - q\hbar^{-1} A_i ) \prod_{i\in R^-} (1  -  A_i ) , \qquad R = R^+ \sqcup R^- : \text{cocircuit},
\end{equation}
where $\alpha$ is the root corresponding to $R$, and $(\hbar a)^\alpha := \prod_{i\in R^+} (\hbar a_i) \prod_{i\in R^-} (\hbar a_i)^{-1}$.
\end{enumerate}

\end{Theorem}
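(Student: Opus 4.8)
The plan is to work fixed point by fixed point, starting from the Barnes--Mellin integral representation $\widetilde V^{(\bone)}(q,z)\big|_\bp=\int_{q\gamma(\bp)}e^{W(x)}$ established above (the Proposition following Definition~\ref{loc-pol}), where the understood measure $\frac{d\ln x_1\wedge\cdots\wedge d\ln x_n}{\bigwedge_{m=1}^d\big(\sum_{i=1}^n\beta_{mi}\,d\ln x_i\big)}$ is invariant under multiplicative shifts of the $x_i$. By Lemma~\ref{q-diff-operator}, each of $Z_i$, $A_i$, $A_i^{-1}$ acts on this integral simply by inserting a rational function of the $x_i$ into the integrand (any contour shift needed for $A_i^{\pm1}$, $i\notin\bp$, has already been absorbed into that rational function). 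Hence applying the operator in \eqref{q-diff-Z} to $\widetilde V^{(\bone)}|_\bp$ (resp.\ the operator in \eqref{q-diff-A} to $\widetilde V^{(\bone)}|_\bp\cdot e^{-\sum_i\frac{\ln z_{\sharp,i}\ln a_i}{\ln q}}$) produces one explicit integral over $q\gamma(\bp)$, and the theorem reduces to the vanishing of that integral. The two parts are then settled by two different elementary mechanisms.

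\emph{Part 1 (circuit equations).} Since $Z_i$ inserts $x_i$, applying \eqref{q-diff-Z} gives
$$
\int_{q\gamma(\bp)}\Big(\prod_{i\in S^+}(1-x_i)\prod_{i\in S^-}(1-\hbar x_i)-z_\sharp^\beta\prod_{i\in S^+}(1-\hbar x_i)\prod_{i\in S^-}(1-x_i)\Big)e^{W(x)}.
$$
First I would establish, using $\phi(x)=(1-x)\phi(qx)$ and the definitions of $z_\epsilon$, $z_\sharp$ and the localized polarization $\Pol_\bp$ in Definition~\ref{loc-pol}, the first-order $q$-difference relation
$$
\big(T_{\beta_S}e^{W}\big)(x)=z_\sharp^\beta\,\frac{\prod_{i\in S^+}(1-\hbar x_i)\,\prod_{i\in S^-}(1-x_i)}{\prod_{i\in S^+}(1-qx_i)\,\prod_{i\in S^-}(1-q^{-1}\hbar x_i)}\,e^{W(x)},
$$
where $T_{\beta_S}$ is the shift $x_i\mapsto qx_i$ for $i\in S^+$, $x_i\mapsto q^{-1}x_i$ for $i\in S^-$; since $\beta_S\in\ker\beta$, $T_{\beta_S}$ preserves the integration cycle $q\gamma(\bp)\subset(\CC^*)^k$. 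Rewriting the second term of the integrand by this relation as $\prod_{i\in S^+}(1-qx_i)\prod_{i\in S^-}(1-q^{-1}\hbar x_i)\cdot T_{\beta_S}e^{W}$ and changing variables $x\mapsto T_{\beta_S}x$ turns it into $\prod_{i\in S^+}(1-x_i)\prod_{i\in S^-}(1-\hbar x_i)\cdot e^{W}$ integrated over $T_{\beta_S}(q\gamma(\bp))$. Finally one checks that the region swept out between $q\gamma(\bp)$ and $T_{\beta_S}(q\gamma(\bp))$ meets the poles of $e^{W}$ only along $x_i=1$ ($i\in S^+$) and $x_i=\hbar^{-1}$ ($i\in S^-$), which are exactly where the surviving factors $\prod_{i\in S^+}(1-x_i)$, $\prod_{i\in S^-}(1-\hbar x_i)$ vanish; hence $\int_{q\gamma(\bp)}$ and $\int_{T_{\beta_S}(q\gamma(\bp))}$ of this integrand agree, the two terms cancel, and the integral is zero.

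\emph{Part 2 (cocircuit equations).} The extra factor $e^{-\sum_i\frac{\ln z_{\sharp,i}\ln a_i}{\ln q}}$ is engineered so that $A_i$ applied to it contributes exactly $z_{\sharp,i}^{-1}$, which cancels the $z_{\sharp,i}$ that appears when $A_i$ acts on $\widetilde V^{(\bone)}|_\bp$ (Lemma~\ref{q-diff-operator}); thus on $\widetilde V^{(\bone)}|_\bp\cdot e^{-\sum_i\frac{\ln z_{\sharp,i}\ln a_i}{\ln q}}$ the operator $A_i$ acts by inserting $\frac{1-\hbar x_i}{1-qx_i}$, so $(1-A_i)$ inserts $\frac{(\hbar-q)x_i}{1-qx_i}$ and $(1-q\hbar^{-1}A_i)$ inserts $\frac{1-q\hbar^{-1}}{1-qx_i}$. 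Using $\hbar-q=\hbar(1-q\hbar^{-1})$, applying \eqref{q-diff-A} then yields
$$
(1-q\hbar^{-1})^{|R|}\int_{q\gamma(\bp)}\frac{\hbar^{|R^+|}\prod_{i\in R^+}x_i-(\hbar a)^\alpha\,\hbar^{|R^-|}\prod_{i\in R^-}x_i}{\prod_{i\in R}(1-qx_i)}\;e^{W(x)}\cdot e^{-\sum_i\frac{\ln z_{\sharp,i}\ln a_i}{\ln q}}.
$$
The numerator vanishes identically on $q\gamma(\bp)$: the cycle lies in $(\CC^*)^k=\{\prod_i(x_i/a_i)^{\beta_{ji}}=1\}$, and since $\alpha_R\in\ker\iota^\vee$ one has $\prod_{i\in R^+}(x_i/a_i)=\prod_{i\in R^-}(x_i/a_i)$ there, i.e.\ $\prod_{i\in R^+}x_i=a^\alpha\prod_{i\in R^-}x_i$; together with $(\hbar a)^\alpha=\hbar^{|R^+|-|R^-|}a^\alpha$ this makes the two terms in the numerator equal. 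Hence the integrand is identically zero on the cycle and the integral vanishes.

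\emph{Expected obstacle.} The delicate step is the functional equation in Part~1: one must verify that the assorted $q^{\pm1}\hbar^{\pm1}$ prefactors produced by shifting the arguments of the $\phi$-functions recombine with the discrepancy between $z_\epsilon$ and $z_\sharp$ (governed by the exponents $\epsilon(i)$ and the localized polarization $\Pol_\bp$) so as to leave exactly $z_\sharp^\beta$ and nothing else, and --- less obviously --- that the answer is independent of $\bp$ although the intermediate data $z_{\epsilon,i}(\bp)$, $\epsilon(i)$ and the splitting $\cA_\bp=\cA_\bp^+\sqcup\cA_\bp^-$ are not. The accompanying bookkeeping of exactly which poles the shift of contour crosses is of the same flavour. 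Once these are pinned down Part~1 is routine, and Part~2 is comparatively immediate.
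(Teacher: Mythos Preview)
Your proposal is correct, but the paper argues quite differently and in a way that makes the circuit/cocircuit duality transparent. Rather than using two separate mechanisms, the paper extracts from Lemma~\ref{q-diff-operator} a single operator identity
\[
z_{\sharp,i}^{-1}(1-Z_i)A_i \;=\; 1-\hbar Z_i
\]
holding in the $\cD_q$-module $\cM_1$ generated by $\widetilde V^{(\bone)}|_\bp$ (equivalently, after the gauge transformation by $e^{-\sum\ln z_{\sharp,i}\ln a_i/\ln q}$, the identity $1-A_i=(\hbar a_i)(1-q\hbar^{-1}A_i)Z_i$ in $\cM_2$). The circuit equation then follows because $\widetilde V^{(\bone)}|_\bp$ depends on the $a_i$ only through the non-redundant combinations $\alpha_i(\bp)$, so $\prod_{i\in S^+}A_i\prod_{i\in S^-}A_i^{-1}$ acts as the identity in $\cM_1$; commuting the $A_i$'s across the $(1-Z_i)$'s via the relation above yields \eqref{q-diff-Z} in two lines. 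The cocircuit equation is obtained symmetrically: after the gauge transformation the function depends on the $z_i$ only through the non-redundant K\"ahler parameters, whence $\prod_{i\in R^+}Z_i\prod_{i\in R^-}Z_i^{-1}$ acts as the identity, and the same relation read the other way gives \eqref{q-diff-A}.

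What the paper's route buys is a uniform proof of both parts with no further contour manipulation: Lemma~\ref{q-diff-operator} has already absorbed all the contour bookkeeping once and for all, and the rest is pure operator algebra in a $\cD_q$-module. Your route makes Part~2 especially transparent (the integrand literally vanishes on the cycle), but for Part~1 you must track which poles the shift $T_{\beta_S}$ crosses --- as you anticipate in your ``expected obstacle'', this means not only the poles from $j\in\cA_\bp$ but also those from the $i\in\bp$ factors $\phi(\hbar x_i)^{-1}$ (these sit at $a$-generic locations and are never crossed for generic equivariant parameters, but this needs to be said), together with the case analysis over $S^\pm\cap\cA_\bp^\pm$. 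Both arguments are valid; the paper's is shorter and exhibits the $Z\leftrightarrow A$ symmetry that underlies the mirror statement, while yours is closer to the classical GKZ/Mellin--Barnes derivation.
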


\begin{proof}
Let $\cM_1$ be the left $\cD_q$-module generated by $\widetilde V \big|_\bp$. We know that $\widetilde V \big|_\bp$ only depends on the non-redundant equivariant parameters $\alpha_i (\bp) := a_i \prod_{j\not\in \bp} a_j^{-C_{ij}}$, $i\in \bp$. So for each circuit $S = S^+ \sqcup S^-$, the operator $\prod_{i\in S^+} A_i \prod_{i\in S^-} A_i^{-1}$ acts as identity in $\cM_1$. In other words, on $\cM_1$ we have
$$
\prod_{i\in S^+} A_i  = \prod_{i\in S^-} A_i , \qquad \forall j\not\in V.
$$
On the other hand, for any $i$, the relation between $Z_i$ and $A_i$'s (in $\cM_1$) is $( 1 - q Z_i ) z_{\sharp, i}^{-1} A_i  =  1 - \hbar Z_i$, or equivalently (using $q Z_i z_{\sharp, i}^{-1} = z_{\sharp, i}^{-1} Z$)
\begin{equation} \label{ZA}
z_{\sharp, i}^{-1} (1 - Z_i) A_i  =  1 - \hbar Z_i  ,
\end{equation}
Therefore, we have in $\cM_1$,
\ben
 \prod_{i\in S^+} ( 1 - Z_i ) \prod_{i\in S^-} ( 1 - \hbar Z_i )
&=&  \prod_{i\in S^-} z_{\sharp, i}^{-1}  \prod_{i\in S^+ \sqcup S^-} ( 1 - Z_i )  \prod_{i\in S^-} A_i   \\
&=& \prod_{i\in S^-} z_{\sharp ,i}^{-1} \prod_{i\in S^+ \sqcup S^-} ( 1 - Z_i )   \prod_{i\in S^+} A_i   \\
&=& z_\sharp^\beta \prod_{i\in S^+} ( 1 - \hbar  Z_i )  \prod_{i\in S^-} ( 1 - Z_i )  ,
\een
We obtain 1).

For 2), let $\cM_2$ be the $\cD_q$-module generated by $\widetilde V^{(\bone)} (q,z) \big|_\bp \cdot e^{-\sum_{i=1}^n \frac{\ln z_{\sharp, i} \ln a_i}{\ln q} }$. There is an isomorphism $\cM_1 \cong \cM_2$, sending $Z_i \mapsto a_i Z_i$, $A_i \mapsto z_{\sharp, i} A_i$.   One can check that  
$$
\sum_{i=1}^n \ln z_{\epsilon, i} (\bp) \ln x_i |_\bp - \sum_{i=1}^n \ln z_{\epsilon, i} (\bp) \ln a_i  = -\sum_{j\in \cA_\bp^+ } \ln \zeta_{\epsilon, j} (\bp) \ln a_j - \sum_{j\in \cA_\bp^- } \ln \zeta_{\epsilon, j} (\bp) \ln (\hbar a_j), 
$$ 
and hence for each cocircuit $R$, the operators $\prod_{i\in R^+} Z_i \prod_{i\in R^-} Z_i^{-1}$ act as identity in $\cM_2$. On the other hand, by (\ref{ZA}), we have $1- A_i = (\hbar a_i) (1 - q\hbar^{-1} A_i ) Z_i$. By similar arguments, we obtain 2).
\end{proof}

The vertex functions can be uniquely characterized as solutions of the  $q$-difference equations (with respect to either K\'ahler or equivariant parameters), with prescribed asymptotes.

\begin{Lemma} \label{uniqueness}
Given $\bp \in X^\TT$, the function $\widetilde V^{(\bone)} (q, z) \big|_\bp$ is the unique solution of the $q$-difference system (\ref{q-diff-Z}), with asymptotic behavior
$$
\widetilde V^{(\bone)} (q, z) \big|_\bp \sim e^{\sum_{i=1}^n \frac{\ln z_{\epsilon, i} (\bp) \ln x_i |_\bp}{\ln q} }   \cdot \prod_{i\in \bp} \frac{\phi (q x_i |_\bp )}{\phi ( \hbar x_i |_\bp )} \cdot (1 + o(\zeta (\bp) ) )  ,
$$
as $\zeta \xrightarrow{\theta} 0$, where the limit is explained in Remark \ref{limit-point}.
\end{Lemma}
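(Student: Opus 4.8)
The plan is to establish the lemma by the standard two-step argument: first, verify that the explicit integral formula for $\widetilde V^{(\bone)}(q,z)\big|_\bp$ has the claimed asymptotic expansion as $\zeta(\bp)\xrightarrow{\theta}0$; second, show that the $q$-difference system \eqref{q-diff-Z} has a unique formal solution with a prescribed leading term of that shape, so that the two must agree. For the first step, I would start from the integral presentation in the Proposition preceding this lemma, writing $\widetilde V^{(\bone)}(q,z)\big|_\bp=\int_{q\gamma(\bp)}e^{W(x)}$ with $W(x)$ as displayed just above Lemma \ref{q-diff-operator}. In the region $|z^\beta|\ll1$ of Remark \ref{limit-point}, the contour $q\gamma(\bp)$ localizes onto the minimal poles $x_i=1$ for $i\in\cA_\bp^+$ and $x_i=\hbar^{-1}$ for $i\in\cA_\bp^-$ (the $d_i=0$ terms in \eqref{poles}); taking residues there leaves exactly $x_i|_\bp$ as in \eqref{restriction-V} for $i\in\bp$, reproduces the exponential prefactor $e^{\sum_i \ln z_{\epsilon,i}(\bp)\ln x_i|_\bp/\ln q}$, and the ratios of $\phi$'s collapse — using $\Pol_\bp(x)|_\bp=T^{1/2}_X|_\bp=\sum_{i\in\bp}x_i|_\bp$ — to $\prod_{i\in\bp}\phi(qx_i|_\bp)/\phi(\hbar x_i|_\bp)$. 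The higher-degree residues contribute the correction terms, each carrying a positive power of some $z^\beta$, $\beta\in\Eff_\bp(X)-\{0\}$, hence are $o(\zeta(\bp))$ in the sense of the stated limit; convergence of the series in this region is exactly the content of Remark \ref{limit-point}.

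For the second step, I would argue that \eqref{q-diff-Z} forces the solution to be uniquely determined, order by order in the $z^\beta$-expansion, once the leading asymptotic term is fixed. Write a candidate solution as $\Psi_0(z)\cdot\big(1+\sum_{\beta\neq0}c_\beta z^\beta\big)$ where $\Psi_0(z):=e^{\sum_i \ln z_{\epsilon,i}(\bp)\ln x_i|_\bp/\ln q}\prod_{i\in\bp}\phi(qx_i|_\bp)/\phi(\hbar x_i|_\bp)$ is the prescribed leading factor. The key point is that $\Psi_0$ is itself an exact solution of a ``leading'' $q$-difference system: applying $Z_i$ to $\Psi_0$ multiplies it precisely by $x_i|_\bp$ (as in the first identity of Lemma \ref{q-diff-operator}), which makes the operators in \eqref{q-diff-Z} act on $\Psi_0$ by multiplication by $\prod_{i\in S^+}(1-x_i|_\bp)\prod_{i\in S^-}(1-\hbar x_i|_\bp)-z_\sharp^\beta\prod_{i\in S^+}(1-\hbar x_i|_\bp)\prod_{i\in S^-}(1-x_i|_\bp)$, whose $z$-independent part vanishes by the defining relation \eqref{K(X)} of $K_\TT(X)$ at the fixed point $\bp$. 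Substituting the Ansatz into \eqref{q-diff-Z} and extracting the coefficient of a given $z^\gamma$ then yields, for each $\gamma$, a linear equation expressing $c_\gamma$ (times a nonvanishing operator-coefficient, namely an invertible product of factors $(1-q^{\langle\gamma,\cdot\rangle}x_i|_\bp\cdots)$ coming from the shift $Z_i z^\gamma=q^{\gamma_i}z^\gamma Z_i$) in terms of the $c_{\gamma'}$ with $\gamma'<\gamma$; invertibility of that coefficient for $\gamma\neq0$ is where the genericity of $\theta$ and $q$ enters. This recursion has a unique solution, so any solution with the stated asymptotics coincides with the one we already constructed from the integral.

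The main obstacle I anticipate is making the residue-by-residue localization of the contour $q\gamma(\bp)$ fully rigorous and uniform: one must check that in the $\theta$-dependent convergence region the multi-dimensional contour $q\gamma(\bp)$ in $(\CC^*)^k=\{\prod_i(x_i/a_i)^{\beta_{ji}}=1\}$ can indeed be collapsed onto the product of small circles around the minimal poles, that the resulting multi-series in the $z^\beta$ converges absolutely there, and that the error is genuinely $o(\zeta(\bp))$ rather than merely bounded — this requires a careful estimate of the tail of $\Phi'((q-\hbar)\Pol_\bp(x))$ along the shifted poles \eqref{poles}, of the type used in \cite{Oko, AOelliptic}. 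The second (algebraic) step, by contrast, is essentially formal once one observes the interplay between the $q$-shift $Z_i$ acting on $\Psi_0$ and the circuit relations \eqref{K(X)}; I would present it briefly and refer to \cite{Oko, OS} for the analogous quiver-variety statement, since the only genuinely new ingredient is that here the ``classical limit'' of the $\cD_q$-module is precisely the hypertoric relation ideal.
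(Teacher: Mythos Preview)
Your approach is correct but takes a substantially more hands-on route than the paper. The paper's proof is two sentences: it observes that \eqref{q-diff-Z} can be rewritten as a first-order holonomic $q$-difference system of rank $\rk K_\TT(X)$, and then invokes the standard existence/uniqueness theory for such systems near a regular singular point, citing \cite{Aom, FR, AOelliptic}. Your Step~1 is overcomplicated: the asymptotic is immediate from Definition~\ref{loc-pol}(3), since $\widetilde V^{(\bone)}|_\bp$ is \emph{by definition} the bare vertex $V^{(\bone)}|_\bp = 1 + O(z^\beta)$ times precisely the displayed prefactor (using $T_X^{1/2}|_\bp = \sum_{i\in\bp} x_i|_\bp$), so no contour deformation or tail estimate is needed. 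Your Step~2 --- the coefficient recursion --- is a valid self-contained alternative to the black-box citation, essentially unpacking by hand what regularity and holonomicity give in this special case. What you gain is transparency about the mechanism; what the paper gains is brevity and the clean structural statement that the solution space has dimension $|X^\TT|$, so the fixed-point asymptotics exhaust it. One caution about your recursion: the leading coefficient $\prod_{i\in S^+}(1-q^{\gamma_i}x_i|_\bp)\prod_{i\in S^-}(1-\hbar q^{\gamma_i}x_i|_\bp)$ can still vanish for $\gamma\neq 0$ whenever $\gamma_i=0$ for all $i\in S$, so to make the induction non-degenerate you must work with circuits adapted to the standard $\bp$-frame (those whose $\beta_S$ lie along the edges of $\Eff_\bp(X)$) rather than arbitrary circuits.
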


\begin{proof}

By the standard approach, the higher order $q$-difference system (\ref{q-diff-Z}) can be written as a first-order holonomic $q$-difference system, of rank $\rk K_\TT (X)$. The existence and uniqueness of the solution follows from discussions in \cite{Aom, FR, AOelliptic}.
\end{proof}

\subsection{Relations for PSZ and divisorial quantum $K$-theory}

The PSZ quantum $K$-theory ring we introduced in Section \ref{QK} can be explicitly determined by the $q$-difference equations \ref{q-diff-Z} with respect to K\"ahler parameters.

\begin{Theorem} \label{PSZ-relations}
We have the following presentations of ring structures (which are equivalent to each other):

\begin{enumerate}[1)]

\item The PSZ quantum $K$-theory ring of $X$ is generated by the quantum tautological line bundles $\widehat L_i (z)$, $1\leq i\leq n$, up to the relations
$$
\prod_{i\in S^+} ( 1 - \widehat L_i (z) ) * \prod_{i\in S^-} ( 1 - \hbar  \widehat L_i (z) )  -  z_\sharp^\beta \prod_{i\in S^+} ( 1 - \hbar  \widehat L_i (z) ) * \prod_{i\in S^-} ( 1 - \widehat L_i (z) ) , \qquad S = S^+ \sqcup S^- : \text{circuit},
$$
where $z_{\sharp, i}:= z_i (-\hbar^{-1/2})$, $\beta$ is the curve class corresponding to $S$, $z_\sharp^\beta := \prod_{i\in S^+} z_{\sharp, i} \prod_{i\in S^-} z_{\sharp, i}^{-1}$, and all products $\prod$ are quantum products $*$.

\item The divisorial quantum $K$-theory ring of $X$ is generated by the line bundles $L_i$, $1\leq i\leq n$, up to the relations
$$
\prod_{i\in S^+} ( 1 -  L_i )  \prod_{i\in S^-} ( 1 - \hbar L_i )  -  z_\sharp^\beta \prod_{i\in S^+} ( 1 - \hbar L_i )  \prod_{i\in S^-} ( 1 -  L_i ) , \qquad S = S^+ \sqcup S^- : \text{circuit},
$$
where $z_{\sharp, i}:= z_i (-\hbar^{-1/2})$, $\beta$ is the curve class corresponding to $S$, and $z_\sharp^\beta := \prod_{i\in S^+} z_{\sharp, i} \prod_{i\in S^-} z_{\sharp, i}^{-1}$.

\end{enumerate}

\end{Theorem}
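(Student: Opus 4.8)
\emph{Strategy.} The two presentations are equivalent: by (\ref{isom}) the map $\tau\mapsto\widehat\tau(z)$ is an isomorphism of Frobenius algebras $K_\TT(\fX)[[z^{\Eff(X)}]]/\cI\xrightarrow{\sim}(K_\TT(X)[[z^{\Eff(X)}]],*)$ carrying $L_i$ to $\widehat L_i(z)$, and by Lemma \ref{quantum-prod} it carries ordinary products of the $L_i$ to quantum products of the $\widehat L_i(z)$; since $K_\TT(\fX)$ is generated over $K_\TT(\pt)$ by $x_1,\dots,x_n$, statement 1) follows from 2). So the plan is to prove that, inside $K_\TT(\fX)[[z^{\Eff(X)}]]$, the ideal $\cI$ is generated by the circuit relations
$$ P_S:=\prod_{i\in S^+}(1-x_i)\prod_{i\in S^-}(1-\hbar x_i)-z_\sharp^\beta\prod_{i\in S^+}(1-\hbar x_i)\prod_{i\in S^-}(1-x_i),\qquad S=S^+\sqcup S^-\ \text{a circuit}. $$
This splits into the two inclusions $\langle P_S\rangle\subseteq\cI$ and $\cI\subseteq\langle P_S\rangle$.

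\emph{The circuit relations lie in $\cI$.} First I would extend the bare vertex $\Lambda$-linearly to descendent insertions $\tau\in K_\TT(\fX)\otimes\Lambda$, $\Lambda:=K_\TT(\pt)[[z^{\Eff(X)}]]$, and record the ``descendent $=$ $q$-shift'' identity: from the explicit formula of Proposition \ref{Prop-vertex} together with Definition \ref{loc-pol}, for any Laurent polynomial $g=g(x_1,\dots,x_n)$ with coefficients in $\Lambda$ one has at each fixed point $\bp$
$$ \widetilde V^{(g)}(q,z)\big|_\bp=g(Z_1,\dots,Z_n)\,\widetilde V^{(\bone)}(q,z)\big|_\bp , $$
where $g(Z_1,\dots,Z_n)$ is the difference operator obtained by substituting $Z_i$ for $x_i$ (coefficients in $\Lambda$ acting by multiplication); here one uses that the $Z_i$ commute (Lemma \ref{q-diff-operator}) and that the prefactor $e^{\sum_i\ln z_{\epsilon,i}(\bp)\ln x_i|_\bp/\ln q}$ converts the multiplication by $x_i|_\bp$ produced by inserting $x_i$ into the shift $Z_i$ acting on $\widetilde V^{(\bone)}|_\bp$. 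For $g=P_S$ this operator is precisely the one in (\ref{q-diff-Z}), so Theorem \ref{q-diff-eqn}(1) gives $\widetilde V^{(P_S)}(q,z)|_\bp=0$, hence $V^{(P_S)}(q,z)|_\bp=0$ for every $\bp\in X^\TT$, hence $V^{(P_S)}(q,z)=0$ ($X$ is GKM, so a class is determined by its fixed-point restrictions). By Proposition \ref{capping-eqn}, $\widehat V^{(P_S)}(q,z)=\Psi(q,z)\,V^{(P_S)}(q,z)=0$, so $\widehat{P_S}(z)=\widehat V^{(P_S)}(q,z)|_{q=1}=0$, i.e.\ $P_S\in\cI$.

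\emph{The circuit relations generate $\cI$.} Set $R:=K_\TT(\fX)[[z^{\Eff(X)}]]/\langle P_S:S\ \text{circuit}\rangle$; the previous step yields a surjection $g\colon R\twoheadrightarrow K_\TT(\fX)[[z^{\Eff(X)}]]/\cI\cong(K_\TT(X)[[z^{\Eff(X)}]],*)$. Let $\mathfrak m=(z^\beta:0\neq\beta\in\Eff(X))\subset\Lambda$; since $z_\sharp^\beta\in\mathfrak m$ for $\beta\neq0$, each $P_S$ specializes modulo $\mathfrak m$ to $\prod_{i\in S^+}(1-x_i)\prod_{i\in S^-}(1-\hbar x_i)$, so by the presentation (\ref{K(X)}) we get $R/\mathfrak m R\cong K_\TT(X)$, which is free of rank $N:=|X^\TT|$ over $K_\TT(\pt)$. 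The monoid $\Eff(X)$ is finitely generated (its primitive effective curves are the circuits, cf.\ Section \ref{section-eff}), so $\Lambda$ is Noetherian and $\mathfrak m$-adically complete, and $R$ — a quotient of the complete ring $K_\TT(\fX)[[z^{\Eff(X)}]]$ by the finitely generated, hence closed, ideal $\langle P_S\rangle$ — is $\mathfrak m$-adically complete as well. By a standard completeness (topological Nakayama) argument, lifting a $K_\TT(\pt)$-basis of $R/\mathfrak m R$ produces a surjection $f\colon\Lambda^N\twoheadrightarrow R$. Since $(K_\TT(X)[[z^{\Eff(X)}]],*)$ is, as a $\Lambda$-module, free of rank $N$, the composite $g\circ f\colon\Lambda^N\to\Lambda^N$ is a surjective endomorphism of a finitely generated module over a commutative ring, hence an isomorphism; therefore $f$ is an isomorphism, and then $g$ is a surjection between free $\Lambda$-modules of rank $N$, hence an isomorphism. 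Thus $\cI=\langle P_S:S\ \text{circuit}\rangle$, which proves 2) and, via (\ref{isom}) and Lemma \ref{quantum-prod}, also 1).

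\emph{Where the difficulty lies.} The routine but most error-prone part is the bookkeeping in the second paragraph: matching the $z$-dependent descendent insertion $P_S$ with the difference operator (\ref{q-diff-Z}) requires tracking the prefactors of Definition \ref{loc-pol} and the placement of the coefficient $z_\sharp^\beta$ relative to the shift operators. The genuine external input is in the third paragraph — that $K_\TT(X)$ is free of rank $|X^\TT|$ over $K_\TT(\pt)$ (equivalently, $\Spec K_\TT(X)\to\Spec K_\TT(\pt)$ is finite flat) and that the one-parameter family of circuit relations preserves this rank. This flatness of the deformation, rather than the formal manipulations, is what makes the Nakayama/surjective-endomorphism argument close, and is the step that uses the hypertoric (GKM, simple, unimodular) hypotheses essentially.
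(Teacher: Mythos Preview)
Your argument for the inclusion $\langle P_S\rangle\subseteq\cI$ is essentially the paper's: both identify $\widetilde V^{(\tau)}|_\bp$ with $\tau(Z_1,\dots,Z_n)\,\widetilde V^{(\bone)}|_\bp$, invoke Theorem~\ref{q-diff-eqn}(1) to annihilate $\widetilde V^{(P_S)}$, pass to the capped vertex via Proposition~\ref{capping-eqn}, and set $q=1$ using Lemma~\ref{quantum-prod}. You are slightly more careful than the paper in extending the descendent--shift correspondence $\Lambda$-linearly so that the $z$-dependent coefficient $z_\sharp^\beta$ is handled correctly, and in noting the ordering of that coefficient relative to the $Z_i$.

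The paper's proof, however, stops at this point: it establishes that the circuit relations hold but does not argue the reverse inclusion $\cI\subseteq\langle P_S\rangle$, leaving the completeness of the presentation implicit (the remark afterward gestures toward the Bethe-ansatz picture in \cite{PSZ}). Your third paragraph supplies this missing step via a clean deformation/Nakayama argument: both $R:=K_\TT(\fX)[[z^{\Eff(X)}]]/\langle P_S\rangle$ and the PSZ ring reduce modulo $\mathfrak m$ to the classical $K_\TT(X)$, which is free of rank $|X^\TT|$ over $K_\TT(\pt)$; completeness plus the surjective-endomorphism trick then forces the comparison map to be an isomorphism. This is a genuine addition and makes your proof more self-contained than the paper's as written. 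The external inputs you flag---freeness of $K_\TT(X)$ over $K_\TT(\pt)$ for a GKM variety, and Noetherianity/completeness of $\Lambda$ for a finitely generated effective monoid---are standard, so the argument closes.
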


\begin{proof}
Recall that in Lemma \ref{q-diff-operator}, the action of the $q$-difference operator $Z_i$ on the bare vertex function is
$$
Z_i \widetilde V^{(\bone)} (q,z) \big|_\bp =
\int_{q \gamma (\bp)} x_i \cdot e^{W(x)} = \widetilde V^{(x_i)} (q,z) \big|_\bp.
$$
In general, let $\tau (x_1, \cdots, x_n)$ be a Laurent polynomial in $x_1, \cdots, x_n$, with coefficients in $K_{\bT \times \CC_q^*} (\pt)$. We have
$$
\tau (Z_1, \cdots, Z_n) \widetilde V^{(\bone)} (q,z) \big|_\bp  = \widetilde V^{(\tau)} (q,z) \big|_\bp.
$$
Now suppose that for some $\tau$, and for any $\bp \in X^\bT$, we have $\tau (Z_1, \cdots, Z_n) \widetilde V^{(\bone)} (q,z) \big|_\bp = \widetilde V^{(\tau)} (q,z) \big|_\bp = 0$. It follows that $V^{(\tau)} (q,z) = 0$, and hence $\widehat V^{(\tau)}(q,z) = \Psi(q,z) \cdot V^{(\tau)}(q,z) = 0$ by Proposition \ref{capping-eqn}. Evaluating at $q=1$, we have
$$
\tau (\widehat L_1 (z), \cdots, \widehat L_n (z) ) = \widehat\tau (z) = 0,
$$
by Lemma \ref{quantum-prod}, where for  products in $\tau$ on the LHS we take the quantum product $*$. The theorem then follows from equation (\ref{q-diff-Z}).
\end{proof}

\begin{Remark}
The result can also be obtained following the approach in \cite{PSZ}. The quantum $K$-theory relations here can be interpreted as Bethe-ansatz equations.
\end{Remark}


\vspace{3ex}

\section{3d mirror symmetry for hypertorics}

\subsection{Abelian mirror construction}

To construct the dual of the hypertoric variety $X$, consider the dual of the sequence (\ref{knd-seq}):
\begin{equation} \label{knd-seq-dual}
\xymatrix{
	0 \ar[r] & \ZZ^d \ar[r]^{\iota'} & \ZZ^n \ar[r]^{\beta'} & \ZZ^k \ar[r] & 0,
}
\end{equation}
where $\iota' = \beta^\vee$, and $\beta' = \iota^\vee$.

With given stability parameter $\widetilde\theta$ and chamber parameter $\widetilde\sigma$ of $X$, we choose the stability and chamber parameter for the mirror as
\begin{equation} \label{mirror-theta-sigma}
\widetilde\theta' = - \widetilde\sigma, \qquad \widetilde\sigma' = - \widetilde\theta.
\end{equation}

We now view the dual sequence (\ref{knd-seq-dual}) as the defining sequence for a new hypertoric variety, denote by $X'$.  We define $X'$ as the \emph{3d mirror} of the hypertoric variety $X$. Denote by $\bK'$, $\bT'$, $\bA'$ the corresponding tori, and by $\cH'$ the hyperplane arrangment. Let $L'_i$ be the tautological line bundle defined by the $i$-th standard basis vector in $\ZZ^n$, and let $x'_i$ be its $K$-theory class. Let $a'_i$, $z'_i$, $s'_i$ be the equivariant parameters, K\"ahler parameters and Chern roots for $X'$ respectively.

Let $\bp \subset \{1, \cdots, n\}$ be the subset corresponding to a vertex in $\cH$. Take $\bp' := \cA_\bp =  \{ 1, \cdots, n \} \backslash \bp$.

\begin{Lemma}
$\bp'$ is a vertex in the dual hyperplane arrangement $\cH'$.
\end{Lemma}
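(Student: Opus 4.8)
The plan is to verify the two smoothness conditions for $\cH'$ directly from the hypertoric data, using the fact that $\bp$ is a vertex of $\cH$. Recall that $\bp$ being a vertex means the intersection $\bigcap_{i\in\bp} H_i$ is nonempty (a point), which by the simplicity of $\cH$ is equivalent to the conormals $\{\beta(e_i) : i\in\bp\}$ being a basis of $\RR^d$ (hence, by unimodularity, of $\ZZ^d$). Dually, one checks that a $d$-element subset $\bp$ gives a vertex of $\cH$ iff the $d$ vectors $\beta(e_i)$, $i\in\bp$, span $\ZZ^d$, which by the exactness of (\ref{knd-seq}) is equivalent to the $k$-element complement $\bp' = \cA_\bp$ having the property that $\{\iota(f_j)\text{ projections}\}$... more precisely: $\{\beta(e_i):i\in\bp\}$ is a basis of $\ZZ^d$ $\iff$ the projection $\ZZ^k \hookrightarrow \ZZ^n \twoheadrightarrow \ZZ^n/\langle e_i : i\in\bp\rangle$ is an isomorphism $\iff$ the dual map $(\ZZ^k)^\vee$ receives an iso from $\langle e_i^* : i\in\bp'\rangle \subset (\ZZ^n)^\vee$ under $\iota^\vee = \beta'$. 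But $\beta'$ is exactly the cokernel map of the \emph{dual} sequence (\ref{knd-seq-dual}), so this last condition says precisely that $\{\beta'(e_i) : i\in\bp'\}$ is a basis of $\ZZ^k$, i.e. $\bigcap_{i\in\bp'} H'_i$ is a single point and nonempty. This is the core of the argument.

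Concretely, I would proceed as follows. First, record the combinatorial characterization: for a hypertoric variety defined by (\ref{knd-seq}) with smooth (simple, unimodular) arrangement, a subset $\bp\subset\{1,\dots,n\}$ with $|\bp|=d$ indexes a vertex iff $\bigcap_{i\in\bp}H_i\neq\emptyset$, iff $\{\beta(e_i):i\in\bp\}$ is a $\ZZ$-basis of $\ZZ^d$; this follows from simplicity and unimodularity (the codimension of the intersection equals the rank of the conormals, which must be $d$ for the intersection to be a point). Second, apply this to $\cH'$: the dual data (\ref{knd-seq-dual}) has $\beta' = \iota^\vee$, and $|\bp'| = n - d = k$, so I must show $\{\iota^\vee(e_i^*) : i\in\bp'\}$ is a $\ZZ$-basis of $(\ZZ^k)^\vee$. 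Third, translate: $\{\beta(e_i):i\in\bp\}$ being a basis of $\ZZ^d$ means the composite $\ZZ^{\bp}\hookrightarrow\ZZ^n\xrightarrow{\beta}\ZZ^d$ is an isomorphism, where $\ZZ^\bp$ is spanned by $\{e_i:i\in\bp\}$; equivalently, $\ZZ^n = \ZZ^\bp \oplus \iota(\ZZ^k)$ as the kernel of $\beta$ is $\iota(\ZZ^k)$. Dualizing this direct sum decomposition, $(\ZZ^n)^\vee = (\ZZ^\bp)^\perp \oplus (\iota(\ZZ^k))^\perp$, and since $(\iota(\ZZ^k))^\perp = \ker(\iota^\vee) = \beta^\vee((\ZZ^d)^\vee) = \iota'((\ZZ^d)^\vee)$, while $(\ZZ^\bp)^\perp$ is spanned by $\{e_j^* : j\in\bp'\}$, we get $(\ZZ^n)^\vee = \langle e_j^* : j\in\bp'\rangle \oplus \iota'((\ZZ^d)^\vee)$. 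Applying $\beta' = \iota^\vee$ (whose kernel is $\iota'((\ZZ^d)^\vee)$) shows $\{\iota^\vee(e_j^*) : j\in\bp'\}$ maps isomorphically onto $(\ZZ^k)^\vee$, which is exactly what is needed. Finally, check $\bigcap_{i\in\bp'}H'_i\neq\emptyset$: since the conormals span, the affine intersection is a single point rather than empty (a linear system with full-rank coefficient matrix always has a solution); this uses the definition of $H'_i$ from the lifted stability parameter $\widetilde\theta' = -\widetilde\sigma$, which only translates the hyperplanes and hence does not affect solvability.

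The main obstacle I anticipate is bookkeeping the various dualizations cleanly — keeping straight which lattice is which, and verifying that the perpendicular/annihilator of $\iota(\ZZ^k)$ inside $(\ZZ^n)^\vee$ is genuinely $\mathrm{im}(\beta^\vee)=\mathrm{im}(\iota')$ \emph{as a saturated sublattice} (so that the quotient is torsion-free and the claimed direct-sum decomposition holds over $\ZZ$, not just over $\QQ$). This saturation is automatic here because (\ref{knd-seq-dual}) is exact with free cokernel $\ZZ^k$, so $\iota'((\ZZ^d)^\vee)$ is a direct summand of $(\ZZ^n)^\vee$; I would state this explicitly. Everything else is a routine consequence of exactness of (\ref{knd-seq}) and its dual, together with the smoothness hypotheses on $\cH$ which transfer to $\cH'$ by the same symmetric argument (simplicity of $\cH'$ follows because every $m$-subset of conormals $\beta'(e_i)$ has rank $m$, again by dualizing the corresponding statement for $\cH$, and unimodularity of $\cH'$ is exactly the basis statement proved above applied to all $k$-subsets). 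Hence $\bp'$ is a vertex of $\cH'$.
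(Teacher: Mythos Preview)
Your proof is correct and is essentially the coordinate-free version of the paper's argument. The paper proceeds by passing to the standard $\bp$-frame, where $\beta = (-C, I)$ and $\iota = \begin{pmatrix} I \\ C \end{pmatrix}$; then $\beta' = \iota^\vee = (I, C^T)$ visibly has linearly independent columns over $\bp'$, so $\bp'$ is a vertex. Your direct-sum and dualization argument ($\ZZ^n = \ZZ^\bp \oplus \iota(\ZZ^k)$, hence $(\ZZ^n)^\vee = \iota'((\ZZ^d)^\vee) \oplus \langle e_j^* : j \in \bp'\rangle$, hence $\beta'$ carries the latter isomorphically onto $(\ZZ^k)^\vee$) is exactly what the choice of standard frame encodes: picking that frame \emph{is} writing down the splitting $\ZZ^n = \ZZ^\bp \oplus \iota(\ZZ^k)$ explicitly. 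Your version has the advantage of making the unimodularity/saturation issues transparent and not requiring any coordinate bookkeeping; the paper's version is shorter and integrates with the explicit $C$-matrix computations used immediately afterward (Lemma \ref{V-V'}, the restriction formulas, etc.).
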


\begin{proof}
Recall that the hyperplane $H_i$ has normal vector $a_i = \beta (e_i) \in \RR^d$. The fact that $\bp = \bigcap_{i\in \bp} H_i \neq \emptyset$ is equivalent to the linear independence of the vectors $\{ a_i \mid i\in \bp \}$. In particular, in the standard $\bp$-frame, this is equivalent to the fact that the matrix $\beta$ is of the form $(-C, I)$, where the identity submatrix $I$ is for the columns $\{i \in \bp\}$. Also, the matrix $\iota$ is of the form $\begin{pmatrix}
I \\
C
\end{pmatrix}$, where $I$ is for the rows $\{ j \not\in \bp \}$.

Now we look at the dual picture. The matrix $\beta' = \iota^\vee$ is of the form $(I, C^T)$, whose columns indexed by $\{j \in \bp' \} = \{j\not\in \bp \}$ are linearly independent. Therefore, $\bp'$ is a vertex in $\cH'$.
\end{proof}

As a result, we have the following natural bijection between the fixed point sets:
\begin{equation} \label{bj-fixed}
\textsf{bj}: X^\TT \xrightarrow{\sim} (X')^{\TT'}, \qquad \bp \mapsto \bp'.
\end{equation}

For a given $\bp$, if we choose the standard $\bp$-frame (\ref{V-frame-i}) (\ref{V-frame-b}), the dual variety $X'$ will also be in the standard $\bp'$-frame. More precisely, this means
$$
\iota'_{li} (\bp') = \delta_{li}, \qquad \iota'_{ji} (\bp') = - C_{ij} (\bp), \qquad l, i \not\in \bp', \ j \in \bp',
$$
$$
\beta'_{ji} (\bp') = C_{ij} (\bp), \qquad \beta'_{jm} (\bp') = \delta_{jm}, \qquad i \not\in \bp', \ j, m \in \bp'.
$$

Recall that the choice of $\widetilde\theta$ determines the splitting $\cA_\bp = \cA_\bp^+ \sqcup \cA_\bp^-$, and the choice of $\widetilde\sigma$ determines the splitting $\bp = \bp^+ \sqcup \bp^-$. The stability and chamber parameters on the mirror side, specified by (\ref{mirror-theta-sigma}), also determines the similar splittings of $\cA_{\bp'} = \bp$ and $\bp' = \cA_\bp$.

\begin{Lemma} \label{V-V'}
We have
$$
\cA_{\bp'}^{\pm} = \bp^\mp, \qquad (\bp')^\pm = \cA_\bp^\mp .
$$
\end{Lemma}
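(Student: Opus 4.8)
The plan is to unwind the definitions of the four splittings in terms of the stability/chamber parameters on both sides, and then match them using the relation $\widetilde\theta' = -\widetilde\sigma$, $\widetilde\sigma' = -\widetilde\theta$ together with the chosen $\bp$-frame identifications of $\iota', \beta'$ with $\iota, \beta$. Recall the relevant conventions: on the $X$-side, $\cA_\bp^+ = \{i\notin\bp \mid \bp\in H_i^+\}$ and $\cA_\bp^- = \{i\notin\bp\mid \bp\in H_i^-\}$, where the half-spaces $H_i^\pm$ are cut out by $\langle x,\beta(e_i)\rangle \gtrless -\langle\widetilde\theta,e_i\rangle$; and $\bp^+ = \{i\in\bp\mid \langle\alpha_i(\bp),\sigma\rangle >0\}$ with $\alpha_i(\bp)$ the equivariant coordinate in the standard $\bp$-frame. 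On the $X'$-side, the analogous subsets $\cA_{\bp'}^\pm$ are governed by $\widetilde\theta' = -\widetilde\sigma$, and $(\bp')^\pm$ by $\widetilde\sigma' = -\widetilde\theta$.

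First I would verify $\cA_{\bp'}^\pm = \bp^\mp$. Since $\bp' = \cA_\bp$, we have $\cA_{\bp'} = \{1,\dots,n\}\setminus\bp' = \bp$, so as a set the claim is already consistent; it remains to identify the $\pm$-labels. The splitting of $\cA_{\bp'}$ is determined by the position of the vertex $\bp'$ relative to the hyperplanes $H'_i$ for $i\in\bp$, i.e. by the sign of $\langle \bp', \beta'(e_i)\rangle + \langle\widetilde\theta',e_i\rangle$. In the standard $\bp$-frame $\beta' = (I, C^T)$ and $\widetilde\theta' = -\widetilde\sigma$, so this sign is (up to the frame identification) exactly the sign by which $\alpha_i(\bp)$ pairs with $\sigma$ — but with an overall minus coming from $\widetilde\theta' = -\widetilde\sigma$. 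Concretely, using Lemma \ref{bridge}(ii) and the description of $\bp^\pm$ via \eqref{split-V}, one checks that $i\in\cA_{\bp'}^+ \iff \langle\alpha_i(\bp),\sigma\rangle <0 \iff i\in\bp^-$, and symmetrically for the minus label; the sign flip is precisely the content of $\widetilde\theta'=-\widetilde\sigma$. This gives $\cA_{\bp'}^\pm = \bp^\mp$.

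Next, for $(\bp')^\pm = \cA_\bp^\mp$: here $(\bp')^\pm$ is defined by the sign of $\langle\alpha'_j(\bp'),\sigma'\rangle$ for $j\in\bp'=\cA_\bp$, with $\sigma' = -\theta$ (more precisely $\widetilde\sigma' = -\widetilde\theta$). In the standard $\bp'$-frame (which, by the lemma just above, is induced by the standard $\bp$-frame), $\alpha'_j(\bp')$ is the dual-side equivariant coordinate attached to $j\in\cA_\bp$, and its pairing with $-\widetilde\theta$ reduces — again via the $\beta'=(I,C^T)$, $\iota'=(-C;\,I)^{\!\top}$-frame and the defining inequalities for $\cA_\bp^\pm$ — to $\mp$ the defining sign for $j\in\cA_\bp^\pm$. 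Thus $j\in(\bp')^+ \iff \langle\widetilde\theta, e_j\rangle <0 \iff j\in\cA_\bp^-$, and likewise for the other sign, giving $(\bp')^\pm = \cA_\bp^\mp$.

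The only real subtlety — and the step I expect to be the main obstacle — is bookkeeping the sign conventions: one must be careful that the half-space conventions $H_i^\pm$ (defined via $\widetilde\theta$) and the chamber conventions $\bp^\pm$ (defined via $\sigma$ through $\alpha_i(\bp)$) are translated correctly through the dualization, where $\iota\leftrightarrow\beta^\vee$ and the roles of $\widetilde\theta,\widetilde\sigma$ are both swapped \emph{and} negated. It is helpful to run the entire argument in a fixed standard $\bp$-frame, where $\beta = (-C,I)$, $\iota = \binom{I}{C}$, $\beta' = (I,C^T)$, $\iota' = \binom{C^T}{I}$ up to reordering, so that every pairing becomes an explicit sign of a coordinate of $\widetilde\theta$ or $\widetilde\sigma$; the four asserted equalities then become four sign identities, each a one-line check. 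No deeper input is needed beyond Lemma \ref{Attr} / Lemma \ref{bridge} and the definitions in Sections \ref{section-eff}–\ref{section-root}.
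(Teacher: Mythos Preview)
Your approach is essentially the same as the paper's: both arguments work in the standard $\bp$-frame, compute the vertex coordinates explicitly, and reduce each of the four sign equalities to a comparison of the pairing of frame coordinates with $\widetilde\theta$ or $\widetilde\sigma$ (the paper only writes out $\cA_\bp^+ = (\bp')^-$ and declares the rest similar). Two small remarks: the citations of Lemma~\ref{bridge}(ii) and Lemma~\ref{Attr} are extraneous and can be dropped, and your intermediate claim ``$j\in(\bp')^+ \iff \langle\widetilde\theta,e_j\rangle<0$'' is only literally correct if you choose the lift $\widetilde\theta$ with $\widetilde\theta_i=0$ for $i\in\bp$ (as in the proof of Lemma~\ref{subcone}); in general the condition is $\widetilde\theta_j+\sum_{i\in\bp}C_{ij}\widetilde\theta_i<0$, which still matches the defining inequality for $\cA_\bp^-$.
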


\begin{proof}
The vertex $\bp = (v_i)_{i\in \bp} \in \RR^d$ is the unique solution to the equations $\langle \bp, \beta (e_i) \rangle = - \langle \widetilde\theta, e_i \rangle$, $i\in \bp$. In particular, in the standard $\bp$-frames, the unique solution is $v_i = -\widetilde\theta_i$, $i\in \bp$.

Recall that by definition, $\cA_\bp^+ \subset \bp'$ consists of those $j\in \bp'$ for which $\bp \in H_j^+$. This means that $\langle \beta^* (\bp), e_j \rangle = \langle \bp, \beta (e_j) \rangle > - \widetilde\theta_j$. Therefore, in the standard $\bp$-frame, the inequalities become
$$
\sum_{i\in \bp} (-C_{ij}) v_i =  \sum_{i\in \bp} C_{ij} \widetilde\theta_i > - \widetilde\theta_j.
$$
Applying the mirror construction $\widetilde\sigma' = - \widetilde\theta$, we have
$$
\langle a'_j \prod_{i\in \bp} (a'_i)^{C_{ij}} , \widetilde\sigma' \rangle < 0,
$$
which is exactly the condition (\ref{split-V}) characterizing $(\bp')^-$. Hence we've proved that $\cA_\bp^+ = (\bp')^-$, and the others are similar.
\end{proof}

The restriction formula of tautological line bundles to a fixed point $\bp'$ (under the standard $\bp'$-frame) is
\be \label{restriction-V'}
x'_j |_{\bp'}  := \left.  L'_j \right|_{\bp'} &=& \left\{ \begin{aligned}
	& 1 , \qquad && j \in \cA_{\bp'}^+ \\
	& \hbar^{-1}, \qquad && j \in \cA_{\bp'}^- \\
	& a'_j \prod_{i\not\in \bp'} (a'_j)^{-C'_{ji}} \cdot  \hbar^{- \sum_{j \in \cA_{\bp'}^-} C'_{ji} } .  \qquad && j \in \bp',
\end{aligned} \right.    \nonumber  \\
&=& \left\{ \begin{aligned}
	& 1 , \qquad && j \in \bp^- \\
	& \hbar^{-1}, \qquad && j \in \bp^+ \\
	& a'_j \prod_{i\in \bp} (a'_j)^{C_{ij}} \cdot  \hbar^{ \sum_{i \in \bp^+} C_{ij} } .  \qquad && j \not\in \bp,
\end{aligned} \right.
\ee

\subsection{Duality of wall-and-chamber structures}

Recall that in Section \ref{section-eff}, the space $\RR^k$ of stability conditions of $X$ admits a wall-and-chamber structure. Each circuit $S$ defines a wall $P_S$, and the stability condition $\widetilde\theta$, or essentially its image $\theta = \iota^\vee \widetilde\theta$, specifies the K\"ahler cone $\fK$ in the complement of all walls. The boundary walls of $\fK$ form a basis of the effective cone $\Eff (X)$.

Similarly in Section \ref{section-root}, the space $\RR^d$ also admits a wall-and-chamber structure, where each wall $W_\alpha$ is indexed by a cocircuit, or equivalently a root $\alpha$. A generic choice of the chamber parameter $\widetilde\sigma$, or essentially its image $\sigma = \beta \widetilde\sigma$, singles out a chamber $\fC$, whose boundary walls give the positive simple roots.

Now let's consider the dual hypertoric variety $X'$, with the choice of $\widetilde\theta'$ and $\widetilde\sigma'$ specified in (\ref{mirror-theta-sigma}). We denote $\fK'$ and $\fC'$ the chambers determined by the choice. The following duality between the wall-and-chamber structures follows from the definition of circuits and cocircuits.

\begin{Proposition}
	
	The wall-and-chamber structures of $X$ and $X'$ are dual to each other. More precisely, this means that
	
	\begin{itemize}
		
		\item There is a bijection between the circuits of $X$ and the (negative) cocircuits of $X'$, and vice versa.
		
		\item $\fK = \fC'$, $\fC = \fK'$. In particular, indecomposable effective curves of $X$ can be identified with negative simple roots of $X'$, and vice versa.
		
	\end{itemize}
	
\end{Proposition}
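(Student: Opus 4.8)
The plan is to observe that the proposition is, at bottom, Gale duality of the underlying oriented matroid together with careful sign bookkeeping; the only geometric inputs are the combinatorial descriptions of the K\"ahler cone and the root chambers from Sections~\ref{section-eff} and~\ref{section-root}, and the mirror recipe \eqref{mirror-theta-sigma}. \textbf{First} I would prove the circuit--cocircuit bijection. By definition a circuit of $X$ is a minimal support of a vector in $\operatorname{im}\iota=\ker\beta\subset\ZZ^n$, while a cocircuit of $X'$ is a minimal support of a vector in $\ker(\iota')^\vee$; since $\iota'=\beta^\vee$ we have $(\iota')^\vee=\beta$ after the canonical double-dual identification, so $\ker(\iota')^\vee=\ker\beta$ and the two families of subsets of $\{1,\dots,n\}$ literally coincide. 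The dual computation with $\beta'=\iota^\vee$ identifies cocircuits of $X$ with circuits of $X'$. To compare orientations, note that a circuit $S$ of $X$ is split $S=S^+\sqcup S^-$ by the sign of $\langle\beta_S,\widetilde\theta\rangle$ and the corresponding cocircuit of $X'$ by the sign of the analogous pairing with $\widetilde\sigma'$; since the two primitive vectors in $\ker\beta$ supported on $S$ agree up to sign and $\widetilde\sigma'=-\widetilde\theta$, these splittings are interchanged, which is the meaning of the parenthetical ``(negative)''. The ``vice versa'' statement is the same argument with $\widetilde\theta'=-\widetilde\sigma$.

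\textbf{Next} I would establish $\fK=\fC'$ (whence $\fC=\fK'$ by swapping $X$ and $X'$). I would first check that the two hyperplane arrangements coincide: for a circuit $S$ of $X$, writing $\beta_S=\iota u_S$ with $u_S\in\ZZ^k$, one has $\langle\iota^\vee e_i^*,u_S\rangle=(\beta_S)_i=0$ exactly for $i\notin S$, so $P_S=\Span_\RR\{\iota^\vee e_i^*\mid i\notin S\}$ is contained in the hyperplane annihilated by $u_S$, and equality holds because a second independent such vector would yield an element of $\operatorname{im}\iota$ supported strictly inside $S$, contradicting minimality. Under the bijection of the previous step the root $\alpha'_S$ of $X'$ attached to $S$ is $\pm u_S$, so its wall $W_{\alpha'_S}$ is the same hyperplane; hence the two arrangements agree under the natural identification of parameter spaces. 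To single out the chamber, recall that $\fK$ is the chamber containing $\theta=\iota^\vee\widetilde\theta$ (equivalently, the smallest cone spanned by a subset of the $\iota^\vee e_i^*$ containing $\theta$), while $\fC'$ is the chamber of $X'$ containing its chamber parameter, the image of $\widetilde\sigma'=-\widetilde\theta$; the minus sign is absorbed by the natural identification $\bK^\vee\cong\bA'$ (the same one --- at the level of real Lie algebras, negation --- underlying Theorems~\ref{main-theorem} and~\ref{Thm-Stab}), giving $\fK=\fC'$. Finally, the facets of $\fK$ index the indecomposable generators of $\Eff(X)$, which is dual to $\fK$, while the facets of $\fC'$ index the positive simple roots of $X'$; since $\fK=\fC'$ these facets match, and the curve class $\beta_S$ and the root $\alpha'_S$ attached to a common facet differ by a sign (again from \eqref{mirror-theta-sigma}), so indecomposable effective curves of $X$ are identified with \emph{negative} simple roots of $X'$, and vice versa.

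\textbf{Main obstacle.} The combinatorics is automatic --- Gale duality here is literally $(\iota')^\vee=\beta$, and both chamber structures are the matroid arrangement of the same matroid --- so the only real work is making all the signs and dual-lattice identifications consistent. In particular, reconciling ``$\theta\in\fK$'' with ``$-\theta\in\fC'$'' forces one to pin down that the natural identifications $\bK^\vee\cong\bA'$ and $\bA\cong(\bK')^\vee$ are by inversion (negation on Lie algebras), and to check that the facet-to-curve-class and facet-to-root assignments disagree by a sign, so that ``negative simple roots'' --- rather than ``simple roots'' --- is the correct statement. None of this is deep, but it is where essentially all of the care is needed.
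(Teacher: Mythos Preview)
Your proposal is correct and matches the paper's approach: the paper's entire proof is the single sentence ``follows from the definition of circuits and cocircuits,'' and what you have written is precisely the unpacking of that sentence --- the Gale-duality observation $(\iota')^\vee=\beta$, $(\beta')^\vee=\iota$ identifies circuits of $X$ with cocircuits of $X'$ as subsets of $\{1,\dots,n\}$, and the orientation reversal comes directly from $\widetilde\sigma'=-\widetilde\theta$, $\widetilde\theta'=-\widetilde\sigma$.

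One small inaccuracy worth flagging, since you yourself single it out as the place requiring care: the two identifications $\kappa_{\Stab}$ and $\kappa_{\mathrm{vtx}}$ are \emph{not} the same on the K\"ahler-to-equivariant factor. In $\kappa_{\Stab}$ one has $z_i\mapsto a'_i$ (no inversion), whereas in $\kappa_{\mathrm{vtx}}$ one has $z_{\sharp,i}\mapsto (a'_i)^{-1}$. So your parenthetical ``the same one --- at the level of real Lie algebras, negation --- underlying Theorems~\ref{main-theorem} and~\ref{Thm-Stab}'' is not quite right; only $\kappa_{\mathrm{vtx}}$ involves the inversion. Since the walls $P_S$ and $W_{\alpha'}$ are linear (hence invariant under negation), the chamber identification $\fK=\fC'$ holds either way up to an overall sign on the ambient space, and this is exactly what produces the adjective ``negative'' in the statement --- but it is worth being precise about which map you are invoking.
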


\subsection{Duality interface and elliptic stable envelopes}

Recall the definition of equivariant and K\"ahler parameters in Section \ref{sec-para}. For a hypertoric variety $X$, equivariant parameters are defined as coordinates on the quotient torus $\bA = (\CC^*)^d$. On the other hand, by our mirror construction, $\bA^\vee = \bK'$ for $X'$. In other words, there is a canonical isomorphism between the equivariant parameters of $X$ and the K\"ahler parameters of $X'$. We also include the $\CC^*_\hbar$ factor and denote it by
\begin{equation} \label{id-para}
\kappa_{\Stab}: \  \bK^\vee \times \bA \times \CC^*_\hbar \xrightarrow{\sim} \bA' \times (\bK')^\vee \times \CC^*_\hbar, \qquad ( z_i , a_i, \hbar) \mapsto ( a'_i, z'_i,  \hbar^{-1}).
\end{equation}
We see that $\kappa_{\Stab}$ is actually induced by the identity map of $\bT^\vee \times \bT = \bT' \times (\bT')^\vee$, together with the natural interpretation of $\bK$, $\bA$, $\bK^\vee$ and $\bA^\vee$ as sub-tori or quotient tori of $\bT$ or $\bT^\vee$. In particular, in the standard $\bp$-frame, the isomorphism $\kappa$ can also be written explicitly as
$$
\zeta_j (\bp) \mapsto \alpha'_j (\bp'), \qquad \alpha_i (\bp) \mapsto \zeta'_i (\bp'), \qquad \hbar\mapsto \hbar^{-1},
$$
where $j\not\in \bp$, $i\in \bp$.

By the explicit formula of elliptic stable envelopes, we can write down the formula on the dual side, which by Lemma \ref{V-V'} is
\ben
\Stab'_{\sigma'} (\bp') &=& \prod_{j \in (\bp')^+} \theta ( \hbar x'_j ) \prod_{j \in (\bp')^-} \theta ( x'_j ) \prod_{i \in \cA_{\bp'}^+} \dfrac{\theta \Big( x'_i \zeta'_i (\bp') \hbar^{- \sum_{j\in (\bp')^+} C'_{ji} } \Big) }{\theta \Big( \zeta'_i (\bp') \hbar^{- \sum_{j\in (\bp')^+} C'_{ji} }  \Big) } \prod_{i \in \cA_{\bp'}^-} \dfrac{\theta \Big( x'_i \zeta'_i (\bp')  \hbar^{- \sum_{j\in (\bp')^+} C'_{ji} } \Big) }{\theta \Big( \hbar^{-1} \zeta'_i (\bp') \hbar^{- \sum_{j\in (\bp')^+} C'_{ji} } \Big) } \\
&=& \prod_{j \in \cA_\bp^-} \theta ( \hbar x'_j ) \prod_{j \in \cA_\bp^+} \theta ( x'_j ) \prod_{i \in \bp^-} \dfrac{\theta \Big( x'_i \zeta'_i (\bp') \hbar^{ \sum_{j\in \cA_\bp^-} C_{ij} } \Big) }{\theta \Big( \zeta'_i (\bp') \hbar^{ \sum_{j\in \cA_\bp^-} C_{ij} }  \Big) } \prod_{i \in \bp^+} \dfrac{\theta \Big( x'_i \zeta'_i (\bp') \hbar^{ \sum_{j\in \cA_\bp^-} C_{ij} } \Big) }{\theta \Big( \hbar^{-1} \zeta'_i(\bp') \hbar^{ \sum_{j\in \cA_\bp^-} C_{ij} } \Big) } .
\een
The diagonal elements are
$$
\left. \Stab'_{\sigma'} (\bp') \right|_{\bp'} = \prod_{j \in \cA_\bp^+} \theta ( x'_j |_{\bp'} ) \prod_{j \in \cA_\bp^-} \theta ( \hbar x'_j |_{\bp'} )
= (-1)^{|\cA_\bp^-|} \Theta (N_{\bp'}^{'-}) ,
$$
whose image under $\kappa^{-1}$ are exactly the denominators in $\Stab_\sigma (\bp)$. In other words, we can consider the following normalized version of elliptic stable envelopes
\ben
\bStab_\sigma (\bp) &:=& \Stab_\sigma (\bp) \cdot \left. \Stab'_{\sigma'} (\bp') \right|_{\bp'} \\
&=& \prod_{i \in \bp^+} \theta ( \hbar x_i ) \prod_{i \in \bp^-} \theta (x_i ) \prod_{j \not\in \bp} \theta \Big( x_j \zeta_j (\bp) \hbar^{- \sum_{i \in \bp^+} C_{ij}} \Big)  .
\een

Consider the product $X \times X'$, viewed as a $\bT \times \bT' \times \CC_\hbar^*$-variety, and the following equivariant embeddings:
$$
\xymatrix{
	X = X \times \{ \bp' \} \ar@{^{(}->}[r]^-{i_{\bp'}} & X \times X' & \{ \bp \} \times X' = X' \ar@{_{(}->}[l]_-{i_\bp} .
}
$$
We view $X\times \{\bp\}$ as a $\bT \times \bT' \times \CC_\hbar^*$-variety with trivial action on the second factor. Then
$$
\Ell_{\bT \times \bT' \times \CC_\hbar^*} (X \times \{ \bp' \} ) = \Ell_\TT (X) \times \cE_{\bT'} = \textsf{E}_\TT (X),
$$
where we apply the identification (\ref{id-para}) $\cE_{\bT'} \cong \cE_{\bT^\vee}$. Similarly, $\Ell_{\bT \times \bT' \times \CC_\hbar^*} (\{ \bp \} \times X' ) = \textsf{E}_{\TT'} (X')$.

Let $\mathfrak{X} := [\mu^{-1} (0) / \bK ]$ be the stacky quotient, and $\mathfrak{X}'$ similarly. The diagram above hence induces the following
$$
\xymatrix{
	\textsf{E}_\TT (X) \ar[r]^-{i_{\bp'}^*} & \Ell_{\bT \times \bT' \times \CC_\hbar^*} (X \times X') & \textsf{E}_{\TT'} (X') \ar[l]_-{i_{\bp}^*} \\
	\Ell_{\bT \times \bT' \times \CC_\hbar^* \times \bT^\vee \times (\bT')^\vee } (\pt) \ar[r] & \Ell_{\bT \times \bT' \times \CC_\hbar^*} (\mathfrak{X} \times \mathfrak{X}' ) \ar[u] & .
}
$$

We have the following main theorem.

\begin{Theorem} \label{Thm-Stab}
	
	Under the isomorphism of parameters
	$$
	\kappa_{\Stab}:  \ \bK^\vee \times \bA \times \CC^*_\hbar \xrightarrow{\sim} \bA' \times (\bK')^\vee \times \CC^*_\hbar, \qquad ( z_i , a_i, \hbar) \mapsto ( a'_i, z'_i,  \hbar^{-1}),
	$$
	we have:
	\begin{enumerate}[1)]
		
		\item There is a line bundle $\mathfrak{M}$ on $\Ell_{\bT \times \bT' \times \CC_\hbar^*} (X \times X')$ such that
		$$
		(i_{\bp'}^*)^* \mathfrak{M} = \mathfrak{M} (\bp) , \qquad (i_\bp^*)^* \mathfrak{M} = \mathfrak{M} (\bp').
		$$
		
		\item There is a section $\mathfrak{m}$ of $\mathfrak{M}$, called the ``duality interface", such that
		$$
		(i_{\bp'}^*)^* \mathfrak{m} = \bStab_{\sigma} (\bp) , \qquad (i_\bp^*)^* \mathfrak{m} = \bStab'_{\sigma'} (p_{\bp'}) .
		$$
		
		\item In the hypertoric case, the duality interface $\mathfrak{m}$ admits a simple explicit form:
		$$
		\mathfrak{m} = \prod_{i=1}^n \theta (x_i x'_i)  .
		$$
		In particular, it comes from a section of a universal line bundle on the prequotient $\Ell_{\bT \times \bT' \times \CC_\hbar^* \times \bT^\vee \times (\bT')^\vee} (\pt)$, and does not depend on the choices of $\theta$ or $\sigma$.
	\end{enumerate}
\end{Theorem}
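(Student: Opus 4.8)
The plan is to prove part 3) by a direct computation of the two restrictions, after which parts 1) and 2) become formal.

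First I would define $\mathfrak{m}:=\prod_{i=1}^n\vartheta(x_ix_i')$ and check it is a section of a line bundle of the stated kind. On the prequotient, each $x_i$ (resp. $x_i'$) is the elliptic first Chern class of the honest line bundle $L_i$ on $\mathfrak{X}$ (resp. $L_i'$ on $\mathfrak{X}'$), hence a coordinate function on the ambient abelian variety $\bS(X)\times\cE_\TT$ (resp. $\bS(X')\times\cE_{\TT'}$). Thus $\vartheta(x_ix_i')$ is the pull-back of the degree-one theta section on $E$ along the multiplication map $E\times E\to E$, and $\mathfrak{m}$ is a global section of an explicit line bundle on $\bS(X)\times\bS(X')\times\cE_{\bT\times\bT'\times\CC^*_\hbar}$. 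Pulling back along the maps in the bottom row of the diagram preceding the theorem produces a line bundle $\mathfrak{M}$ on $\Ell_{\bT\times\bT'\times\CC^*_\hbar}(X\times X')$ carrying the section $\mathfrak{m}$; by construction $\mathfrak{m}$ comes from the prequotient $\Ell_{\bT\times\bT'\times\CC^*_\hbar\times\bT^\vee\times(\bT')^\vee}(\pt)$ and involves neither $\theta$ nor $\sigma$, which is the last assertion of 3).

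Next comes the substance: computing $(i_{\bp'}^*)^*\mathfrak{m}$. I substitute $x_i'\mapsto x_i'|_{\bp'}$ using the restriction formula (\ref{restriction-V}) for $X'$ at $\bp'$, i.e. (\ref{restriction-V'}). Writing $\{1,\dots,n\}=\bp\sqcup\cA_\bp$, applying Lemma \ref{V-V'} ($\cA_{\bp'}^{\pm}=\bp^{\mp}$, $(\bp')^{\pm}=\cA_\bp^{\mp}$) together with the parameter identification $\kappa_{\Stab}$ — under which $a_i'\leftrightarrow z_i$ and $\hbar\leftrightarrow\hbar^{-1}$, so that the monomial $z_j\prod_{i\in\bp}z_i^{C_{ij}}$ is $\zeta_j(\bp)$ by definition — one finds $x_j'|_{\bp'}=1$ for $j\in\bp^-$, $x_j'|_{\bp'}=\hbar$ for $j\in\bp^+$, and $x_j'|_{\bp'}=\zeta_j(\bp)\,\hbar^{-\sum_{i\in\bp^+}C_{ij}}$ for $j\in\cA_\bp$. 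Multiplying the corresponding theta factors yields
\[
(i_{\bp'}^*)^*\mathfrak{m}=\prod_{i\in\bp^+}\vartheta(\hbar x_i)\prod_{i\in\bp^-}\vartheta(x_i)\prod_{j\notin\bp}\vartheta\!\Big(x_j\,\zeta_j(\bp)\,\hbar^{-\sum_{i\in\bp^+}C_{ij}}\Big),
\]
which is exactly the explicit formula for $\bStab_\sigma(\bp)$ recorded above. Since $\mathfrak{m}$ is symmetric in the two factors and the mirror construction is an involution ($(X')'=X$ with $\theta''=\theta$, $\sigma''=\sigma$, and $\kappa_{\Stab}$ self-inverse), the identical computation with $X$ and $X'$ interchanged — now substituting $x_i\mapsto x_i|_\bp$ via (\ref{restriction-V}) — gives $(i_\bp^*)^*\mathfrak{m}=\bStab'_{\sigma'}(\bp')$. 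This establishes 2) and completes 3).

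Finally, 1) is formal: $\mathfrak{M}$ is the line bundle of which $\mathfrak{m}$ is a section, and on the smooth ambient abelian variety a nonzero theta section determines its line bundle through its divisor; hence $(i_{\bp'}^*)^*\mathfrak{M}$ is the line bundle admitting $\bStab_\sigma(\bp)=(i_{\bp'}^*)^*\mathfrak{m}$ as a section, namely $\mathfrak{M}(\bp)$, and likewise $(i_\bp^*)^*\mathfrak{M}=\mathfrak{M}(\bp')$. The one essential input is the monomial-in-theta form of the hypertoric stable envelope (Theorem \ref{Stab-formula}): without it there is no reason for the manifestly symmetric object $\prod_i\vartheta(x_ix_i')$ to reproduce $\bStab$ on either side. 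Given that, I expect the main obstacle to be purely the bookkeeping in the restriction step — choosing the standard $\bp$-frame and the induced standard $\bp'$-frame compatibly (which is arranged when $X'$ is constructed), and tracking the half-theta shifts and the powers $\hbar^{\pm\sum_{i\in\bp^+}C_{ij}}$, which enter the restriction formula (\ref{restriction-V}) with one sign and the line bundle $\cT^{os}(\bp)$ underlying $\bStab_\sigma(\bp)$ with the other, against the rule $\hbar\mapsto\hbar^{-1}$ in $\kappa_{\Stab}$, so that no spurious signs survive.
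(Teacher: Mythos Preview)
Your proposal is correct and follows essentially the same approach as the paper: define $\mathfrak{m}=\prod_i\vartheta(x_ix_i')$, restrict $x_i'$ to $\bp'$ via (\ref{restriction-V'}), apply $\kappa_{\Stab}$, and match against the explicit formula for $\bStab_\sigma(\bp)$, with the other restriction handled by symmetry. The paper's proof is terser---it simply says ``it suffices to check 3)'' and writes the two-line computation---but your added remarks on why 1) and 2) follow formally, and your careful tracking of the $\hbar\leftrightarrow\hbar^{-1}$ sign in the exponents, are correct elaborations of the same argument.
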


\begin{proof}
It suffices to check 3). We compute $(i_{\bp'}^*)^* \mathfrak{m}$ by restricting $x'_j$ to the fixed point $\bp' \in X'$, followed by the change of variables $\kappa$. By (\ref{restriction-V'}) we have
\ben
(i_{\bp'}^*)^* \mathfrak{m} &=& \prod_{j\in \bp^+} \theta (\hbar^{-1} x_j) \prod_{j\in \bp^-} \theta (x_j) \prod_{j\not\in \bp} \theta \Big( a'_j \prod_{i \in \bp} (a'_i)^{C_{ij}} \cdot \hbar^{\sum_{i\in \bp^+} C_{ij} } \cdot  x_j  \Big) \\
&\xrightarrow{\kappa}& \prod_{j\in \bp^+} \theta (\hbar x_j) \prod_{j\in \bp^-} \theta ( x_j) \prod_{j\not\in \bp} \theta \Big( \zeta_j (\bp) \cdot \hbar^{ - \sum_{i\in \bp^+} C_{ij} } \cdot  x_j  \Big) \\
&=& \bStab_\sigma (\bp).
\een
The computation for $(i_\bp^*) \mathfrak{m}$ is similar.
\end{proof}

\begin{Corollary} \label{Cor-Stab}
We have the following symmetry between elliptic stable envelopes:
$$
\frac{\Stab_\sigma (\bp) |_\bq }{\Stab_\sigma (\bq) |_\bq } = \frac{\Stab'_{\sigma'} (\bq') |_{\bp'}}{\Stab'_{\sigma'} (\bp') |_{\bp'}},
$$
where $\bp, \bq \in X^\TT$, and $\bp', \bq' \in (X')^{\TT'}$ are fixed points corresponding to each other.
\end{Corollary}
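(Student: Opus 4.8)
The plan is to deduce Corollary~\ref{Cor-Stab} directly from Theorem~\ref{Thm-Stab}, by restricting the duality interface $\mathfrak{m}$ to a single torus-fixed point of $X\times X'$ along two different chains of closed embeddings and comparing the two answers.

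First I would record that the mirror construction is symmetric: by \eqref{mirror-theta-sigma} one has $\widetilde\theta''=-\widetilde\sigma'=\widetilde\theta$ and $\widetilde\sigma''=-\widetilde\theta'=\widetilde\sigma$, so the 3d mirror of $X'$ is $X$ equipped with its original stability and chamber data, and the interface $\mathfrak{m}=\prod_{i=1}^n\vartheta(x_ix_i')$ of Theorem~\ref{Thm-Stab}(3) is invariant under $x_i\leftrightarrow x_i'$. Hence Theorem~\ref{Thm-Stab} applied with $X$ and $X'$ interchanged uses the \emph{same} section $\mathfrak{m}$ of the \emph{same} line bundle $\mathfrak{M}$, and yields, besides $(i_{\bp'}^*)^*\mathfrak{m}=\bStab_\sigma(\bp)$ together with the unprimed normalization $\bStab_\sigma(\bp)=\Stab_\sigma(\bp)\cdot\Stab'_{\sigma'}(\bp')|_{\bp'}$, also the mirrored statements $(i_\bp^*)^*\mathfrak{m}=\bStab'_{\sigma'}(\bp')$ and $\bStab'_{\sigma'}(\bq')=\Stab'_{\sigma'}(\bq')\cdot\Stab_\sigma(\bq)|_\bq$.

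Next, fix the $\bT\times\bT'\times\CC_\hbar^*$-fixed point $\bq\times\bp'$ of $X\times X'$ and restrict the section $\mathfrak{m}$ of $\mathfrak{M}$ to the fiber over it. Factoring the inclusion of this point as $\{\bq\}\times\{\bp'\}\hookrightarrow X\times\{\bp'\}\hookrightarrow X\times X'$ and applying Theorem~\ref{Thm-Stab}(2) at the first stage gives $\bStab_\sigma(\bp)|_\bq$; factoring it instead as $\{\bq\}\times\{\bp'\}\hookrightarrow\{\bq\}\times X'\hookrightarrow X\times X'$ and using the mirrored form of Theorem~\ref{Thm-Stab}(2) gives $\bStab'_{\sigma'}(\bq')|_{\bp'}$. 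By Theorem~\ref{Thm-Stab}(1) the two resulting identifications of the fiber of $\mathfrak{M}$ over $\bq\times\bp'$ agree, so, after matching the parameters of $X$ and $X'$ by $\kappa_{\Stab}$, one obtains $\bStab_\sigma(\bp)|_\bq=\bStab'_{\sigma'}(\bq')|_{\bp'}$. Substituting the two normalization formulas from the previous paragraph turns this into $\Stab_\sigma(\bp)|_\bq\cdot\Stab'_{\sigma'}(\bp')|_{\bp'}=\Stab'_{\sigma'}(\bq')|_{\bp'}\cdot\Stab_\sigma(\bq)|_\bq$, and dividing by the nonzero quantity $\Stab_\sigma(\bq)|_\bq\cdot\Stab'_{\sigma'}(\bp')|_{\bp'}$ yields exactly the asserted identity.

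Given Theorem~\ref{Thm-Stab} there is no genuine obstacle; the work lies entirely in the bookkeeping of the middle step. One must check that the line-bundle identifications in Theorem~\ref{Thm-Stab}(1) are compatible with composing the two restrictions in either order, so that the equality of fibers over $\bq\times\bp'$ is the naive one, and that $\kappa_{\Stab}$ — which exchanges the K\"ahler and equivariant tori and inverts $\hbar$ — is applied consistently, so that $\Stab_\sigma(\bp)|_\bq$ and $\Stab'_{\sigma'}(\bq')|_{\bp'}$ are literally functions of the same variables; since the statement is a ratio of sections of a single line bundle, no explicit trivialization is needed. A completely elementary alternative, which avoids the interface, is to expand both sides from the theta-function formula of Theorem~\ref{Stab-formula} and the restriction formulas \eqref{restriction-V} and \eqref{restriction-V'} and check the equality term by term — the same computation that establishes Theorem~\ref{Thm-Stab}(3).
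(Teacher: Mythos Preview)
Your proposal is correct and is exactly the derivation implicit in the paper: the corollary is stated immediately after Theorem~\ref{Thm-Stab} with no separate proof, and the intended argument is precisely the double restriction of the duality interface $\mathfrak{m}$ to the fixed point $\bq\times\bp'$ that you carry out, combined with the normalization $\bStab_\sigma(\bp)=\Stab_\sigma(\bp)\cdot\Stab'_{\sigma'}(\bp')|_{\bp'}$ introduced just before the theorem. Your bookkeeping remarks about the symmetry of the mirror construction and the role of $\kappa_{\Stab}$ are also on point.
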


\subsection{Opposite polarization and vertex function for mirror} \label{oppo}

Recall that to define the vertex function, we need to choose a global polarization; to add prefactors and form modified vertex functions, we need to choose a localized polarization. We choose them for the mirror $X'$ as follows.

\begin{enumerate}[(i)]
	
	\setlength{\parskip}{1ex}
	
	\item The global polarization $T_{X'}^{1/2}$ in the definition of vertex functions would be
	$$
	T_{X'}^{1/2} = \sum_{i=1}^n \hbar^{-1} L_i^{-1} - \hbar^{-1} \cO^{\oplus d} = \sum_{i=1}^n \hbar^{-1} x_i^{-1} - d \hbar^{-1} ,
	$$
	which is \emph{opposite}, compared to the choice $T_X^{1/2}$ in (\ref{Polar-X}). As a result, the shifted K\"ahler parameters $z'_\sharp$ are
	$$
	z'_{\sharp, i} = z'_i \cdot (-\hbar^{1/2}).
	$$
	
	\item The localized polarization $\Pol'_{\bp'}$ is chosen as
	$$
	\Pol'_{\bp'} (x') = \sum_{\cA_{\bp'}^+ \cup (\bp')^-} \hbar^{-1} (x'_i)^{-1} + \sum_{\cA_{\bp'}^- \cup (\bp')^+} x'_i - d \hbar^{-1}.
	$$
	The shifted K\"ahler parameters $z'_\epsilon$ are
	$$
	z'_{\epsilon, i} = z'_{\sharp, i} \cdot (q\hbar^{-1})^{- \epsilon' (i)},
	$$
	where $\epsilon' (i) = 0$ for $i\in \cA_{\bp'}^+ \cup (\bp')^-$ , and $\epsilon' (i) = -1$ for $i\in \cA_{\bp'}^- \cup (\bp')^+$.
	
\end{enumerate}

Under these choices, we restate the characterization for vertex functions of $X'$.

\begin{Lemma}[Theorem (\ref{q-diff-eqn}) and Lemma (\ref{uniqueness}) for $X'$] \label{uniqueness-X'}
Given $\bp' \in (X')^{\TT'}$, the modified bare vertex function $( \widetilde V')^{(\bone)} (q, z') |_{\bp'}$ is uniquely characterized by the following.

\begin{enumerate}[(i)]

\item It is annihilated by the following $q$-difference operators
\begin{equation} \label{q-diff-Z'}
\prod_{i\in (S')^+} ( 1 - Z'_i ) \prod_{i\in (S')^-} ( 1 - \hbar  Z'_i )  -  (q \hbar^{-1} z'_\sharp)^\beta \prod_{i\in (S')^+} ( 1 - \hbar  Z'_i ) \prod_{i\in (S')^-} ( 1 - Z'_i ) ,
\end{equation}
where $Z'_i$ is the operater $z'_i \mapsto q z'_i$, $S' = (S')^+ \sqcup (S')^-$ runs through all circuits of $X'$.

\item It admits the asymptotic behavior
$$
( \widetilde V')^{(\bone)} (q, z') \big|_{\bp'} \sim e^{\sum_{i=1}^n \frac{\ln z'_{\epsilon, i} (\bp') \ln x'_i |_{\bp'}}{\ln q} }   \cdot \prod_{i\in \cA_{\bp'}^+ \cup (\bp')^-} \frac{\phi (q\hbar^{-1} (x'_i)^{-1} |_{\bp'} )}{\phi ( (x'_i)^{-1} |_{\bp'} )} \prod_{i\in \cA_{\bp'}^- \cup \bp'} \frac{\phi (q x'_i |_{\bp'} )}{\phi ( \hbar x'_i |_{\bp'} )} \cdot (1 + o(\zeta' (\bp') ) )  ,
$$
as $\zeta' \xrightarrow{\theta'} 0$, where the limit is explained in Remark \ref{limit-point}.

\end{enumerate}

\end{Lemma}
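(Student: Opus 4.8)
The variety $X'$ is itself a smooth hypertoric variety, defined by the dual short exact sequence (\ref{knd-seq-dual}) together with the stability and chamber parameters (\ref{mirror-theta-sigma}). Hence the explicit vertex formula of Proposition \ref{Prop-vertex}, Theorem \ref{q-diff-eqn}, and Lemma \ref{uniqueness} all apply to $X'$ verbatim, with every combinatorial object read off from the dual hyperplane arrangement $\cH'$. The only non-default features are the two choices recorded in Section \ref{oppo}: the \emph{opposite} global polarization $T^{1/2}_{X'}=\sum_i\hbar^{-1}(x'_i)^{-1}-d\hbar^{-1}$, which is $\hbar^{-1}$ times the dual of the standard one $\sum_i x'_i-d$, and the \emph{nonstandard} localized polarization $\Pol'_{\bp'}$. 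The plan is to apply Theorem \ref{q-diff-eqn} and Lemma \ref{uniqueness} to $X'$ equipped with the standard polarization, and then to track how these two choices rescale the modified bare vertex, thereby deforming the $q$-difference operators and the prescribed asymptotics.

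\textbf{Part (i).}
By the duality between the wall-and-chamber structures of $X$ and $X'$, the circuits of $X'$ are the cocircuits of $X$. Working in the standard $\bp'$-frame, which is compatible with the standard $\bp$-frame of $X$ (cf.\ the discussion preceding Lemma \ref{V-V'}), and identifying $\cA_{\bp'}^\pm=\bp^\mp$ and $(\bp')^\pm=\cA_\bp^\mp$ by Lemma \ref{V-V'}, all the splittings appearing in the statement are expressed through the data of $X$. Applying Theorem \ref{q-diff-eqn}(1) to $X'$ with the standard polarization shows that the corresponding modified vertex is annihilated by
$$
\prod_{i\in (S')^+}(1-Z'_i)\prod_{i\in (S')^-}(1-\hbar Z'_i)-u^\beta\prod_{i\in (S')^+}(1-\hbar Z'_i)\prod_{i\in (S')^-}(1-Z'_i),
$$
where $u_i:=z'_i(-\hbar^{-1/2})$ are the shifted K\"ahler parameters of the standard polarization. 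Passing to the opposite polarization multiplies the bare vertex by a monomial in $q$ in the K\"ahler variables — this is the statement, recalled before Definition \ref{loc-pol}, that a change of global polarization amounts to a $q$-shift of the K\"ahler parameters — the shift being governed by the fact that $\det\cT^{1/2}_{X'}|_{\mathrm{opp}}$ and $\det\cT^{1/2}_{X'}|_{\mathrm{std}}$ are mutually inverse up to a fixed power of $\hbar$; combining this with the accompanying $\hbar$-rescaling $z'_{\sharp,i}=z'_i(-\hbar^{1/2})=\hbar u_i$ of the opposite-polarization shift parameter turns $u^\beta$ into exactly $(q\hbar^{-1}z'_\sharp)^\beta$, which is (\ref{q-diff-Z'}). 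As in Lemma \ref{uniqueness}, the system is holonomic of rank $\rk K_{\TT'}(X')=|X^\TT|$.

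\textbf{Part (ii) and uniqueness.}
The modified bare vertex $(\widetilde V')^{(\bone)}(q,z')|_{\bp'}$ depends on $\Pol'_{\bp'}$ only through the exponential prefactor $e^{\sum_i \ln z'_{\epsilon,i}(\bp')\ln x'_i|_{\bp'}/\ln q}$ of Definition \ref{loc-pol} (with $z'_\epsilon$ built from $\Pol'_{\bp'}$) and through the $\Phi$-prefactor. Restricting these to $\bp'$ via (\ref{restriction-V'}) and rewriting $\cA_{\bp'}^\pm$, $(\bp')^\pm$ through Lemma \ref{V-V'} yields precisely the normalization displayed in (ii), the limit $\zeta'\xrightarrow{\theta'}0$ being that of Remark \ref{limit-point} applied to $X'$. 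For uniqueness, such a prefactor conjugates each $Z'_i$ by a monomial in $q$, $\hbar$ and the equivariant parameters, exactly as in the passage from part 1) to part 2) of Theorem \ref{q-diff-eqn}, so it changes neither the holonomic rank nor the solvability; reducing the higher-order system (\ref{q-diff-Z'}) to a first-order holonomic $q$-difference system of rank $\rk K_{\TT'}(X')$ and quoting the existence and uniqueness input cited in the proof of Lemma \ref{uniqueness} then singles out $(\widetilde V')^{(\bone)}(q,z')|_{\bp'}$ by its leading asymptotics.

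\textbf{Main obstacle.}
Nothing above is conceptually difficult; the delicate point is to verify in Part (i) that the combined effect of the polarization change and the $\hbar$-rescaling is \emph{exactly} the prefactor $q\hbar^{-1}$ in (\ref{q-diff-Z'}), and not some other monomial in $q$ and $\hbar$. This requires carefully balancing three sources of $q$- and $\hbar$-powers in the definition of the modified vertex for $X'$: the prefactor $q^{-\frac12(\beta,\det\cT^{1/2})}$ of the bare vertex (\ref{vertex}), the $\widehat\cO_\vir$-twist by $\det\cT^{1/2}|_{p_2}/\det\cT^{1/2}|_{p_1}$, and the exponential and $\Phi$-prefactors of Definition \ref{loc-pol}. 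The same bookkeeping is what matches the exact $\phi$-quotients in the asymptotic normalization of (ii); it is a direct but error-prone computation.
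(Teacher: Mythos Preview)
Your proposal is correct and matches the paper's intent: the lemma is stated without proof and labeled explicitly as ``Theorem (\ref{q-diff-eqn}) and Lemma (\ref{uniqueness}) for $X'$'', so the paper's argument is precisely to rerun those proofs for $X'$ with the polarization choices recorded in Section \ref{oppo}. Your explicit tracking of how the opposite global polarization replaces $z_\sharp$ by $q\hbar^{-1}z'_\sharp$ (which one can also read off immediately from the bare-vertex formula $V'_{\mathrm{opp}}=\sum_\beta(q\hbar^{-1}z'_\sharp)^\beta\prod_i\frac{(\hbar x'_i)_{D_i}}{(qx'_i)_{D_i}}$) and how the nonstandard $\Pol'_{\bp'}$ shapes the asymptotic prefactor is exactly the bookkeeping the paper leaves implicit.
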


\subsection{Mirror symmetry for $q$-difference equations and vertex functions}

In this subsection, we study the mirror symmetry statement for $q$-difference equations and vertex functions.

Consider the matrix $\fP$ whose entries are defined by \footnote{This matrix $\fP$ differs from the one in \cite{AOelliptic} by an exponential factor.}
$$
\fP_{\bq, \bp} := \frac{\Stab^\sharp_{\sigma} (\bq) |_\bp}{\Theta (T^{1/2}_X \big|_\bp )}  \cdot \frac{\Phi ((q-\hbar) T^{1/2}_X \big|_\bp ) }{\kappa_{\mathrm{vtx}} (\Phi ((q-\hbar) \Pol'_{\bq'} |_{\bq'} ) )}    \cdot (-\hbar^{1/2})^{|\bq^+|} \prod_{i\in \bq^+} x_i |_\bq ,
$$
for $\bq, \bp \in X^\bT$, where $\Stab^\sharp_{\sigma} (\bq)$ is defined as the stable envelope $\Stab_\sigma (\bq)$ with change of variables $\zeta_j (\bq) \mapsto \zeta_{\sharp, j} (\bq)^{-1}$, and $\kappa_{\mathrm{vtx}}$ is the identification of parameters in the following main theorem.

\begin{Theorem} \label{main-theorem}
Under the identification of parameters
$$
\kappa_{\mathrm{vtx}}:  \qquad \bK^\vee \times \bA \times \CC^*_\hbar \xrightarrow{\sim} \bA' \times (\bK')^\vee \times \CC^*_\hbar, \qquad ( z_{\sharp, i} , a_i, \hbar) \mapsto ( (a'_i)^{-1}, z'_{\sharp, i},  q \hbar^{-1}),
$$	
the product
$$
V'(q, z', a') = \fP \cdot V (q, z, a)  \quad \in \quad  K_\TT (X^\TT)
$$
forms a global class in $K_{\TT'} (X')$, and coincides with the vertex function $V'(q, z', a')$ of the 3d-mirror $X'$, with the opposite polarization $T^{1/2}_{X'}$.
\end{Theorem}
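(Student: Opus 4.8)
The plan is to identify $\fP\cdot V$ with the vertex function $V'$ of the mirror $X'$ by means of the uniqueness characterization in Lemma~\ref{uniqueness-X'}, rather than by a direct residue/contour manipulation of the explicit formulas. Working one fixed point at a time, it suffices to prove that for each $\bq\in X^\TT$, with dual fixed point $\bq'=\textsf{bj}(\bq)$, the component $(\fP\cdot V)_{\bq}=\sum_{\bp\in X^\TT}\fP_{\bq,\bp}\,V^{(\bone)}(q,z,a)|_\bp$ — interpreted after the substitution $\kappa_{\mathrm{vtx}}$ and at the level of \emph{modified} vertices, which is exactly the level at which the $\Phi$-prefactors in $\fP$ are calibrated — first is annihilated by the $q$-difference system (\ref{q-diff-Z'}) of $X'$, and second has the leading asymptotics of Lemma~\ref{uniqueness-X'}(ii) as $\zeta'\xrightarrow{\theta'}0$. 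Lemma~\ref{uniqueness-X'} then forces $(\fP\cdot V)_{\bq}=(\widetilde V')^{(\bone)}(q,z')|_{\bq'}$, and since the right-hand sides assemble, by the mirror construction of Section~\ref{oppo}, into the genuine $K_{\TT'}(X')$-class $V'(q,z',a')$ with the opposite polarization, the ``forms a global class'' assertion follows at once; Corollary~\ref{Cor-Stab} is then read off from the diagonal entries of $\fP$.

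\emph{Difference equations.} First I would set up the combinatorial dictionary. By the duality between the wall-and-chamber structures of $X$ and $X'$ established above, circuits of $X$ are exactly the cocircuits of $X'$ and cocircuits of $X$ are the circuits of $X'$; under $\kappa_{\mathrm{vtx}}$ one has $Z_i\mapsto(A'_i)^{-1}$, $A_i\mapsto Z'_i$, and $\hbar\mapsto q\hbar^{-1}$ turns each factor $1-\hbar Z_i$ into $1-q\hbar^{-1}A'_i$. Hence the $z$-system (\ref{q-diff-Z}) of $X$ becomes the equivariant system of $X'$ (equation (\ref{q-diff-A}) written for $X'$), and the $a$-system (\ref{q-diff-A}) of $X$ becomes the K\"ahler system (\ref{q-diff-Z'}) of $X'$. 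By Theorem~\ref{q-diff-eqn}, the modified vertex $V^{(\bone)}(q,z,a)|_\bp$ of $X$ solves both systems of $X$, up to the explicit exponential twist appearing there; transported by $\kappa_{\mathrm{vtx}}$ it solves both systems of $X'$. It remains to see that left-multiplication by the matrix $\fP$ preserves (\ref{q-diff-Z'}). Since in the $X$-variables this system acts on each component only through shifts of the equivariant parameters $a_i$, the required input is that the theta-monomial $\Stab^\sharp_\sigma(\bq)|_\bp$ together with the $\Phi$-ratio and the monomial factor $(-\hbar^{1/2})^{|\bq^+|}\prod_{i\in\bq^+}x_i|_\bq$ in $\fP_{\bq,\bp}$ carries exactly the $q$-quasi-periods in the $a$-variables needed for $\sum_\bp\fP_{\bq,\bp}(\widetilde V|_\bp)$ to recombine into a solution — equivalently, that $\Stab^\sharp_\sigma(\bq)|_\bp/\Theta(T^{1/2}_X|_\bp)$, times that $\Phi$-ratio, has the same $a$-quasi-periods as $\widetilde V|_\bp$ up to the common twist. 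This is the hypertoric avatar of the elliptic stable envelope being a section of the line bundle with the prescribed quasi-periods, and it can be checked cocircuit-by-cocircuit from the explicit formula of Theorem~\ref{Stab-formula}, the restriction formulas (\ref{restriction-V}), and the law $\vartheta(qx)=-q^{-1/2}x^{-1}\vartheta(x)$.

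\emph{Asymptotics.} Here the key is triangularity: by Lemma~\ref{Attr}, $\Stab^\sharp_\sigma(\bq)|_\bp=0$ unless $\bp\in\overline{\Attr_\sigma(\bq)}$, so $\fP_{\bq,\bp}$ is triangular for the attracting order, with diagonal entry controlled by $\Stab_\sigma(\bq)|_\bq=(-1)^{|\bq^+|}\Theta(N_\bq^-)$. Inserting the leading asymptotics of $\widetilde V^{(\bone)}|_\bp$ from Lemma~\ref{uniqueness} and the $\fP$-prefactors, one finds that as $\zeta'\xrightarrow{\theta'}0$ the sum is dominated by the single term with $\bp=\bq$. Its leading coefficient and exponential factor are then computed using Lemma~\ref{V-V'} ($\cA_{\bp'}^\pm=\bp^\mp$, $(\bp')^\pm=\cA_\bp^\mp$) and the restriction formulas (\ref{restriction-V}), (\ref{restriction-V'}), and matched against the asymptote prescribed in Lemma~\ref{uniqueness-X'}(ii): the $\Phi$-ratio and the factor $(-\hbar^{1/2})^{|\bq^+|}\prod_{i\in\bq^+}x_i|_\bq$ in $\fP$ are precisely what is needed to pass from the polarization $T^{1/2}_X$ to the opposite $X'$-polarization and from $z_\sharp$ to $z'_\sharp$.

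\emph{The main obstacle} is the last claim of the ``difference equations'' step: that conjugating the fundamental solution matrix $(\widetilde V|_\bp)_\bp$ by $\fP$ intertwines the $q$-difference systems of $X$ and $X'$ without breaking any relation. Conceptually this is the Aganagic--Okounkov principle that the renormalized elliptic stable envelope \emph{is} the transition matrix between the $z$- and $a$-solution bases; in the hypertoric case it is accessible through an explicit, if somewhat lengthy, matching of $q$-quasi-periods built from the theta-function transformation law, the fixed-point restriction formulas of Section~2, and the circuit/cocircuit duality. The combinatorial dictionary, the triangularity, and the asymptotic bookkeeping are then routine.
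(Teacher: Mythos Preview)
Your overall strategy --- verify the two criteria of Lemma~\ref{uniqueness-X'} for $(\fP\cdot V)|_\bq$ and invoke uniqueness --- is exactly the paper's, and your difference-equation step is essentially Lemma~\ref{Criterion-1}: compute the $q$-quasi-periods of $\Stab^\sharp_\sigma(\bq)|_\bp/\Theta(T^{1/2}_X|_\bp)$ from the theta transformation law and match them to the exponential twist that carries (\ref{q-diff-A}) to (\ref{q-diff-Z'}) under $\kappa_{\mathrm{vtx}}$.

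The asymptotics step, however, has a genuine gap. You correctly identify that the relevant limit is $\zeta'\xrightarrow{\theta'}0$, which under $\kappa_{\mathrm{vtx}}$ becomes the \emph{equivariant} limit $\alpha(\bq)\xrightarrow{\sigma}0$ on the $X$ side; but you then invoke Lemma~\ref{uniqueness} for the leading behaviour of $\widetilde V^{(\bone)}|_\bp$, and that lemma gives the asymptotics in the \emph{K\"ahler} limit $\zeta\xrightarrow{\theta}0$ --- the wrong variable. What is actually needed is the equivariant limit of $V^{(\bone)}|_\bp$, which is computed separately in Proposition~\ref{limit-V}. More seriously, the assertion that ``the sum is dominated by the diagonal term'' is not routine. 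For $\bp\neq\bq$ the off-diagonal entry $\fP_{\bq,\bp}$ contains factors of the form $\vartheta(x_j\zeta_\sharp^{\mp1}\cdots)/\big(\vartheta(x_j)\,\vartheta(\zeta_\sharp^{\mp1}\cdots)\big)$ with $x_j|_\bp\neq 1$, and their behaviour as $\alpha(\bq)\xrightarrow{\sigma}0$ is \emph{not} subleading for generic $\zeta_\sharp$: triangularity alone does not make these terms vanish. The paper's Lemma~\ref{Criterion-2} handles this by first restricting to $q$-geometric progressions $\alpha_i(\bq)=w_iq^{\pm N}$ and to an open region $U(\bq)$ in the K\"ahler parameters where the off-diagonal contributions do decay as $N\to\infty$, and then invoking the holomorphicity of $\fP\cdot V$ in a punctured neighborhood of the limit point (Theorem~5 of \cite{AOelliptic}) together with Riemann extension and analytic continuation to upgrade this to the full statement. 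Without this analytic input from \cite{AOelliptic}, your argument does not close.
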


The following two lemmas provide the key incredients in the proof of this main theorem.

\begin{Lemma} \label{Criterion-1}
Given $\bq' \in (X')^{\TT'}$, under the change of variables $\kappa_{\mathrm{vtx}}$, the functions
$$
\kappa_{\mathrm{vtx}} \Big( e^{\sum_{i=1}^n \frac{\ln z'_{\epsilon, i} (\bq')  \ln x'_i |_{\bq'}}{\ln q}} \Phi ((q-\hbar) \Pol'_{\bq'} |_{\bq'} ) \Big) \cdot ( \fP \cdot V(q,z,a) ) \big|_\bq
$$
are annihilated by the $q$-difference operators (\ref{q-diff-Z'}) for the mirror $X'$.
\end{Lemma}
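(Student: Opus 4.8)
The plan is to expand $(\fP\cdot V)|_\bq=\sum_{\bp\in X^\TT}\fP_{\bq,\bp}\,V(q,z,a)|_\bp$ and, after multiplying by the stated $X'$-prefactor, to recognize each summand as an explicit invertible factor times precisely the combination of the $X$-side vertex that part 2) of Theorem~\ref{q-diff-eqn} annihilates. Concretely, the factor $\kappa_{\mathrm{vtx}}(\Phi((q-\hbar)\Pol'_{\bq'}|_{\bq'}))$ in the denominator of $\fP_{\bq,\bp}$ cancels the same factor inside the prefactor $\kappa_{\mathrm{vtx}}\big(e^{\sum_i\ln z'_{\epsilon,i}(\bq')\ln x'_i|_{\bq'}/\ln q}\,\Phi((q-\hbar)\Pol'_{\bq'}|_{\bq'})\big)$. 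Using the definition of the modified bare vertex, $\widetilde V^{(\bone)}(q,z)|_\bp=V^{(\bone)}(q,z)|_\bp\cdot e^{\sum_i\ln z_{\epsilon,i}(\bp)\ln x_i|_\bp/\ln q}\cdot\Phi((q-\hbar)T^{1/2}_X|_\bp)$, one rewrites the $\bp$-summand as $c_{\bq,\bp}\cdot F_\bp$, where $F_\bp:=\widetilde V^{(\bone)}(q,z)|_\bp\cdot e^{-\sum_i\ln z_{\sharp,i}\ln a_i/\ln q}$ is exactly the function killed by the cocircuit operators (\ref{q-diff-A}) of $X$, and $c_{\bq,\bp}$ is the explicit product of: the theta functions coming from $\Stab^\sharp_\sigma(\bq)|_\bp$ and from $1/\Theta(T^{1/2}_X|_\bp)$, the $\phi$-factors, the monomial $(-\hbar^{1/2})^{|\bq^+|}\prod_{i\in\bq^+}x_i|_\bq$, the exponential $\kappa_{\mathrm{vtx}}(e^{\sum_i\ln z'_{\epsilon,i}(\bq')\ln x'_i|_{\bq'}/\ln q})$, and the exponential/monomial discrepancy between $z_\epsilon$ and $z_\sharp$ and between $\Pol_\bp$ and $\Pol'_{\bq'}$.

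Next I would transport the target operators (\ref{q-diff-Z'}) of $X'$ back to the $X$-side through $\kappa_{\mathrm{vtx}}$. Since $\kappa_{\mathrm{vtx}}$ identifies $z'_{\sharp,i}$ with $a_i$ and the $X'$-parameter $\hbar$ with $q\hbar^{-1}$, the shift $Z'_i$ (a $q$-shift of $z'_i$, hence of $z'_{\sharp,i}$) becomes the $X$-side shift $A_i$. By the duality between circuits of $X'$ and cocircuits of $X$, a circuit $S'=(S')^+\sqcup(S')^-$ of $X'$ is a cocircuit $R=R^+\sqcup R^-$ of $X$, with sign decomposition dictated by the mirror conventions $\widetilde\theta'=-\widetilde\sigma$, $\widetilde\sigma'=-\widetilde\theta$ together with Lemma~\ref{V-V'}, and the curve class $\beta$ of $X'$ dual to $S'$ becomes the root $\alpha$ of $X$ dual to $R$; a direct check then gives $(q\hbar^{-1}z'_\sharp)^\beta\mapsto(\hbar a)^\alpha$ (up to an explicit power of $q$ absorbed in the conjugation below). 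Substituting, the image of (\ref{q-diff-Z'}) under $\kappa_{\mathrm{vtx}}$ is an operator $\mathcal{D}'_R$ in the $A_i$'s, and I claim $\mathcal{D}'_R$ is the conjugate, by $c_{\bq,\bp}$, of the cocircuit operator (\ref{q-diff-A}) of $X$ — for every $\bp$.

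The verification of this conjugation is the computational core, carried out termwise in $\bp$ using only the theta quasi-periodicity $\vartheta(qx)=-q^{-1/2}x^{-1}\vartheta(x)$ and the $q$-shift identity $\phi(x)/\phi(qx)=1-x$. The exponential factors in $c_{\bq,\bp}$ conjugate the $A_i$-shifts into multiplication by monomials times $A_i$-shifts; the $\phi$-factors convert the binomials $(1-A_i)$ and $(1-q\hbar^{-1}A_i)$ of (\ref{q-diff-A}) into the binomials $(1-Z'_i)$ and $(1-\hbar Z'_i)$ of (\ref{q-diff-Z'}); and the product of theta functions in $\Stab^\sharp_\sigma(\bq)|_\bp$, through quasi-periodicity, supplies exactly the monomial needed to match $(\hbar a)^\alpha$ with $(q\hbar^{-1}z'_\sharp)^\beta$. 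Here the restriction formula (\ref{restriction-V'}) for $x'_j|_{\bp'}$, Lemma~\ref{V-V'} (i.e. $\cA_{\bp'}^\pm=\bp^\mp$ and $(\bp')^\pm=\cA_\bp^\mp$), and the explicit opposite/localized polarizations of Section~\ref{oppo} are all used to see that the $\bp$-dependent parts of $c_{\bq,\bp}$ contribute only shift-compatible monomials, so that $c_{\bq,\bp}$ intertwines $\mathcal{D}'_R$ with (\ref{q-diff-A}) up to an invertible scalar. Since $F_\bp$ is annihilated by (\ref{q-diff-A}) for every cocircuit $R$ of $X$ by part 2) of Theorem~\ref{q-diff-eqn}, each $c_{\bq,\bp}F_\bp$ is annihilated by $\mathcal{D}'_R$, hence so is the sum over $\bp$, which is the asserted function; unwinding $\mathcal{D}'_R=\kappa_{\mathrm{vtx}}^*$ of (\ref{q-diff-Z'}) finishes the proof.

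The main obstacle is entirely in the bookkeeping of monomial prefactors: one must track simultaneously the powers of $\hbar$, $q$, $a_i$ and $z_i$ produced by (i) the theta quasi-periodicities in $\Stab^\sharp_\sigma(\bq)$, (ii) the two distinct Kähler shifts $z_\epsilon$ and $z_\sharp$ on each of the mirror sides, and (iii) the deliberate mismatch between the localized polarization $\Pol_\bp$ built into $\widetilde V$ and the localized polarization $\Pol'_{\bq'}$ built into $\fP_{\bq,\bp}$ and into the prefactor. One must also check that the sign decomposition $(S')^\pm$ of a circuit of $X'$ is carried by $\kappa_{\mathrm{vtx}}$ to the correct decomposition $R^\pm$ of the corresponding cocircuit of $X$ (possibly with a swap of $\pm$), which is precisely where the mirror conventions $\widetilde\theta'=-\widetilde\sigma$, $\widetilde\sigma'=-\widetilde\theta$ and Lemma~\ref{V-V'} are indispensable and where an incorrect orientation would spoil the matching of binomials. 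No conceptual input beyond part 2) of Theorem~\ref{q-diff-eqn} and the quasi-periodicity of $\vartheta$ is needed: the content of the lemma is that the normalization packaged into $\fP$ makes this conjugation close up exactly.
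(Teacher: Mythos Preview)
Your approach is essentially the same as the paper's: both work termwise over $\bp\in X^{\TT}$, identify each summand with a prefactor times $\widetilde V^{(\bone)}|_\bp\cdot e^{-\sum_i\ln z_{\sharp,i}\ln a_i/\ln q}$, and then use theta quasi-periodicity to show the prefactor intertwines the $X'$-operators (\ref{q-diff-Z'}) with the cocircuit operators (\ref{q-diff-A}), invoking Theorem~\ref{q-diff-eqn}~2) to finish. The paper phrases this as ``has the same $q$-shifts as an explicit exponential'' (and invokes Lemma~\ref{id-prime} to simplify that exponential), while you phrase it as conjugation by $c_{\bq,\bp}$; these are the same computation.

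One minor correction to your bookkeeping: after the cancellations you describe, there are \emph{no} $\phi$-factors left in $c_{\bq,\bp}$ --- the $\Phi((q-\hbar)T^{1/2}_X|_\bp)$ is absorbed into $\widetilde V|_\bp$ and the $\Phi((q-\hbar)\Pol'_{\bq'}|_{\bq'})$ cancels against the prefactor, so $c_{\bq,\bp}$ consists only of theta functions, exponentials, and monomials. Correspondingly, your claim that ``the $\phi$-factors convert the binomials $(1-A_i)$, $(1-q\hbar^{-1}A_i)$ into $(1-Z'_i)$, $(1-\hbar Z'_i)$'' is not what happens: under $\kappa_{\mathrm{vtx}}$ one has $A_i\leftrightarrow Z'_i$ and $\hbar_{X'}\leftrightarrow q\hbar^{-1}$, so these binomials already match identically, with no extra $q$-power to absorb. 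The actual content of the conjugation step is that the theta/exponential factors in $c_{\bq,\bp}$ are $A_i$-quasi-periodic with the right monomial multipliers so that $c_{\bq,\bp}^{-1}\mathcal{D}'_R\,c_{\bq,\bp}$ equals (\ref{q-diff-A}) up to an overall constant (depending only on $q,\hbar$, not on $a$ or $z$) --- exactly the computation the paper records.
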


\begin{proof}
Explicit computation shows that the $q$-shifts of
$$
\dfrac{\Stab^\sharp_{\sigma} (\bq) }{\Theta ( \Pol_\bp )} = \prod_{i \in \bq^+} \frac{\theta ( \hbar x_i )}{ \theta ( x_i )}  \prod_{j \in \cA_\bq^+} \dfrac{\theta \Big( x_j \zeta_{\sharp, j} (\bq)^{-1} \hbar^{-\sum_{i \in \bq^+} C_{ij}} \Big) }{ \theta (  x_j ) \theta \Big(  \zeta_{\sharp, j} (\bq)^{-1} \hbar^{- \sum_{i \in \bq^+} C_{ij}}  \Big) } \prod_{j \in \cA_\bq^-} \dfrac{\theta \Big(  x_j \zeta_{\sharp, j} (\bq)^{-1} \hbar^{- \sum_{i \in \bq^+} C_{ij}} \Big) }{ \theta (  x_j ) \theta \Big( \hbar^{-1} \zeta_{\sharp, j} (\bq)^{-1} \hbar^{ -\sum_{i \in \bq^+} C_{ij} }  \Big) } \prod_{i\in \cA_\bp^+} \frac{\theta (x_i) }{ \theta (\hbar^{-1} x_i^{-1} ) } ,
$$
with respect to variables $a_i$, $z_i$ and $s_i$ are the same as
$$
\exp \Big( \sum_{i=1}^n \frac{\ln z_{\sharp, i} \ln x_i}{\ln q} - \sum_{i=1}^n \frac{\ln z_{\sharp, i} \ln x_i |_\bq }{\ln q} - \sum_{i\in \bq^+} \frac{\ln x_i |_\bq \ln \hbar}{\ln q} + \sum_{i\in \cA_\bp^+} \frac{\ln x_i \ln \hbar}{\ln q} \Big).
$$
Therefore $\fP \cdot V(q,z, a)$ satisfies the same $q$-difference equations (with respect to $z_i$'s and $a_i$'s) as
$$
\sum_\bp  e^{ \sum_{i=1}^n \frac{\ln z_{\epsilon, i} (\bp) \ln x_i |_\bp }{\ln q} - \sum_{i=1}^n \frac{\ln z_{\epsilon, i} (\bq) \ln x_i |_\bq }{\ln q} - \sum_{i\in \bq^+} \frac{\ln x_i |_\bq \ln \hbar}{\ln q} } \cdot \prod_{i\in \bq^+} x_i |_\bq \cdot \prod_{i\in \bp} \frac{\phi ( q x_i |_\bp )}{\phi ( \hbar x_i |_\bp  )} \cdot V (q,z,a) \big|_\bp
$$
where we have used the observation that $\sum_{i=1}^n \ln z_{\epsilon, i} (\bp) \ln x_i |_\bp = \sum_{i=1}^n \ln z_{\sharp, i} \ln x_i |_\bp$. Hence, by Lemma \ref{id-prime}, we have
$$
\kappa_{\mathrm{vtx}} \Big( e^{\sum_{i=1}^n \frac{\ln z'_{\epsilon, i} (\bq')  \ln x'_i |_{\bq'}}{\ln q}} \Phi ((q-\hbar) \Pol'_{\bq'} |_{\bq'} ) \Big) \cdot ( \fP \cdot V(q,z,a) ) \big|_\bq
$$
satisfies the same $q$-difference equations as
\ben
&& e^{\sum_{i=1}^n \frac{\ln z'_{\epsilon, i} (\bq') \ln x'_i |_{\bq'} }{\ln q} - \sum_{i=1}^n \frac{\ln z_{\epsilon, i} (\bq) \ln x_i |_\bq }{\ln q} - \sum_{i\in \bq^+} \frac{\ln x_i |_\bq  \ln \hbar}{\ln q} } \cdot \prod_{i\in \bq^+} x_i |_\bq \cdot \widetilde V (q, z, a) \\
&=& e^{- \sum_{i=1}^n \frac{\ln z_{\sharp, i} \ln a_i}{\ln q} - \sum_{i\in \bq^+, j\in \cA_\bq^-} C_{ij} \frac{\ln (q\hbar^{-1} ) \ln \hbar }{\ln q} } \cdot \widetilde V (q, z, a).
\een
By Theorem \ref{q-diff-eqn}, it satisfies the equation (\ref{q-diff-A}), which under the change of variables $\kappa_{\mathrm{vtx}}$, is the same as the equation (\ref{q-diff-Z'}).
\end{proof}

\begin{Lemma} \label{Criterion-2}
Under the change of variables $\kappa_{\mathrm{vtx}}$, the functions $( \fP \cdot V(q,z,a) ) \big|_\bq$ admit the following asymptotic behavior
$$
( \fP \cdot V(q,z,a) ) \big|_\bq \sim (1 + o(\alpha(\bq)))
$$
as $\alpha (\bp) \xrightarrow{\sigma} 0$ (see Appendix \ref{limit-point-a}).
\end{Lemma}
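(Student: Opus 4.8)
The plan is to imitate the corresponding argument of Aganagic--Okounkov \cite{AOelliptic}: use the triangularity of the transition matrix $\fP$ with respect to the ordering of $X^\TT$ determined by $\sigma$ in order to reduce the claim to the single diagonal entry $\fP_{\bq,\bq}$, and then show that, after the substitution $\kappa_{\mathrm{vtx}}$, this entry times the leading term of $V(q,z,a)\big|_\bq$ equals $1$.

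\emph{Triangularity and reduction to the diagonal.} By Theorem \ref{Stab-formula} and Lemma \ref{Attr}, the restriction $\Stab_\sigma(\bq)\big|_\bp$, hence $\Stab^\sharp_\sigma(\bq)\big|_\bp$, vanishes unless $\bp\in\overline{\Attr_\sigma(\bq)}$; in the hypertoric case the explicit theta-monomial form of the stable envelope, together with this support constraint, shows that for $\bp\prec\bq$ the entry $\fP_{\bq,\bp}$, written in the mirror variables through $\kappa_{\mathrm{vtx}}$, is governed by a strictly positive monomial in the equivariant parameters and so tends to $0$ as $\alpha(\bq)\xrightarrow{\sigma}0$. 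Since every $V(q,z,a)\big|_\bp$ is a power series in $z$ with finite limits at $q\to 0,\infty$ (Lemma \ref{bounded}) whose $z^\beta$-coefficients are supported on the subcone $\Eff_\bp(X)$ (the remark following Proposition \ref{Prop-vertex}), the off-diagonal part $\sum_{\bp\prec\bq}\fP_{\bq,\bp}\,V\big|_\bp$ contributes only to the $o(\alpha(\bq))$ error, leaving $\fP_{\bq,\bq}\,V(q,z,a)\big|_\bq$.

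\emph{Evaluating the diagonal.} The $\beta=0$ part of $V(q,z,a)\big|_\bq$ is $1$, and in the limit $\alpha(\bq)\xrightarrow{\sigma}0$ — which under $\kappa_{\mathrm{vtx}}$ is exactly the limit $\zeta'(\bq')\xrightarrow{\theta'}0$ of Lemma \ref{uniqueness-X'}, since $\theta'=-\sigma$ — the remaining $z^\beta$ terms fall into the error. It remains to prove $\fP_{\bq,\bq}\to 1$. On the diagonal the substitution $\zeta_j(\bq)\mapsto\zeta_{\sharp,j}(\bq)^{-1}$ is invisible, so $\Stab^\sharp_\sigma(\bq)\big|_\bq=\prod_{i\in\bq^+}\vartheta(\hbar x_i|_\bq)\prod_{i\in\bq^-}\vartheta(x_i|_\bq)$; dividing by $\Theta(T^{1/2}_X|_\bq)=\prod_{i\in\bq}\vartheta(x_i|_\bq)$, multiplying by $(-\hbar^{1/2})^{|\bq^+|}\prod_{i\in\bq^+}x_i|_\bq$ and by the $\Phi$-ratio $\Phi((q-\hbar)T^{1/2}_X|_\bq)/\kappa_{\mathrm{vtx}}\big(\Phi((q-\hbar)\Pol'_{\bq'}|_{\bq'})\big)$, one uses the quasi-periodicity $\vartheta(qx)=-q^{-1/2}x^{-1}\vartheta(x)$ and the multiplicativity $\Phi(\textstyle\sum x_i)=\prod\phi(x_i)$ to telescope: the $\hbar$-shifts built into $\Pol'_{\bq'}$ relative to $T^{1/2}_X$ are precisely those consumed by this cancellation, and the residual monomial in $\hbar$, $q$ and the $x_i|_\bq$ is killed by the prefactor $(-\hbar^{1/2})^{|\bq^+|}\prod_{i\in\bq^+}x_i|_\bq$. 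Hence $\fP_{\bq,\bq}\to 1$ and $(\fP\cdot V)\big|_\bq\sim 1+o(\alpha(\bq))$.

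\emph{Main obstacle.} The hard part is not a single estimate but the bookkeeping of conventions. One must (i) keep $\kappa_{\Stab}$ and $\kappa_{\mathrm{vtx}}$ separate, as they differ by an inversion of the K\"ahler variable and an extra $q$-shift of $\hbar$, which rearranges the $\hbar$-powers inside every theta argument; (ii) check that the shifts $\hbar^{-\sum_{i\in\bq^+}C_{ij}}$ appearing in $\Stab^\sharp_\sigma(\bq)$ match, after $\kappa_{\mathrm{vtx}}$, exactly the passage from the global polarization $T^{1/2}_X$ on $X$ to the \emph{opposite} localized polarization $\Pol'_{\bq'}$ on $X'$ (Section \ref{oppo}); and (iii) verify that the $z$- and $q$-dependent prefactors relating $\widetilde V$ to $V$ do not disturb the leading term, which is where the convergence region of Remark \ref{limit-point}, carried over to the equivariant parameters by Appendix \ref{limit-point-a}, and the boundedness of Lemma \ref{bounded} enter.
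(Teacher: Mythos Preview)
Your proposal has the right overall shape (triangularity plus diagonal computation), but both halves contain genuine errors that the paper's proof handles quite differently.

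\textbf{Diagonal term.} You assert that the $\beta\neq 0$ terms of $V(q,z,a)\big|_\bq$ ``fall into the error'' as $\alpha(\bq)\xrightarrow{\sigma}0$, and that $\fP_{\bq,\bq}\to 1$. Neither is true. The limit $\alpha(\bq)\xrightarrow{\sigma}0$ is a limit in the \emph{equivariant} parameters of $X$; the K\"ahler variables $z$ (hence all $z^\beta$) stay fixed. Proposition \ref{limit-V} computes this limit explicitly: $V\big|_\bq$ tends to the nontrivial $z$-function $\kappa_{\mathrm{vtx}}\big(\Phi((q-\hbar)\Pol'_{\bq'}|_{\bq'})\big)$, obtained by summing a $q$-binomial series in $\zeta_\sharp(\bq)$. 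This is precisely cancelled by the factor $1/\kappa_{\mathrm{vtx}}\big(\Phi((q-\hbar)\Pol'_{\bq'}|_{\bq'})\big)$ built into $\fP_{\bq,\bq}$; the remaining $\phi$-ratios in $\fP_{\bq,\bq}$ then go to $1$. So $\fP_{\bq,\bq}\cdot V\big|_\bq\to 1$, but not for the reason you give: the two factors separately have nontrivial, mutually inverse $z$-limits. Your confusion seems to be that under $\kappa_{\mathrm{vtx}}$ the $\alpha$-limit becomes a $\zeta'$-limit, but $z$ becomes $a'$, not $\zeta'$.

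\textbf{Off-diagonal terms.} You claim $\fP_{\bq,\bp}$ for $\bp\neq\bq$ is ``governed by a strictly positive monomial in the equivariant parameters'' and hence tends to $0$. The paper's analysis shows this is not the case: along a $q$-geometric progression $\alpha_i(\bq)=w_i q^{\pm N}$, the contribution $\Cont_\bp$ behaves like $f(q,\hbar)^N\cdot g(q,\hbar,\zeta_\sharp)\cdot\big(\prod_{j\in\cA_\bq\cap\bp^+}\zeta_{\sharp,j}\prod_{j\in\cA_\bq\cap\bp^-}\zeta_{\sharp,j}^{-1}\big)^N$, i.e.\ the dominant factor is a monomial in the \emph{K\"ahler} parameters $\zeta_\sharp$, not the equivariant ones. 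This goes to $0$ only for $\zeta_\sharp$ in a suitable open set $U(\bq)$. To pass from ``limit $=1$ along $q$-progressions for $\zeta_\sharp\in U(\bq)$'' to the actual asymptotic statement, the paper invokes the holomorphicity of $(\fP\cdot V)|_\bq$ in a punctured neighborhood (Theorem 5 of \cite{AOelliptic}), Riemann extension, and analytic continuation in $\zeta_\sharp$. This step is essential and absent from your sketch. (Also, Lemma \ref{bounded} concerns the $q\to 0,\infty$ limits, not the $a$-limit; the relevant input is Proposition \ref{limit-V}.)
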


\begin{proof}
We first consider the asymptotic behavior of  the function along a generic \emph{$q$-geometric progression} of the form
$$
\alpha_i (\bq) = w_i q^{\pm N}, \qquad i\in \bq^\pm,  \qquad N\to \infty .
$$
Consider $\kappa_{\mathrm{vtx}} \Big( \Phi ((q-\hbar) \Pol'_{\bq'} |_{\bq'} ) \Big) \cdot (\fP \cdot V(q,z,a) ) \big|_\bq$, which is
$$
\sum_{\bp \in X^\TT} \Cont_\bp := (-\hbar^{1/2})^{|\bq^+|}  \prod_{i\in \bq^+} x_i |_\bq \cdot \sum_{\bp \in X^\TT}   \frac{ \Stab^\sharp_{\sigma} (\bq) |_\bp }{\Theta (T_X^{1/2} |_\bp ) } \cdot \Phi ((q-\hbar) T^{1/2}_X |_\bp )  \cdot V(q,z,a) \big|_\bp  .
$$
We claim that along any generic $q$-geometric progression of the above form, the summation over $\bp$ is \emph{dominated by the diagonal term}. Let's analyze the contributions from each fixed point $\bp$ in more details.

\textbf{Case 1}: $\bp \neq \bq$.

Let $\bp \in X^\TT$ be such that $\bq \neq \bp$. It's clear that the matrix $\fP_{\bq, \bp}$ vanishes unless $\bq\in \overline{\Attr_\sigma (\bp)}$.  The contribution from $\bp$, viewed as a function in terms of $a_i$'s and $z_i$'s, is a product of factors of the following three types (using the identity $\theta (x) = x^{1/2} \phi (qx) \phi (x^{-1})$):
\begin{enumerate}[(i)]
	
\setlength{\parskip}{1ex}
	
\item $x_i^{\pm 1} |_\bp$ or $x_i^{\pm 1} |_\bq$, $1\leq i\leq n$;

\item $\dfrac{\phi (u x_i^{\pm 1} |_\bp) }{\phi (v x_i^{\pm 1} |_\bp)}$, $1 \leq i\leq n$, where $u$ and $v$ are some monomials of $q$ and $\hbar$; 

\item $\dfrac{\phi (u \zeta_{\sharp, j} (\bq)^{\mp 1} x_j^{\pm 1} |_\bp) }{ \phi (x_j^{\pm 1} |_\bp) \phi (v \zeta_{\sharp, j} (\bq)^{\mp 1} )}$, $j\not\in \bq$, where $u$ and $v$ are some monomials $q$ and $\hbar$, and $x_j |_\bp \neq 1$;

\item $V(q, z, a) |_\bp$.

\end{enumerate}

We see that along a generic $q$-geometric progression $\{\alpha_i (\bq) = w_i q^{\pm N} \}$, as $N\to \infty$, factors of type (i) and (ii), either have finite limits, or have asymptotes of the form $\text{const}\cdot (u/v)^N$. Factor (iv) admits a finite limit by Proposition \ref{limit-V}. However, factors of type (iii) have the asymptotic form
$( u \zeta_{\sharp, j} (\bq)^{\mp 1} )^N$. Therefore, the contribution from $\bp$ is asymptotically of the form
$$
\Cont_\bp \sim  f(q, \hbar)^N \cdot g(q, \hbar, \zeta_\sharp) \cdot \Big( \prod_{j\in \cA_\bq \cap \bp^+ } \zeta_{\sharp, j} (\bp) \prod_{j \in \cA_\bq \cap \bp^-} \zeta^{-1}_{\sharp, j} (\bq) \Big)^N
$$
where $f$ is a monomial in $q$ and $\hbar$, only depending on $q$ and $\hbar$, and independent of the initial point $w_i$, and $g$ is a function depending only on $q$, $\hbar$ and $\zeta_\sharp$. Now as $\bp\neq \bq$, we always have $\cA_\bq \cap \bp\neq \emptyset$, and one can uniformly choose $\zeta_{\sharp, j} (\bp)$ sufficiently small (or large, depending on $j\in \cA_\bq \cap \bp^\pm$), such that the asymptote above goes to $0$, as $N\to \infty$.

In other words, for $\bq \neq \bp$, locally in some open subset $U(\bq)$ of the domain of $\zeta_\sharp$, the limit of $\Cont_\bp (\fP \cdot V(q,z,a) ) \big|_\bq$ along any generic $q$-geometric progression is $0$. Note that as $\bq\in \overline{\Attr_\sigma (\bp)}$,  $\cA_\bq \cap \bp^\pm = \cA_\bq^\pm \cap \bp^\pm$ by Lemma \ref{Attr}, and one can always take the open subset $U(\bq)$ such that it does not depend on $\bp$.

\textbf{Case 2}: $\bp = \bq$.

We can compute the diagonal contribution explicitly. Recall that $\Stab^\sharp_\sigma (\bq) |_\bq = (-1)^{|\bq^+|} \Theta (N_\bq^-)$. Hence
\ben
\Cont_\bq &=& (-1)^{|\bq^+|} (-\hbar^{1/2})^{|\bq^+|}  \prod_{i\in \bq^+} x_i |_\bq \cdot \frac{ \Theta (N_\bq^-) }{\Theta (T_X^{1/2} |_\bq ) } \cdot \Phi ((q-\hbar) T^{1/2}_X |_\bq )  \cdot V(q,z,a) \big|_\bq \\
&=& (-\hbar^{1/2})^{|\bq^+|}  \prod_{i\in \bq^+} x_i  \prod_{i\in \bq^+} \frac{\theta (\hbar x_i |_\bq)}{\theta (x_i |_\bq)} \prod_{i\in \bq} \frac{\phi (q x_i |_\bq)}{\phi (\hbar x_i |_\bq)} \cdot V(q, z, a) |_\bq \\
&=& \prod_{i\in \bq^+} \frac{\phi (q \hbar^{-1} x_i^{-1} |_\bq) }{\phi (x_i^{-1} |_\bq)} \prod_{i\in \bq^-} \frac{\phi (q x_i |_\bq)}{\phi (\hbar x_i |_\bq)} \cdot V(q, z, a) |_\bq.
\een
As $\alpha(\bq) \xrightarrow{\sigma} 0$, by Proposition \ref{limit-V}, it has the limit $\lim_{\alpha(\bq) \xrightarrow{\sigma} 0} V(q,z,a) |_\bq = \kappa_{\mathrm{vtx}} \Big( \Phi ((q-\hbar) \Pol'_{\bq'} |_{\bq'} ) \Big)$.

Now let's go back to the lemma, and consider $\sum_{\bp\in X^\TT} \Cont_\bq$. We will apply the holomorphicity results from Aganagic--Okounkov \cite{AOelliptic}. By Theorem 5 of \cite{AOelliptic}, the function $( \fP \cdot V(q, z, a) ) |_\bq $ is holomorphic with respect to $a_i$'s in a \emph{punctured} neighborhood of the limit point $\alpha (\bp) \xrightarrow{\sigma} 0$. On the other hand, within the region $\zeta_\sharp \in U(\bq)$, comibing Case 1 and 2, we see that it has a finite limit $\kappa_{\mathrm{vtx}} \Big( \Phi ((q-\hbar) \Pol'_{\bq'} |_{\bq'} ) \Big)$ along any $q$-geometric progression. The lemma then follows from Riemann extension theorem and analytic continuation.
\end{proof}

\begin{proof}[Proof of Theorem \ref{main-theorem}]

To prove the theorem, it suffices to check that $e^{\sum_{i=1}^n \frac{\ln z'_{\epsilon, i} (\bp') \ln x'_i |_{\bp'}}{\ln q} } \cdot (\fP \cdot V(q,z,a) ) \big|_\bq$ satisfies the uniqueness criteria for $\widetilde V'$ in Lemma \ref{uniqueness-X'}. Now Criterion (i) and (ii) there are respectively checked in Lemma \ref{Criterion-1} and \ref{Criterion-2}. The theorem follows.
\end{proof}

\begin{Remark}
It is interesting to ask how the results can be generalized to the orbifold case, i.e. to hypertoric DM stacks, introduced in \cite{JT-1, JT-2}.
In that case, the unimodular assumption does not hold any more, which implies that entries of the matrix $\iota$ are no longer restricted to $\pm 1$ and $0$. 
There will be extra increase of orders for the $q$-difference equations and quantum $K$-theory relations.
We expect there are some new nontrivial phenomenon happening in the orbifold theory, which deserves future exploration. 
\end{Remark}


\section{Relationship to Givental's quantum $K$-theory}

\subsection{$K$-theoretic Gromov--Witten theory and $J$-function}

As in the usual Gromov--Witten theory, the $K$-theoretic analogue, introduced by Givental \cite{Giv-WDVV} and Lee \cite{Lee}, considers the moduli space of stable maps $\overline\cM_{0, N} (X, \beta)$, parameterizing genus-$0$ stable maps into $X$. The usual GW perfect obstruction theory defines a virtual structure sheaf $\cO_\vir$. Most of the properties of cohomological GW theory can be generalized to $K$-theory, although in an essentially nontrivial way.

In \cite{Giv1}, Givental introduced another variant of $K$-theoretic GW theory, called the \emph{permutation-equivariant} quantum $K$-theory. The idea is to consider the invariants not merely as numbers, but as $S_N$-modules, keeping track on the permutations of marked points. The permutation-equivariant theory turns out to behave much better than the ordinary version of $K$-theory.

Let $X$ be a quasiprojective variety. Let $\Lambda$ be a $\lambda$-algebra \footnote{The Adams operation of $\Lambda$ naturally includes the usual $\Psi^k$ operators on symmetric functions, and also on the K\"ahler parameters $\Psi^k (Q_i) = Q_i^k$.} over $\QQ$, which contains the ring of symmetric functions on a certain number of variables, and the K\"ahler parameters $Q_i$. Let $\bt(q)$ be a Laurent polynomial in $q$ with coefficients in $K(X) \otimes \Lambda$. The moduli space of stable maps $\overline\cM_{0, N} (X, \beta)$ admits an action of the group $S_N$, permuting the marked points. The virtual structure sheaf $\cO_\vir$ is equivariant under the $S_N$-action, and hence defines a $K$-theory class on the quotient.

The correlation functions are defined as
$$
\left\langle \bt (L), \cdots, \bt (L) \right\rangle_{0, N, \beta}^{S_N} := \chi \Big( \left[ \overline\cM_{0, N} (X, \beta) / S_N \right] , \cO_\vir \otimes \prod_{i=1}^N \bt (L_i) \Big),
$$
where $L_i$ is the tautological cotangent line bundle at the $i$-th marked point.

The genus-$0$ invariants are encoded in the \emph{big $J$-function}, defined as
$$
J (\bt(q), Q) := 1-q + \bt(q) + \sum_{i, N\geq 2, \beta} \phi_i \Big\langle \bt (L),  \cdots, \bt(L) , \frac{\phi^i}{1 - qL} \Big\rangle_{0, N+1}^{S_N}  Q^\beta,
$$
where $S_N$ only acts on the first $N$ points, $\{\phi_i\}$ is a basis of $K(X)$ and $\{\phi^i\}$ is the dual basis with respect to the Mukai pairing.

The language of the loop space is introduced to describe the range of the big $J$-function. Consider the space $\cK := K(X)\otimes \Lambda (q)$, consisting of rational functions in $q$ with coefficients in $K(X) \otimes \Lambda$. $\cK$ admits a symplectic form $\Omega (f, g) := \left( \Res_{q = 0} + \Res_{q = \infty} \right) (f (q), g (q^{-1}) ) \frac{dq}{q}$, and a decomposition into  Lagrangian subspaces $\cK = \cK_+ \oplus \cK_- = T^* \cK_+$. Here $\cK_+$ is the subspace $K(X)\otimes \Lambda [q, q^{-1}]$, and $\cK_-$ is the subspaces of \emph{reduced rational functions}, i.e., rational functions in $q$, regular as $q \to 0$, and tends to $0$ as $q\to \infty$. The big $J$-function can be viewed naturally as the graph of a function from $\cK_+$ to $\cK_-$, with $\bt(q) \in \cK_+$, and $1-q$ the \emph{dilaton shift} of the origin. When $X$ admits the action by a torus $\TT$, as before, everything can be made $\TT$-equivariantly.

The aim of this subsection, is to prove the vertex functions $V^{(1)} (q^{-1}, z)$, defined in previous sections for hypertoric varieties, multiplied by $(1-q)$, represents a value of the big $J$-function, up to a certain $q$-shift of K\"ahler parameters. To show that, we will apply a criteria by Givental which characterize the range of a big $J$-function.

Let $X$ be a GKM variety, with the torus action by $\TT$. We take the $\lambda$-algebra to be
$$
\Lambda := K_\TT (\pt) [[Q^{\Eff (X)}]],
$$
and let $\Lambda_+$ to be the ideal generated by $1 - a_i^{\pm 1}$, $1 - \hbar^{\pm 1}$ and $Q$. We assume that all coefficients $\bt_k$ of $\bt (q)$ are taken in $K_\TT (X) \otimes \Lambda_+$.

\begin{Remark}
When $\Lambda$ does not contain the ring of symmetric functions, following Example 4 of \cite{Giv1}, we will refer to the permutation-equivariant quantum $K$-theory as \emph{symmetrized} quantum $K$-theory. The full permutation-equivariant invariants, when $\Lambda$ includes the ring of symmetric functions, actually contain all information about the pushforward of $\cO_\vir$ along the projection $\left[ \overline{\cM}_{0,N}(X, \beta) / S_N \right] \to B S_N$.
\end{Remark}

Let $\{ f^{({\bf p})} \in  \Lambda (q) \mid {\bf p} \in X^\TT \}$ be a set of elements.
 For ${\bf p, q} \in X^\TT$ connected by a 1-dimensional $\TT$-invariant curve $C$, we denote by $\lambda_{\bf p,q}$ the $\TT$-character $T_{\bf p} C$. The proof of the following proposition is the same as in \cite{Giv2}.

\begin{Proposition} \label{Givental-cri}
Suppose that $\{ f^{({\bf p})} \in \Lambda (q) \mid {\bf p} \in X^\TT \}$ satiesfy the following two criteria.
\begin{enumerate}[(i)]

\item For each ${\bf p} \in X^\TT$, considered as a meromorphic function in $q$ with only poles at roots of unity, $f^{({\bf p})}$ represents a value of the big $J$-function $J_{\pt}$ of a point target space.

\item  Outside $q = 0, \infty$ and roots of unity, $f^{({\bf p})}$ may have poles only at $q = \lambda_{\bf p,q}^{1/m}$, $m = 1, 2, \cdots$, with residues
$$
\Res\limits_{ q = \lambda_{\bf p,q}^{ 1/m} } f^{({\bf p})} (q) \frac{dq}{q} = - \frac{ Q^{m [C]}}{m E_{\bf p,q} (m)} \cdot  f^{( {\bf q} )} ( \lambda_{\bf p, q}^{1/m} ),
$$
where $[C]$ is the curve class of the curve $C$, and
$$
E_{\bf p, q} (m) = \bigwedge^\bullet \left( T_\varphi \overline\cM_{0,2} (X, m[C])  - T_{\bf p} X \right)^\vee
$$
where $\varphi: \PP^1 \to C \subset X$ is the deg-$m$ covering map over $C$, ramified at $0$ and $\infty$.

\end{enumerate}
Then there exists $\bt (q) \in K_\TT (X) [q, q^{-1}]$, such that $f^{({\bf p})} = \left. J_X (\bt (q) ) \right|_{\bf p}$, for each ${\bf p} \in X^\TT$. In other words, $f^{({\bf p})}$'s represents a value of the permutation-equivariant big $J$-function $J_X$ of $X$.
\end{Proposition}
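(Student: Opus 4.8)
The plan is to run Givental's fixed‑point reconstruction argument, adapted to the GKM situation, exactly as in \cite{Giv2}; the Proposition is the ``reconstruction'' (sufficiency) half of Givental's characterization of the overruled Lagrangian cone $\cL_X\subset\cK$, and the ``necessity'' half — that every value of $J_X$ satisfies (i) and (ii) — will be established first and reused. For the latter, recall the shape of the $\TT$‑fixed loci of $\overline{\cM}_{0,N}(X,\beta)$: since $X^\TT$ is finite and the $1$‑dimensional $\TT$‑orbits are the invariant curves $C$ joining pairs of fixed points, these loci are indexed by decorated trees whose vertices carry a fixed point $\bp$ together with the genus‑$0$ stable map contracted over $\bp$ and the legs sitting over $\bp$, and whose edges carry an invariant curve $C$ with a cover multiplicity $m$. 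Applying virtual localization to $J_X(\bt(q))\big|_\bp$ and organizing the graph sum by the vertex over $\bp$, the vertex factor is precisely a value of the point big $J$‑function $J_{\pt}$ in the variable $q$ (the $\frac{\phi^i}{1-qL}$ insertion producing the distinguished output leg), while each incident edge contributes a factor with a simple pole at $q=\lambda_{\bp,\bq}^{1/m}$ whose residue is $-\,Q^{m[C]}/(m\,E_{\bp,\bq}(m))$ times the restriction at the neighbouring vertex. This is precisely (i)+(ii).

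For the reconstruction, I would argue by induction on the $Q$‑adic order along $\Eff(X)$. Given $\{f^{(\bp)}\}$ satisfying (i) and (ii), first extract the candidate input: set $\bt(q):=[f]_+-(1-q)$ fixed‑point‑wise, where $[\,\cdot\,]_+$ is the Laurent‑polynomial part in $q$. Condition (i) guarantees each $[f^{(\bp)}]_+$ is a Laurent polynomial with coefficients in $\Lambda$, and condition (ii) imposes along every invariant curve exactly the GKM congruence needed for the $[f^{(\bp)}]_+$ to glue to a genuine class $\bt(q)\in K_\TT(X)\otimes\Lambda[q,q^{-1}]$ (here is where the GKM structure of $X$ is used). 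By the dilaton‑shift convention the $Q$‑positive corrections in $J_X(\bt(q))$ lie in $\cK_-$, so $[J_X(\bt(q))]_+=1-q+\bt(q)$, i.e. $f$ and $J_X(\bt(q))$ have the same $\cK_+$‑part. Now put $g:=f-J_X(\bt(q))$, which satisfies the homogenized versions of (i), (ii) and has $[g]_+=0$. Inductively assume $g\equiv 0$ modulo terms of $Q$‑order $<\beta$. Then $g_\beta^{(\bp)}$ is a reduced rational function of $q$; its principal parts at $q=\lambda_{\bp,\bq}^{1/m}$ are, by (ii) applied to $f$ and to $J_X(\bt(q))$ and subtracted, expressed through $g$ at strictly lower $Q$‑order, hence vanish; and its behaviour at roots of unity is controlled by (i) in the same way, leaving a reduced rational function with no poles at all, which is therefore $0$. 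The base case $\beta=0$ follows directly from (i) and the definition of $J_X$. This yields $f=J_X(\bt(q))$ fixed‑point‑wise, as required.

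The step I expect to be the main obstacle is the double bookkeeping in the reconstruction. First, one must verify that condition (ii), carrying the specific normalization $E_{\bp,\bq}(m)=\bigwedge^\bullet\!\big(T_\varphi\overline{\cM}_{0,2}(X,m[C])-T_\bp X\big)^\vee$, is literally the compatibility needed for $\{f^{(\bp)}\}$ to descend from $K_\TT(X)\otimes\Lambda(q)$: this amounts to matching the edge factor against the $\TT$‑character $\lambda_{\bp,\bq}=T_\bp C$, tracking the balancing of the two ramification points of $\varphi\colon\PP^1\to C$, and checking that the node‑smoothing and automorphism contributions on the edge are exactly absorbed into $E_{\bp,\bq}(m)$. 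Second, one must confirm the point‑target reduction proper — that a contracted genus‑$0$ component at $\bp$ contributes only the universal series $J_{\pt}$ in $q$, with the normal‑bundle Euler‑class factors at $\bp$ reorganizing so that no residual $X$‑dependence is left in the vertex term. Both points are local on $C\cong\PP^1$ with its two $\TT$‑fixed points, and both are handled line‑by‑line as in \cite{Giv2}; granting them, the localization computation and the inductive reconstruction above go through.
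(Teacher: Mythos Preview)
Your proposal is correct and follows exactly the approach the paper intends: the paper gives no independent proof of this proposition but simply states that ``the proof of the following proposition is the same as in \cite{Giv2}.'' What you have written is a faithful outline of Givental's fixed-point localization and $Q$-adic reconstruction argument from that reference, with the necessity and sufficiency halves spelled out in more detail than the paper provides.
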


\begin{Theorem} \label{V}
Let $X$ be a hypertoric variety, and $z_\sharp$ be the $q$-shifted K\"ahler parameters defined as $z_{\sharp, i} := z_i \cdot (-\hbar^{-1/2})$. The vertex function
$$
(1-q) V^{(\bone)} (q^{-1} , z) \big|_{z_\sharp = Q}
$$
represents a value $J(t_0, Q)$ of the big $J$-function of $X$, for some $t_0 \in K_\TT (X) \otimes \Lambda$.
\end{Theorem}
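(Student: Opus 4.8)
The plan is to verify Givental's criterion, Proposition~\ref{Givental-cri}, for the family of functions
\[
f^{(\bp)}(q) \;:=\; (1-q)\,V^{(\bone)}\!\big(q^{-1},z\big)\big|_{\bp}\,,\qquad z_\sharp = Q,\qquad \bp \in X^{\TT}.
\]
First I would make each $f^{(\bp)}$ completely explicit. By Proposition~\ref{Prop-vertex}, and using that only curve classes $\beta\in\Eff_\bp(X)$ contribute to $V^{(\bone)}|_\bp$, one writes $f^{(\bp)}$ as $(1-q)$ times a multiple $q$-hypergeometric series in the variables $Q^\beta$ whose general term is a product, over the $\TT$-invariant curve directions at $\bp$, of ratios of \emph{finite} $q$-Pochhammer factors $\prod_l(1-q^l w)$, where $w$ runs over the tangent characters $T_\bp C = x_i|_\bp$ or $\hbar^{-1}x_i^{-1}|_\bp$ described in Lemma~\ref{bridge}. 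The specialization $z_\sharp=Q$ is exactly what absorbs the prefactors $(-q^{1/2}\hbar^{-1/2})^d$ and $q^{-\frac12(\beta,\det\cT^{1/2})}$ built into the factors $\{x_i|_\bp\}_{D_i}$, while $q\mapsto q^{-1}$ flips $D_i\mapsto -D_i$; organizing this bookkeeping cleanly is the first task.

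Next I would check criterion~(i). Regarding $a_i,\hbar$ as generic elements of $\Lambda$ and $q$ as the variable, the explicit product form immediately shows that, away from $q=0,\infty$ and roots of unity, every $Q^\beta$-coefficient of $f^{(\bp)}$ has only simple poles, located at $q^m=w^{-1}$ for the tangent characters $w$ above. That $f^{(\bp)}$ represents a value of the big $J$-function of a point follows from the description of the range of $J_{\pt}$ in \cite{Giv1,Giv2}: the building blocks $\{a\}_d$ are precisely the $q$-hypergeometric/$q$-Gamma factors known to lie on that cone, and the $q\to 0$, $q\to\infty$ asymptotics $\{a\}_d\sim\mathrm{const}\cdot q^{\mp|d|/2}$ recorded in the proof of Lemma~\ref{bounded}, together with the $(1-q)$ prefactor supplying the dilaton shift, guarantee the required regularity at $q=0$ and decay at $q=\infty$ of the reduced-rational-function part. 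This is the $q$-hypergeometric input going back to \cite{Giv1,IMT}.

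The heart of the argument is criterion~(ii). For $\bp,\bq\in X^{\TT}$ joined by a $\TT$-invariant curve $C$ with tangent character $\lambda_{\bp,\bq}=T_\bp C$, one must establish
\[
\Res_{q=\lambda_{\bp,\bq}^{1/m}}\, f^{(\bp)}(q)\,\frac{dq}{q}
\;=\; -\,\frac{Q^{m[C]}}{m\,E_{\bp,\bq}(m)}\; f^{(\bq)}\!\big(\lambda_{\bp,\bq}^{1/m}\big),\qquad m\ge 1.
\]
The pole at $q=\lambda_{\bp,\bq}^{1/m}$ is produced only by the factor $(1-q^m\lambda_{\bp,\bq})^{-1}$ coming from the $C$-direction (here one uses the identification of $T_\bp C$ and of $\deg L_i|_C$ from Lemma~\ref{bridge}), so taking the residue forces the degree to split as $\beta=m[C]+\beta'$ with $\beta'\in\Eff_\bq(X)$. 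The remaining sum over $\beta'$ reassembles into $f^{(\bq)}$ evaluated at $q=\lambda_{\bp,\bq}^{1/m}$ by the Pochhammer shift identity $(x)_{d+e}=(x)_d\,(q^d x)_e$ together with the transition rule $x_i|_\bp=x_i|_\bq\,(T_\bp C)^{\deg L_i|_C}$. What is left is a scalar factor, which one matches with $m\,E_{\bp,\bq}(m)=m\bigwedge^\bullet\!\big(T_\varphi\overline{\cM}_{0,2}(X,m[C])-T_\bp X\big)^\vee$ by using that a $\TT$-neighbourhood of $C$ in $X$ is modelled on $T^*\PP^1$ times its normal directions, so that both sides factor as products over the normal characters at $\bp$. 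This residue computation — the Pochhammer manipulation and the identification of $E_{\bp,\bq}(m)$ — is the step I expect to be the main obstacle.

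Finally, Proposition~\ref{Givental-cri} then produces an input $\bt(q)$ with $J_X(\bt(q))|_\bp=f^{(\bp)}$ for every fixed point $\bp$; since $X$ is a GKM variety a class is determined by its restrictions to $X^{\TT}$, so $(1-q)V^{(\bone)}(q^{-1},z)|_{z_\sharp=Q}=J(t_0,Q)$ globally, with $t_0$ read off from the $\cK_+$-component of the $f^{(\bp)}$ using the asymptotics of Lemma~\ref{bounded} and Theorem~\ref{rig}. An entirely parallel argument with $V^{(\tau)}$ in place of $V^{(\bone)}$ gives the descendent version stated in Corollary~\ref{V-tau}.
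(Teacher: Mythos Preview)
Your proposal is correct and follows essentially the same route as the paper: verify Proposition~\ref{Givental-cri} by writing $V^{(\bone)}|_\bp$ explicitly as $\sum_\beta z_\sharp^\beta\prod_i (\hbar x_i|_\bp)_{D_i}/(qx_i|_\bp)_{D_i}$, invoke Givental's characterization of $J_{\pt}$ for criterion~(i), and for criterion~(ii) compute the residue at $q=\lambda_{\bp,\bq}^{1/m}$ using the Pochhammer shift identity together with $x_l|_\bp = x_l|_\bq\cdot\lambda^{\deg L_l|_C}$, then match the leftover factor with $E_{\bp,\bq}(m)^{-1}$. The paper also isolates and proves the combinatorial point you pass over quickly, namely that $\beta-m[C]\in\Eff_\bq(X)$ whenever $\beta\in\Eff_\bp(X)$ with $D_i\ge m$; you should make this explicit, since it is what guarantees the residual sum reassembles into $f^{(\bq)}$.

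One side remark: your last sentence about Corollary~\ref{V-tau} is not how the paper proceeds. Rather than rerunning the residue argument with a descendent insertion, the paper deduces the corollary from Theorem~\ref{V} via Givental's explicit reconstruction theorem (Theorem~2 of \cite{Giv8}), which says that if $\sum_\beta I_\beta Q^\beta$ lies on $\cL$ then so does $\sum_\beta I_\beta Q^\beta\,\tau(x_1 q^{D_1},\dots,x_n q^{D_n})$. This is both shorter and more robust than redoing the fixed-point recursion.
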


\begin{proof}
Take $\bp \in X^\TT$. Explicitly, one can compute that
$$
V^{(\bone)}(q,z) \big|_\bp  = \sum_{\beta \in \Eff(X)} z_\sharp^\beta  \prod_{i=1}^n \frac{ ( \hbar x_i |_\bp )_{D_i} }{ ( q x_i |_\bp )_{D_i} } ,
$$
where $D_i = \beta \cdot L_i$. By similar arguments as in Corollary 1 of \cite{Giv4}, we see that Criterion (i) holds. Indeed, it follows from Theorem in \cite{Giv2}, which describes the big $J$-function of a point, and Lemma in \cite{Giv4}, which gives a set of difference operators that preserves the range of the big $J$-function of a point.

We now check that $V^{(\bone)} (q , z)$ satiesfies Criterion (ii), with $q$ replaced by $q^{-1}$.  Let  $\bq$ be another vertex in the hyperplane arrangement, such that $\bp = (\bq \backslash \{j\} ) \sqcup \{i\}$ and $\bq = (\bp \backslash \{i\} ) \sqcup \{j\}$, for some $1\leq i\neq  j\leq n$. Let $C$ be the $\TT$-invariant curve connecting $\bp$ and $\bq$. Then we have two cases as in Lemma \ref{bridge} (ii). For simplicity, we assume that $i\in \cA_\bq^+$; the other case $i\in \cA_\bq^-$ is similar. We have $\lambda:= \lambda_{\bp, \bq} = x_i |_\bp$.

Let $m\geq 0$ be an integer. Consider the coefficent of $Q^\beta$ in $V^{(\bone)} (q,z) |_\bp$. Its residue at $q = \lambda^{-1/m}$ vanishes unless $D_i \geq m$. In that case, for $\beta \in \Eff_\bp (X)$, we claim that the curve class $\beta - m[C] \in \Eff_\bq (X)$. In fact, for any $l\not\in \bp \cup \bq$, we know that in $\RR^d$ the vertices $\bp$ and $\bq$ lie on the same side of the hyperplane $H_l$, which implies that $\cA_\bp^\pm$ and $\cA_\bq^\pm$ only differ by the indices $i$ or $j$. On the other hand, the curve class $[C]$ is defined by the circuit $S_{\bp \bq} =  \bp \cup \bq$; in other words, it is of the form $[C] = e_i + \sum_{l\in \bq } \epsilon_l e_l$, where $\epsilon_l = \pm 1$, depending on $l \in S_{\bp \bq}^\pm$. Therefore, we have
$$
(\beta - m[C],  L_l) = \left\{ \begin{aligned}
& (\beta,  L_l ) , && \qquad l \in \cA_\bp^\pm \backslash \{j\} = \cA_\bq^\pm \backslash \{i\} \\
& D_i - m , && \qquad l = i .
\end{aligned} \right.
$$
We see that $\beta - m [ C] \in \Eff_\bq (X)$. The claim holds.

Direct computation shows that
$$
\Res_{q = \lambda^{-1/m}} \frac{ ( \hbar x_i |_\bp )_{D_i} }{ ( q x_i |_\bp )_{D_i} } \frac{dq}{q} = \frac{1}{m} \left. \frac{ ( \hbar x_i |_\bp )_{m} }{ ( q x_i |_\bp )_{m} } \right|_{q = \lambda^{-1/m}} \cdot \left. \frac{ ( \hbar x_i |_\bq )_{D_i - m} }{ ( q x_i |_\bq )_{D_i - m} } \right|_{q =  \lambda^{-1/m}} ,
$$
$$
\left. \frac{ ( \hbar x_l |_\bp )_{D_l} }{ ( q x_l |_\bp )_{D_l} } \right|_{q = \lambda^{-1/m}} = \left. \frac{ ( \hbar x_l |_\bq )_{D_l\mp m} }{ ( q x_l |_\bq )_{D_l \mp m} } \right|_{q = \lambda^{-1/m}} \cdot \left. \frac{ ( \hbar x_l |_\bp )_{\pm m} }{ ( q x_l |_\bp )_{\pm m} } \right|_{q = \lambda^{-1/m}}, \qquad l\in S_{\bp \bq}^\pm \backslash \{i\},
$$
and on the other hand,
$$
E_{\bp, \bq}^{-1} (m) =  \prod_{l\in S_{\bp \bq}^+} \left.  \frac{ ( \hbar x_l |_\bp )_{m} }{ ( q x_l |_\bp )_{m} } \right|_{q = \lambda^{-1/m}} \cdot \prod_{l\in S_{\bp \bq}^-} \left. \frac{ ( \hbar x_l |_\bp )_{-m} }{ ( q x_l |_\bp )_{-m} } \right|_{q = \lambda^{-1/m}}.
$$
We see that Criterion (ii) is satisfied. The theorem follows from Proposition \ref{Givental-cri}.
\end{proof}

\begin{Corollary} \label{V-tau}
Let $\tau$ be a Laurent polynomial in $q$ with coefficients in $K_\bT(X) \otimes \Lambda$. The descendent bare vertex function
$$
(1-q) V^{(\tau)} (q^{-1} , z) \big|_{z_\sharp = Q}
$$
lies in the range of big $J$-function of $X$.
\end{Corollary}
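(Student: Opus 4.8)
The plan is to reduce Corollary \ref{V-tau} to Theorem \ref{V} by showing that a descendent insertion $\tau$ acts on the non-descendent vertex as a finite-difference operator in the K\"ahler (Novikov) variables, and that the range of the permutation-equivariant big $J$-function of $X$ is stable under such operators.

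First I would record the descendent-to-operator dictionary. In the normalization used in the proof of Theorem \ref{V}, the localization formula of Proposition \ref{Prop-vertex} reads, for each fixed point $\bp$,
$$
V^{(\tau)}(q,z)\big|_\bp=\sum_{\beta\in\Eff(X)}z_\sharp^\beta\,\prod_{i=1}^n\frac{(\hbar x_i|_\bp)_{D_i}}{(q x_i|_\bp)_{D_i}}\cdot\tau\big(x_1|_\bp q^{D_1},\dots,x_n|_\bp q^{D_n}\big),\qquad D_i=(\beta,L_i).
$$
Since the shift operator $Z_i$ ($z_i\mapsto q z_i$) multiplies the $z_\sharp^\beta$-coefficient by $q^{D_i}$, and multiplication by the line bundle class $L_i$ commutes with every $Z_j$, this says precisely $V^{(\tau)}=\tau(L_1Z_1,\dots,L_nZ_n)\,V^{(\bone)}$, the $q$-dependence of $\tau$ riding along untouched; this is the same mechanism that drives the proof of Theorem \ref{PSZ-relations}. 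After the substitution $q\mapsto q^{-1}$ and the specialization $z_\sharp=Q$, the operator $\tau(L_1Z_1,\dots,L_nZ_n)$ becomes a Laurent-polynomial combination of the basic $q$-difference operators $L_iZ_i^{-1}$ acting on functions of the Novikov variables, with coefficients in $K_\TT(X)\otimes\Lambda$.

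The heart of the matter is then to know that these operators preserve the range of $J_X$. The structural route: by Propositions \ref{capping-eqn} and \ref{Psi} the capping operator intertwines $Z_i$ with $M_i(q^{-1},z)$, whose non-equivariant limit is quantum multiplication by $\widehat L_i(z)$; since Givental's overruled Lagrangian cone $\cL_X$ is ruled by tangent spaces that are modules over the quantum $K$-ring and closed under the associated $q$-difference structure, applying $\tau(L_1Z_1,\dots,L_nZ_n)$ to the point $J(t_0,Q)=(1-q)V^{(\bone)}(q^{-1},z)\big|_{z_\sharp=Q}$ furnished by Theorem \ref{V} again lands in $\cL_X$. A more self-contained alternative is to re-run the two criteria of Proposition \ref{Givental-cri} directly for $(1-q)V^{(\tau)}(q^{-1},z)|_\bp$: Criterion (i) over a point follows from the Givental difference-operator lemma already invoked for Theorem \ref{V}, now applied with the extra factor $\tau(x_i|_\bp q^{-D_i})$, which is itself obtained from the point vertex by admissible difference operators; for Criterion (ii) one repeats the residue computation of Theorem \ref{V}, using $x_j|_\bp=x_j|_\bq\,(T_\bp C)^{\deg L_j|_C}$ from Lemma \ref{bridge} to verify that at the pole $q=\lambda_{\bp,\bq}^{-1/m}$ the $\tau$-factor attached to $\bp$ and degree $\beta$ specializes to the $\tau$-factor attached to $\bq$ and degree $\beta-m[C]$, so that the residue identity reappears with $f^{(\bq)}$ replaced by the $\tau$-descendent vertex at $\bq$.

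I expect the main obstacle to be exactly this last compatibility: after the substitution $q\mapsto q^{-1}$ one must check that the $\tau$-factor splits along the factorization of the residue into a ``degree $m$ over $C$ at $\bp$'' piece (absorbed into $E_{\bp,\bq}(m)$) and a ``residual degree at $\bq$'' piece, without which Criterion (ii) of Proposition \ref{Givental-cri} would not close up; in the structural route the only delicate point is aligning the $(1-q)$ prefactor and the $q\mapsto q^{-1}$ convention with the precise class of difference operators under which $\cL_X$ is known to be invariant. Once either alignment is in place, the corollary follows at once from Theorem \ref{V}.
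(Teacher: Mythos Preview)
Your proposal is correct and rests on the same mechanism as the paper: the descendent vertex is obtained from the non-descendent one by multiplying the $Q^\beta$-coefficient by $\tau(x_1 q^{D_1},\dots,x_n q^{D_n})$, and this operation preserves the range of the big $J$-function. The paper's proof is simply a one-line citation of Givental's explicit reconstruction theorem (Theorem~2 of \cite{Giv8}), which asserts precisely this invariance of $\cL$; your ``alternative'' route of re-running the two criteria of Proposition~\ref{Givental-cri} with the extra $\tau$-factor amounts to reproving that reconstruction statement in the hypertoric case, and works as you describe.

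One small caution: your ``structural route'' via Propositions~\ref{capping-eqn} and~\ref{Psi} conflates the PSZ capping operator $\Psi$ with Givental's fundamental solution $S$. The $\cD_q$-invariance of the ruling spaces of $\cL_X$ (property~(iii) listed in Section~7.2) is a feature of Givental's formalism, established through $S$ and the operators $B_i$, not through $\Psi$ and $M_i$. So if you want to invoke the $\cD_q$-module structure directly rather than re-checking the criteria, the cleaner path is to appeal to that property (or equivalently to Givental's reconstruction theorem) without routing through the PSZ capping operator.
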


\begin{proof}
By the Theorem 2 of explicit reconstruction in \cite{Giv8}, given a point $\sum_\beta I_\beta Q^\beta$ in the rangle $\cL$ of big $J$-function, the point
$$
\sum_\beta I_\beta Q^\beta \tau (x_1 q^{D_1}, \cdots, x_n q^{D_n})
$$
also lies in $\cL$. The corollary then follows.
\end{proof}

\subsection{Quantum $K$-theory (in the sense of Givental)}

In Section \ref{QK}, we defined a quantum $K$-theory ring in terms of virtual counting of parameterized quasimaps from $\PP^1$ to $X$. On the other hand, it is standard to define quantum cohomology or $K$-theory ring \cite{Lee}, in terms of genus-zero stable maps into $X$. It is natural to ask whether these two versions of quantum $K$-theories coincide. The question is somehow complicated due to the lack of divisor axiom in $K$-theory, as opposed to the cohomological theory. We will see later that the PSZ quantum $K$-theory is essentially the same as the algebra generated by the $A_i$ operators, introduced in \cite{IMT}, but in general different from Givental's quantum $K$-theory.

Let's review the definition of Givental's quantum $K$-theory ring, which we denote by $\bullet$.  Let $X$, $\Lambda$ as in the previous subsection. In \cite{Giv7}, Givental introduced a genus-zero $K$-theoretic GW potential with \emph{mixed inputs}:
$$
\cF (\bx, \bt) = \sum_{\beta \in \Eff(X)} \sum_{M, N=0}^\infty \left\langle \bx (L) , \cdots, \bx (L); \bt (L) , \cdots, \bt (L) \right\rangle_{0, M+N, \beta}^{S_N}  \frac{Q^\beta}{M!},
$$
where $\bx (q)$, $\bt (q)$ are Laurent polynomials with coefficients in $K(X) \otimes \Lambda$; only $\bt$'s are considered as \emph{permutation-equivariant} inputs, and $\bx$'s are considered as \emph{ordinary} inputs.

The graph of the potential $\cF (\bx, \bt)$ (up to dilaton shift) therefore defines a \emph{mixed} $J$-function $J(\bx (q), \bt(q), Q)$. In particular, the permutation-equivariant $J$-function can be recovered by setting $\bx = 0$.

Let $\{\phi_\alpha\}$ be a basis of $K(X)$. We take constant Laurent polynomial as inputs: $\bx (q) = x = \sum_\alpha x^\alpha \phi_\alpha$, $\bt (q) = t \in K(X) \otimes \Lambda$. Given basis elements $\phi_\alpha, \phi_\beta, \phi_\gamma \in K (X)$, the \emph{quantum pairing} is defined as
$$
G (\phi_\alpha, \phi_\beta) := \frac{\partial^2}{\partial x^\alpha \partial x^\beta} \cF (x, t).
$$
The \emph{quantum product} is determined by the 3-point function of three basis elements $\phi_\alpha, \phi_\beta, \phi_\gamma$
$$
G (\phi_\alpha \bullet \phi_\beta, \phi_\gamma ) :=  \frac{\partial^3}{\partial x^\alpha \partial x^\beta \partial x^\gamma} \cF (x, t) .
$$
By the WDVV equation in \cite{Giv7}, the ring is equipped with a structure of Frobenius algebra, with pairing $G$, product $\bullet$ (both dependent on $x$ and $t$), and identity $\bone \in K_\bT (X)$. Similar as in the comological theory, one can define a quantum connection using the quantum product $\bullet$ and then the quantum $K$-ring structure can be packaged in the language of $\cD$-modules.

For given constant Laurent polynomials $x$ and $t$ as above, there is an operator $S (x, t, q)^{-1}: K(X) \otimes \Lambda \to \cK_-$, whose inverse is defined as
$$
S (x, t, q)^{-1} \phi := \phi + \sum_{i,  M, N, \beta} \phi_i \Big\langle \phi, x, \cdots, x, t, \cdots, t, \frac{\phi^i}{1 - qL} \Big\rangle_{0, M+N+2}^{S_N}  \frac{Q^\beta}{M!}.
$$
$S$ is a symplectomorphism, i.e., satisfying $S(q) = S^* (q^{-1})$. In particular, the $J$-function $J(x, t, Q) = (1-q) S (x, t, q)^{-1} \bone$. The image $S(x, t, q)^{-1} \cK_+$ is called the \emph{ruling space}, which satisfies the following properties \cite{Giv7, Giv8}:

\begin{enumerate}[(i)]
	
\setlength{\parskip}{1ex}
	
\item The range $\cL_{\mathrm{perm}}$ of permutation-equivariant big $J$-functions $\bt(q) \mapsto J(0, \bt (q), Q)$ of $X$ is swept by the images of $S(t,q)^{-1}$:
$$
\cL_{\mathrm{perm}} = \bigcup_{t \in K(X) \otimes \Lambda_+} (1-q) S(0, t, q)^{-1} \cK_+.
$$

\item For each fixed $t$, the tangent space of the Lagrangian cone $\cL_t$ for the ordinary big $J$-function $\bx (q) \mapsto J (\bx (q), t, Q)$ is $T_t := S(\bx (q), t, Q)^{-1} \cK_+$, and tangent to $\cL_t$ exactly along the subspace $(1-q) T_t$. In particular, $\cL_t$ is also swept by the union of those ruling spaces.

\item Let $Q_i$ be the K\"ahler parameter with respect to $\widetilde L_i$. Each ruling space admits a $\cD_q$-module structure under the $q$-difference operators $q^{Q_i \frac{\partial}{\partial Q_i}}$.

\end{enumerate}

The key observation of \cite{IMT} is that the operator $S(x, t, q)^{-1}$ serves simultaneously as the fundamental solution to a $q$-difference system with respect to $q^{Q_i \partial_{Q_i}}$, and the fundamental solution to a $q$-differential system with respect to the variables $\frac{\partial}{\partial x^\alpha}$. They take the following forms:
\begin{equation} \label{2-systems}
(1-q) \frac{\partial}{\partial x^\alpha} S(x, t, q)^{-1} = S(x, t, q)^{-1} \circ (\phi_\alpha \bullet - ), \qquad  L_i^{-1} q^{Q_i \frac{\partial}{\partial Q_i}} \circ S(x, t, q)^{-1} = S(x, t, q)^{-1} \circ B_i q^{Q_i \frac{\partial}{\partial Q_i}} ,
\end{equation}
where the first equation is by the definition of the quantum product $\bullet$, and the second is obtained from the $q$-difference module structure. Here $B_i \in \End K(X) \otimes \Lambda [q, q^{-1}]$ are uniquely characterized by the above equation. The two systems above are compatible to each other, in the sense that the quantum connection and $q$-difference operators commute. In particular, one can define the operators
$$
B_{i, \com} := B_i \big|_{q = 1} \in \End K(X) \otimes \Lambda.
$$
Now let $X$ be a hypertoric variety. Using the result on the vertex function and $J$-functions in the previous section, together with the  $q$-difference equations (\ref{q-diff-Z}), we obtain the following result.

\begin{Theorem} \label{GivQK-relation}
Let $X$ be a hypertoric variety, and $t_0 \in K_\TT (X) \otimes \Lambda$ be as in Theorem \ref{V}. We fix the insertions $x = 0$ and $t = t_0$.

\begin{enumerate}[1)]
	
		\setlength{\parskip}{1ex}
	
\item For any circuit $S = S^+\sqcup S^-$, and the corresponding curve class $\beta$, the identity class $\bone \in K_\bT (X)$ is annihilated by the following operator
$$
\prod_{i\in S^+} ( 1 - B_{i, \com}) \prod_{i\in S^-} (\hbar - B_{i, \com} ) - Q^\beta \prod_{i\in S^+} (\hbar - B_{i, \com} ) \prod_{i\in S^-} (1 - B_{i, \com}),
$$
where $Q^\beta := \prod_{i\in S^+} Q_i \prod_{i\in S^-} Q_i^{-1}$.

\item The Givental quantum $K$-theory ring of $X$ is generated by the classes $B_{i, \com} \bone$, $1\leq i\leq n$, up to the following relations: for any circuit $S = S^+\sqcup S^-$, and the corresponding curve class $\beta$
$$
\prod_{i\in S^+} ( 1 - B_{i, \com} \bone) \bullet \prod_{i\in S^-} (\hbar - B_{i, \com} \bone ) = Q^\beta \prod_{i\in S^+} (\hbar - B_{i, \com} \bone ) \bullet \prod_{i\in S^-} (1 - B_{i, \com} \bone ),
$$
where all the products are the quantum product $\bullet$.

\end{enumerate}

\end{Theorem}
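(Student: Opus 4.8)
The plan is to establish 1), as an operator identity on $K_\TT(X)\otimes\Lambda$, and then to deduce 2) from it exactly the way Theorem \ref{PSZ-relations} deduces the PSZ presentation from the $q$-difference equation, with the fundamental solution $S(0,t_0,q)^{-1}$ of \cite{IMT} playing the role that the capping operator $\Psi$ plays there. First I would upgrade Theorem \ref{V} from the single class $\bone$ to all descendents. By Lemma \ref{q-diff-operator}, applied after the substitution $q\mapsto q^{-1}$ and $z_\sharp=Q$, inserting a Chern root $x_i$ into the bare vertex is the same as applying the $q$-difference shift $q^{-Q_i\partial_{Q_i}}$; since $(1-q)V^{(\bone)}(q^{-1},z)\big|_{z_\sharp=Q}=J(t_0,Q)\in(1-q)S(0,t_0,q)^{-1}\cK_+$ lies in the ruling space of the point $t_0$, and the ruling space is stable under the operators $q^{Q_i\partial_{Q_i}}$ (property (iii) of the ruling spaces, consistent with Corollary \ref{V-tau}), it follows inductively that $V^{(\tau)}(q^{-1},z)\big|_{z_\sharp=Q}\in S(0,t_0,q)^{-1}\cK_+$ for every Laurent polynomial $\tau$ in the $x_i$ and in $q$. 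Hence there is a well-defined linear map $\tau\mapsto\nu(\tau)\in\cK_+$, with $\nu(\bone)=\bone$, such that $V^{(\tau)}(q^{-1},z)\big|_{z_\sharp=Q}=S(0,t_0,q)^{-1}\nu(\tau)$.

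Next I would run the $q$-difference equation through $\nu$. The second equation of (\ref{2-systems}) says precisely that conjugating $S(0,t_0,q)^{-1}$ by $q^{Q_i\partial_{Q_i}}$ produces $B_i$ up to the invertible twist by $L_i$; combining this with the previous paragraph, $\nu$ intertwines multiplication by $x_i$ with an operator $\mathcal B_i$ assembled from $B_i$, $L_i$ and $q^{\pm Q_i\partial_{Q_i}}$, so that $\nu\big(\tau(x_1,\dots,x_n)\big)=(\text{unit})\cdot\tau(\mathcal B_1,\dots,\mathcal B_n)\bone$. Fix a circuit $S=S^+\sqcup S^-$ with curve class $\beta$. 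By Theorem \ref{q-diff-eqn}(1), again after $q\mapsto q^{-1}$ and $z_\sharp=Q$, the operator $\prod_{i\in S^+}(1-Z_i)\prod_{i\in S^-}(1-\hbar Z_i)-z_\sharp^\beta\prod_{i\in S^+}(1-\hbar Z_i)\prod_{i\in S^-}(1-Z_i)$ annihilates $\widetilde V^{(\bone)}(q^{-1},z)\big|_\bp$ for every $\bp\in X^\TT$; since $\widetilde V$ and $V$ differ fixed-point-wise by nowhere-vanishing prefactors, this forces the corresponding descendent bare vertex $V^{(\tau_S)}(q^{-1},z)\big|_{z_\sharp=Q}$ (with $\tau_S$ the Chern-root polynomial underlying that operator) to vanish identically, whence $\nu(\tau_S)=0$ by invertibility of $S(0,t_0,q)^{-1}$. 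Expanding $\nu(\tau_S)$ as above, the operator obtained from $\tau_S$ by replacing $x_i$ with $\mathcal B_i$ and $z_\sharp^\beta$ with $Q^\beta$ kills $\bone$.

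Finally I would pass to $q=1$: there $B_i\to B_{i,\com}$, $q^{Q_i\partial_{Q_i}}\to\Id$, the units tend to explicit $\hbar$-monomials times multiplication by the $L_i$, and $z_\sharp^\beta\to Q^\beta$; absorbing these factors, and fixing all conventions by comparing with the classical presentation (\ref{K(X)}) at $Q=0$ (where $B_{i,\com}\bone$ specializes to $L_i^{-1}$), turns the operator identity of the preceding paragraph into exactly 1). The relation does not collapse in the limit because $B_i$, unlike the bare shift $q^{Q_i\partial_{Q_i}}$, retains the nontrivial value $B_{i,\com}$ at $q=1$. Part 2) then follows as in Theorem \ref{PSZ-relations}: by the first equation of (\ref{2-systems}), $S(0,t_0,q)^{-1}$ is simultaneously the fundamental solution of the quantum connection, so at $q=1$ the operator $B_{i,\com}$ commutes with every quantum multiplication $\phi_\alpha\bullet(-)$ and is therefore itself multiplication by $B_{i,\com}\bone$ in the Frobenius algebra $(K_\TT(X)\otimes\Lambda,\bullet)$; the identity from 1) descends to the stated $\bullet$-relation, and the classes $B_{i,\com}\bone$ generate the ring since they already generate $K_\TT(X)$ at $Q=0$ and $\Lambda$ is $Q$-adically complete.

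I expect the main difficulty to be precisely the first two steps — promoting Corollary \ref{V-tau} (which only gives membership in the cone) to membership in the single ruling space $S(0,t_0,q)^{-1}\cK_+$ attached to the specific $t_0$ of Theorem \ref{V}, so that $\nu$ is defined, together with a description of $\nu$ through the \cite{IMT} $q$-difference relation sharp enough to recognize the circuit relation on the other side; bundled with this is the bookkeeping of the $q\mapsto q^{-1}$ conjugation, the opposite-polarization shifts of Section \ref{oppo}, and the $\hbar$-twists, which must be carried through carefully to land on the precise normalization in the statement.
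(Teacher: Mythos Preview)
Your approach is correct and lands on the same argument as the paper, but you have overcomplicated the first step. The paper does not construct your map $\nu$ or worry about which ruling space the descendent vertices $V^{(\tau)}$ inhabit; it works purely at the operator level. Once one knows that $J(t_0,Q)=S(0,t_0,q)^{-1}\bone$ is annihilated by the $q$-difference operator $f\big(L_1^{-1}q^{Q_1\partial_{Q_1}},\dots,L_n^{-1}q^{Q_n\partial_{Q_n}}\big)$ coming from (\ref{q-diff-Z}) (this is Theorem~\ref{V} combined with the identification of $Z_i$ with $L_i^{-1}q^{Q_i\partial_{Q_i}}$ after stripping prefactors), the second equation of (\ref{2-systems}) yields directly the conjugation identity
\[
S\circ f\big(L_1^{-1}q^{Q_1\partial_{Q_1}},\dots,L_n^{-1}q^{Q_n\partial_{Q_n}}\big)\circ S^{-1}
= f\big(B_1\,q^{Q_1\partial_{Q_1}},\dots,B_n\,q^{Q_n\partial_{Q_n}}\big),
\]
valid for any Laurent polynomial $f$. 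Applying both sides to $\bone$ gives $f\big(B_i\,q^{Q_i\partial_{Q_i}}\big)\bone=0$; setting $q=1$ kills the shift operators and leaves exactly the relation in 1). In other words, the ``main difficulty'' you anticipate --- pinning every $V^{(\tau)}$ to the single ruling space at $t_0$ so that $\nu$ is well-defined --- is self-inflicted: conjugating the annihilating operator by $S$ sidesteps it entirely, and no statement about general descendents beyond $\tau=\bone$ is needed. Your argument for 2), via the compatibility of the two systems in (\ref{2-systems}) so that $B_{i,\com}$ commutes with every $\phi_\alpha\bullet(-)$ and hence coincides with $(B_{i,\com}\bone)\bullet(-)$, is exactly the paper's.
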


\begin{proof}
By Theorem \ref{V}, the $J$-function $J(t_0, Q) = S(0, t_0, q)^{-1} \bone$ satisfies the $q$-difference equations (\ref{q-diff-Z}), with $Z_i$ replaced by $\widetilde L_i q^{-Q_i \partial_{Q_i}}$. By the 2nd equation in (\ref{2-systems}), for any Laurent polynomial $f(X_1, \cdots, X_n)$, we have
$$
S \circ f ( L_1^{-1} q^{Q_1 \partial_{Q_1}} , \cdots,  L_n^{-1} q^{Q_n \partial_{Q_n}} ) \circ S^{-1} = f(B_1 q^{Q_1 \partial_{Q_1}}, \cdots, B_n q^{Q_n \partial_{Q_n}} ).
$$
Apply operators on both sides to the identity $\bone$, and take $q\to 1$. We obtain 1).

For 2), it suffices to notice that by compatibility, for any $\alpha$, $i$, one has $B_{i, \com} \phi_\alpha = B_{i, \com} (\phi_\alpha \bullet \bone) = \phi_\alpha \bullet (B_{i, \com} \bone)$. Hence the operator $B_{i, \com}$ is the same as the quantum multiplication $(B_{i, \com} \bone) \bullet (-)$. 2) then follows from 1).
\end{proof}

\appendix

\section{Some identities of K\"ahler and equivariant parameters}

We list here some computations for K\"ahler and equivariant parameters. Let $\bp \in X^\TT$ be a fixed point, and $\iota = \begin{pmatrix}
I \\
C
\end{pmatrix}$ be the matrix in the standard $\bp$-frame.

\begin{Lemma}
$$
\sum_{i=1}^n \ln z_i \ln x_i |_\bp - \sum_{i=1}^n \ln z_i \ln a_i  = -\sum_{j\in \cA_\bp^+ } \ln \zeta_j (\bp) \ln a_j - \sum_{j\in \cA_\bp^- } \ln \zeta_j (\bp) \ln (\hbar a_j)
$$
\end{Lemma}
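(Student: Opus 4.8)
The plan is to verify the identity by substituting the explicit restriction formulas from the standard $\bp$-frame and matching terms directly. First I would recall the key inputs: in the standard $\bp$-frame we have $x_i|_\bp = 1$ for $i\in\cA_\bp^+$, $x_i|_\bp = \hbar^{-1}$ for $i\in\cA_\bp^-$, and $x_i|_\bp = a_i\prod_{j\not\in\bp}a_j^{-C_{ij}}\cdot\hbar^{-\sum_{j\in\cA_\bp^-}C_{ij}}$ for $i\in\bp$, from \eqref{restriction-V}. Likewise, the K\"ahler parameters satisfy $\zeta_j(\bp) = z_j\prod_{i\in\bp}z_i^{C_{ij}}$ for $j\not\in\bp$, from Section \ref{sec-para}. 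Taking logarithms turns all these monomial relations into linear ones, so the whole identity becomes a bilinear identity in the variables $\ln a_i$, $\ln z_i$, $\ln\hbar$, which can be checked by comparing coefficients.

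The main computation is to expand the left-hand side $\sum_{i=1}^n \ln z_i(\ln x_i|_\bp - \ln a_i)$. Split the sum over $i\in\cA_\bp^+$, $i\in\cA_\bp^-$, and $i\in\bp$. For $i\in\cA_\bp^+$ the factor is $\ln x_i|_\bp - \ln a_i = -\ln a_i$; for $i\in\cA_\bp^-$ it is $-\ln\hbar - \ln a_i$; for $i\in\bp$ it is $-\sum_{j\not\in\bp}C_{ij}\ln a_j - \big(\sum_{j\in\cA_\bp^-}C_{ij}\big)\ln\hbar$. Substituting and then using the definition $\ln\zeta_j(\bp) = \ln z_j + \sum_{i\in\bp}C_{ij}\ln z_i$ to collect the coefficient of each $\ln a_j$ and of $\ln\hbar$, one sees that the $i\in\bp$ contribution reorganizes precisely so that, together with the $\cA_\bp^\pm$ contributions, the total coefficient of $\ln a_j$ for $j\in\cA_\bp^+$ is $-\ln\zeta_j(\bp)$, the coefficient of $\ln a_j$ for $j\in\cA_\bp^-$ is $-\ln\zeta_j(\bp)$, the coefficient of $\ln\hbar$ is $-\sum_{j\in\cA_\bp^-}\ln\zeta_j(\bp)$ (absorbed into the $\ln(\hbar a_j)$ terms on the right), and there are no residual $\ln a_i$ terms for $i\in\bp$. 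This matches the right-hand side $-\sum_{j\in\cA_\bp^+}\ln\zeta_j(\bp)\ln a_j - \sum_{j\in\cA_\bp^-}\ln\zeta_j(\bp)\ln(\hbar a_j)$.

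The only mild subtlety — and the step I would be most careful about — is the bookkeeping of the $\ln\hbar$ terms: they enter both from the $i\in\cA_\bp^-$ piece (one $\ln\hbar$ per index weighted by $\ln z_i$) and from the $i\in\bp$ piece (weighted by $\sum_{j\in\cA_\bp^-}C_{ij}\ln z_i$), and one must check these combine to exactly $-\sum_{j\in\cA_\bp^-}\ln\zeta_j(\bp)\ln\hbar$ using $\ln\zeta_j(\bp) = \ln z_j + \sum_{i\in\bp}C_{ij}\ln z_i$. Once the coefficients of $\ln a_j$ (for $j\in\cA_\bp^\pm$), of $\ln a_i$ (for $i\in\bp$, which must vanish), and of $\ln\hbar$ are all matched, the identity follows. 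This is a routine but slightly intricate linear algebra verification; no conceptual obstacle is expected.
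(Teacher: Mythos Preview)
Your proposal is correct and follows essentially the same approach as the paper's proof: expand the left-hand side using the restriction formula \eqref{restriction-V} in the standard $\bp$-frame, then regroup the resulting bilinear expression in $\ln z_i$, $\ln a_j$, $\ln\hbar$ using the definition $\ln\zeta_j(\bp) = \ln z_j + \sum_{i\in\bp} C_{ij}\ln z_i$. The paper's version is slightly more compact (it writes out the expanded LHS in one line and regroups in the next), but the computation is identical in substance.
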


\begin{proof}
Recall the variable $\zeta_j (\bp)$, $j\not\in \bp$ is defined as $\zeta_j (\bp) := z_j \prod_{i\in \bp} z_i^{C_{ij}}$. Then by the restriction formula (\ref{restriction-V}),
\ben
LHS &=& \sum_{j\in \cA_\bp^-} \ln z_i \ln \hbar^{-1} + \sum_{i\in \bp} \ln z_i \Big( \ln a_i - \sum_{j\in \cA_\bp^+} C_{ij} \ln a_j - \sum_{j\in \cA_\bp^-} C_{ij} \ln (\hbar a_j) \Big) - \sum_{i=1}^n \ln z_i \ln a_i \\
&=& -\sum_{j\in \cA_\bp^+} \ln a_j \Big( \ln z_j + \sum_{i\in \bp} C_{ij} \ln z_i \Big) - \sum_{j\in \cA_\bp^-} \ln (\hbar a_j) \Big( \ln z_j + \sum_{i\in \bp} C_{ij} \ln z_i \Big) \\
&=& RHS.
\een
\end{proof}

Let $\bp' \in (X')^\TT$ be the corresponding fixed point in the mirror, and $z_\epsilon$, $z'_\epsilon$ be defined as in Definition \ref{loc-pol} 2) and Section \ref{oppo}. More precisely,
$$
z_{\epsilon, i} = \left\{ \begin{aligned}
& z_{\sharp, i} \cdot (q\hbar^{-1} )^{-1} , && \   i \in \cA_\bp^+  \\
& z_{\sharp, i} , && \   i \in \cA_\bp^- \cup \bp
\end{aligned}\right. , \qquad z'_{\epsilon, i} = \left\{ \begin{aligned}
& z'_{\sharp, i} , && \  i \in \cA_{\bp'}^+ \cup (\bp')^- \\
& z'_{\sharp, i} \cdot (q\hbar^{-1} ) , && \ i \in  \cA_{\bp'}^- \cup (\bp')^+  .
\end{aligned}\right.
$$

\begin{Lemma} \label{id-prime}
Under the change of variables $\kappa_{\mathrm{vtx}}$,
$$
\sum_{i=1}^n \ln z'_{\epsilon, i} (\bp') \ln x'_i |_{\bp'}  - \sum_{i=1}^n \ln z_{\epsilon, i} (\bp) \ln x_i |_\bp = - \sum_{i=1}^n \ln z_{\sharp, i} \ln a_i - \sum_{i\in \bp^+} \ln (q\hbar^{-1}) \ln x_i |_\bp + \sum_{j \in \cA_\bp^-, i\in \bp^+} C_{ij} \ln (q\hbar^{-1} ) \ln \hbar .
$$
\end{Lemma}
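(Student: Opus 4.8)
The plan is to prove Lemma \ref{id-prime} by a direct, bookkeeping computation, reducing it to the previous Lemma in the appendix together with the restriction formulas and the explicit form of the mirror change of variables $\kappa_{\mathrm{vtx}}$. The statement is purely an identity among logarithms of monomials in the parameters, so once every term is expanded in terms of $\ln a_i$, $\ln z_i$ (equivalently $\ln z_{\sharp,i}$), $\ln\hbar$, $\ln q$, and the entries $C_{ij}$ of the $\bp$-frame matrix, it becomes a finite linear-algebra verification. The main conceptual content is that the two sides must be interpreted correctly under $\kappa_{\mathrm{vtx}}$, which sends $z_{\sharp,i}\mapsto (a'_i)^{-1}$, $a_i\mapsto z'_{\sharp,i}$, $\hbar\mapsto q\hbar^{-1}$; in particular $\ln z'_{\sharp,i}$ on the left-hand side is $\kappa_{\mathrm{vtx}}$-image of $\ln a_i$, and the $\ln x'_i|_{\bp'}$ factors are to be rewritten using the restriction formula (\ref{restriction-V'}) and then pushed through $\kappa_{\mathrm{vtx}}$.

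First I would split the left-hand sum $\sum_i \ln z'_{\epsilon,i}(\bp')\ln x'_i|_{\bp'}$ according to the partition $\{1,\dots,n\} = \cA_{\bp'}^+ \sqcup \cA_{\bp'}^- \sqcup \bp'$, which by Lemma \ref{V-V'} equals $\bp^- \sqcup \bp^+ \sqcup \cA_\bp$. Using $z'_{\epsilon,i} = z'_{\sharp,i}$ for $i\in \cA_{\bp'}^+\cup(\bp')^-$ and $z'_{\epsilon,i} = z'_{\sharp,i}\cdot(q\hbar^{-1})$ for $i\in\cA_{\bp'}^-\cup(\bp')^+$, and the explicit values of $x'_i|_{\bp'}$ from (\ref{restriction-V'}) ($1$, $\hbar^{-1}$, or $a'_j\prod_{i\in\bp}(a'_j)^{C_{ij}}\hbar^{\sum_{i\in\bp^+}C_{ij}}$), the whole left-hand sum becomes an explicit expression in $\ln z'_{\sharp,i}$, $\ln a'_i$, $\ln\hbar$, $\ln q$, $C_{ij}$. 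Then I apply $\kappa_{\mathrm{vtx}}$ termwise. The term $\sum_{i=1}^n\ln z_{\epsilon,i}(\bp)\ln x_i|_\bp$ is expanded the same way using (\ref{restriction-V}) and Definition \ref{loc-pol}~2). After these substitutions, the identity to be checked is a linear relation, and I would organize the verification by grouping coefficients of $\ln a_i$ (for $i\in\bp$ versus $i\notin\bp$), of $\ln\hbar$, and of $\ln q$ separately; the previous appendix Lemma handles exactly the $\sum\ln z_i\ln x_i|_\bp - \sum\ln z_i\ln a_i$ piece, so I would isolate that combination first and feed in the correction terms coming from the shifts $z_{\epsilon,i}$ versus $z_{\sharp,i}$ and the $\hbar$-shift in $\kappa_{\mathrm{vtx}}$.

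The main obstacle I anticipate is purely organizational: keeping track of the three-way shift (from $z$ to $z_\sharp$ to $z_\epsilon$, each differing by powers of $-\hbar^{1/2}$ and $q\hbar^{-1}$), the opposite-polarization choice on the mirror side which makes $z'_\sharp$ differ from $z_\sharp$ by $-\hbar$ rather than $-\hbar^{-1}$, and the fact that under $\kappa_{\mathrm{vtx}}$ the roles of $\cA_\bp^\pm$ and $\bp^\pm$ get swapped and reflected via Lemma \ref{V-V'}. The $C_{ij}$-dependent terms, which appear both in $x_i|_\bp$ for $i\in\bp$ and in $x'_j|_{\bp'}$ for $j\notin\bp$, must be shown to match; this is where the $\sum_{j\in\cA_\bp^-, i\in\bp^+}C_{ij}\ln(q\hbar^{-1})\ln\hbar$ term on the right-hand side originates, coming from the $\hbar^{\sum_{i\in\bp^+}C_{ij}}$ factor in (\ref{restriction-V'}) combined with the $q\hbar^{-1}$ shift on the $\cA_{\bp'}^- = \bp^+$ indices. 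Once all four classes of terms ($\ln a$ for $i\in\bp$, $\ln a$ for $i\notin\bp$, $\ln\hbar$, $\ln q$) are checked to balance, the lemma follows; no deeper input than the restriction formulas, Lemma \ref{V-V'}, and the previous appendix Lemma is needed.
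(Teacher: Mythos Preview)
Your proposal is correct and follows essentially the same approach as the paper: both expand each side using the restriction formulas (\ref{restriction-V}), (\ref{restriction-V'}), the identifications of Lemma \ref{V-V'}, and the explicit form of $\kappa_{\mathrm{vtx}}$, then compare term by term. The paper does not actually invoke the previous appendix lemma as a shortcut---it simply expands $\sum_i \ln z_{\epsilon,i}(\bp)\ln x_i|_\bp$ and $\sum_i \ln z'_{\epsilon,i}(\bp')\ln x'_i|_{\bp'}$ directly, applies $\kappa_{\mathrm{vtx}}$ to the latter, and subtracts---but your suggestion to isolate the $\sum\ln z_i\ln x_i|_\bp - \sum\ln z_i\ln a_i$ piece via that lemma is a harmless organizational variant leading to the same computation.
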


\begin{proof}
Compute
\ben
\sum_{i=1}^n \ln z_{\epsilon, j} (\bp) \ln x_i |_\bp &=& - \sum_{j\in \cA_\bp^-} \ln z_{\sharp, i} \ln \hbar + \sum_{i\in \bp} \ln z_{\sharp, i} \ln \Big( a_i \prod_{j\not\in \bp} a_j^{-C_{ij}} \cdot \hbar^{- \sum_{j\in \cA_\bp^-} C_{ij}} \Big) \\
&=& - \sum_{j\in \cA_\bp^-} \ln z_{\sharp, j} \ln \hbar + \sum_{i\in \bp} \ln z_{\sharp, i} \ln a_i - \sum_{i\in \bp, j\not\in \bp} C_{ij} \ln z_{\sharp, i} \ln a_j - \sum_{i\in \bp, j\in \cA_\bp^-} C_{ij} \ln z_{\sharp, i} \ln \hbar  . 
\een
On the other hand, since $\cA_{\bp'}^\pm = \bp^\mp$, $(\bp')^\pm = \cA_\bp^\mp$, and $C'_{ji} = - C_{ij}$,
\ben
\sum_{i=1}^n \ln z'_{\epsilon, i} (\bp) \ln x'_i |_{\bp'} &=& - \sum_{i \in \cA_{\bp'}^-} \ln z'_{\sharp, i} \ln \hbar + \sum_{j \in \bp'} \ln z'_{\sharp, j} \ln \Big ( a'_j \prod_{i \not\in \bp' } (a'_i)^{-C'_{ji}} \cdot \hbar^{- \sum_{i\in \cA_{\bp'}^-} C'_{ji}} \Big) \\
&& + \sum_{j\in (\bp')^+} \ln (q\hbar^{-1}) \ln \Big( a'_j \prod_{i \not\in \bp' } (a'_i)^{-C'_{ji}} \cdot \hbar^{- \sum_{i\in \cA_{\bp'}^-} C'_{ji}} \Big) \\
&=& - \sum_{i\in \bp^+} \ln z'_{\sharp, i} \ln \hbar + \sum_{j\not\in \bp} \ln z'_{\sharp, j} \ln a'_j + \sum_{j\not\in \bp, i\in \bp} C_{ij} \ln z'_{\sharp, j} \ln a'_i  + \sum_{j\not\in \bp, i\in (\bp')^+} C_{ij} \ln z'_{\sharp, j} \ln \hbar \\
&& + \sum_{j\in \cA_\bp^-} \ln (q\hbar^{-1}) \ln a'_j + \sum_{j\in \cA_\bp^-, i\in \bp} C_{ij} \ln (q\hbar^{-1}) \ln a'_i  + \sum_{j \in \cA_\bp^-, i\in (\bp')^+} C_{ij} \ln (q\hbar^{-1} ) \ln \hbar ,
\een
which under the change of variables $\kappa_{\mathrm{vtx}}$ is
\ben
&& - \sum_{i \in \bp^+} \ln a_i \ln (q\hbar^{-1} ) - \sum_{j\not\in \bp} \ln a_j \ln z_{\sharp, j} - \sum_{j\not\in \bp, i\in \bp} C_{ij} \ln a_j \ln z_{\sharp, i} + \sum_{j \not\in \bp, i\in \bp^+} C_{ij} \ln a_j \ln (q\hbar^{-1} ) \\
&& - \sum_{j \in \cA_\bp^-} \ln \hbar \ln z_{\sharp, j} - \sum_{j \in \cA_\bp^-, i \in \bp} C_{ij} \ln \hbar \ln z_{\sharp, i} + \sum_{j \in \cA_\bp^-, i\in \bp^+} C_{ij} \ln (q\hbar^{-1} ) \ln \hbar .
\een
The lemma follows by direct comparison.
\end{proof}

\section{Limit of bare vertex functions} \label{limit-point-a}

Let $\bp \in X^\TT$ be a fixed point, and $\iota = \begin{pmatrix}
I \\
C
\end{pmatrix}$ be the matrix in the standard $\bp$-frame. By the explicit formula (\ref{vertex}), we have the bare vertex function
$$
V^{(\bone)} (q, z, a) |_\bp = \sum_{\substack{d_j \geq 0, j\in \cA_\bp^+ \\ d_j \leq 0, j \in \cA_\bp^-} } \zeta (\bp)^d q^{ - \frac{1}{2} \sum_{j\not\in \bp} d_j - \frac{1}{2} \sum_{i\in \bp, j\not\in \bp} C_{ij} d_j } \prod_{j\in \cA_\bp^+} \{1\}_{d_j} \prod_{j\in \cA_\bp^-} \{\hbar^{-1} \}_{d_j} \prod_{i\in \bp} \{ x_i |_\bp \}_{\sum_{j\not\in \bp} C_{ij} d_j}
$$
where $\zeta (\bp)^d := \prod_{j\not\in \bp} \zeta_j (\bp)^{d_j}$.

Now we consider $V^{(\bone)} (q, z, a) |_\bp$ as a meromorphic function in terms of equivariant parameters $a_i$'s, in the region specified by the following condition:
$$
| a^\alpha | \ll 1,
$$
for any \emph{positive} root $\alpha$. In particular, we have $|\alpha_i (\bp)| \ll 1$ for $i\in \bp^+$, and $|\alpha_i (\bp)| \gg 1$ for $i\in \bp^-$.

We denote by the notation $\alpha (\bp) \xrightarrow{\sigma} 0$ the following process of taking limit:
$$
\alpha_i (\bp) \to \infty, \qquad i \in \bp^+; \qquad \alpha_i (\bp) \to 0 \qquad i \in \bp^-.
$$
Note that under the change of variables $\kappa_{\mathrm{vtx}}$, this limit is the as the one as described in Remark \ref{limit-point}, applied to $X'$ and $\theta'$.

\begin{Proposition} \label{limit-V}
In the region of $a_i$'s described above, we have
$$
\lim_{\alpha (\bp) \xrightarrow{\sigma} 0} V^{(1)} (q, z, a) = \prod_{j\in \cA_\bp^+} \frac{\phi \left( \hbar \zeta_{\sharp, j} (\bp) (q\hbar^{-1} )^{-\sum_{i\in \bp^+} C_{ij} }  \right) }{\phi \left( \zeta_{\sharp, j} (\bp) (q\hbar^{-1} )^{-\sum_{i\in \bp^+} C_{ij} }  \right)} \prod_{j\in \cA_\bp^-} \frac{\phi \left( q \zeta_{\sharp, j} (\bp)^{-1} (q\hbar^{-1} )^{\sum_{i\in \bp^+} C_{ij} }  \right) }{\phi \left( q\hbar^{-1}  \zeta_{\sharp, j} (\bp)^{-1} (q\hbar^{-1} )^{\sum_{i\in \bp^+} C_{ij} }  \right)} .
$$
In particular, under the change of variables $\kappa_{\mathrm{vtx}}$, it is
$$
\prod_{j\in (\bp')^-} \frac{\phi \left( q\hbar^{-1} (x'_j)^{-1} |_{\bp'} \right) }{\phi \left( (x'_j)^{-1} |_{\bp'}  \right)} \prod_{j\in (\bp')^+} \frac{\phi \left( q x'_j |_{\bp'} \right) }{\phi \left( \hbar x'_j |_{\bp'} \right)} = \Phi ((q - \hbar) \Pol'_{\bp'} (x') |_{\bp'} ).
$$
\end{Proposition}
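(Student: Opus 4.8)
The plan is to work directly with the explicit $q$-series for $V^{(\bone)}(q,z,a)|_\bp$ displayed above, pass to the limit $\alpha(\bp)\xrightarrow{\sigma}0$ term by term in the sum over $d=(d_j)_{j\notin\bp}$, and recognize the resulting series as a product of one-dimensional $q$-hypergeometric sums evaluated by the $q$-binomial theorem; at the end I rewrite the answer through $\kappa_{\mathrm{vtx}}$ and the restriction formula (\ref{restriction-V'}).

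First I would record the asymptotics of the building block $\{a\}_e=(-q^{1/2}\hbar^{-1/2})^e\,(\hbar a;q)_e/(qa;q)_e$, where $(x;q)_e=\phi(x)/\phi(q^e x)$. Using the product expansion of $\phi$ and $|q|<1$, one checks that for every fixed $e\in\ZZ$ one has $\{a\}_e\to(-q^{-1/2}\hbar^{1/2})^e$ as $a\to\infty$ and $\{a\}_e\to(-q^{1/2}\hbar^{-1/2})^e$ as $a\to 0$, and moreover $\{a\}_e=O(c^{|e|})$ uniformly for $a$ in a fixed neighbourhood of $\infty$ (resp.\ of $0$), with $c$ independent of $e$ and $a$. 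Applying this to $x_i|_\bp=\alpha_i(\bp)\cdot\hbar^{-\sum_{j\in\cA_\bp^-}C_{ij}}$, so that $x_i|_\bp\to\infty$ for $i\in\bp^+$ and $x_i|_\bp\to 0$ for $i\in\bp^-$ in the chosen limit, each factor $\{x_i|_\bp\}_{\sum_j C_{ij}d_j}$ degenerates, in the limit, to $(-q^{-1/2}\hbar^{1/2})^{\sum_j C_{ij}d_j}$ for $i\in\bp^+$ and to $(-q^{1/2}\hbar^{-1/2})^{\sum_j C_{ij}d_j}$ for $i\in\bp^-$. Since these are monomials in the $d_j$, the product $\prod_{i\in\bp}\{x_i|_\bp\}_{\cdots}$ factors through $\prod_{j\notin\bp}(\cdots)^{d_j}$; combining this with the factor $q^{-\frac12\sum_j d_j-\frac12\sum_{i,j}C_{ij}d_j}$ and with the $\{1\}_{d_j}$, $\{\hbar^{-1}\}_{d_j}$ factors shows that the limiting multi-sum factors as a product over $j\notin\bp$ of single sums over $d_j\ge 0$ ($j\in\cA_\bp^+$) or $d_j\le 0$ ($j\in\cA_\bp^-$).

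Second I would justify interchanging the limit with the infinite sum: the uniform bound $\{a\}_e=O(c^{|e|})$ together with the elementary sub-exponential bounds for $\{1\}_{d_j}$ and $\{\hbar^{-1}\}_{d_j}$ provides a geometric majorant for the summand, uniform on a punctured neighbourhood of the limit point and valid in the region of $z$ where the $\zeta(\bp)$-series converges; a Weierstrass $M$-test then licenses the termwise passage to the limit. Next, after bookkeeping the sign, $q$-power and $\hbar$-power prefactors, each single sum over $d_j$ takes the form $\sum_{d\ge 0}\tfrac{(\hbar;q)_d}{(q;q)_d}\,y_j^{\,d}$ with $y_j$ a monomial in $\zeta_{\sharp,j}(\bp)$, $q$ and $\hbar$ (using $\{1\}_d=(-q^{1/2}\hbar^{-1/2})^d\tfrac{(\hbar;q)_d}{(q;q)_d}$, and for $j\in\cA_\bp^-$ the substitution $d_j=-m$, $\{\hbar^{-1}\}_{-m}=(-q^{1/2}\hbar^{-1/2})^m\tfrac{(\hbar;q)_m}{(q;q)_m}$); the $q$-binomial theorem evaluates it to $\phi(\hbar y_j)/\phi(y_j)$, which for the $\cA_\bp^-$ factors one rewrites as $\phi(qu_j)/\phi(q\hbar^{-1}u_j)$ via the identity $\phi(\hbar v)/\phi(v)=\phi(qu)/\phi(q\hbar^{-1}u)$ with $v=q\hbar^{-1}u$. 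Identifying $y_j$ with the arguments in the statement is then a direct computation from $z_{\sharp,i}=z_i(-\hbar^{-1/2})$, $\zeta_j(\bp)=z_j\prod_{i\in\bp}z_i^{C_{ij}}$, collecting powers of $q$, $\hbar$ and signs; this produces the first displayed formula of the Proposition, with $y_j=\zeta_{\sharp,j}(\bp)(q\hbar^{-1})^{-\sum_{i\in\bp^+}C_{ij}}$.

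Finally, for the ``in particular'' clause I would apply $\kappa_{\mathrm{vtx}}\colon z_{\sharp,i}\mapsto(a'_i)^{-1}$, $a_i\mapsto z'_{\sharp,i}$, $\hbar\mapsto q\hbar^{-1}$ and compare with the restriction formula (\ref{restriction-V'}) for $x'_j|_{\bp'}$, $j\notin\bp$, together with Lemma \ref{V-V'} ($\cA_\bp^\pm=(\bp')^\mp$): a short computation shows $\kappa_{\mathrm{vtx}}(y_j)=(x'_j|_{\bp'})^{-1}$ for $j\in\cA_\bp^+=(\bp')^-$ and (after the rewriting above) that the $j\in\cA_\bp^-=(\bp')^+$ factors become $\phi(qx'_j|_{\bp'})/\phi(\hbar x'_j|_{\bp'})$; hence the product of $\phi$-ratios equals $\Phi((q-\hbar)\Pol'_{\bp'}(x')|_{\bp'})$, once one notes that the $\hbar^{-1}$-coefficient of $\Pol'_{\bp'}(x')|_{\bp'}$ vanishes because $|\cA_{\bp'}^+|+|\cA_{\bp'}^-|=|\cA_{\bp'}|=d$. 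The step I expect to be the genuine obstacle is the analytic one in the second paragraph — producing the uniform majorant that makes the termwise limit rigorous — followed by the lengthy but mechanical sign- and exponent-bookkeeping needed to bring the summed series first into $q$-binomial form and then into the $\Pol'_{\bp'}$ form.
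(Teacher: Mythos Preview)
Your proposal is correct and follows essentially the same route as the paper: compute the limits $\{a\}_d\to(-q^{1/2}\hbar^{-1/2})^{\mp d}$ as $a\to 0,\infty$, substitute termwise into the explicit series for $V^{(\bone)}|_\bp$, observe that the resulting multi-sum factors over $j\notin\bp$, and evaluate each one-variable sum by the $q$-binomial theorem $\sum_{d\ge 0}\frac{(\hbar)_d}{(q)_d}y^d=\phi(\hbar y)/\phi(y)$. The paper's proof is more terse---it writes down the termwise limit and the $q$-binomial identification without further comment---whereas you supply the analytic justification (the uniform majorant and Weierstrass $M$-test) that the paper omits; this is extra care rather than a different idea.
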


\begin{proof}
It is easy to see that, for any $a$ and $d\in \ZZ$,
$$
\{a \}_d \to \left\{ \begin{aligned}
& (- q^{1/2} \hbar^{-1/2} )^d , && \qquad a \to 0 ; \\
& (- q^{1/2} \hbar^{-1/2} )^{- d} , && \qquad a \to \infty .
\end{aligned} \right.
$$
Therefore,
\ben
\lim_{\alpha (\bp) \xrightarrow{\sigma} 0} V^{(1)} (q, z, a)  &=& \sum_{\substack{d_j \geq 0, j\in \cA_\bp^+ \\ d_j \leq 0, j \in \cA_\bp^-} } \zeta (\bp)^d q^{ - \frac{1}{2} \sum_{j\not\in \bp} d_j - \frac{1}{2} \sum_{i\in \bp, j\not\in \bp} C_{ij} d_j } \prod_{j\in \cA_\bp^+} (- q^{1/2} \hbar^{-1/2} )^{d_j} \frac{(\hbar)_{d_j}}{(q)_{d_j} } \\
&& \prod_{j\in \cA_\bp^-} ( - q^{1/2} \hbar^{-1/2} )^{-d_j} \frac{(\hbar)_{-d_j}}{(q)_{-d_j}} \prod_{i\in \bp^+} (- q^{1/2} \hbar^{-1/2})^{ -  \sum_{j\not\in \bp} C_{ij} d_j } \prod_{i\in \bp^-} (- q^{1/2} \hbar^{-1/2})^{  \sum_{j\not\in \bp} C_{ij} d_j } \\
&=&  \sum_{\substack{d_j \geq 0, j\in \cA_\bp^+ \\ d_j \leq 0, j \in \cA_\bp^-} } \prod_{j\in \cA_\bp^+}  \frac{(\hbar)_{d_j}}{(q)_{d_j} } \zeta_{\sharp, j}^{d_j} \prod_{j\in \cA_\bp^-}  \frac{(\hbar)_{-d_j}}{(q)_{-d_j}} (q \hbar^{-1} \zeta_{\sharp, j}^{-1} )^{- d_j} \prod_{i\in \bp^+} (q \hbar^{-1} )^{ - \sum_{j\not\in \bp} C_{ij} d_j } \\
&=&  \sum_{\substack{d_j \geq 0, j\in \cA_\bp^+ \\ d_j \leq 0, j \in \cA_\bp^-} } \prod_{j\in \cA_\bp^+}  \frac{(\hbar)_{d_j}}{(q)_{d_j} } \left( \zeta_{\sharp, j} (q\hbar^{-1} )^{-\sum_{i\in \bp^+} C_{ij} }  \right)^{d_j} \prod_{j\in \cA_\bp^-}  \frac{(\hbar)_{-d_j}}{(q)_{-d_j}} \left( q \hbar^{-1}  \zeta_{\sharp, j}^{-1} (q\hbar^{-1} )^{\sum_{i\in \bp^+} C_{ij} }   \right)^{- d_j}
\een
The lemma then follows from the $q$-binomial formula $\dfrac{\phi (xz)}{\phi (z)} = \sum_{d\geq 0} \dfrac{(x)_d}{(q)_d} z^d$.
\end{proof}


\bibliographystyle{abbrv}
\bibliography{reference}

\vspace{12 mm}

\noindent
Andrey Smirnov\\
Department of Mathematics,\\
University of North Carolina at Chapel Hill,\\
Chapel Hill, NC 27599-3250, USA;\\
Steklov Mathematical Institute, \\
of Russian Academy of Sciences, \\
Gubkina str. 8, Moscow, 119991, Russia; \\
asmirnov@email.unc.edu

\vspace{3 mm}

\noindent
Zijun Zhou\\
Department of Mathematics,\\
Stanford University,\\
450 Serra Mall, Stanford, CA 94305, USA\\
zz2224@stanford.edu

\end{document}